\theoremstyle{plain}  % Bold name, italics font
\newtheorem{theorem}{Theorem}[chapter]
\newtheorem{lemma}[theorem]{Lemma}
\newtheorem{proposition}[theorem]{Proposition}
\newtheorem{corollary}[theorem]{Corollary}
\newtheorem{definition}[theorem]{Definition}
\theoremstyle{definition}
\theoremstyle{remark} % italics name, roman font
\newtheorem{example}{Example}[section]
\newtheorem{remark}{Remark}[section]
\newcommand{\biglet}[2]{\begingroup\def\par{
    \endgraf\endgroup\lineskiplimit=0pt}
  \setbox2=\hbox{\uppercase{\sffamily #2}
    }\newdimen\tmpht \tmpht \ht2 \advance\tmpht by
  \baselineskip\font\hhuge=cmssbx10 at \tmpht \setbox1=\hbox{{\hhuge #1}}
  \count7=\tmpht \count8=\ht1\divide\count8 by 1000 \divide\count7
  by\count8 \tmpht=.001\tmpht\multiply\tmpht by \count7\font\hhuge=cmssbx10
  at \tmpht \setbox1=\hbox{{\hhuge #1}}
  \noindent \hangindent1.05\wd1
    \hangafter=-2 {\hskip-\hangindent \lower1\ht1\hbox{\raise1.0\ht2\copy1}%
    \kern-0\wd1}\copy2\lineskiplimit=-1000pt}
\newcommand{\shortv}[1]{}
\begin{document}
\raggedbottom

%%%%%%%%%%%%%%%
%% Front Matter
%%%%%%%%%%%%%%%

%%
%% MIT Doctoral Thesis - Title Page
%%

\thispagestyle{empty}

\begin{centering}
\rule{\textwidth}{.05in}\\ \vspace{.10in}

\Large \textbf{\textsf{Algebraic Relaxations and Hardness Results \\in Polynomial Optimization and Lyapunov Analysis}} \\
\normalsize

\vspace{.1in}
by \\

\vspace{.1in}
Amir Ali Ahmadi \\

\vspace{.1in} B.S., Electrical Engineering,
University of Maryland, 2006 \\

\vspace{.0in} B.S., Mathematics,
University of Maryland, 2006 \\

\vspace{.0in} S.M., Electrical Engineering and Computer Science,
MIT, 2008 \\

\vspace{.1in}
\rule{\textwidth}{.05in}\\

\vspace{.2in}

Submitted to the Department of Electrical Engineering and Computer Science \\
in partial fulfillment of the requirements for the degree of \\ Doctor of Philosophy
in Electrical Engineering and Computer Science \\
at the Massachusetts Institute of Technology

\vspace{.1in} September 2011

\vspace{.15in} \copyright\ 2011
Massachusetts Institute of Technology.  All rights reserved. \\

\end{centering}

\vspace{.3in}

\noindent
Signature of Author: \hrulefill{} \\
\vspace{-.25in}
{\flushright Department of Electrical Engineering and Computer Science \\
September $2$, $2011$ \\
}

\vspace{.15in}

\noindent
Certified by: \hrulefill{} \\
\vspace{-.25in}
{\flushright Pablo A. Parrilo \\
Professor of Electrical Engineering and Computer Science \\
%%Finmeccanica Career Development Professor \\
Thesis Supervisor \\
}

\vspace{.15in}

\noindent
Accepted by: \hrulefill{} \\
\vspace{-.25in}
{\flushright Leslie A. Kolodziejski \\
Professor of Electrical Engineering \\
Chair, Committee for Graduate Students \\
}

\cleardoublepage

%\phantomsection
%\addcontentsline{toc}{chapter}{Abstract}
\thispagestyle{empty}

\begin{centering}
\rule{\textwidth}{.05in}\\ \vspace{.10in}

\Large \textbf{\textsf{Algebraic Relaxations and Hardness Results \\in Polynomial Optimization and Lyapunov Analysis}} \\
\normalsize

\vspace{.1in}
by \\

\vspace{.1in}
Amir Ali Ahmadi \\

%\vspace{.1in} B.S., Electrical \& Computer Engineering,
%Cornell University, $2005$ \\
%
%\vspace{.0in} S.M., Electrical Engineering \& Computer Science,
%Massachusetts Institute of Technology, 2007 \\

\vspace{.1in}
\rule{\textwidth}{.05in}\\

\vspace{.2in}

Submitted to the Department of Electrical Engineering \\ and Computer Science on September $2$, $2011$ in partial fulfillment \\ of the requirements for the degree of Doctor of Philosophy \\
\end{centering}

\section*{Abstract}
The contributions of the first half of this thesis are on the
computational and algebraic aspects of convexity in polynomial
optimization. We show that unless P=NP, there exists no polynomial
time (or even pseudo-polynomial time) algorithm that can decide
whether a multivariate polynomial of degree four (or higher even
degree) is globally convex. This solves a problem that has been
open since 1992 when N. Z. Shor asked for the complexity of
deciding convexity for quartic polynomials. We also prove that
deciding strict convexity, strong convexity, quasiconvexity, and
pseudoconvexity of polynomials of even degree four or higher is
strongly NP-hard. By contrast, we show that quasiconvexity and
pseudoconvexity of odd degree polynomials can be decided in
polynomial time.

We then turn our attention to sos-convexity---an algebraic sum of
squares (sos) based sufficient condition for polynomial convexity
that can be efficiently checked with semidefinite programming. We
show that three natural formulations for sos-convexity derived
from relaxations on the definition of convexity, its first order
characterization, and its second order characterization are
equivalent. We present the first example of a convex polynomial
that is not sos-convex. Our main result then is to prove that the
cones of convex and sos-convex polynomials (resp. forms) in $n$
variables and of degree $d$ coincide if and only if $n=1$ or $d=2$
or $(n,d)=(2,4)$ (resp. $n=2$ or $d=2$ or $(n,d)=(3,4)$). Although
for disparate reasons, the remarkable outcome is that convex
polynomials (resp. forms) are sos-convex exactly in cases where
nonnegative polynomials (resp. forms) are sums of squares, as
characterized by Hilbert in 1888.

The contributions of the second half of this thesis are on the
development and analysis of computational techniques for
certifying stability of uncertain and nonlinear dynamical systems.
We show that deciding asymptotic stability of homogeneous cubic
polynomial vector fields is strongly NP-hard. We settle some of
the converse questions on existence of polynomial and sum of
squares Lyapunov functions. We present a globally asymptotically
stable polynomial vector field with no polynomial Lyapunov
function. We show via an explicit counterexample that if the
degree of the polynomial Lyapunov function is fixed, then sos
programming can fail to find a valid Lyapunov function even though
one exists. By contrast, we show that if the degree is allowed to
increase, then existence of a polynomial Lyapunov function for a
planar or a homogeneous polynomial vector field implies existence
of a polynomial Lyapunov function that can be found with sos
programming. We extend this result to develop a converse sos
Lyapunov theorem for robust stability of switched linear systems.

In our final chapter, we introduce the framework of path-complete
graph Lyapunov functions for approximation of the joint spectral
radius. The approach is based on the analysis of the underlying
switched system via inequalities imposed between multiple Lyapunov
functions associated to a labeled directed graph. Inspired by
concepts in automata theory and symbolic dynamics, we define a
class of graphs called path-complete graphs, and show that any
such graph gives rise to a method for proving stability of
switched systems. The semidefinite programs arising from this
technique include as special case many of the existing methods
such as common quadratic, common sum of squares, and
maximum/minimum-of-quadratics Lyapunov functions. We prove
approximation guarantees for analysis via several families of
path-complete graphs and a constructive converse Lyapunov theorem
for maximum/minimum-of-quadratics Lyapunov functions.

\begin{flushleft}
Thesis Supervisor: Pablo A. Parrilo

Title: Professor of Electrical Engineering and Computer Science

%%Finmeccanica Career Development Professor
\end{flushleft}

\cleardoublepage

\thispagestyle{empty}
\vspace*{\fill}
\begin{center}
\textit{{\Large \ \     To my parents, Maryam and Hamid Reza}}
\newline
\newline
\includegraphics[height=1.028cm]{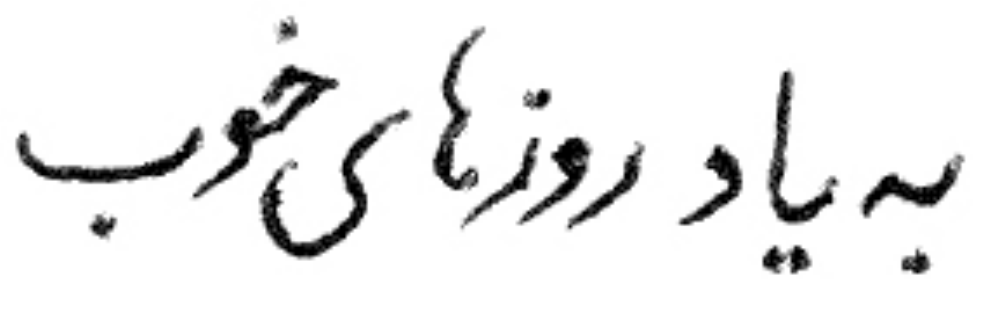}
\end{center}
\vspace*{\fill}  \cleardoublepage

\lhead[\fancyplain{}{\bf\thepage}]{\fancyplain{}{\sfeight\rightmark}}
\rhead[\fancyplain{}{\sfeight\leftmark}]{\fancyplain{}{\bf\thepage}}

\setcounter{tocdepth}{3}
\tableofcontents \cleardoublepage

%\phantomsection
%\addcontentsline{toc}{chapter}{List of Figures}
%\listoffigures
%\cleardoublepage

\phantomsection
\addcontentsline{toc}{chapter}{Acknowledgments}
\chapter*{Acknowledgements}
\markboth{Acknowledgements}{Acknowledgements}

The opportunities that have been available to me as a graduate
student at MIT have been endless, but without a doubt, the
greatest opportunity of all has been the chance to work with my
advisor Pablo Parrilo. What inspires me most about Pablo---aside
from his well-known traits like superior intelligence, humility,
and professional attitude---is his never-ending passion for
understanding things deeply. The joy that Pablo takes in
interpreting a mathematical result from all different angles,
placing it in the right level of abstraction, and simplifying it
to the point that it cannot be made simpler is a virtue that I
hope to take with me throughout my career. As Feynman once said,
``The truth always turns out to be simpler than you thought.'' On
several occasions in this thesis, Pablo's insights, or at times
simply his questions, have made me realize this fact, and for that
I am very grateful.

I would also like to thank Pablo for creating the perfect
environment for me to pursue my research ideas. Not once did he
ask me to work on a problem that I didn't choose to work on
myself, not once did he inquire about a research result before I
felt ready to present my progress, and not once did I ever have to
worry about funding for my research or for going to various
conferences. Pablo's approach was to meet with me regularly during
my Master's studies, but to be more hands-off throughout my Ph.D.
years. This worked out perfectly. I recall as a child, my father
taught me how to ride a bicycle by running alongside my bike and
holding on to its back, but then gradually letting go of his hands
(without me realizing) so I could ride on my own. I feel that
Pablo has very much done the same thing in the past few years in
placing me on the path to becoming a great researcher. I will be
grateful to him for as long as I continue on this ride.

I couldn't have asked for better thesis committee members than
Vincent Blondel and John Tsitsiklis. Among their many other
accomplishments, Vincent and John are two of the pioneering
figures in complexity theory in control and optimization, a
subject that as parts of this thesis reflect has become of much
interest to me over the last couple of years. From discussions
about complexity and the joint spectral radius to conversations
about my business idea for the MIT \$100K Entrepreneurship
Competition, both Vincent and John have always been generous with
their time and advice. In the case of Vincent, I was very
fortunate that my last year at LIDS coincided with the year that
he joined us as a visiting faculty member from Belgium. Some of
the most creative talks I have ever attended have been given by
Vincent. (A memorable example is in his LIDS seminar talk on
\emph{privacy in social networks}, where he fooled me into
believing that his wife, Gilbert Strang, and Mark Zuckerberg sent
him ``private'' messages which we saw popping up on the screen as
he was speaking!) I am thankful to Vincent for sincerely caring
about my thesis work, either in the form of a call from Belgium
providing encouragement the day prior to my defense, or by
deriving valuable results related to this thesis even after the
completion of my defense.

%I am thankful to Vincent for sincerely caring about my thesis
%work, whether in the form of calling me from Belgium the day
%before my defense to provide encouragement and support, or by
%deriving valuable results related to this thesis even after my
%defense was over.

My acquaintance with John goes back to my first year at MIT. I
remember walking out of the first lecture of his probability
course telling myself that I should attend every class that is
taught by this professor. Surely, I have done that. John has an
amazing ability to make every concept look simple and intuitive. I
am indebted to him for everything he has taught me both in and out
of the classroom. I would also like to thank him for his
invaluable contributions to a joint work that led to
Chapter~\ref{chap:nphard.convexity} of this thesis.

My gratitude extends to my other coauthors: Gerg Blekherman,
Rapha\"{e}l Jungers, Miroslav Krstic, Alex Olshevsly, and Mardavij
Roozbehani. I have learned a great deal from interactions with all
of them. I am thankful to Greg for settling two (out of two!)
mathematical bets that I made with Markus Schweighofer at a
meeting in 2009. Had he not done that, it is not clear how long it
would have taken me to get over my obsession with those problems
and move on to the problems addressed in this thesis. (I lost one
of the two bets when Greg proved that polynomials with a certain
special property exist~\cite{Blekherman_convex_not_sos};
interestingly, no explicit examples are known to this day, see
Section~\ref{sec:concluding.remarks}.) Rapha\"{e}l, Alex, and
Mardavij have all been great friends throughout my years at LIDS.
I particularly want to express my gratitude to Mardavij for always
being there when I needed someone to talk to, and to Rapha\"{e}l
for his nice gesture in giving me a copy of his
book~\cite{Raphael_Book} as a gift in our first meeting, which he
kindly signed with the note: ``Let's collaborate!'' I am grateful
to Miroslav for hosting me (together with Tara Javidi) at the
Cymer Center for Control Systems and Dynamics at UCSD, which led
to immediate and fruitful collaborations between us.

This thesis has benefited from my interactions with several
distinguished researchers outside of MIT. Among them I would like
to mention Amitabh Basu, Stephen Boyd, Etienne de Klerk, Jes\'{u}s
De Loera, Bill Helton, Monique Laurent, Jiawang Nie, Bruce
Reznick, Claus Scheiderer, Eduardo Sontag, Bernd Sturmfels, and
Andr\'{e} Tits. I had the good fortune to TA the Convex
Optimization class that Stephen co-taught with Pablo while
visiting MIT from Stanford. I am grateful to Amitabh and Jes\'{u}s
for hosting me at the Mathematics Dept. of UC Davis, and to
Eduardo for having me at the SontagFest in DIMACS. These have all
been memorable and unique experiences for me. I wish to also thank
Nima Moshtagh for the internship opportunity at Scientific
Systems.

I am deeply grateful to the faculty at LIDS, especially Sasha
Megretski, Asu Ozdaglar, Munther Dahleh, and Emilio Frazzoli for
making LIDS a great learning environment for me. I have also
learned a great deal from classes offered outside of LIDS, for
example from the beautiful lectures of Michael Sipser on Theory of
Computation, or from the exciting course of Constantinos
Daskalakis on Algorithmic Game Theory. Special thanks are also due
to members of the LIDS staff. I am grateful to Lisa Gaumond for
her friendly smile and for making sure that I always got
reimbursed on time, to Jennifer Donovan for making organizational
tasks easy, and to Brian Jones for going out of his way to fix my
computer twice when I was on the verge of losing all my files.
Janet Fischer from the EECS graduate office and my academic
advisor, Vladimir Stojanovic, are two other people that have
always helped me stay on track.

This is also a good opportunity to thank my mentors at the
University of Maryland, where I spent four amazing years as an
undergraduate student. Thomas Antonsen, Thomas Murphy, Edward Ott,
Reza Salem, Andr\'{e} Tits, and (coach!) James Yorke helped set
the foundation on which I am able to do research today.

Some of the results of this thesis would not have been possible
without the use of software packages such as YALMIP~\cite{yalmip},
SOSTOOLS~\cite{sostools}, and SeDuMi~\cite{sedumi}. I am deeply
grateful to the people who wrote these pieces of software and made
it freely available. Special thanks go to Johan L\"{o}fberg for
his patience in answering my questions about YALMIP.

My best memories at MIT are from the moments shared with great
friends, such as, Amir Khandani, Emmanuel Abbe, Marco Pavone,
Parikshit Shah, Noah Stein, Georgios Kotsalis, Borjan Gagoski,
Yola Katsargyri, Mitra Osqui, Ermin Wei, Hoda Eydgahi, Ali
ParandehGheibi, Sertac Karaman, Michael Rinehart, Mark Tobenkin,
John Enright, Mesrob Ohannessian, Rose Faghih, Ali Faghih, Sidhant
Misra, Aliaa Atwi, Venkat Chandrasekaran, Ozan Candogan, James
Saunderson, Dan Iancu, Christian Ebenbauer, Paul Njoroge, Alireza
Tahbaz-Salehi, Kostas Bimpikis, Ilan Lobel, Stavros Valavani,
Spyros Zoumpoulis, and Kimon Drakopoulos. I particularly want to
thank my longtime officemates, Pari and Noah. Admittedly, I didn't
spend enough time in the office, but whenever I did, I had a great
time in their company. Together with Amir and Emmanuel, I had a
lot of fun exploring the night life of Boston during my first two
years. Some of the memories there are unforgettable. I am thankful
to Marco for making everyday life at MIT more enjoyable with his
sense of humor. His presence was greatly missed during my last
year when NASA decided to threaten the security of all humans on
Earth and aliens in space by making the disastrous mistake of
hiring him as a Research Technologist. I also owe gratitude to
Marco's lovely wife-to-be, Manuela, for her sincere friendship.
Unfortunately, just like NASA, Manuela is a victim of Marco's
deception.

Aside from friends at MIT, my childhood friends who live in Iran
and Washington DC have made a real effort to keep our friendships
close by paying frequent visits to Boston, and for that I am
forever grateful. I would also like to thank the coaches of the
MIT tennis team, Dave Hagymas, Spritely Roche, and Charlie Maher,
for always welcoming me on practice sessions, which allowed me to
continue to pursue my childhood passion at MIT and maintain a
balanced graduate life.

The work in this thesis was partially supported by the NSF Focused
Research Group Grant on Semidefinite Optimization and Convex
Algebraic Geometry DMS-0757207, and by AFOSR MURI subaward
07688-1.

\vspace{15mm}

My heart is full of gratitude for my parents, Maryam and Hamid
Reza, my sister, Shirin, and my brother-in-law, Karim, who have
always filled my life with unconditional love and support. I am so
sorry and embarrassed for all the times I have been too ``busy''
to return the love, and so grateful to you for never expecting me
to do so. ``Even after all this time the sun never says to the
earth, `you owe me.' Look what happens with a love like that. It
lights the whole sky.''

I finally want to thank my girlfriend and best friend, Margarita,
who has the heart of an angel and who has changed my life in so
many ways since the moment we met. As it turns out, my coming to
Boston was not to get a Ph.D. degree, but to meet you. ``Who could
be so lucky? Who comes to a lake for water and sees the reflection
of moon.''

 \cleardoublepage

%\phantomsection

%\phantomsection
%\addcontentsline{toc}{chapter}{List of Algorithms}
%\listof{algorithm}{List of Algorithms}
%\cleardoublepage

%\phantomsection
%\addcontentsline{toc}{chapter}{List of Tables}
%\listoftables
%\cleardoublepage

\lhead[\fancyplain{}{\bf\thepage}]{\fancyplain{}{\sfeight\rightmark}}
\rhead[\fancyplain{}{\sfeight\leftmark}]{\fancyplain{}{\bf\thepage}}
%%%%%%%%%%%%%%%%
%% Document Body
%%%%%%%%%%%%%%%%

% \renewcommand{\thepage}{\arabic{page}}
% \setcounter{page}{1}
\chapter{Introduction}\label{chap:intro}

With the advent of modern computers in the last century and the
rapid increase in our computing power ever since, more and more
areas of science and engineering are being viewed from a
computational and algorithmic perspective---the field of
optimization and control is no exception. Indeed, what we often
regard nowadays as a satisfactory solution to a problem in this
field---may it be the optimal allocation of resources in a power
network or the planning of paths of minimum fuel consumption for a
group of satellites---is an \emph{efficient algorithm} that when
fed with an instance of the problem as input, returns in a
reasonable amount of time an output that is guaranteed to be
optimal or near optimal.
%a computational lens and explored from an algorithmic perspective

Fundamental concepts from theory of computation, such as the
notions of a Turing machine, decidability, polynomial time
solvability, and the theory of NP-completeness, have allowed us to
make precise what it means to have an (efficient) algorithm for a
problem and much more remarkably to even be able to prove that for
certain problems such algorithms do not exist. The idea of
establishing ``hardness results'' to provide rigorous explanations
for why progress on some problems tends to be relatively
unsuccessful is commonly used today across many disciplines and
rightly so. Indeed, when a problem is resisting all attempts for
an (efficient) algorithm, little is more valuable to an
unsatisfied algorithm designer than the ability to back up the
statement ``I cannot do it'' with the claim that ``it cannot be
done''.

Over the years, the line between what can or cannot be efficiently
computed has shown to be a thin one. There are many examples in
optimization and control where complexity results reveal that two
problems that on the surface appear quite similar have very
different structural properties. Consider for example the problem
of deciding given a symmetric matrix $Q$, whether $x^TQx$ is
nonnegative for all $x\in\mathbb{R}^n$, and contrast this to the
closely related problem of deciding whether $x^TQx$ is nonnegative
for all $x$'s in $\mathbb{R}^n$ that are elementwise nonnegative.
The first problem, which is at the core of semidefinite
programming, can be answered in polynomial time (in fact in
$O(n^3)$), whereas the second problem, which forms the basis of
copositive programming, is NP-hard and can easily encode many hard
combinatorial problems~\cite{nonnegativity_NP_hard}. Similar
scenarios arise in control theory. An interesting example is the
contrast between the problems of deciding stability of interval
polynomials and interval matrices.
%
%To mention a similar scenario from control theory, consider the
%problem of deciding stability of interval polynomials in contrast
%to that of deciding stability of interval matrices.
%
%Another example that demonstrates a similar interesting contrast
%is the problem of deciding stability of interval polynomials or
%interval matrices in robust control.
%
%
%A similar example from the field
%of robust control is the problem of deciding stability of interval
%polynomials or interval matrices.
If we are given a single univariate polynomial of degree $n$ or a
single $n\times n$ matrix, then standard classical results enable
us to decide in polynomial time whether the polynomial or the
matrix is (strictly) stable, i.e, has all of its roots (resp.
eigenvalues) in the open left half complex plane. Suppose now that
we are given lower and upper bounds on the coefficients of the
polynomial or on the entries of the matrix and we are asked to
decide whether all polynomials or matrices in this interval family
are stable. Can the answer still be given in polynomial time? For
the case of interval polynomials, Kharitonov famously
demonstrated~\cite{Kharitonov_interval_poly} that it can:
stability of an interval polynomial can be decided by checking
whether \emph{four} polynomials obtained from the family via some
simple rules are stable. One may naturally speculate whether such
a wonderful result can also be established for interval matrices,
but alas, NP-hardness
results~\cite{Nemirovskii_interval_matrix_NPhard} reveal that
unless P=NP, this cannot happen.

Aside from ending the quest for exact efficient algorithms, an
NP-hardness result also serves as an insightful bridge between
different areas of mathematics. Indeed, when we give a reduction
from an NP-hard problem to a new problem of possibly different
nature, it becomes apparent that the computational difficulties
associated with the first problem are intrinsic also to the new
problem. Conversely, any algorithm that has been previously
developed for the new problem can now readily be applied also to
the first problem. This concept is usually particularly
interesting when one problem is in the domain of discrete
mathematics and the other in the continuous domain, as will be the
case for problems considered in this thesis. For example, we will
give a reduction from the canonical NP-complete problem of 3SAT to
the problem of deciding stability of a certain class of
differential equations. As a byproduct of the reduction, it will
follow that a certificate of unsatisfiability of instances of 3SAT
can always be given in form of a Lyapunov function.

%, and this, at least from at a theoretical level, makes a
%connection between ideas from two different fields.

%With a viewpoint that emphasizes efficient algorithms on one hand
%and a considerable number of negative results that point to
%nonexistence of such algorithms for general problems in the other,

%
%The practical message of hardness results in optimization is
%obviously that,

In general, hardness results in optimization come with a clear
practical implication: as an algorithm designer, we either have to
give up optimality and be content with finding suboptimal
solutions, or we have to work with a subclass of problems that
have more tractable attributes. In view of this, it becomes
exceedingly relevant to identify structural properties of
optimization problems that allow for tractability of finding
optimal solutions.

One such structural property, which by and large is the most
fundamental one that we know of, is \emph{convexity}. As a
geometric property, convexity comes with many attractive
consequences. For instance, every local minimum of a convex
problem is also a global minimum. Or for example, if a point does
not belong to a convex set, this nonmembership can be certified
through a separating hyperplane. Due in part to such special
attributes, convex problems generally allow for efficient
algorithms for solving them. Among other approaches, a powerful
theory of interior-point polynomial time methods for convex
optimization was developed in~\cite{NN}. At least when the
underlying convex cone has an efficiently computable so-called
``barrier function'', these algorithms are efficient both in
theory and in practice.

%
%As for algorithms for solving convex problems, a general theory of
%interior-point polynomial time methods for convex optimization was
%developed in the early 1990s~\cite{NN}. At least when the
%underlying convex cone has an efficiently computable ``barrier
%function'', these algorithms are extremely efficient both in
%theory and in practice.

%\footnote{It is important to keep in mind that in the absence of a
%nice barrier function, there are convex problems that are
%intractable; an example that we have already mentioned is
%optimization over the copositive cone.}

Extensive and greatly successful research in the applications of
convex optimization over the last couple of decades has shown that
surprisingly many problems of practical importance can be cast as
convex optimization problems. Moreover, we have a fair number of
rules based on the calculus of convex functions that allow us to
design---whenever we have the freedom to do so---problems that are
by construction convex. Nevertheless, in order to be able to
exploit the potential of convexity in optimization in full, a very
basic question is to understand whether we are even able to
\emph{recognize} the presence of convexity in optimization
problems. In other words, can we have an efficient algorithm that
tests whether a given optimization problem is convex?

%Indeed, imagine a scenario where we have set up an optimization
%model for a real world problem and are now wondering whether the
%optimization problem is convex, perhaps to be able to argue that
%if we find a point where the gradient of the objective function
%vanishes, then that point must be a global optimum. Is there an
%efficient algorithm for us to test whether a given optimization
%problem is convex?

We will show in this thesis---answering a longstanding question of
N.Z. Shor---that unfortunately even for the simplest classes of
optimization problems where the objective function and the
defining functions of the feasible set are given by polynomials of
modest degree, the question of determining convexity is NP-hard.
We also show that the same intractability result holds for
essentially any well-known variant of convexity (generalized
convexity). These results suggest that as significant as convexity
may be in optimization, we may not be able to in general guarantee
its presence before we can enjoy its consequences.

Of course, NP-hardness of a problem does not stop us from studying
it, but on the contrary stresses the need for finding good
approximation algorithms that can deal with a large number of
instances efficiently. Towards this goal, we will devote part of
this thesis to a study of convexity from an \emph{algebraic}
viewpoint. We will argue that in many cases, a notion known as
\emph{sos-convexity}, which is an efficiently checkable algebraic
counterpart of convexity, can be a viable substitute for convexity
of polynomials. Aside from its computational implications,
sos-convexity has recently received much attention in the area of
convex algebraic
geometry~\cite{Blekherman_convex_not_sos},\cite{Monique_Etienne_Convex},\cite{Helton_Nie_SDP_repres_2},\cite{Lasserre_Jensen_inequality},\cite{Lasserre_Convex_Positive},\cite{Lasserre_set_convexity},
mainly due to its role in connecting the geometric and algebraic
aspects of convexity. In particular, the name ``sos-convexity''
comes from the work of Helton and Nie on semidefinite
representability of convex sets~\cite{Helton_Nie_SDP_repres_2}.

The basic idea behind sos-convexity is nothing more than a simple
extension of the concept of representation of nonnegative
polynomials as sums of squares. To demonstrate this idea on a
concrete example, suppose we are given the polynomial
\begin{equation}\label{eq:example.of.poly}
\begin{array}{lll}
p(x)&=&x_1^4-6x_1^3x_2+2x_1^3x_3+6x_1^2x_3^2+9x_1^2x_2^2-6x_1^2x_2x_3-14x_1x_2x_3^2+4x_1x_3^3
\\ \ &\ &+5x_3^4-7x_2^2x_3^2+16x_2^4,
\end{array}
\end{equation}
and we are asked to decide whether it is nonnegative, i.e, whether
$p(x)\geq 0$ for all $x\mathrel{\mathop:}=(x_1,x_2,x_3)$ in
$\mathbb{R}^3$. This may seem like a daunting task (and indeed it
is as deciding nonnegativity of quartic polynomials is also
NP-hard), but suppose that we could ``somehow'' come up with a
decomposition of the polynomial a sum of squares (sos):
\begin{equation}
p(x)=(x_1^2-3x_1x_2+x_1x_3+2x_3^2)^2+(x_1x_3-x_2x_3)^2+(4x_2^2-x_3^2)^2.
\end{equation}
Then, we have at our hands an \emph{explicit certificate} of
nonnegativity of $p(x)$, which can be easily checked (simply by
multiplying the terms out).

It turns out (see e.g.~\cite{PhD:Parrilo},~\cite{sdprelax}) that
because of several interesting connections between real algebra
and convex optimization discovered in recent years and quite
well-known by now, the question of existence of an sos
decomposition can be cast as a semidefinite program, which can be
solved efficiently e.g. by interior point methods. As we will see
more formally later, the notion of sos-convexity is based on an
appropriately defined sum of squares decomposition of the Hessian
matrix of a polynomial and hence it can also be checked
efficiently with semidefinite programming. Just like sum of
squares decomposition is a sufficient condition for polynomial
nonnegativity, sos-convexity is a sufficient condition for
polynomial convexity.

An important question that remains here is the obvious one: when
do nonnegative polynomials admit a decomposition as a sum of
squares? The answer to this question comes from a classical result
of Hilbert. In his seminal 1888 paper~\cite{Hilbert_1888}, Hilbert
gave a complete characterization of the degrees and dimensions in
which all nonnegative polynomials can be written as sums of
squares. In particular, he proved that there exist nonnegative
polynomials with no sum of squares decomposition, although
explicit examples of such polynomials appeared only 80 years
later. One of the main contributions of this thesis is to
establish the counterpart of Hilbert's results for the notions of
convexity and sos-convexity. In particular, we will give the first
example of a convex polynomial that is not sos-convex, and by the
end of the first half of this thesis, a complete characterization
of the degrees and dimensions in which convexity and sos-convexity
are equivalent. Some interesting and unexpected connections to
Hilbert's results will also emerge in the process.

In the second half of this thesis, we will turn to the study of
stability in dynamical systems. Here too, we will take a
computational viewpoint with our goal will being the development
and analysis of efficient algorithms for proving stability of
certain classes of nonlinear and hybrid systems.

Almost universally, the study of stability in systems theory leads
to Lyapunov's second method or one of its many variants. An
outgrowth of Lyapunov's 1892 doctoral
dissertation~\cite{PhD:Lyapunov}, Lyapunov's second method tells
us, roughly speaking, that if we succeed in finding a
\emph{Lyapunov function}---an energy-like function of the state
that decreases along trajectories---then we have proven that the
dynamical system in question is stable. In the mid 1900s, a series
of \emph{converse Lyapunov theorems} were developed which
established that any stable system indeed has a Lyapunov function
(see~\cite[Chap. 6]{Hahn_stability_book} for an overview).
Although this is encouraging, except for the simplest classes of
systems such as linear systems, converse Lyapunov theorems do not
provide much practical insight into how one may go about finding a
Lyapunov function.

%Moreover, the Lyapunov functions constructed in these classical
%theorems often belong to an infinite dimensional space of
%functions (e.g. the space of all continuously differentiable
%functions) and because of this reason the theorems do not really
%restrict the search space in

In the last few decades however, advances in the theory and
practice of convex optimization and in particular semidefinite
programming (SDP) have rejuvenated Lyapunov theory. The approach
has been to parameterize a class of Lyapunov functions with
restricted complexity (e.g., quadratics, pointwise maximum of
quadratics, polynomials, etc.) and then pose the search for a
Lyapunov function as a convex feasibility problem. A widely
popular example of this framework which we will revisit later in
this thesis is the method of sum of squares Lyapunov
functions~\cite{PhD:Parrilo},\cite{PabloLyap}. Expanding on the
concept of sum of squares decomposition of polynomials described
above, this technique allows one to formulate semidefinite
programs that search for polynomial Lyapunov functions for
polynomial dynamical systems. Sum of squares Lyapunov functions,
along with many other SDP based techniques, have also been applied
to systems that undergo switching; see
e.g.~\cite{JohRan_PWQ},\cite{PraP03},\cite{Pablo_Jadbabaie_JSR_journal}.
The analysis of these types of systems will also be a subject of
interest in this thesis.

An algorithmic approach to Lyapunov theory naturally calls for new
converse theorems. Indeed, classical converse Lyapunov theorems
only guarantee existence of Lyapunov functions within very broad
classes of functions (e.g. the class of continuously
differentiable functions) that are a priori not amenable to
computation. So there is the need to know whether Lyapunov
functions belonging to certain more restricted classes of
functions that can be computationally searched over also exist.
For example, do stable polynomial systems admit Lyapunov functions
that are polynomial? What about polynomial functions that can be
found with sum of squares techniques? Similar questions arise in
the case of switched systems. For example, do stable linear
switched systems admit sum of squares Lyapunov functions? How
about Lyapunov functions that are the pointwise maximum of
quadratics? If so, how many quadratic functions are needed? We
will answer several questions of this type in this thesis.

This thesis will also introduce a new class of techniques for
Lyapunov analysis of switched systems. The novel component here is
a general framework for formulating Lyapunov inequalities between
\emph{multiple} Lyapunov functions that together guarantee
stability of a switched system under arbitrary switching. The
relation between these inequalities has interesting links to
concepts from automata theory. Furthermore, the technique is
amenable to semidefinite programming.

Although the main ideas behind our approach directly apply to
broader classes of switched systems, our results will be presented
in the more specific context of switched linear systems. This is
mainly due to our interest in the notion of the \emph{joint
spectral radius} of a set of matrices which has intimate
connections to stability of switched linear systems. The joint
spectral radius is an extensively studied quantity that
characterizes the maximum growth rate obtained by taking arbitrary
products from a set of matrices. Computation of the joint spectral
radius, although notoriously hard~\cite{BlTi2},\cite{BlTi3}, has a
wide range of applications including continuity of wavelet
functions, computation of capacity of codes, convergence of
consensus algorithms, and combinatorics, just to name a few. Our
techniques provide several hierarchies of polynomial time
algorithms that approximate the JSR with guaranteed accuracy.

%, all based on semidefinite programming, that provide
%approximations of the JSR with guaranteed accuracy.

%
%The reasons for this are twofold. First, when the switched system
%is linear, we can also provide guarantees on the success of our
%semidefinite programs in finding Lyapunov functions, depending of
%course on the stability properties of the system. Second, there is
%an intimate connection between switched linear systems and the
%notion of the \emph{joint spectral radius} (JSR) of a set of
%matrices.

A more concrete account of the contributions of this thesis will
be given in the following section. We remark that although the
first half of the thesis is mostly concerned with convexity in
polynomial optimization and the second half with Lyapunov
analysis, a common theme throughout the thesis is the use of
algorithms that involve algebraic methods in optimization and
semidefinite programming.

\section{Outline and contributions of the thesis}

The remainder of this thesis is divided into two parts each
containing two chapters. The first part includes our complexity
results on deciding convexity in polynomial optimization
(Chapter~\ref{chap:nphard.convexity}) and our study of the
relationship between convexity and sos-convexity
(Chapter~\ref{chap:convexity.sos.convexity}). The second part
includes new results on Lyapunov analysis of polynomial
differential equations (Chapter~\ref{chap:converse.lyap}) and a
novel framework for proving stability of switched systems
(Chapter~\ref{chap:jsr}). A summary of our contributions in each
chapter is as follows.

\paragraph {Chapter~\ref{chap:nphard.convexity}.} The main result of this chapter is to prove that unless P=NP, there
cannot be a polynomial time algorithm (or even a pseudo-polynomial
time algorithm) that can decide whether a quartic polynomial is
globally convex. This answers a question of N.Z. Shor that
appeared as one of seven open problems in complexity theory for
numerical optimization in 1992~\cite{open_complexity}. We also
show that deciding strict convexity, strong convexity,
quasiconvexity, and pseudoconvexity of polynomials of even degree
four or higher is strongly NP-hard. By contrast, we show that
quasiconvexity and pseudoconvexity of odd degree polynomials can
be decided in polynomial time.

\paragraph {Chapter~\ref{chap:convexity.sos.convexity}.} Our first contribution in this chapter is to prove that three
natural sum of squares (sos) based sufficient conditions for
convexity of polynomials via the definition of convexity, its
first order characterization, and its second order
characterization are equivalent. These three equivalent algebraic
conditions, which we will refer to as sos-convexity, can be
checked by solving a single semidefinite program. We present the
first known example of a convex polynomial that is not sos-convex.
We explain how this polynomial was found with tools from sos
programming and duality theory of semidefinite optimization. As a
byproduct of this numerical procedure, we obtain a simple method
for searching over a restricted family of nonnegative polynomials
that are not sums of squares that can be of independent interest.

If we denote the set of convex and sos-convex polynomials in $n$
variables of degree $d$ with $\tilde{C}_{n,d}$ and $\tilde{\Sigma
C}_{n,d}$ respectively, then our main contribution in this chapter
is to prove that $\tilde{C}_{n,d}=\tilde{\Sigma C}_{n,d}$ if and
only if $n=1$ or $d=2$ or $(n,d)=(2,4)$. We also present a
complete characterization for forms (homogeneous polynomials)
except for the case $(n,d)=(3,4)$ which will appear
elsewhere~\cite{AAA_GB_PP_Convex_ternary_quartics}. Our result
states that the set $C_{n,d}$ of convex forms in $n$ variables of
degree $d$ equals the set $\Sigma C_{n,d}$ of sos-convex forms if
and only if $n=2$ or $d=2$ or $(n,d)=(3,4)$. To prove these
results, we present in particular explicit examples of polynomials
in $\tilde{C}_{2,6}\setminus\tilde{\Sigma C}_{2,6}$ and
$\tilde{C}_{3,4}\setminus\tilde{\Sigma C}_{3,4}$ and forms in
$C_{3,6}\setminus\Sigma C_{3,6}$ and $C_{4,4}\setminus\Sigma
C_{4,4}$, and a general procedure for constructing forms in
$C_{n,d+2}\setminus\Sigma C_{n,d+2}$ from nonnegative but not sos
forms in $n$ variables and degree $d$.

Although for disparate reasons, the remarkable outcome is that
convex polynomials (resp. forms) are sos-convex exactly in cases
where nonnegative polynomials (resp. forms) are sums of squares,
as characterized by Hilbert.

\paragraph {Chapter~\ref{chap:converse.lyap}.} This chapter is devoted to converse results on (non)-existence of polynomial and sum of squares polynomial Lyapunov functions for systems described by polynomial differential
equations. We present a simple, explicit example of a
two-dimensional polynomial vector field of degree two that is
globally asymptotically stable but does not admit a polynomial
Lyapunov function of any degree. We then study whether existence
of a polynomial Lyapunov function implies existence of one that
can be found with sum of squares techniques. We show via an
explicit counterexample that if the degree of the polynomial
Lyapunov function is fixed, then sos programming can fail to find
a valid Lyapunov function even though one exists. On the other
hand, if the degree is allowed to increase, we prove that
existence of a polynomial Lyapunov function for a planar vector
field (under an additional mild assumption) or for a homogeneous
vector field implies existence of a polynomial Lyapunov function
that is sos and that the negative of its derivative is also sos.
This result is extended to prove that asymptotic stability of
switched linear systems can always be proven with sum of squares
Lyapunov functions. Finally, we show that for the latter class of
systems (both in discrete and continuous time), if the negative of
the derivative of a Lyapunov function is a sum of squares, then
the Lyapunov function itself is automatically a sum of squares.

This chapter also includes some complexity results. We prove that
deciding asymptotic stability of homogeneous cubic polynomial
vector fields is strongly NP-hard. We discuss some byproducts of
the reduction that establishes this result, including a
Lyapunov-inspired technique for proving positivity of forms.

%%AAA: removed the following because the problem is in fact undecidable.
%and a corollary demonstrating that deciding existence of lattice
%points in basic semialgebraic sets is NP-hard even in very
%restricted cases.

\paragraph {Chapter~\ref{chap:jsr}.} In this chapter, we introduce the framework of path-complete graph
Lyapunov functions for approximation of the joint spectral radius.
The approach is based on the analysis of the underlying switched
system via inequalities imposed between multiple Lyapunov
functions associated to a labeled directed graph. The nodes of
this graph represent Lyapunov functions, and its directed edges
that are labeled with matrices represent Lyapunov inequalities.
Inspired by concepts in automata theory and symbolic dynamics, we
define a class of graphs called path-complete graphs, and show
that any such graph gives rise to a method for proving stability
of the switched system. This enables us to derive several
asymptotically tight hierarchies of semidefinite programming
relaxations that unify and generalize many existing techniques
such as common quadratic, common sum of squares, and
maximum/minimum-of-quadratics Lyapunov functions.

We compare the quality of approximation obtained by certain
families of path-complete graphs including all path-complete
graphs with two nodes on an alphabet of two matrices. We argue
that the De Bruijn graph of order one on $m$ symbols, with
quadratic Lyapunov functions assigned to its nodes, provides good
estimates of the JSR of $m$ matrices at a modest computational
cost. We prove that the bound obtained via this method is
invariant under transposition of the matrices and always within a
multiplicative factor of $1/\sqrt[4]{n}$ of the true JSR
(independent of the number of matrices).

Approximation guarantees for analysis via other families of
path-complete graphs will also be provided. In particular, we show
that the De Bruijn graph of order $k$, with quadratic Lyapunov
functions as nodes, can approximate the JSR with arbitrary
accuracy as $k$ increases. This also proves that common Lyapunov
functions that are the pointwise maximum (or minimum) of
quadratics always exist. Moreover, the result gives a bound on the
number of quadratic functions needed to achieve a desired level of
accuracy in approximation of the JSR, and also demonstrates that
these quadratic functions can be found with semidefinite
programming. \vspace{5mm}

\noindent A list of open problems for future research is presented
at the end of each chapter.
%
%We will end each chapter with a list of open problems for future
%research.

\subsection{Related publications}

The material presented in this thesis is in the most part based on
the following papers.

\vspace{3mm}

\paragraph{Chapter~\ref{chap:nphard.convexity}.} \

% \newline

\noindent A.~A. Ahmadi, A.~Olshevsky, P.~A. Parrilo, and J.~N.
Tsitsiklis.
\newblock {NP}-hardness of deciding convexity of quartic polynomials and related problems.
\newblock {\em Mathematical Programming}, 2011.
\newblock Accepted for publication. Online version available at arXiv:.1012.1908.

\paragraph{Chapter~\ref{chap:convexity.sos.convexity}.} \

%\newline

\noindent A.~A. Ahmadi and P.~A. Parrilo.
\newblock A convex polynomial that is not sos-convex.
\newblock {\em Mathematical Programming}, 2011.
\newblock DOI: 10.1007/s10107-011-0457-z.

%\newline
\vspace{3mm}

\noindent A.~A. Ahmadi and P.~A. Parrilo.
\newblock A complete characterization of the gap between convexity and sos-convexity.
\newblock In preparation, 2011.

%\newline
\vspace{3mm}

\noindent A.~A. Ahmadi, G.~Blekherman, and P.~A.Parrilo.
\newblock Convex ternary quartics are sos-convex.
\newblock In preparation, 2011.

\paragraph{Chapter~\ref{chap:converse.lyap}.} \

%\newline

\noindent A.~A. Ahmadi and P.~A. Parrilo.
\newblock Converse results on existence of sum of squares {L}yapunov functions.
\newblock In {\em Proceedings of the 50$^{th}$ IEEE Conference on Decision and Control}, 2011.

%\newline
\vspace{3mm}

\noindent A.~A. Ahmadi, M.~Krstic, and P.~A. Parrilo.
\newblock A globally asymptotically stable polynomial vector field with no polynomial {L}yapunov function.
\newblock In {\em Proceedings of the 50$^{th}$ IEEE Conference on Decision and Control}, 2011.

\paragraph{Chapter~\ref{chap:jsr}.} \

%\newline

\noindent A.~A. Ahmadi, R.~Jungers, P.~A. Parrilo, and
M.~Roozbehani.
\newblock Analysis of the joint spectral radius via {L}yapunov functions on path-complete graphs.
\newblock In {\em Hybrid Systems: Computation and Control 2011}, Lecture Notes in Computer Science. Springer, 2011.

\cleardoublepage \thispagestyle{empty} \vspace*{\fill}
\begin{center}{ \sfbHuge  Part I: \\ \ \\Computational
and Algebraic\\ \ \\Aspects of Convexity }
\end{center}
\vspace*{\fill} \addcontentsline{toc}{chapter}{I: Computational
and Algebraic Aspects of Convexity} \cleardoublepage

\chapter{Complexity of Deciding Convexity}\label{chap:nphard.convexity}

%%%Black and white version:
\def\alex#1{{#1}}
\def\alexc#1{{#1}}
\def\alexn#1{{#1}}

\long\def\jnt#1{{#1}} \long\def\old#1{} \long\def\jntn#1{{#1}}

\definecolor{DarkerGreen}{RGB}{0,170,0}
%Amirali: revision 5 in DarkerGreen.
\long\def\aaa#1{{#1}}
%Amirali: revision 7:
\definecolor{orange}{rgb}{1,0.5,0}
\long\def\aaan#1{{#1}}

%Amirali: black and white for the revision after hearing back from MPA:
\long\def\aaar#1{{#1}}

%%%%%%%%%%%%%%%%%%%%%%%%%%%%%%%%%%%%%%%%%%%%%%%%%%%%%%%%%%%%%%%%%%%%%%%%%%%%%%%%

In this chapter, we characterize the computational complexity of
deciding convexity and many of its variants in polynomial
optimization. The material presented in this chapter is based on
the work in~\cite{NPhard_Convexity_arxiv}.

\section{Introduction}
%\subsection{Motivation}
The role of \emph{convexity} in modern day mathematical
programming has proven to be remarkably fundamental, to the point
that tractability of an optimization problem is nowadays assessed,
more often than not, by whether or not the problem benefits from
some sort of underlying convexity. In the famous words of
Rockafellar~\cite{Roc_SIAM_Lagrange}: \vspace{-5pt}
\begin{itemize}
\item[] ``In fact the great watershed in optimization isn't
between linearity and nonlinearity, but convexity and
nonconvexity.''
\end{itemize}
\vspace{-5pt} But how easy is it to distinguish between convexity
and nonconvexity? Can we decide in an efficient manner if a given
optimization problem is convex?

A class of optimization problems that allow for a rigorous study
of this question from a computational complexity viewpoint is the
class of polynomial optimization problems. \alex{These are}
optimization problems where the objective is given by a polynomial
function and the feasible set is described by polynomial
inequalities. Our research in this direction was motivated by a
concrete question of N. Z. Shor that appeared as one of seven open
problems in complexity theory for numerical optimization put
together by Pardalos and Vavasis in 1992~\cite{open_complexity}:
\vspace{-5pt}
\begin{itemize}
\item[] ``Given a degree-$4$ polynomial in $n$ variables, what is
the complexity of determining whether this polynomial describes a
convex function?''
\end{itemize}
\vspace{-5pt} As we will explain in more detail shortly, the
reason why Shor's question is specifically about degree $4$
polynomials is that deciding convexity of odd degree polynomials
is trivial and deciding convexity of degree $2$ (quadratic)
polynomials can be reduced to the simple task of checking
\jnt{whether} a constant matrix is positive semidefinite. So, the
first interesting case really occurs for degree $4$ (quartic)
polynomials. Our main contribution in this chapter
(Theorem~\ref{thm:convexity.quartic.nphard} in
Section~\ref{subsec:convexity.hard.degrees}) is to show that
deciding convexity of polynomials is strongly NP-hard already for
polynomials of degree $4$.

The implication of NP-hardness of this problem is that
\aaan{unless P=NP}, there exists no algorithm that can take as
input the (\alex{rational}) coefficients of a quartic polynomial,
have running time bounded by a polynomial in the number of bits
needed to represent the coefficients, and output correctly on
every instance whether or not the polynomial is convex.
Furthermore, the fact that our NP-hardness result is in the strong
sense (as opposed to weakly NP-hard problems such as KNAPSACK)
implies, roughly speaking, that the problem remains NP-hard even
when the magnitude of the coefficients of the polynomial are
restricted to be ``small.'' For a strongly NP-hard problem, even a
pseudo-polynomial time algorithm cannot exist unless P=NP.
See~\cite{GareyJohnson_Book} for precise definitions and more
details.

%Indeed, if such an algorithm is ever found, it would immediately
%give rise to polynomial time algorithms for thousands of other
%NP-hard and NP-complete problems, for which no polynomial time
%algorithms is believed to exist. (See e.g. [*] for more details on
%the theory of NP-completeness.)

%
%Unfortunately, this intractability result \jnt{runs counter}
%%comes in contrast
%to our interest in establishing convexity of polynomials in many
%application areas.

\aaan{There are many areas of application where one would like to
establish convexity of polynomials.} Perhaps the simplest example
is in global minimization of polynomials, where it could be very
useful to decide first \jnt{whether the polynomial to be optimized
is convex. Once convexity is verified,} then every local minimum
is global and very basic techniques (e.g., gradient descent) can
find a global minimum---a task that is in general NP-hard in
\aaan{the} absence of
convexity~\cite{Minimize_poly_Pablo},~\cite{nonnegativity_NP_hard}.
As another example, if we can certify that a homogeneous
polynomial is convex, \aaa{then we define a gauge (or Minkowski)
norm based on its convex sublevel sets, which may be useful in
many applications}. In several other problems \jnt{of practical
relevance,} we might not just be interested in checking whether a
given polynomial is convex, but to \emph{parameterize} a family of
convex polynomials and perhaps search or optimize over them. For
example we might be interested in approximating the convex
envelope of a complicated nonconvex function with a convex
polynomial, or in fitting a convex polynomial to a set of data
points with minimum error~\cite{convex_fitting}. Not surprisingly,
if testing membership to the set of convex polynomials is hard,
searching and optimizing over \jnt{that set} also turns out to be
a hard problem.

We also extend our hardness result to some variants of convexity,
namely, the problems of deciding \emph{strict convexity},
\emph{strong convexity}, \emph{pseudoconvexity}, and
\emph{quasiconvexity} of polynomials. Strict convexity is a
property that is often useful to check because it guarantees
uniqueness of the optimal solution in optimization problems. The
notion of strong convexity is a common assumption in convergence
analysis of many iterative Newton-type algorithms in optimization
theory; see, e.g.,~\cite[Chaps.\ 9--11]{BoydBook}. So, in order to
ensure the theoretical convergence rates promised by many of these
algorithms, one needs to first make sure that the objective
function is strongly convex. The problem of checking
quasiconvexity (convexity of sublevel sets) of polynomials also
arises frequently in practice. For instance, if the feasible set
of an optimization problem is defined by polynomial inequalities,
by certifying quasiconvexity of the defining polynomials we can
ensure that the feasible set is convex. In several statistics and
clustering problems, we are interested in finding minimum volume
convex sets that contain a set of data points in space. This
problem can be tackled by searching over the set of quasiconvex
polynomials~\cite{convex_fitting}. In economics, quasiconcave
functions are prevalent as desirable utility
functions~\cite{Testing_convexity_economics},~\cite{Quasiconcave_programming}.
In control and systems theory, it is useful at times to search for
quasiconvex Lyapunov functions whose convex sublevel sets contain
relevant information about the trajectories of a dynamical
system~\cite{Chesi_Hung_journal},~\cite{AAA_PP_CDC10_algeb_convex}.
Finally, the notion of pseudoconvexity is a natural generalization
of convexity that inherits many of the attractive properties of
convex functions. For example, every stationary point or every
local minimum of a pseudoconvex function must be a global minimum.
Because of these nice features, pseudoconvex programs have been
studied extensively in nonlinear
programming~\cite{Mangasarian_Pseudoconvex_fns},%\cite{Generalized_convexity_Book},
~\cite{pseudoconvex_nonnegative_vars}.

As an outcome of close to a century of research in convex
analysis, numerous necessary, sufficient, and exact conditions for
convexity and all of its variants are available; see,
e.g.,~\cite[Chap.~3]{BoydBook},~\cite{Second_order_pseudoconvexity},~\cite{Matrix_theoretic_quasiconvexity},~\cite{Criteria_comparison_quasiconvexity_pseudoconvexity},~\cite{Testing_convexity_economics},
%~\cite{Generalized_convexity_Book},
~\cite{NLP_Book_Mangasarian} and references therein for a by no
means exhaustive list. Our results %in this paper
suggest that none of the exact characterizations of these notions
can be efficiently checked for polynomials. In fact, when turned
upside down, many of these equivalent formulations reveal new
NP-hard problems; see, e.g.,
Corollary~\ref{cor:nonnegativity.quartic.nphard} and
\ref{cor:semialgeb.set.nphard}.
%and~\ref{cor:max.of.quartics.nphard}.

\subsection{Related Literature}
There are several results in the literature \jnt{on} the
complexity of various special cases of polynomial optimization
problems. The interested reader can find many of these results in
the edited volume of
Pardalos~\cite{Pardalos_Book_Complexity_num_opt} or in the survey
papers of de Klerk~\cite{deKlerk_complexity_survey}, and Blondel
and Tsitsiklis~\cite{BlTi1}. A very general and fundamental
concept in certifying feasibility of polynomial equations and
inequalities is the Tarski--Seidenberg quantifier elimination
theory~\cite{Tarski_quantifier_elim},~\cite{Seidenberg_quantifier_elim},
from which it follows that all of the problems that we consider in
this chapter are algorithmically \emph{decidable}. This means that
there are algorithms that on all instances of our problems of
interest halt in finite time and always output the correct yes--no
answer. Unfortunately, algorithms based on quantifier elimination
or similar decision algebra techniques have running times that are
at least exponential in the number of variables
\cite{Algo_real_algeb_geom_Book}, and in practice can only solve
problems with very few parameters.

When we turn to the issue of polynomial time solvability, perhaps
the most relevant result for our purposes is the NP-hardness of
deciding nonnegativity of quartic polynomials and biquadratic
forms (see Definition~\ref{defn:biquad.forms}); the main reduction
that we give in this chapter will in fact be from the latter
problem. \jnt{As we will see in
Section~\ref{subsec:convexity.hard.degrees}, it turns out that
deciding convexity of quartic forms is \aaan{equivalent to}
checking nonnegativity of a special \aaan{class} of biquadratic
forms, which are themselves a special \aaan{class} of quartic
forms.} The NP-hardness of checking nonnegativity of quartic forms
follows, e.g., as a direct consequence of NP-hardness of testing
matrix copositivity, a result proven by Murty and
\aaa{Kabadi}~\cite{nonnegativity_NP_hard}. As for the hardness of
checking nonnegativity of biquadratic forms, we know of two
different proofs. The first one is due to
Gurvits~\cite{Gurvits_quantum_entag_hard}, \jnt{who} proves that
the entanglement problem in quantum mechanics (i.e., the problem
of distinguishing separable quantum states from entangled ones) is
NP-hard. A dual \jnt{reformulation of this} %viewpoint to this
result shows directly that checking nonnegativity of biquadratic
forms is NP-hard; see~\cite{Pablo_Sep_Entang_States}. The second
proof is due to Ling et al.~\cite{Ling_et_al_Biquadratic},
\jnt{who} use a theorem of Motzkin and Straus to give a very short
and elegant reduction from the maximum \aaar{clique} problem in
graphs.

The only work in the literature on the hardness of deciding
polynomial convexity that we are aware of is the work of Guo on
the complexity of deciding convexity of quartic polynomials over
simplices~\cite{complexity_simplex_convexity}. Guo discusses some
of the difficulties that arise from this problem, but he does not
prove that deciding convexity of polynomials over simplices is
NP-hard. Canny shows in~\cite{Canny_PSPACE} that the existential
theory of the real numbers can be decided in PSPACE. From this, it
follows that testing several properties of polynomials, including
nonnegativity and convexity, can be done in polynomial space.
In~\cite{Nie_PMI_SDP_repres.}, Nie proves that the related notion
of \emph{matrix convexity} is NP-hard for polynomial matrices
whose
entries are quadratic forms. %Finally, the authors have some earlier
%hardness results on testing a slightly modified version of convexity
%of sextic polynomials~\cite{Convexity_Sextic_NPhard} and quartic
%polynomials~\cite{Convexity_quartic_NPhard_old_proof}. The results
%in~\cite{Convexity_Sextic_NPhard}
%and~\cite{Convexity_quartic_NPhard_old_proof} are implied by the
%stronger results of this paper, but the reductions in there are
%totally different and much more combinatorial in nature. The gadgets
%used in those reductions provide novel ways of linking discrete
%combinatorial problems to continuous ones, and could be of
%independent interest to some readers.

On the \alex{algorithmic} side, several techniques have been
proposed both for testing convexity of sets and convexity of
functions. \aaan{Rademacher and Vempala present and analyze
randomized algorithms for testing the relaxed notion of
\emph{approximate convexity}~\cite{Test_Geom_Convexity}.
%Delanoue and Henrion give sufficient conditions for set convexity
%based on ideas from interval
%analysis~\cite{set_convexity_interval_analysis}.
In~\cite{Lasserre_set_convexity}, Lasserre proposes a semidefinite
programming hierarchy for testing convexity of basic closed
semialgebraic sets; a problem that we also prove to be NP-hard
(see Corollary~\ref{cor:semialgeb.set.nphard}).}
%In~\cite{Lasserre_set_convexity}, Lasserre proposes a semidefinite
%programming hierarchy for testing convexity of basic closed
%semialgebraic sets; a problem that we also prove to be NP-hard
%(see Corollary~\ref{cor:semialgeb.set.nphard}). Delanoue and
%Henrion give sufficient conditions for set convexity based on
%ideas from interval
%analysis~\cite{set_convexity_interval_analysis}. Rademacherand and
%Vempala present and analyze randomized algorithms for testing the
%relaxed notion of \emph{approximate
%convexity}~\cite{Test_Geom_Convexity}.
As for testing convexity of functions, an approach that some
convex optimization parsers (e.g.,  \texttt{CVX}~\cite{cvx}) take
is to start with some ground set of convex functions and then
check \aaan{whether} the desired function can be obtained
\aaan{by} applying a set of convexity preserving operations to the
functions in the ground
set~\cite{Crusius_thesis},~\cite[p.~79]{BoydBook}.
\aaan{Techniques of this type that are based on the calculus of
convex functions are successful for a large range of applications.
However, when applied to general polynomial functions, they can
only detect a subclass of convex polynomials.}
%
%
%Although this approach is successful for a large range of
%applications, when applied to polynomial functions, it can only
%detect a small subset of convex polynomials.
%

\jnt{Related} to convexity of polynomials, a concept that has
attracted recent attention is the algebraic notion of
\emph{sos-convexity} (see
Definition~\ref{defn:sos-convex})~\cite{Helton_Nie_SDP_repres_2},
\cite{Lasserre_Jensen_inequality},
\cite{Lasserre_Convex_Positive}, \cite{AAA_PP_CDC10_algeb_convex},
\cite{convex_fitting}, \cite{Chesi_Hung_journal},
\cite{AAA_PP_not_sos_convex_journal}. This is a powerful
sufficient condition for convexity \aaa{that} relies on an
appropriately defined sum of squares decomposition of the Hessian
matrix, and can be efficiently checked by solving a single
semidefinite program. The study of sos-convexity will be the main
focus of our next chapter. In particular, we will present explicit
counterexamples to show that not every convex polynomial is
sos-convex. \aaa{The NP-hardness result in this chapter certainly
justifies the existence of such counterexamples and more generally
suggests that \emph{any} polynomial time algorithm attempted for
checking polynomial convexity is doomed to fail on some hard
instances.}

\subsection{Contributions and organization of this chapter}
\jnt{The} \alex{main} contribution of this chapter is to
\aaa{establish} the computational complexity of deciding
convexity, strict convexity, strong convexity, pseudoconvexity,
and quasiconvexity of polynomials for any given degree. (See
Table~\ref{table:summary} in Section~\ref{sec:summary.conclusions}
for a quick summary.) The results are mainly divided in three
sections, with Section~\ref{sec:convexity} covering convexity,
Section~\ref{sec:strict.strong} covering strict and strong
convexity, and Section~\ref{sec:quasi.pseudo} covering
quasiconvexity and pseudoconvexity. \aaa{These three sections
follow a similar \aaan{pattern} and are each divided into three
parts: first, the definitions and basics, second, the degrees for
which the questions can be answered in polynomial time, and third,
the degrees for which the questions are NP-hard.}

Our main reduction, which establishes NP-hardness of checking
convexity of quartic forms, is given in
Section~\ref{subsec:convexity.hard.degrees}. This hardness result
is extended to strict and strong convexity in
Section~\ref{subsec:strict.strong.hard.degrees}, and to
quasiconvexity and pseudoconvexity in
Section~\ref{subsec:quasi.pseudo.hard.degrees}. \aaa{By contrast,}
we show \aaa{in Section~\ref{subsec:quasi.pseudo.easy.degrees}}
that quasiconvexity and pseudoconvexity of odd degree polynomials
can be decided in polynomial time. A summary of the chapter and
some concluding remarks are presented in
Section~\ref{sec:summary.conclusions}.

\section{Complexity of deciding convexity}\label{sec:convexity}
\subsection{Definitions and basics}\label{subsec:convexity.basics}
A \aaa{(multivariate)} \emph{polynomial} $p(x)$ in variables
$x\mathrel{\mathop:}=(x_1,\ldots,x_n)^T$ is a function from
$\mathbb{R}^n$ to $\mathbb{R}$ that is a finite linear combination
of monomials:
\begin{equation}
p(x)=\sum_{\alpha}c_\alpha x^\alpha=\sum_{\alex{\alpha_1, \ldots,
\alpha_n}} c_{\alex{\alpha_1,\ldots,\alpha_n}} x_1^{\alpha_1}
\cdots x_n^{\alpha_n} ,
\end{equation}
\alex{where the sum is over \aaa{$n$-tuples of} nonnegative
integers $\alpha_i$}. An algorithm for testing some property of
polynomials will have as its input an ordered list of the
coefficients $c_\alpha$. Since our complexity results are based on
models of digital computation, where the input must be represented
by a finite number of bits, the coefficients $c_\alpha$ for us
will always be rational numbers, which upon clearing the
denominators can be taken to be integers. So, for the remainder of
the chapter, even when not explicitly stated, we will always have
$c_\alpha \in \mathbb{Z}$.

%When we discuss the complexity of algorithms for testing
%properties of polynomials, we c
%
%
%an ordered list of the coefficients $c_\alpha$ is considered as
%the input to these algorithms. Because we adhere to models of
%digital computation where the input should be represented by a
%finite number of bits, the coefficients $c_\alpha$ for us will
%always be rational numbers, which upon clearing of the
%denominators can be taken to be integers. So, for the remainder of
%the paper, even when not explicitly mentioned, we will always have
%$c_\alpha \in \mathbb{Z}$.

The \emph{degree} of a monomial $x^\alpha$ is equal to $\alpha_1 +
\cdots + \alpha_n$. The degree of a polynomial $p(x)$ is defined
to be the highest degree of its component monomials. A simple
counting argument shows that a polynomial of degree $d$ in $n$
variables has $\binom{n+d}{d}$ coefficients. A \emph{homogeneous
polynomial} (or a \emph{form}) is a polynomial where all the
monomials have the same degree. A form $p(x)$ of degree $d$ is a
homogeneous function of degree $d$ (since it satisfies $p(\lambda
x)=\lambda^d p(x)$), and has $\binom{n+d-1}{d}$ coefficients.

A polynomial $p(x)$ is said to be \emph{nonnegative} or
\emph{positive semidefinite (psd)} if $p(x)\geq0$ for all
$x\in\mathbb{R}^n$. Clearly, a necessary condition for a
polynomial to be psd is for its degree to be even. We say that
$p(x)$ is a \emph{sum of squares (sos)}, if there exist
polynomials $q_{1}(x),\ldots,q_{m}(x)$ such that
$p(x)=\sum_{i=1}^{m}q_{i}^{2}(x)$. Every sos polynomial is
obviously psd. A \emph{polynomial matrix} $P(x)$ is a matrix with
polynomial entries. We say that a polynomial matrix $P(x)$ is
\emph{PSD} (denoted $P(x)\succeq0$) if it is positive semidefinite
in the matrix sense for every value of the indeterminates $x$.
\jnt{(Note the upper case convention for matrices.)} It is easy to
see that $P(x)$ is PSD if and only if the scalar polynomial
$y^TP(x)y$ in variables $(x;y)$ is psd.
%We say that $p(x)$ is a sum of squares (sos), if there exist
%polynomials $q_{1}(x),...,q_{m}(x)$ such that
%\begin{equation}\label{eq:sos.decomp.q_i}
%p(x)=\sum_{i=1}^{m}q_{i}^{2}(x).
%\end{equation}

\aaa{We recall that a polynomial $p(x)$ is convex if and only if
its Hessian matrix, which will be generally denoted by $H(x)$, is
PSD.}

\subsection{Degrees that are easy}\label{subsec:convexity.easy.degrees}
The question of deciding convexity is trivial \aaa{for odd degree
polynomials}. Indeed, it is easy to check that linear polynomials
($d=1$) are always convex and that polynomials of odd degree
$d\geq3$ can never be convex. \aaa{The case of quadratic
polynomials ($d=2$) is also straightforward.}
%Perhaps the easiest way to see the
%latter claim is to note that convex functions restricted to
%arbitrary lines remain convex. But a univariate odd degree
%polynomial is clearly never convex.
\aaa{A quadratic polynomial $p(x)=\frac{1}{2}x^TQx+q^Tx+c$ is
convex if and only if the constant matrix $Q$ is positive
\jnt{semidefinite}. This can be decided in polynomial time for
example by performing Gaussian pivot steps along the main diagonal
of $Q$~\cite{nonnegativity_NP_hard} or by computing the
characteristic polynomial of $Q$ exactly and then checking that
the signs of its coefficients alternate~\cite[p.
403]{HJ_Matrix_Analysis_Book}.}

%Let us now address the case of quadratic polynomials ($d=2$). We
%recall first a basic result from linear algebra.
%
%\begin{theorem}\label{thm:Q.psd.iff.char.poly.alternate} (e.g.~\cite[p. 403]{HJ_Matrix_Analysis_Book})
%An $n \times n$ symmetric matrix $Q$ is positive semidefinite
%(PSD) if and only if all the coefficients of its characteristic
%polynomial $r(\lambda)=\det(\lambda
%I-Q)=\lambda^n+r_{n-1}\lambda^{n-1}+\cdots+r_o$ alternate in sign,
%i.e., they satisfy $r_i(-1)^{n-i}\geq0$.
%\end{theorem}
%
%Since determinants can be computed in polynomial
%time~\cite{BlTi1}, the theorem above clearly gives a polynomial
%time algorithm for checking if a matrix is positive
%semidefinite.\footnote{Alternatively, positive semidefiniteness of
%a matrix can be checked by performing at most $n$ Gaussian
%eliminations with a computational cost of
%$O(n^3)$~\cite{nonnegativity_NP_hard}.}
%
%\begin{theorem}\label{thm:convex.quadratics.poly.time}
%Convexity of quadratic polynomials can be decided in polynomial
%time.
%\end{theorem}
%\begin{proof}
%Let $p(x)=\frac{1}{2}x^TQx+q^Tx+c$ be a quadratic polynomial. By
%Theorem~\ref{thm:classical.first.2nd.order.charac.}, $p(x)$ is
%convex if and only if its Hessian matrix, which is the constant
%matrix $Q$ is positive semidefinite.
%Theorem~\ref{thm:Q.psd.iff.char.poly.alternate} gives a way of
%checking this condition in polynomial time.
%\end{proof}
Unfortunately, the results that come next suggest that the case of
quadratic polynomials is essentially the only nontrivial case
where convexity can be efficiently \aaa{decided}.

%%The following remark is WRONG! got rid of it.
%\begin{remark} We \jnt{note} %shall also point out
%that although deciding convexity of
%quadratic polynomials on the whole space is easy, the problem
%\jnt{can} become hard if we restrict our attention to certain regions
%of space. For example, the problem of deciding whether a quadratic
%polynomial $p(x)=\frac{1}{2}x^TQx+q^Tx+c$ is convex on the
%nonnegative orthant $\{x\geq0\}$ is equivalent to testing whether
%$x^TQx$ is nonnegative for $x\geq0$. This is exactly the matrix
%copositivity problem, which is known to be
%NP-hard~\cite{nonnegativity_NP_hard}.
%\end{remark}

\subsection{Degrees that are hard}\label{subsec:convexity.hard.degrees}
The main hardness result of this chapter is the following theorem.
\begin{theorem}\label{thm:convexity.quartic.nphard}
Deciding convexity of degree four polynomials is \jnt{strongly}
NP-hard. This is true even when the polynomials are restricted to
be homogeneous.
\end{theorem}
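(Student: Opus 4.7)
The plan is to give a strongly polynomial-time reduction from the problem of deciding nonnegativity of biquadratic forms, which is strongly NP-hard by Gurvits~\cite{Gurvits_quantum_entag_hard} and by the \textsc{Clique}-based reduction of Ling et al.~\cite{Ling_et_al_Biquadratic}. The elementary connection to convexity is that a quartic form $p(z)$ on $\mathbb{R}^N$ is convex iff its Hessian $H_p(z)$ is PSD for every $z$, iff the biquadratic form $B_p(z,w)\mathrel{\mathop:}=w^{\top}H_p(z)w$ is nonnegative on $\mathbb{R}^N\times\mathbb{R}^N$. Thus testing convexity of quartic forms is a special case of testing biquadratic nonnegativity, and the main work is to show that this special case still captures the general one.

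Given an input biquadratic $b(x,y)=\sum b_{ij,kl}\,x_ix_jy_ky_l$ with $x,y\in\mathbb{R}^n$, the reduction outputs
\[
p(x,y)\mathrel{\mathop:}= b(x,y)+\gamma\bigl(\|x\|^4+\|y\|^4\bigr)
\]
in the $N=2n$ variables $z=(x,y)$, where $\gamma>0$ is a rational to be chosen polynomially in the bit-length of the coefficients of $b$. Both summands are homogeneous of degree four in $(x,y)$, so $p$ is itself a form, yielding the homogeneous strengthening of the theorem for free. For the implication $b\not\ge 0\Rightarrow p$ not convex, write $b(x,y)=x^{\top}M_b(y)x$ with $M_b(y)_{ab}=\sum_{k,l}b_{ab,kl}y_ky_l$, and note that the Hessian of the corrector is block-diagonal with $xx$-block $4\gamma\|x\|^2 I+8\gamma xx^{\top}$, which vanishes at $x=0$. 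If $b(x_0,y_0)<0$, then substituting $z=(0,y_0)$ and $w=(x_0,0)$ into $B_p(z,w)$ kills the corrector contribution, the cross block $(H_b)_{xy}$ (which also vanishes at $x=0$), and the $yy$-block contribution (because $w_y=0$), leaving $2\,x_0^{\top}M_b(y_0)x_0=2b(x_0,y_0)<0$, so $p$ is not convex.

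For the reverse implication, $b\ge 0$ implies $M_b(y)\succeq 0$ and the analogously defined $N_b(x)\succeq 0$, so both diagonal blocks of $H_b$ are PSD; only the off-diagonal block $(H_b)_{xy}(x,y)$ is indefinite, and it satisfies a bilinear operator-norm bound $\|(H_b)_{xy}(x,y)\|\le C\,\|x\|\,\|y\|$ with $C$ polynomial in the coefficients of $b$. For any test vector $w=(w_x,w_y)$, combining the lower bounds $w_x^{\top}(H_p)_{xx}w_x\ge 4\gamma\|x\|^2\|w_x\|^2$ and $w_y^{\top}(H_p)_{yy}w_y\ge 4\gamma\|y\|^2\|w_y\|^2$ with Cauchy--Schwarz yields
\[
w^{\top}H_p(x,y)w\ \ge\ 4\gamma u^2-2Cuv+4\gamma v^2,\qquad u\mathrel{\mathop:}=\|x\|\|w_x\|,\ v\mathrel{\mathop:}=\|y\|\|w_y\|.
\]
The right-hand side is nonnegative for all $u,v\ge 0$ precisely when $\gamma\ge C/4$, so this choice (a rational of polynomial bit-length) forces $H_p\succeq 0$ and hence $p$ convex. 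The main obstacle is really the \emph{choice of corrector}: $\|x\|^4+\|y\|^4$ is specifically tailored so that its Hessian is block-diagonal and vanishes on exactly the subspaces used in the hardness witness, while also dominating the indefinite cross block of $H_b$ for large enough $\gamma$. Since all scaling constants and bit-lengths remain polynomial in those of $b$, and Ling et al.'s \textsc{Clique}-based reduction is strongly polynomial, the overall reduction is strongly polynomial, yielding strong NP-hardness of deciding convexity of quartic forms, with the homogeneous case following automatically.
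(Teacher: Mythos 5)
Your proposal is correct, and the skeleton of the reduction is the same as the paper's: both start from strong NP-hardness of nonnegativity of biquadratic forms (Ling et al.), both add to $b(x;y)$ a separable homogeneous quartic corrector with a coefficient of polynomially bounded magnitude (you use $\gamma(\|x\|^4+\|y\|^4)$, while Theorem~\ref{thm:main.reduction} uses $\tfrac{n^2\gamma}{2}\big(\sum_i x_i^4+\sum_i y_i^4+\sum_{i<j}x_i^2x_j^2+\sum_{i<j}y_i^2y_j^2\big)$, essentially the same object), and both use the identical witness for the easy direction, namely evaluating $w^{\top}H_p(x,y)w$ at a point with one block of variables set to zero and a test direction supported on the other block, which annihilates the corrector and the cross block. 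Where you genuinely differ is the hard direction: the paper shows that when $b$ is psd the residual $z^{\top}Hz-z_y^{\top}A(x)z_y-z_x^{\top}B(y)z_x$ is a sum of squares, by pairing each cross monomial $\pm 2\beta z_{x,k}x_iy_jz_{y,l}$ with fractions of the diagonal squares $z_{x,k}^2x_i^2$ and $z_{y,l}^2y_j^2$ (with the bookkeeping that each monomial occurs at most once, so each square is needed at most $n^2$ times); you instead bound the operator norm of the cross block by $C\|x\|\,\|y\|$ and dominate it with the lower bounds $4\gamma\|x\|^2\|w_x\|^2$ and $4\gamma\|y\|^2\|w_y\|^2$ supplied by the corrector, reducing the whole verification to nonnegativity of a two-variable quadratic in $u=\|x\|\|w_x\|$, $v=\|y\|\|w_y\|$ for $\gamma\geq C/4$. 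Your Cauchy--Schwarz route is shorter and avoids the combinatorial pairing, but it only certifies that $y^{\top}H(x)y$ is psd, not sos; the paper's explicit sos decomposition is what additionally yields the algebraic analogue (Theorem~\ref{thm:algeb.ver.of.reduction}: $b$ sos if and only if $f$ sos-convex), which is reused in Chapter~\ref{chap:convexity.sos.convexity} to produce the ``clean'' convex-but-not-sos-convex quartic. In both constructions the new coefficients ($\gamma$ versus $n^2\gamma/2$) stay polynomially bounded in the magnitudes of the input coefficients, so strong NP-hardness of deciding convexity of quartic forms, and hence of quartic polynomials, follows either way.
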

We will give a reduction from the problem of deciding
nonnegativity of biquadratic forms. We start by recalling some
basic facts about biquadratic forms and sketching the idea of the
proof.

\begin{definition}\label{defn:biquad.forms}
A \emph{biquadratic form} \alex{$b(x;y)$ is a \aaa{form in the
variables\\ $x=(x_1, \ldots, x_n)^T$ and $y=(y_1, \ldots, y_m)^T$
that} can be written as}
\begin{equation}\label{eq:defn.biquad.form}
b(x;y)\old{\mathrel{\mathop:}}=\sum_{i\leq j, \, k\leq
l}\alpha_{ijkl}x_ix_jy_ky_l.
\end{equation}
%is a form in two sets of variables $x=(x_1, \ldots, x_n)^T$ and
%$y=(y_1, \ldots, y_m)^T$, such that for fixed $x$, $b(x;y)$
%becomes a quadratic form in $y$ and for fixed $y$, it becomes a
%quadratic form in $x$.
\end{definition}
\aaa{Note that for fixed $x$, $b(x;y)$ becomes a quadratic form in
$y$, and for fixed $y$, it becomes a quadratic form in $x$.} Every
biquadratic form is a quartic form, but the converse is of course
not true. It follows from a result of Ling et
al.~\cite{Ling_et_al_Biquadratic} that deciding nonnegativity of
biquadratic forms is strongly NP-hard. \aaar{This claim is not
precisely stated in this form in~\cite{Ling_et_al_Biquadratic}.
For the convenience of the reader, let us make the connection more
explicit before we proceed, as this result underlies everything
that follows.

The argument in~\cite{Ling_et_al_Biquadratic} is based on a
reduction from CLIQUE (given a graph $G(V,E)$ and a positive
integer $k\leq |V|$, decide whether $G$ contains a clique of size
$k$ or more) whose (strong) NP-hardness is
well-known~\cite{GareyJohnson_Book}. For a given graph $G(V,E)$ on
$n$ nodes, if we define the biquadratic form $b_G(x;y)$ in the
variables $x=(x_1, \ldots, x_n)^T$ and $y=(y_1, \ldots, y_n)^T$ by
$$b_G(x;y)=-2 \sum_{(i,j)\in E} x_ix_jy_iy_j,$$ then Ling et al.~\cite{Ling_et_al_Biquadratic} use a theorem of Motzkin and
Straus~\cite{Motzkin_Straus} to show
\begin{equation}\label{eq:biquadratic.CLIQUE}
\min_{||x||=||y||=1} b_G(x;y)=-1+\frac{1}{\omega(G)}.
\end{equation}
Here, $\omega(G)$ denotes the clique number of the graph $G$,
i.e., the size of a maximal clique.\footnote{Equation
(\ref{eq:biquadratic.CLIQUE}) above is stated
in~\cite{Ling_et_al_Biquadratic} with the stability number
$\alpha(G)$ in place of the clique number $\omega(G)$. This seems
to be a minor typo.} From this, we see that for any value of $k$,
$\omega(G)\leq k$ if and only if $$\min_{||x||=||y||=1}
b_G(x;y)\geq\frac{1-k}{k},$$ which by homogenization holds if and
only if the biquadratic form
$$\hat{b}_G(x;y)=- 2k \sum_{(i,j)\in E} x_i x_j y_i y_j - (1-k)
\left(\sum_{i=1}^n x_i^2\right) \left( \sum_{i=1}^n y_i^2
\right)$$ is nonnegative. Hence, by checking nonnegativity of
$\hat{b}_G(x;y)$ for all values of $k\in\{1,\ldots,n-1\}$, we can
find the exact value of $\omega(G)$. It follows that deciding
nonnegativity of biquadratic forms is NP-hard, and in view of the
fact that the coefficients of $\hat{b}_G(x;y)$ are all integers
with absolute value at most $2n-2$, the NP-hardness claim is in
the strong sense. Note also that the result holds even when $n=m$
in Definition~\ref{defn:biquad.forms}. In the sequel, we will
always have $n=m$.

}

%As Pablo noted, it is well-known that deciding nonnegativity of
%biquadratic forms is NP-hard (even when $n=m$). In particular,
%there is a simple reduction in the literature from the maximum
%independent set problem that uses the Motzkin-Straus theorem. In
%the sequel, we will always have $n=m$.
%
%We say that a polynomial $p(x)$ is nonnegative or \emph{positive
%semidefinite (psd)} if $p(x)\geq0, \ \forall x\in\mathbb{R}^n$. A
%\emph{polynomial matrix} $P(x)$ is a matrix with polynomial
%entries. We say that a polynomial matrix is \emph{PSD} if it is
%positive semidefinite in the matrix sense for every value of the
%indeterminates $x$. Clearly, $P(x)$ is PSD if and only if the
%scalar polynomial $y^TP(x)y$ in variables $(x;y)$ is psd.

It is not difficult to see that any biquadratic form $b(x;y)$ can
be written in the form \begin{equation} \label{eq:b_represent}
b(x;y)=y^TA(x)y\end{equation} (or of course as $x^TB(y)x$) for
some symmetric polynomial matrix $A(x)$ whose entries are
quadratic forms. Therefore, it is strongly NP-hard to decide
whether a symmetric polynomial matrix with quadratic form entries
is PSD. \alex{One might hope that this would lead to a quick proof
\aaa{of NP-hardness of testing convexity of quartic forms},
because} the Hessian of a quartic form is exactly a symmetric
polynomial matrix with quadratic form entries. \aaa{However, the
major problem that stands in the way is that not every polynomial
matrix is a \emph{valid Hessian}. Indeed, if any of the partial
derivatives between the entries of $A(x)$ do not commute (e.g., if
$\frac{\partial{A_{11}(x)}}{\partial{x_2}}\neq\frac{\partial{A_{12}(x)}}{\partial{x_1}}$),
then $A(x)$ cannot be the matrix of second derivatives of some
polynomial. This is because all mixed third partial derivatives of
polynomials must commute.}

Our task is therefore to prove that even with these additional
constraints \jnt{on} the entries of $A(x)$, the problem of
deciding positive semidefiniteness of such matrices remains
NP-hard. \aaa{We will show} that any given symmetric $n \times n$
matrix $A(x)$, whose entries are quadratic forms, \jnt{can be
embedded} in a $2n \times 2n$ polynomial matrix $H(x,y)$, again
with quadratic form entries, \jnt{so} that $H(x,y)$ is a valid
Hessian and $A(x)$ is PSD if and only if $H(x,y)$ is. In fact,
\alex{we will directly construct the polynomial $f(x,y)$ whose
Hessian is the matrix $H(x,y)$.} \aaa{This is done in the next
theorem, which establishes the correctness of our main reduction.
Once this theorem is proven, the proof of
Theorem~\ref{thm:convexity.quartic.nphard} will become immediate.}
%instead of constructing $H(x,y)$ from $A(x)$ and
%integrating it twice to get a quartic form $f(x,y)$ (that will be
%convex if and only if $A(x)$ is PSD), we will show how to directly
%construct $f(x,y)$ from $A(x)$.

%At this point, we only need one last easy definition. A polynomial
%$p(x)$ is said to be a \emph{sum of squares (sos)}, if it can be
%written as
%$$p(x)=\sum_{i=1}^{k}q_i^2(x)$$ for some other polynomials $q_i(x)$.
%Existence of a sum of squares decomposition for a polynomial is an
%\emph{explicit certificate} of its nonnegativity. As we will see
%shortly in our proof, the constructive nature of sos decomposition
%proves to be advantageous for use in a reduction.

%(of Theorem~\ref{thm:convexity.quartic.nphard})

\aaa{
\begin{theorem}\label{thm:main.reduction}
 \alex{Given a biquadratic form $b(x;y)$,
%, which,
%as described above, can be represented as $\frac{1}{2}y^TA(x)y$, where $A(x)$ is a symmetric polynomial matrix.
define the} the $n \times n$ polynomial matrix $C(x,y)$ by setting
\begin{equation}\label{eq:C(x,y).defn.}
\alex{[}C(x,y)\alex{]_{ij}}\mathrel{\mathop:}=\frac{\partial{b(x;y)}}{\partial{x_{\alex{i}}}\partial{y_{\alex{j}}}},
\end{equation} and let $\gamma$ be the largest coefficient, in
absolute value, \alex{of any monomial present in some entry of the
matrix $C(x,y)$.}
%the scalar polynomial
%$v^TC(x,y)w$ in variables $x,y,$ and auxiliary variables $v$ and $w$
%in $\mathbb{R}^n$; i.e.,
%\begin{equation}\label{eq:scalar.gamma.defn.}
%\gamma\mathrel{\mathop:}=\max |\mbox{coefficients}(v^TC(x,y)w)|.
%\end{equation}
Let $f$ be the form given by
%\footnote{The
%notation $\sum_{\ }^{k}$ means that there are ${k}$ terms in the
%summation.}
\begin{equation}\label{eq:f(x,y).defn.}
f(x,y)\mathrel{\mathop:}=b(x;y) + \frac{n^2\gamma}{2}\Big(
\sum_{i=1}^n x_i^4+\sum_{i=1}^n
y_i^4+\sum_{\substack{i,j=1,\ldots,n\\i\aaan{<} j}}
x_i^2x_j^2+\sum_{\substack{i,j=1,\ldots,n\\i\aaan{<} j}}
y_i^2y_j^2\Big).\end{equation}
%\smallskip
%
%\noindent \textbf{Claim:}
%
Then, $b(x;y)$ is psd if and only if $f(x,y)$ is convex.
\end{theorem}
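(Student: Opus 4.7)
My approach is to work directly with the $2n\times 2n$ Hessian of $f$ and exploit the structure that biquadraticity of $b$ imposes on it. Since $b$ is biquadratic, it can be written as $b(x;y)=\tfrac12 x^TA(y)x=\tfrac12 y^TB(x)y$, where $A(y)$ is an $n\times n$ symmetric matrix whose entries are quadratic forms in $y$, and symmetrically for $B(x)$. A direct computation then gives the block form
\[
H(x,y)=\begin{pmatrix} A(y)+\tfrac{n^2\gamma}{2}\nabla^2 g(x) & C(x,y)\\ C(x,y)^T & B(x)+\tfrac{n^2\gamma}{2}\nabla^2 g(y)\end{pmatrix},
\]
where $g(x):=\sum_i x_i^4+\sum_{i<j}x_i^2 x_j^2=\tfrac12(\sum_i x_i^2)^2+\tfrac12\sum_i x_i^4$ is itself convex, and crucially $A(y)$, $C(x,y)$, and $\nabla^2 g(y)$ all vanish identically when $y=0$ (and analogously in $x$).

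\textbf{Necessity.} For the easy direction I would specialize to $y=0$: the off-diagonal block $C(x,0)$ and the matrix $A(0)$ vanish, and $\nabla^2 g(0)=0$, so $H(x,0)$ becomes block-diagonal with blocks $\tfrac{n^2\gamma}{2}\nabla^2 g(x)$ and $B(x)$. PSD-ness of $H(x,0)$ for every $x$ therefore forces $B(x)\succeq 0$ for every $x$, which is exactly the statement that $b$ is psd.

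\textbf{Sufficiency.} This is the core direction. For arbitrary $v=(u;w)\in\mathbb{R}^{2n}$ I would expand
\[
v^T H v = 2b(u;y)+2b(x;w)+2u^TC(x,y)w+\tfrac{n^2\gamma}{2}\bigl[u^T\nabla^2 g(x)u+w^T\nabla^2 g(y)w\bigr],
\]
using the identities $u^TA(y)u=2b(u;y)$ and $w^TB(x)w=2b(x;w)$ that come from the biquadratic structure of $b$. Assuming $b$ is psd, the first two summands are nonnegative and the last is nonnegative by convexity of $g$; only the third, $2u^TC(x,y)w$, can be negative, so the task reduces to showing that the perturbation dominates it. I would establish two pointwise estimates. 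First, because each $C_{ij}(x,y)$ is bilinear in $(x,y)$ with coefficients of absolute value at most $\gamma$, an $\ell_1$--$\ell_2$ calculation yields
\[
|u^TC(x,y)w|\le \gamma\,\|u\|_1\|w\|_1\|x\|_1\|y\|_1 \le n^2\gamma\,\|u\|\|w\|\|x\|\|y\|.
\]
Second, an explicit computation gives $\nabla^2 g(x)=2\|x\|^2 I+6\,\mathrm{diag}(x_i^2)+4xx^T$, whose three summands are each PSD, so $u^T\nabla^2 g(x)u\ge 2\|x\|^2\|u\|^2$. Combining these with AM--GM in the form $2(\|u\|\|x\|)(\|w\|\|y\|)\le \|u\|^2\|x\|^2+\|w\|^2\|y\|^2$ shows that the perturbation contribution exceeds $|2u^TC(x,y)w|$, whence $v^THv\ge 2b(u;y)+2b(x;w)\ge 0$.

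\textbf{Main obstacle.} The delicate point is the calibration of the perturbation: it must be strong enough for its Hessian to dominate $C(x,y)$ uniformly on $\mathbb{R}^{2n}$, yet structured so that it decouples on the axes $\{y=0\}$ and $\{x=0\}$---only then does the specialization argument in the necessity direction recover psd-ness of $b$ rather than being washed out by the perturbation itself. The quartic shape of $g$ and the scaling factor $n^2\gamma/2$ are chosen precisely for this balance, and making the constants in the bound on $|u^T C(x,y)w|$ align with those produced by $\nabla^2 g$ is where the argument has to be handled carefully; once the two pointwise estimates are in hand the final inequality should fall out cleanly.
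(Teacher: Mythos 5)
Your proof is correct, and its skeleton matches the paper's: the same block decomposition of the Hessian into the biquadratic part plus the perturbation, the same specialization to $y=0$ for the direction ``$f$ convex $\Rightarrow b$ psd,'' and the same reduction of the hard direction to showing that the perturbation dominates the cross term $2u^TC(x,y)w$. Your constants also check out exactly: $|u^TC(x,y)w|\le \gamma\|u\|_1\|w\|_1\|x\|_1\|y\|_1\le n^2\gamma\|u\|\|w\|\|x\|\|y\|$, together with $\nabla^2 g(x)\succeq 2\|x\|^2 I$ and AM--GM, gives precisely the factor $n^2\gamma$ that the multiplier $\tfrac{n^2\gamma}{2}$ supplies. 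Where you genuinely differ is in how the domination is certified. The paper works monomial by monomial, pairing each term $\pm 2\beta_{i,j,k,l}z_{x,k}x_iy_jz_{y,l}$ of the cross term with fractions of the squares $z_{x,k}^2x_i^2$ and $z_{y,l}^2y_j^2$ drawn from the diagonal of the perturbation's Hessian, so that the quantity $z^TH(x,y)z-z_y^TA(x)z_y-z_x^TB(y)z_x$ is exhibited as an explicit sum of squares. Your argument instead uses triangle-inequality norm bounds, which prove pointwise nonnegativity but do not produce an sos certificate. For the theorem as stated this costs nothing, and your route is arguably shorter and makes the calibration of the quartic perturbation more transparent; however, the paper's sos bookkeeping is exactly what gets reused for the algebraic analogue (Theorem~\ref{thm:algeb.ver.of.reduction}, which asserts that $b$ is sos if and only if $f$ is sos-convex), and your absolute-value estimates would not transfer to that statement.
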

}

%\bigskip
\begin{proof}
%\alex{Observe that once this claim is established, our proof will
%be complete. This is because, as we remarked earlier, testing
%whether $b(x;y)$ is psd is known to be NP-hard
%\cite{Ling_et_al_Biquadratic}, so that testing convexity of the
%polynomial $f(x,y)$ is also NP-hard.}
%Before we prove this claim, let us note that $f(x,y)$ is indeed a
%quartic form and that the reduction is clearly polynomial in length.

%\bigskip

\aaa{Before we prove the claim, let us make a few observations and
try to shed light on the intuition behind this construction}. We
will use $H(x,y)$ to denote the Hessian of $f$. This is a $2n
\times 2n$ polynomial matrix \alex{whose entries are quadratic
forms.} The polynomial $f$ is convex if and only if $z^TH(x,y)z$
is psd. For bookkeeping purposes, let us split the variables $z$
as $z\mathrel{\mathop:}=(z_x, z_y)^T$, where $z_x$ and $z_y$ each
belong to $\mathbb{R}^n$. It will also be helpful to give a name
to the second group of terms in the definition of $f(x,y)$ in
(\ref{eq:f(x,y).defn.}). So, let
\begin{equation}\label{eq:g(x,y).defn.}
g(x,y)\mathrel{\mathop:}=\frac{n^2\gamma}{2}\aaa{\Big(
\sum_{i=1}^n x_i^4+\sum_{i=1}^n
y_i^4+\sum_{\substack{i,j=1,\ldots,n\\i\aaan{<} j}}
x_i^2x_j^2+\sum_{\substack{i,j=1,\ldots,n\\i\aaan{<} j}}
y_i^2y_j^2\Big)}.
\end{equation}
We denote the Hessian matrices of $b(x,y)$ and $g(x,y)$ with
$H_b(x,y)$ and $H_g(x,y)$ respectively. Thus,
$H(x,y)=H_b(x,y)+H_g(x,y)$. \aaa{Let us first focus on the
structure of $H_b(x,y)$}. \alex{Observe that if we define}
$$\alex{[}A(x)\alex{]_{ij}}=\frac{\partial{b(x;y)}}{\partial{y_{\alex{i}}}\partial{y_{\alex{j}}}},$$
\alex{then $A(x)$ depends only on $x$, and  \begin{equation}
 \frac{1}{2} y^T A(x) y = b(x;y). \label{eq:yAy=b} \end{equation} }
\alex{Similarly, if we let}
$$\alex{[}B(y)\alex{]_{ij}}=\frac{\partial{b(x;y)}}{\partial{x_{\alex{i}}}\partial{x_{\alex{j}}}},$$
\alex{then $B(y)$ depends only on $y$, and
\begin{equation}\label{eq:x.B.x=b} \frac{1}{2}x^TB(y)x=b(x;y).
\end{equation}}\alex{From Eq.\ (\ref{eq:yAy=b}), we have that $b(x;y)$ is psd if and only if $A(x)$ is \aaa{PSD}; from  Eq.\ (\ref{eq:x.B.x=b}),
we see that $b(x;y)$ is psd if and only if $B(y)$ is \aaa{PSD}. }

Putting the blocks together, we have
\begin{equation}\label{eq:Hb}
H_b(x,y)=\begin{bmatrix} B(y) & C(x,y) \\ C^T(x,y) & A(x)
\end{bmatrix}.
\end{equation} The matrix $C(x,y)$ is not in general symmetric.
\aaan{The entries of $C(x,y)$ consist of square-free monomials
that are each a multiple of} $x_iy_j$ for some \jnt{$i$, $j$,
with} $1\leq i,j \leq n$; (see (\ref{eq:defn.biquad.form}) and
(\ref{eq:C(x,y).defn.})).
%If $b(x;y)$ is psd, even though $A(x)$ and
%$B(y)$ must be PSD, $H_b(x,y)$  in (\ref{eq:Hb}) may obviously not
%be PSD (or else every psd biquadratic form would be convex).

\aaa{The Hessian $H_g(x,y)$ of the polynomial $g(x,y)$ in
(\ref{eq:g(x,y).defn.}) is given by}

%Now, let us briefly comment on the polynomial $g(x,y)$ in
%(\ref{eq:g(x,y).defn.}) whose role will become clear shortly. One
%can check that $g(x,y)$ is convex.
%
%Its Hessian $H_g(x,y)$ is given
%by
\begin{equation}\label{eq:Hg}
H_g(x,y)=\frac{n^2\gamma}{2}\begin{bmatrix}H_g^{11}(x) & 0 \\ 0 &
H_g^{22}(y)
\end{bmatrix},
\end{equation}
where
\begin{equation}
H_g^{11}(x)=\begin{bmatrix}
12x_1^2+2\aaa{\displaystyle\sum_{\substack{i=1,\ldots,n\\i\neq 1}}} x_i^2 & 4x_1x_2 & \cdots & 4x_1x_n  \\
4x_1x_2 & 12x_2^2+2\aaa{\displaystyle\sum_{\substack{i=1,\ldots,n\\i\neq 2}}} x_i^2  & \cdots & 4x_2x_n \\
\vdots & \aaan{\vdots} &\ddots &\vdots \\
4x_1x_n &  \cdots &4x_{n-1}x_n & 12x_n^2+2\aaa{\displaystyle\sum_{\substack{i=1,\ldots,n\\i\neq n}}} x_i^2  \\
\end{bmatrix},
\end{equation} and \begin{equation}
H_g^{22}(y)=\begin{bmatrix}
12y_1^2+2\aaa{\displaystyle\sum_{\substack{i=1,\ldots,n\\i\neq 1}}} y_i^2 & 4y_1y_2 & \cdots & 4y_1y_n  \\
4y_1y_2 & 12y_2^2+2\aaa{\displaystyle\sum_{\substack{i=1,\ldots,n\\i\neq 2}}} y_i^2  & \cdots & 4y_2y_n \\
\vdots & \aaan{\vdots} &\ddots &\vdots \\
4y_1y_n &  \cdots &4y_{n-1}y_n & 12y_n^2+2\aaa{\displaystyle\sum_{\substack{i=1,\ldots,n\\i\neq n}}} y_i^2  \\
\end{bmatrix}.
\end{equation}
\aaa{Note that all diagonal elements of $H_g^{11}(x)$ \aaan{and}
$H_g^{22}(y)$ contain the square of every variable $x_1, \ldots,
x_n$ \aaan{and} $y_1, \ldots, y_n$ \aaan{respectively}.}

\aaa{\aaan{We fist give an intuitive summary of the rest of the
proof.} If $b(x;y)$ is not psd, then $B(y)$ and $A(x)$ are not PSD
and hence $H_b(x,y)$ is not PSD. Moreover, adding $H_g(x,y)$ to
$H_b(x,y)$ cannot help make $H(x,y)$ PSD because the dependence of
the diagonal blocks of $H_b(x,y)$ and $H_g(x,y)$ on $x$ and $y$
\aaan{runs} backwards. On the other hand, if $b(x;y)$ is psd, then
$H_b(x,y)$ will have PSD diagonal blocks. \aaan{In principle,
$H_b(x,y)$ might} still not be PSD because of the off-diagonal
block $C(x,y)$. However, the squares in the diagonal elements of
$H_g(x,y)$ \aaan{will be shown to} dominate the monomials of
$C(x,y)$ \aaan{and} make $H(x,y)$ PSD.}

Let us now prove the theorem formally. One direction is easy: if
$b(x;y)$ is not psd, then $f(x,y)$ is not convex. Indeed, if there
exist $\bar{x}$ and $\bar{y}$ in $\mathbb{R}^n$ such that
$b(\bar{x};\bar{y})<0$, then
$$z^TH(x,y)z \Big \vert_{z_x=0, x=\bar{x}, y=0,
z_y=\bar{y}}=\bar{y}^TA(\bar{x})\bar{y}=2b(\bar{x};\bar{y})<0.$$
%Alternatively, we could have also argued that $$z^TH(x,y)z \Big
%\vert_{z_x=\bar{x}, x=0, y=\bar{y},
%z_y=0}=\bar{x}^TB(\bar{y})\bar{x}=2b(\bar{x};\bar{y})<0.$$
%
%The argument we just gave explains to some extent our choice of
%the polynomial $g(x,y)$ in (\ref{eq:g(x,y).defn.}). The monomials
%in $g$ are chosen in a way that a copy of $A(x)$ (and $B(y)$) is
%always embedded in a principal block of the Hessian $H(x,y)$. So,
%even though $g$ is convex, if the biquadratic $b(x;y)$ is not psd,
%$f$ will never become convex no matter how much we scale up the
%coefficients of $g$. This gives us the luxury of being very lavish
%with the choice of $\gamma$ in (\ref{eq:scalar.gamma.defn.}) in
%order to make the Hessian of $g$ dominate the terms coming from
%$C(x,y)$, i.e., the off-diagonal block of the Hessian of $b(x;y)$.
%As we will see next, this is the key to making the other direction
%of the proof to work.

%
%It is not difficult to see that as long as $g(x,y)$ does not
%include a monomial that has total degree $2$ in $x$ and $2$ in
%$y$, the proceeding argument would still go through. So, in
%principle, we could have also included monomials like $x_i^3x_j,
%x_i^3y_j, x_iy_j^3, y_i^3y_j, x_ix_jx_kx_l, y_iy_jy_ky_l$ in $g$,
%but as we will see next, it turns out that they are not needed.

\aaan{For the converse,} suppose that $b(x;y)$ is psd; we will
prove that $z^TH(x,y)z$ is psd and hence $f(x,y)$ is convex. We
have
\begin{equation}\nonumber
\begin{array}{lll}
z^TH(x,y)z&=&z_y^TA(x)z_y+z_x^TB(y)z_x+2z_x^TC(x,y)z_y \\ \ &\ &\
\\ \ &\
&+\frac{n^2\gamma}{2}z_x^TH_g^{11}(x)z_x+\frac{n^2\gamma}{2}z_y^TH_g^{22}(y)z_y.
\end{array}
\end{equation}
Because $z_y^TA(x)z_y$ and $z_x^TB(y)z_x$ are psd by assumption
(see \jnt{(\ref{eq:yAy=b}) and} (\ref{eq:x.B.x=b})), it suffices
to show that $z^TH(x,y)z-z_y^TA(x)z_y-z_x^TB(y)z_x$ is psd. In
fact, we will show that $z^TH(x,y)z-z_y^TA(x)z_y-z_x^TB(y)z_x$ is
a sum of squares.

After \jnt{some} regrouping of terms we can write
\begin{equation}\label{eq:p1+p2+p3}
z^TH(x,y)z-z_y^TA(x)z_y-z_x^TB(y)z_x=p_1(x,y,z)+p_2(x,z_x)+p_3(y,z_y),
\end{equation}
where
\begin{equation}\label{eq:p1}
p_1(x,y,z)=2z_x^TC(x,y)z_y+n^2\gamma
\Big(\sum_{\aaa{i=1}}^nz_{x,i}^2\Big)\Big(\sum_{\aaa{i=1}}^nx_i^2\Big)+n^2\gamma\Big(\sum_{\aaa{i=1}}^nz_{y,i}^2\Big)\Big(\sum_{\aaa{i=1}}^ny_i^2\Big),
\end{equation}
\begin{equation}\label{eq:p2}
p_2(x,z_x)=n^2\gamma z_x^T\begin{bmatrix}
5x_1^2& 2x_1x_2 & \cdots & 2x_1x_n  \\
2x_1x_2 & 5x_2^2  & \cdots & 2x_2x_n \\
\vdots & \aaan{\vdots} &\ddots &\vdots \\
2x_1x_n &  \cdots &2x_{n-1}x_n & 5x_n^2 \\
\end{bmatrix}z_x,
\end{equation}
and
\begin{equation}\label{eq:p3}
p_3(y,z_y)=n^2\gamma z_y^T\begin{bmatrix}
5y_1^2 & 2y_1y_2 & \cdots & 2y_1y_n  \\
2y_1y_2 & 5y_2^2  & \cdots & 2y_2y_n \\
\vdots & \aaan{\vdots} &\ddots &\vdots \\
2y_1y_n &  \cdots &2y_{n-1}y_n & 5y_n^2 \\
\end{bmatrix}z_y.
\end{equation}

We show that (\ref{eq:p1+p2+p3}) is sos by showing that $p_1$,
$p_2$, and $p_3$ are each individually sos. % The argument for $p_2$
%and $p_3$ is of course identical.
\aaa{To see that $p_2$ is sos, simply note that we can rewrite it
as \alex{\[ p_2(x,z_x) = n^2 \gamma \left[ 3 \sum_{k=1}^n
z_{x,k}^2 x_k^2 + 2 \Big(\sum_{k=1}^n z_{x,k} x_k\Big)^2 \right].
\]} The argument for $p_3$ is of course identical.} \alex{To show that $p_1$ is sos,} we argue as follows. If we
multiply out \aaan{the first term} $2z_x^TC(x,y)z_y$, we
\jnt{obtain} a polynomial \alex{with monomials of the form}
%with at
%most $n^4$ monomials, all of the type
\begin{equation}\label{eq:typical.monomial}
\pm2\beta_{i,j,k,l}z_{x,k}x_iy_jz_{y,l},
\end{equation}
where $0\leq \beta_{i,j,k,l} \leq \gamma$, \alex{by the definition
of} $\gamma$. Since
\begin{equation}
\pm2\beta_{i,j,k,l}z_{x,k}x_iy_jz_{y,l}+\beta_{i,j,k,l}z_{x,k}^2x_i^2+\beta_{i,j,k,l}y_j^2z_{y,l}^2=\beta_{i,j,k,l}(z_{x,k}x_i\pm
y_jz_{y,l})^2,
\end{equation} \alex{by pairing up the terms of $2z_x^T C(x,y) z_y$
with \aaa{fractions of} the squared terms $z_{x,k}^2 x_i^2$ and
$z_{y,l}^2 y_j^2$, we get a sum of squares. Observe that there are
more than enough squares for each monomial of $2z_x^T C(x,y) z_y$
because each such monomial $\pm2\beta_{i,j,k,l}z_{x,k}x_i y_j
z_{y,l}$ occurs at most once, so that each of the terms $z_{x,k}^2
x_i^2$ and $z_{y,l}^2 y_j^2$ will be needed at most $n^2$ times,
each time with a coefficient of at most $\gamma$.} Therefore,
$p_1$ is sos, \alex{and this completes the proof.}
%To prove that $p_2$ is sos, note that it can be written as
%\begin{equation}
%p_2(x,z_x)=n^2\gamma u^TQu,
%\end{equation}
%where $u$ is the vector of monomials
%\begin{equation}
%u=(x_1z_{x,1}, x_2z_{x,2}, \ldots, x_nz_{x,n})^T,
%\end{equation}
%and
%\begin{equation}
%Q=\begin{bmatrix}
%5 & 2 & \cdots & 2 \\
%2 & 5  & \cdots & 2 \\
%\vdots & \cdots &\ddots &\vdots \\
%2&  \cdots &2& 5\\
%\end{bmatrix}.
%\end{equation}
%We observe that $Q=2\mathbf{J}_n+3\mathbf{I}_n$, where
%$\mathbf{J}_n$ is the all-ones matrix. So, the smallest eigenvalue
%of $Q$ is always $3$. Therefore, it is positive definite and has a
%Cholesky factorization $Q=L^TL$. Hence, $p_2(x,z_x)=n^2\gamma
%(Lu)^T(Lu)$ is sos.
\end{proof}
\aaa{We can now \aaan{complete the proof of} strong NP-hardness of
deciding convexity of quartic forms.
\begin{proof}[Proof of Theorem~\ref{thm:convexity.quartic.nphard}]
As we remarked earlier, deciding nonnegativity of biquadratic
forms is known to be strongly
NP-hard~\cite{Ling_et_al_Biquadratic}. Given such a biquadratic
form $b(x;y)$, we can construct the polynomial $f(x,y)$ as in
(\ref{eq:f(x,y).defn.}). Note that $f(x,y)$ has degree four and is
homogeneous. Moreover, the reduction from $b(x;y)$ to $f(x,y)$
\aaan{runs in polynomial time} as we are only adding to $b(x;y)$
$2n+2 \binom{n}{2}$ new monomials with coefficient
$\frac{n^2\gamma}{2}$, and the size of $\gamma$ is by definition
only polynomially larger than the size of any coefficient of
$b(x;y)$. Since by Theorem~\ref{thm:main.reduction} convexity of
$f(x,y)$ is equivalent to nonnegativity of $b(x;y)$, we conclude
that deciding convexity of quartic forms is strongly NP-hard.
\end{proof}}
\paragraph {An algebraic version of the reduction.} Before we
proceed further with our results, we \jnt{make} a slight detour
and present an algebraic \aaan{analogue} of this reduction,
\jnt{which relates} sum of squares biquadratic forms to sos-convex
polynomials. Both of these concepts are well-studied in the
literature, in particular in regards to their connection to
semidefinite programming; see,
e.g.,~\cite{Ling_et_al_Biquadratic},~\cite{AAA_PP_not_sos_convex_journal},
and references therein.

\begin{definition}\label{defn:sos-convex}
A polynomial $p(x)$, with its Hessian denoted by $H(x)$, is
\emph{sos-convex} if the polynomial $y^TH(x)y$ is a sum of squares
in variables (x;y).\footnote{Three other equivalent definitions of
sos-convexity are presented in the next chapter.}
\end{definition}

\begin{theorem}\label{thm:algeb.ver.of.reduction}
Given a biquadratic form \aaa{$b(x;y)$}, let $f(x,y)$ be the
quartic form defined as in (\ref{eq:f(x,y).defn.}). Then $b(x;y)$
is a sum of squares if and only if $f(x,y)$ is sos-convex.
\end{theorem}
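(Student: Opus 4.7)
My plan is to parallel the proof of Theorem~\ref{thm:main.reduction} but at the level of sum of squares decompositions rather than nonnegativity, exploiting the fact that the decomposition used there actually produces sos certificates, not just nonnegativity certificates. The key structural observation is that since $b(x;y)$ is quadratic in $y$ for fixed $x$ and quadratic in $x$ for fixed $y$, Euler's identity for homogeneous polynomials gives the clean identities
\begin{equation}\nonumber
z_y^T A(x) z_y = 2\,b(x; z_y), \qquad z_x^T B(y) z_x = 2\,b(z_x; y),
\end{equation}
where $A(x)$ and $B(y)$ are the blocks appearing in (\ref{eq:Hb}). These identities tie the sos-structure of $b$ directly to the sos-structure of two of the three pieces of $z^T H(x,y) z$.

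For the forward direction, assume $b(x;y)$ is sos as a polynomial in $(x,y)$. Then by simple variable renaming, $2b(x;z_y)$ and $2b(z_x;y)$ are also sos, so $z_y^T A(x) z_y$ and $z_x^T B(y) z_x$ are sos. Now revisit the decomposition from the proof of Theorem~\ref{thm:main.reduction}:
\begin{equation}\nonumber
z^T H(x,y) z = z_y^T A(x) z_y + z_x^T B(y) z_x + p_1(x,y,z) + p_2(x,z_x) + p_3(y,z_y),
\end{equation}
with $p_1, p_2, p_3$ as in (\ref{eq:p1})--(\ref{eq:p3}). The crucial point is that the arguments given in the previous proof for each of $p_1, p_2, p_3$ being psd in fact produce explicit sos decompositions (the squares for $p_2$ and $p_3$ are written out, and $p_1$ is expressed as a sum of terms of the form $\beta_{i,j,k,l}(z_{x,k}x_i \pm y_j z_{y,l})^2$). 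Summing these sos pieces with the sos pieces $z_y^T A(x) z_y$ and $z_x^T B(y) z_x$ yields that $z^T H(x,y) z$ is sos in $(x,y,z)$, i.e., $f$ is sos-convex.

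For the converse, assume $f(x,y)$ is sos-convex, so $z^T H(x,y) z$ is a sum of squares in $(x,y,z_x,z_y)$. Substituting any specific values for a subset of the variables preserves the sos property, so specialize to $z_x = 0$ and $y = 0$. Inspecting the decomposition above, $z_x^T B(y) z_x$, $p_1$, $p_2$, and $p_3$ all vanish under this substitution, leaving $z_y^T A(x) z_y = 2\,b(x; z_y)$, which must therefore be sos in $(x, z_y)$. Renaming $z_y$ back to $y$, we conclude that $b(x;y)$ is sos.

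I do not anticipate a real obstacle: the hard combinatorial/psd-reasoning work has already been done inside the proof of Theorem~\ref{thm:main.reduction}, and all that is needed is to verify that the certificates produced there are genuine sos certificates, and to notice that the substitution $z_x=0, y=0$ isolates exactly the block encoding $b$. The only mildly delicate point is making sure the sos decompositions of $p_1, p_2, p_3$ in the original proof are indeed sos in the joint variables $(x,y,z)$ (which they are by inspection), so that they can be added to the sos certificate for $b$ without any issue.
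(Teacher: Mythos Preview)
Your proof is correct and follows exactly the approach the paper indicates: it parallels the proof of Theorem~\ref{thm:main.reduction}, observing that the certificates for $p_1,p_2,p_3$ constructed there are genuine sos decompositions (not merely nonnegativity certificates), and replacing the point-evaluation argument of the converse direction with the algebraic substitution $z_x=0,\ y=0$, which preserves the sos property and isolates $z_y^T A(x) z_y = 2b(x;z_y)$. This is precisely what the paper means by ``the proof is very similar to the proof of Theorem~\ref{thm:main.reduction} and is left to the reader.''
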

\begin{proof}
The proof is very similar to the proof of
\aaa{Theorem~\ref{thm:main.reduction}} and is left to the reader.
\end{proof}

We will revisit Theorem~\ref{thm:algeb.ver.of.reduction} in the
next chapter when we study the connection between convexity and
sos-convexity.

%Perhaps of independent interest,
%\aaa{Theorems~\ref{thm:main.reduction}} and
%\ref{thm:algeb.ver.of.reduction} imply that our reduction gives an
%explicit way of constructing convex but not sos-convex quartic
%forms (see~\cite{AAA_PP_not_sos_convex_journal}), starting from
%any example of a psd but not sos biquadratic form
%(see~\cite{Choi_Biquadratic}).

\paragraph {Some NP-hardness \jnt{results, obtained} as corollaries.} NP-hardness of checking convexity of quartic
forms directly establishes NP-hardness\aaa{\footnote{\aaa{All of
our NP-hardness results in this chapter are in the strong sense.
For the sake of brevity, from now on we refer to strongly NP-hard
problems simply as NP-hard problems.}}}
 of \jnt{several problems of interest.}
Here, we mention a \jnt{few examples.}

\begin{corollary}\label{cor:nonnegativity.quartic.nphard}
It is NP-hard to decide \alex{nonnegativity of a homogeneous}
polynomial $q$ of degree four, \alex{of the form}
\begin{equation}\nonumber
q(x,y)=\frac{1}{2}p(x)+\frac{1}{2}p(y)-p\left(\textstyle{\frac{x+y}{2}}\right),
\end{equation}
for some homogeneous quartic polynomial $p$.
\end{corollary}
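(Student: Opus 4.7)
The plan is to reduce the problem of deciding convexity of homogeneous quartic polynomials (known to be NP-hard by Theorem~\ref{thm:convexity.quartic.nphard}) to the problem of deciding nonnegativity of polynomials of the stated special form. The key observation is that $q(x,y)\geq 0$ for all $x,y\in\mathbb{R}^n$ is precisely the statement of \emph{midpoint convexity} of $p$, i.e.,
\[
p\!\left(\tfrac{x+y}{2}\right)\leq \tfrac{1}{2}p(x)+\tfrac{1}{2}p(y)\qquad \forall x,y\in\mathbb{R}^n.
\]
Since $p$ is a polynomial and hence continuous, midpoint convexity is equivalent to convexity (the usual dyadic-rationals argument: iterating midpoint convexity yields the convex combination inequality for all dyadic $\lambda\in[0,1]$, and continuity extends it to all $\lambda\in[0,1]$).

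Next I would verify that the reduction $p \mapsto q$ is polynomial time and preserves the structural hypotheses of the target problem. Given a homogeneous quartic $p(x)$ on $\mathbb{R}^n$, the polynomial $q(x,y)$ is a polynomial in $2n$ variables of degree four whose coefficients are integer linear combinations (with denominators dividing $2^4$) of the coefficients of $p$, so the reduction is clearly polynomial time. Homogeneity of $q$ in the variables $(x,y)$ follows because each of $p(x)$, $p(y)$ and $p((x+y)/2)$ is homogeneous of degree four in $(x,y)$: for the last term note that $(x+y)/2$ is linear in $(x,y)$, so composing with the degree-four form $p$ preserves homogeneity of degree four.

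Combining the two observations, $q$ (as built from $p$) is nonnegative if and only if $p$ is convex. Hence any algorithm deciding nonnegativity of polynomials of the form in the statement would, via this reduction, decide convexity of arbitrary homogeneous quartic polynomials. By Theorem~\ref{thm:convexity.quartic.nphard} the latter is (strongly) NP-hard, so the former is as well.

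There is essentially no obstacle here: the entire content lies in recognizing $q$ as a midpoint-convexity defect, in the elementary continuity argument upgrading midpoint convexity to convexity for polynomials, and in checking homogeneity and polynomial-time of the construction. The result is a direct corollary of Theorem~\ref{thm:convexity.quartic.nphard}, packaged to highlight a specific algebraic form of quartic nonnegativity that remains hard.
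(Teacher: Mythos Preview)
Your proof is correct and takes essentially the same approach as the paper: the paper's own proof is the one-liner ``nonnegativity of $q$ is equivalent to convexity of $p$, and the result follows directly from Theorem~\ref{thm:convexity.quartic.nphard},'' and you have simply unpacked the equivalence (midpoint convexity plus continuity gives convexity) and verified the routine details (polynomial-time reduction, homogeneity of $q$). There is nothing to add.
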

\begin{proof}
\jnt{Nonnegativity of $q$ is equivalent to convexity of $p$, and
the result} follows directly from
Theorem~\ref{thm:convexity.quartic.nphard}.% and
%Theorem~\ref{thm:lambda.0.5.enough}.
\end{proof}

\begin{definition}\label{def:basic.semialgeb.set}
A set $\mathcal{S}\subset\mathbb{R}^n$ is \emph{basic closed
semialgebraic} if it can be written as
\begin{equation}\label{eq:semialgebraic.set}
\mathcal{S}=\{x\in\mathbb{R}^n|\ f_i(x)\geq0,\ i=1,\ldots,m\},
\end{equation}
for some positive integer $m$ and some polynomials $f_i(x)$.
\end{definition}

\begin{corollary}\label{cor:semialgeb.set.nphard}
Given a basic closed semialgebraic set $\mathcal{S}$ as in
(\ref{eq:semialgebraic.set}), where at least one of the defining
polynomials $f_i(x)$ has degree four, it is NP-hard to decide
whether $\mathcal{S}$ is a convex set.
\end{corollary}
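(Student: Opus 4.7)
The plan is to give a direct reduction from the convexity problem for quartic polynomials, whose NP-hardness is established by Theorem~\ref{thm:convexity.quartic.nphard}. The key observation is that the epigraph of a function is convex if and only if the function is convex, and the epigraph of a polynomial is a basic closed semialgebraic set with an explicit polynomial description.

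More precisely, given a quartic polynomial $p(x)$ in $n$ variables (say, the homogeneous one produced by Theorem~\ref{thm:convexity.quartic.nphard}), I would introduce an auxiliary scalar variable $t$ and consider the set
\begin{equation*}
\mathcal{S} \mathrel{\mathop:}= \{(x,t) \in \mathbb{R}^{n+1} \ : \ t - p(x) \geq 0\}.
\end{equation*}
This is a basic closed semialgebraic set of the form in Definition~\ref{def:basic.semialgeb.set}, described by the single defining polynomial $f_1(x,t) \mathrel{\mathop:}= t - p(x)$, which has degree exactly four in the variables $(x,t)$ since $p$ has degree four. The construction is clearly polynomial-time in the bit-length of the coefficients of $p$.

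The correctness of the reduction rests on the standard fact that $\mathcal{S}$ is precisely the epigraph of $p$, and the epigraph of a function on $\mathbb{R}^n$ is convex if and only if the function itself is convex. Thus $\mathcal{S}$ is convex iff $p$ is convex. Since by Theorem~\ref{thm:convexity.quartic.nphard} deciding convexity of quartic polynomials is (strongly) NP-hard, deciding convexity of basic closed semialgebraic sets defined by polynomials of degree four is NP-hard as well.

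There is no substantive obstacle here; the only minor point to verify is that the reduction is a Karp reduction (polynomial in the input size and preserving the yes/no answer), which is immediate from the construction above. If one prefers the defining polynomial itself to be a quartic \emph{form}, one can instead homogenize by introducing an additional variable, but this refinement is not needed for the statement of the corollary.
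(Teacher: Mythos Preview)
Your proposal is correct and is essentially identical to the paper's own proof: both reduce from Theorem~\ref{thm:convexity.quartic.nphard} by taking the epigraph $\{(x,t)\in\mathbb{R}^{n+1}\mid t-p(x)\geq 0\}$ of a quartic polynomial $p$ and using the standard equivalence between convexity of a function and convexity of its epigraph.
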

\begin{proof}
Given a quartic polynomial $p(x)$, consider the basic closed
semialgebraic set $$\mathcal{E}_p=\{(x,t)\in\mathbb{R}^{n+1}|\
t-p(x)\geq0\},$$ describing the epigraph of $p(x)$. Since $p(x)$
is convex if and only if its epigraph is a convex set, the result
follows.\footnote{Another proof of this corollary is given by the
NP-hardness of checking convexity of sublevel sets of quartic
polynomials (Theorem~\ref{thm:quasi.pseudo.quartic.nphard} in
Section~\ref{subsec:quasi.pseudo.hard.degrees}).}
\end{proof}

\paragraph {Convexity of polynomials of even degree larger than
four.} We end this section by extending our hardness result to
polynomials of higher degree.
\begin{corollary}\label{cor:convexity.higher.degree.hard}
It is NP-hard to check convexity of polynomials of any fixed even
degree $d\geq4$.
\end{corollary}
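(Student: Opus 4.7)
The plan is to reduce from the already-established NP-hardness of deciding convexity of quartic forms (Theorem~\ref{thm:convexity.quartic.nphard}) to the problem of deciding convexity of polynomials of degree exactly $d$, for any fixed even $d \ge 4$. The base case $d=4$ is Theorem~\ref{thm:convexity.quartic.nphard} itself, so the only thing to do is to promote the degree from $4$ to any larger even $d$ without changing the convexity status of the instance.

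Given a quartic form $f(x_1,\ldots,x_n)$ produced by the reduction of Theorem~\ref{thm:convexity.quartic.nphard}, I would introduce a fresh variable $x_{n+1}$ and define
\[
p(x_1,\ldots,x_n,x_{n+1}) \mathrel{\mathop:}= f(x_1,\ldots,x_n) + x_{n+1}^d.
\]
This construction is clearly computable in polynomial time (one extra monomial with unit coefficient) and $p$ has degree exactly $d$ because $d \ge 4 = \deg f$. The Hessian $H_p(x,x_{n+1})$ is block diagonal: its upper-left $n \times n$ block is the Hessian $H_f(x)$ of $f$, and its lower-right $1 \times 1$ block is $d(d-1)x_{n+1}^{d-2}$.

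Since $d$ is even and $d \ge 4$, the exponent $d-2$ is a nonnegative even integer, so the scalar $d(d-1)x_{n+1}^{d-2}$ is nonnegative for all $x_{n+1} \in \mathbb{R}$. By the block-diagonal structure, $H_p$ is PSD at every point if and only if $H_f$ is PSD at every point; that is, $p$ is convex if and only if $f$ is convex. Combined with Theorem~\ref{thm:convexity.quartic.nphard}, this yields the desired NP-hardness for every fixed even $d \ge 4$.

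There is essentially no obstacle here beyond choosing the right padding term. The key requirements are that the added summand (i) raises the total degree to exactly $d$, (ii) is itself convex so that it contributes a nonnegative-definite block to the Hessian, and (iii) involves only variables disjoint from those of $f$, so that the Hessian decouples and no cross terms can tilt convexity one way or the other. The monomial $x_{n+1}^d$ with $d$ even satisfies all three conditions simultaneously, which is what makes the reduction immediate.
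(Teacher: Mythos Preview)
Your proposal is correct and matches the paper's proof essentially verbatim: the paper also pads a quartic polynomial with $x_{n+1}^d$ in a fresh variable and observes that the block-diagonal Hessian makes convexity of the new polynomial equivalent to convexity of the original.
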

\begin{proof}
We have already established the result for polynomials of degree
four. Given such a degree four polynomial
$p(x)\mathrel{\mathop:}=p(x_1,\ldots,x_n)$ and an even degree
$d\geq6$, consider the polynomial $$q(x,x_{n+1})=p(x)+x_{n+1}^d$$
in $n+1$ variables. It is clear (e.g., from the block diagonal
structure of the Hessian of $q$) that $p(x)$ is convex if and only
if $q(x)$ is convex. The \jnt{result} follows.
\end{proof}

%\aaar{
%\begin{remark}\label{rmk:higher.degree.dont.do.homog}
%Corollary~\ref{cor:convexity.higher.degree.hard} does not
%establish NP-hardness of checking convexity for \emph{forms} of
%fixed even degree $d\geq 6$. If needed, such a refinement can be
%done. One possibility, which we just sketch, is to give a
%reduction from the problem of deciding nonnegativity of forms of
%fixed even degree $d\geq 4$. Given such a form $p(x)$, one can
%construct a form $q(x)$ of degree $d+2$ in such a way that $p(x)$
%is a diagonal element of the Hessian of $q(x)$, and $p(x)$ is
%nonnegative if and only if $q(x)$ is convex. A construction of
%this type, although for a different purpose, is given
%in~\cite[Thm. 5.11]{AAA_PP_table_sos-convexity}.
%\end{remark}
%}

\section{Complexity of deciding strict convexity and strong
convexity}\label{sec:strict.strong}

\subsection{Definitions and basics}\label{subsec:strict.strong.basics}

\begin{definition}
A function $f:\mathbb{R}^n\rightarrow\mathbb{R}$ is \emph{strictly
convex} if %its domain is a convex set and
for all $x\neq y$ %in the domain
 and all $\lambda \in (0,1)$, we have
\begin{equation}\label{eq:strict.convexity.defn.}
f(\lambda x+(1-\lambda)y)< \lambda f(x)+(1-\lambda)f(y).
\end{equation}
\end{definition}

\begin{definition}
A twice differentiable function
$f:\mathbb{R}^n\rightarrow\mathbb{R}$ is \emph{strongly convex} if
its Hessian $H(x)$ satisfies
\begin{equation}\label{eq:strong.convexity.defn}
H(x)\succeq mI,
\end{equation}
for a scalar $m>0$ and for all $x$.% in the domain of $f$.
\end{definition}

We have the standard implications
\begin{equation}\label{eq:implications.strong.strict.convex}
\mbox{strong convexity} \ \Longrightarrow \  \mbox{strict
convexity}\ \Longrightarrow \ \mbox{convexity},
\end{equation}
but \jnt{none of the converse implications is true.}
%the converse of neither implication is true.% even for
%polynomials. For example linear polynomials are convex but not
%strictly convex, and the univariate polynomial $x^4$ is strictly
%convex but not strongly convex.

\subsection{Degrees that are easy}\label{subsec:strict.strong.easy.degrees}
From the implications in
(\ref{eq:implications.strong.strict.convex}) and our previous
discussion, it is clear that odd degree polynomials can never be
strictly convex \aaan{or} strongly convex. \aaa{We cover the case
of quadratic polynomials in the following straightforward
proposition.}

\aaa{
\begin{proposition}%\label{thm:strict=strong.for.quadratic}
For a quadratic polynomial $p(x)=\frac{1}{2}x^TQx+q^Tx+c$, the
notions of strict convexity and strong convexity are equivalent,
and can be decided in polynomial time.
\end{proposition}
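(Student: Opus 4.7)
The plan is to observe that for a quadratic polynomial $p(x)=\frac{1}{2}x^TQx+q^Tx+c$, the Hessian is the constant matrix $H(x)=Q$, so both notions collapse to a statement about $Q$ alone. I would then show that each of strict convexity and strong convexity is equivalent to $Q$ being positive definite, and finally invoke a standard polynomial-time test for positive definiteness of a rational symmetric matrix.

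First I would handle the equivalence. Strong convexity requires $Q=H(x)\succeq mI$ for some $m>0$, which is just another way of saying $Q\succ 0$ (take $m=\lambda_{\min}(Q)$). For strict convexity, one direction is immediate: if $Q\succ 0$, then $p$ is strongly convex and hence strictly convex by the implications in (\ref{eq:implications.strong.strict.convex}). For the converse, I would argue by contrapositive. If $Q$ is not positive definite, then either $Q$ has a negative eigenvalue, in which case $p$ fails to be convex (and therefore fails to be strictly convex), or $Q$ is positive semidefinite but singular with a null vector $v\neq 0$. In the latter case, direct computation gives
\begin{equation}
p(x+tv)=p(x)+t\,q^Tv+\tfrac{t^2}{2}v^TQv=p(x)+t\,q^Tv,
\end{equation}
which is affine in $t$, so $p$ restricted to the line $\{x+tv:t\in\Rm\}$ is affine and hence fails the strict inequality (\ref{eq:strict.convexity.defn.}) at the midpoint of any two distinct points on that line. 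This shows strict convexity of $p$ forces $Q\succ 0$, completing the chain
\begin{equation}
\text{strong convexity}\iff Q\succ 0\iff\text{strict convexity}.
\end{equation}

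For the polynomial-time claim, I would note that positive definiteness of a rational symmetric matrix $Q$ can be decided in polynomial time by any of several standard methods, for instance Gaussian elimination with diagonal pivots (equivalently an $LDL^T$ factorization) checking that all pivots are strictly positive, or by exact computation of the leading principal minors and applying Sylvester's criterion; either approach runs in $O(n^3)$ arithmetic operations on numbers whose bit-lengths grow only polynomially in the input size. I would cite the same references already used in Section~\ref{subsec:convexity.easy.degrees} (e.g.,~\cite{nonnegativity_NP_hard},~\cite{HJ_Matrix_Analysis_Book}) for the polynomial-time test.

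I do not expect any genuine obstacle here; the only subtlety is being careful that strict convexity of $p$ really does rule out $Q$ being singular and positive semidefinite, which the explicit line-restriction argument above settles cleanly. The whole proof should fit in a short paragraph.
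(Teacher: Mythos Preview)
Your proposal is correct and follows essentially the same approach as the paper: both argue by contrapositive that if $Q$ is not positive definite then either a negative eigenvalue kills convexity outright or a null vector $v$ makes $p$ affine along the line in direction $v$, and both invoke Sylvester's criterion (leading principal minors) for the polynomial-time check. The only cosmetic difference is that the paper restricts to the line through the origin while you use a general base point $x$, but the computation and conclusion are identical.
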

}
\begin{proof}
Strong convexity always implies strict convexity. For the
\jnt{reverse} direction, assume that $p(x)$ is not strongly
convex. In view of (\ref{eq:strong.convexity.defn}), this means
that the matrix $Q$ is not positive definite. If $Q$ has a
negative eigenvalue, $p(x)$ is not convex, let alone strictly
convex. If $Q$ has a zero eigenvalue, let $\bar{x}\neq 0$ be the
corresponding eigenvector. Then $p(x)$ restricted to the
\aaan{line} from the origin to $\bar{x}$ is linear and hence not
strictly convex.

To see that these properties can be checked in polynomial time,
note that $p(x)$ is strongly convex if and only if the symmetric
matrix $Q$ is positive definite. \aaan{By Sylvester's criterion,}
positive definiteness of an $n \times n$ symmetric matrix is
equivalent to positivity of its $n$ leading principal minors,
\alex{each of which can be computed in polynomial time.}
\end{proof}

%\begin{remark}
%A quadratic polynomial can certainly be convex but not strictly
%convex. An example is $(x_1+x_2)^2$.
%\end{remark}
%
%\begin{proposition}
%Strict convexity and strong convexity of quadratic polynomials can
%be decided in polynomial time.
%\end{proposition}

%\begin{proof}
%By the previous theorem, both notions are equivalent for quadratic
%polynomials. A quadratic polynomial $p(x)=\frac{1}{2}x^TQx+q^Tx+c$
%is strongly convex if and only if its Hessian, which is the constant
%symmetric matrix $Q$, is positive definite (see
%(\ref{eq:strong.convexity.defn})). Positive definiteness of an $n
%\times n$ symmetric matrix is equivalent to positivity of its $n$
%leading principal minors, \alex{each of which can be computed in polynomial
%time.}
%\end{proof}

\subsection{Degrees that are hard}\label{subsec:strict.strong.hard.degrees}
With little effort, we can extend our NP-hardness result in the
previous section to \jnt{address} strict convexity and strong
convexity.
%The key that makes things simple for us is the fact
%that our NP-hardness result in
%Theorem~\ref{thm:convexity.quartic.nphard} stands even when the
%polynomials are homogenous.

\begin{proposition}\label{thm:strong.convexity.NPhard.even.deg} %these were originally thms. that's the reason for the label.
It is NP-hard to decide strong convexity of polynomials of any
fixed even degree $d\geq4$.
\end{proposition}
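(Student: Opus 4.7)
The plan is to reduce from the problem of deciding convexity of polynomials of fixed even degree $d\geq 4$, which was shown to be NP-hard in Corollary~\ref{cor:convexity.higher.degree.hard} (via the homogeneous quartic case of Theorem~\ref{thm:convexity.quartic.nphard}). Given a quartic form $p(x)$ in $n$ variables, the goal is to build, in polynomial time, a polynomial $q$ of degree $d$ that is strongly convex if and only if $p$ is convex.

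The construction I propose is: for $d=4$, take $q(x) := p(x) + \|x\|^2$; and for even $d\geq 6$, add one variable and take
\[
q(x,x_{n+1}) := p(x) + x_{n+1}^d + \|x\|^2 + x_{n+1}^2.
\]
In either case the Hessian is block diagonal (trivially so when $d=4$), with the $x$-block equal to $H_p(x)+2I$ and, when present, the scalar $x_{n+1}$-block equal to $d(d-1)x_{n+1}^{d-2}+2 \geq 2$. So everything reduces to analyzing $H_p(x)+2I$.

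If $p$ is convex, then $H_p(x)\succeq 0$ for all $x$, so $H_q\succeq 2I$ and $q$ is strongly convex with $m=2$. Conversely, if $p$ is not convex, pick $\bar x,\bar v$ with $\bar v^T H_p(\bar x)\bar v<0$. Since $p$ is a quartic \emph{form}, each entry of $H_p$ is a quadratic form in $x$, so
\[
\bar v^T H_p(\lambda \bar x)\bar v = \lambda^2\, \bar v^T H_p(\bar x)\bar v \longrightarrow -\infty
\]
as $\lambda\to\infty$. Thus $\bar v^T H_q(\lambda \bar x,\cdot)\bar v$ is eventually very negative, so $q$ is not even convex, let alone strongly convex. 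This gives the desired equivalence, and since the reduction obviously runs in polynomial time, NP-hardness follows.

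The only subtlety worth flagging is the gap between $H_q(x)\succ 0$ everywhere and $H_q(x)\succeq mI$ \emph{uniformly} with $m>0$: a priori, $p$ being convex might only yield $H_q\succeq 0$ rather than a positive uniform lower bound. This is precisely why the $+\|x\|^2$ (and $+x_{n+1}^2$) term is included rather than something like $+\epsilon\cdot(\text{higher-degree squares})$: it contributes a constant $2I$ to the Hessian, yielding an explicit, uniform $m=2$ in the yes-case. Symmetrically, the reason the no-case argument works is the homogeneity of $H_p$ in $x$, which rules out the possibility that a small constant shift could absorb the negative eigenvalue.
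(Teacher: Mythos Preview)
Your proof is correct and takes essentially the same approach as the paper: reduce from convexity of quartic forms by adding a strictly convex quadratic (to guarantee a uniform $m>0$ in the yes-case) and, when $d\geq 6$, an extra $x_{n+1}^d$ term to bump the degree, then exploit the degree-two homogeneity of $H_p$ to overwhelm the constant shift in the no-case. The only differences from the paper are cosmetic---you use $\|x\|^2$ rather than $\tfrac{1}{2}\|x\|^2$, and you split off the $d=4$ case to avoid the extra variable there, whereas the paper uses the $(n{+}1)$-variable construction uniformly for all even $d\geq 4$.
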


\begin{proof}
\aaa{We give a reduction from the problem of deciding convexity of
quartic forms. Given a homogenous quartic polynomial
$p(x)\mathrel{\mathop:}=p(x_1,\ldots,x_n)$ and an even degree
$d\geq4$, consider the polynomial
\begin{equation}\label{eq:q.in.reduction.strong.convexity}
q(x,x_{n+1}):=p(x)+x_{n+1}^d+\textstyle{\frac{1}{2}}(x_1^2+\cdots+x_n^2+x_{n+1}^2)
\end{equation}
in $n+1$ variables. We claim that $p$ is convex if and only if $q$
is strongly convex. Indeed, if $p(x)$ is convex, then so is
$p(x)+x_{n+1}^d$. Therefore, the Hessian of $p(x)+x_{n+1}^d$ is
PSD. On the other hand, the Hessian of the term
$\frac{1}{2}(x_1^2+\cdots+x_n^2+x_{n+1}^2)$ is the identity
matrix. So, the minimum eigenvalue of the Hessian of
$q(x,x_{n+1})$ is positive and bounded \aaan{below by one}. Hence,
$q$ is strongly convex.

Now suppose that $p(x)$ is not convex. Let us denote the Hessians
of $p$ and $q$ respectively by $H_p$ and $H_q$. If $p$ is not
convex, then there exists a point $\bar{x}\in\mathbb{R}^n$ such
that
$$\lambda_{\min}(H_p(\bar{x}))<0,$$ where $\lambda_{\min}$ here
denotes the minimum eigenvalue. Because $p(x)$ is homogenous of
degree four, we have
$$\lambda_{\min}(H_p(c\bar{x}))=c^2\lambda_{\min}(H_p(\bar{x})),$$
for any scalar $c\in\mathbb{R}$. Pick $c$ large enough such that
$\lambda_{\min}(H_p(c\bar{x}))<1$. Then it is easy to see that
$H_q(c\bar{x},0)$ has a negative eigenvalue and hence $q$ is not
convex, let alone strongly convex.}
\end{proof}

\begin{remark}
It is worth noting that homogeneous polynomials of degree $d>2$
can never be strongly convex (because their Hessians vanish at the
origin). Not surprisingly, the polynomial $q$ in the proof of
\aaa{Proposition}~\ref{thm:strong.convexity.NPhard.even.deg} is
not homogeneous.
\end{remark}

\begin{proposition}
It is NP-hard to decide strict convexity of polynomials of any
fixed even degree $d\geq4$.
\end{proposition}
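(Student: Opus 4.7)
The plan is to use essentially the same reduction as in the proof of Proposition~\ref{thm:strong.convexity.NPhard.even.deg}, and show that for the auxiliary polynomial $q$ constructed there, convexity of $p$ is equivalent to \emph{strict} convexity of $q$ (not just strong convexity). Since we have already shown NP-hardness of deciding convexity of quartic forms in Theorem~\ref{thm:convexity.quartic.nphard} (and of even degree $d\geq 4$ polynomials via Corollary~\ref{cor:convexity.higher.degree.hard}), this suffices.

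Concretely, given a homogeneous quartic form $p(x)=p(x_1,\ldots,x_n)$ and any fixed even $d\geq 4$, I would consider
\[
q(x,x_{n+1}) \mathrel{\mathop:}= p(x) + x_{n+1}^d + \tfrac{1}{2}\bigl(x_1^2 + \cdots + x_n^2 + x_{n+1}^2\bigr),
\]
exactly as in (\ref{eq:q.in.reduction.strong.convexity}). The reduction runs in polynomial time.

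For the forward direction, if $p$ is convex then $p(x)+x_{n+1}^d$ is convex (as a sum of two convex functions), and adding the \emph{strictly} convex quadratic $\tfrac{1}{2}\sum_{i=1}^{n+1} x_i^2$ to a convex function yields a strictly convex function; hence $q$ is strictly convex. (Equivalently, one can observe that the Hessian of $q$ dominates $I$, so $q$ is strongly convex and therefore strictly convex.) For the reverse direction, I would show the contrapositive: if $p$ is not convex, then $q$ is not even convex, and in particular not strictly convex. By homogeneity of $p$ of degree $4$, if $H_p(\bar{x})$ has a negative eigenvalue at some $\bar{x}$, then $\lambda_{\min}(H_p(c\bar{x})) = c^2 \lambda_{\min}(H_p(\bar{x}))$ can be driven arbitrarily negative; choosing $c$ large enough so that $\lambda_{\min}(H_p(c\bar{x})) < -1$ makes the Hessian of $q$ at the point $(c\bar{x},0)$ fail to be PSD, since the contribution from the $x_{n+1}^d$ term vanishes there and only the identity matrix is added.

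There is no real obstacle here; the only subtlety is that strict convexity is not equivalent to positive definiteness of the Hessian (e.g.\ $x_{n+1}^d$ alone has a vanishing Hessian at the origin but is strictly convex in $x_{n+1}$), so one cannot simply quote a Hessian characterization. The argument above sidesteps this by using the elementary fact that convex $+$ strictly convex $=$ strictly convex on one side, and by showing failure of ordinary convexity (which implies failure of strict convexity) on the other side.
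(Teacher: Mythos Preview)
Your proof is correct and follows essentially the same approach as the paper: use the same auxiliary polynomial $q$ from the strong convexity reduction, argue that if $p$ is convex then $q$ is strongly (hence strictly) convex, and if $p$ is not convex then $q$ is not even convex (hence not strictly convex). Your added remark about why a direct Hessian characterization of strict convexity would not suffice is a nice clarification, but the reduction and the two-direction argument match the paper's proof.
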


\begin{proof}
The proof is almost identical to the proof of
\aaa{Proposition}~\ref{thm:strong.convexity.NPhard.even.deg}. Let
$q$ be defined as in (\ref{eq:q.in.reduction.strong.convexity}).
If $p$ is convex, then we established that $q$ is strongly convex
and hence also strictly convex. If $p$ is not convex, we showed
that $q$ is not convex and hence also not strictly convex.
\end{proof}

\section{Complexity of deciding quasiconvexity and
pseudoconvexity}\label{sec:quasi.pseudo}
\subsection{Definitions and basics}\label{subsec:quasi.pseudo.basics}

\begin{definition}\label{def:quasiconvex.fn}
A function $f:\mathbb{R}^n\rightarrow\mathbb{R}$ is
\emph{quasiconvex} if \aaan{its sublevel sets}
\begin{equation}\label{eq:sublevel.sets}
\mathcal{S}(\alpha)\mathrel{\mathop:}=\{x\in\mathbb{R}^n \mid
f(x)\leq\alpha\},
\end{equation}for all $\alpha\in\mathbb{R}$, \aaan{are} convex.
\end{definition}

\begin{definition}\label{def:pseudoconvex.fn}
A differentiable function $f:\mathbb{R}^n\rightarrow\mathbb{R}$ is
\emph{pseudoconvex} if the implication
\begin{equation}\label{eq:pseudoconvexity.defn}
\nabla f(x)^T(y-x)\geq0  \ \Longrightarrow \ f(y)\geq f(x)
\end{equation}
holds for all $x$ and $y$ \aaa{in $\mathbb{R}^n$}.
\end{definition}

The following implications are well-known \aaa{(see e.g.~\cite[p.
143]{NLP_Book_Bazaraa})}:
\begin{equation}\label{eq:implications.convex.quasi.pseudo}
\mbox{convexity} \ \Longrightarrow \  \mbox{pseudoconvexity}\
\Longrightarrow \ \mbox{quasiconvexity},
\end{equation}
but the converse of neither implication is true in general.

\subsection{Degrees that are easy}\label{subsec:quasi.pseudo.easy.degrees}
\aaa{As we remarked earlier, linear polynomials are always convex
and hence also pseudoconvex and quasiconvex. Unlike convexity,
however, it is possible for polynomials of odd degree $d\geq3$ to
be pseudoconvex or quasiconvex.}
% In fact, we already presented simple
%cubic examples such as $x^3$ that was quasiconvex, and $x^3+x$
%that was pseudoconvex.
We will show in this section that \alex{somewhat} surprisingly,
quasiconvexity and pseudoconvexity of polynomials of any fixed odd
degree can be decided in polynomial time. Before we present these
results, we will \alex{cover} the easy case of quadratic
polynomials.

\aaa{
\begin{proposition}\label{thm:conv=quasi=pseudo.for.quadratics}
For a quadratic polynomial $p(x)=\frac{1}{2}x^TQx+q^Tx+c$, the
notions of convexity, pseudoconvexity, and quasiconvexity are
equivalent, and can be decided in polynomial time.
\end{proposition}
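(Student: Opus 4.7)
The plan is to establish the chain of equivalences by combining the two standard implications recalled in (\ref{eq:implications.convex.quasi.pseudo}) with a converse that is special to the quadratic case. By (\ref{eq:implications.convex.quasi.pseudo}) we already have
\[
\textrm{convexity} \ \Longrightarrow \ \textrm{pseudoconvexity} \ \Longrightarrow \ \textrm{quasiconvexity},
\]
so it suffices to prove that a quasiconvex quadratic polynomial is convex. The decidability claim will then follow because convexity of $p(x)=\frac{1}{2}x^TQx+q^Tx+c$ is equivalent to $Q\succeq 0$, and the latter can be decided in polynomial time, e.g.\ by Gaussian pivoting along the diagonal or by computing the characteristic polynomial exactly and checking sign alternation, exactly as noted in Section~\ref{subsec:convexity.easy.degrees}.

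For the nontrivial implication, I would argue by contrapositive. Suppose $p$ is not convex; then $Q$ is not positive semidefinite, so there exist a scalar $\lambda<0$ and a unit vector $v\in\mathbb{R}^n$ with $Qv=\lambda v$. Restrict $p$ to the line $\{tv:t\in\mathbb{R}\}$ to obtain the univariate polynomial
\[
g(t) \mathrel{\mathop:}= p(tv) = \tfrac{1}{2}\lambda\, t^2 + (q^Tv)\, t + c,
\]
which is a strictly concave quadratic in $t$ since its leading coefficient $\frac{1}{2}\lambda$ is negative. Let $M\mathrel{\mathop:}=\max_{t\in\mathbb{R}} g(t)<+\infty$, and fix any $\alpha<M$. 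The sublevel set $\{t\in\mathbb{R}:g(t)\le\alpha\}$ is the complement of a nonempty open interval and hence is not convex; pulling this back to $\mathbb{R}^n$ shows that the sublevel set $\mathcal{S}(\alpha)=\{x\in\mathbb{R}^n:p(x)\le\alpha\}$ is not convex either, so $p$ is not quasiconvex. This contradicts the assumption and proves the implication.

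The main (and really only) subtlety is making sure that the one-dimensional restriction is genuinely quadratic, i.e.\ that the coefficient of $t^2$ does not accidentally vanish; this is guaranteed because the eigenvalue $\lambda$ is strictly negative and $v\ne 0$. No further work is needed: combined with (\ref{eq:implications.convex.quasi.pseudo}) this gives the three-way equivalence, and polynomial-time decidability is inherited from polynomial-time decidability of positive semidefiniteness of the constant matrix $Q$.
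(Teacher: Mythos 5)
Your proposal is correct and follows essentially the same route as the paper's proof: restrict $p$ along an eigenvector of a negative eigenvalue of $Q$, observe that the resulting univariate quadratic has negative leading coefficient and hence is not quasiconvex, and conclude via the standard implication chain, with decidability inherited from checking $Q\succeq 0$. The extra detail you supply (explicitly exhibiting the nonconvex sublevel set of the concave univariate quadratic and pulling it back to $\mathbb{R}^n$) simply fills in steps the paper leaves to the reader.
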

}
\begin{proof}
We \alex{argue} that the quadratic polynomial \alex{$p(x)$} is
convex if and only if
it is quasiconvex. %In view of implication
%(\ref{eq:implications.convex.quasi.pseudo}), this also shows the
%equivalence to pseudoconvexity.
\alex{Indeed, if $p(x)$ is not convex, then $Q$ has a negative
eigenvalue; letting $\bar{x}$ be \aaan{a} corresponding
eigenvector, we have that $p(t \bar{x})$ is a \jnt{quadratic}
polynomial in $t$, with negative leading coefficient, so $p(t
\bar{x})$ is not quasiconvex, \jnt{as a function of $t$.} This,
however, implies that $p(x)$ is not quasiconvex.}

\aaa{We have already argued in
Section~\ref{subsec:convexity.easy.degrees} that convexity of
quadratic polynomials can be decided in polynomial time.}
%Convex functions are always quasiconvex. For the converse
%direction, assume that $p(x)=\frac{1}{2}x^TQx+q^Tx+c$ is not
%convex. This means that the symmetric matrix $Q$ has at least one
%negative eigenvalue. Let $\bar{x}\neq0$ be an eigenvector
%corresponding to a negative eigenvalue of $Q$. So, we have
%$\bar{x}^TQ\bar{x}<0$. Fix any $\epsilon>0$. Because the quadratic
%term in $p(x)$ dominates the linear and constant term for $||x||$
%large, we can find a scalar $b$ large enough such that
%$p(b\bar{x})<c-\epsilon$ and $p(-b\bar{x})<c-\epsilon$. On the
%other hand, the origin is on the line between $b\bar{x}$ and
%$-b\bar{x}$ and satisfies $p(0)=c$. This shows that the sublevel
%set $\mathcal{S}_p(c-\epsilon)$ is not convex, and hence $p(x)$ is
%not quasiconvex.
\end{proof}

\subsubsection{Quasiconvexity of polynomials of odd degree}
\label{subsubsec:quasi.odd.degree} In this subsection, we provide
a polynomial time algorithm for checking whether an odd-degree
polynomial is quasiconvex. \aaan{Towards this goal, we will first
show that quasiconvex polynomials of odd degree have a very
particular structure
(Proposition~\ref{prop:pseudo.odd.degree.special.struc.}).}

%We will begin by arguing that the set of quasiconvex polynomials
%of odd degree is quite small; in particular, we will show that any
%such polynomial has a very particular representation.

Our first lemma concerns quasiconvex polynomials of odd degree in
\aaan{one} variable. \aaar{The proof is easy and left to the
reader. A version of this lemma is provided in~\cite[p.\
99]{BoydBook}, though there also without proof.}

% \jntn{A version of this lemma is provided
%in~\cite[p.\ 99]{BoydBook}, without proof. We provide here a
%proof, for completeness.}

\begin{lemma} \label{lemma:quasiconvex-odd-degree} Suppose that
$p(t)$ is a quasiconvex univariate polynomial of odd degree. Then,
$p(t)$ is monotonic.
\end{lemma}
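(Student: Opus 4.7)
The approach is to translate quasiconvexity into the statement that every sublevel set of $p(t)$ is an interval, and then exploit the odd-degree hypothesis to rule out all non-monotonic behavior via the end behavior $p(t)\to\pm\infty$ as $t\to\pm\infty$. The first reduction I would make is the standard observation that a univariate function is quasiconvex in the sense of Definition~\ref{def:quasiconvex.fn} if and only if each sublevel set $\mathcal{S}(\alpha)$ is an interval (possibly empty, a ray, or all of $\mathbb{R}$), since these are exactly the convex subsets of $\mathbb{R}$.

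Next I would establish the key structural lemma: \emph{a quasiconvex univariate polynomial has no strict local maximum.} Indeed, if $c$ were a strict local max of $p(t)$, then for some $\epsilon>0$ we would have $p(c\pm\epsilon)<p(c)$, and choosing $\alpha$ with $\max\{p(c-\epsilon),p(c+\epsilon)\}<\alpha<p(c)$ would make $c-\epsilon$ and $c+\epsilon$ lie in $\mathcal{S}(\alpha)$ while the intermediate point $c$ does not, contradicting that $\mathcal{S}(\alpha)$ is an interval. (A polynomial cannot be locally constant unless it is globally constant, so every critical point where $p'$ changes sign is in fact a strict extremum.)

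Then I would use the odd-degree hypothesis to upgrade ``no strict local max'' to monotonicity. Without loss of generality the leading coefficient of $p$ is positive (the other case is symmetric, replacing $t$ by $-t$). Then $p'$ has even degree $d-1$ with positive leading coefficient, so $p'(t)\to+\infty$ as $t\to\pm\infty$. Suppose $p$ is not monotonic; then $p'$ is not everywhere nonnegative, so it takes a negative value somewhere, which forces $p'$ to change sign at least twice (from $+$ to $-$ and later from $-$ to $+$). Look at the leftmost sign-change of $p'$: since $p'$ is positive for $t$ sufficiently negative, this leftmost sign-change must be from positive to negative, which produces a strict local maximum of $p$ — contradicting the lemma above. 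Hence $p'$ does not change sign, so $p'$ is either nonnegative everywhere or nonpositive everywhere (zeros of even multiplicity are harmless), proving that $p$ is monotonic.

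The argument is essentially routine; the only mild subtlety is making sure that zeros of $p'$ without sign change do not obstruct the conclusion, which is handled by the observation that such zeros still leave $p$ monotonic. There is no serious obstacle.
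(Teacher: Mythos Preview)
Your argument is correct. The paper itself omits the proof of this lemma (leaving it to the reader and citing~\cite[p.~99]{BoydBook}, where it is also stated without proof), so there is no approach to compare against; your route via ``quasiconvexity forbids strict local maxima'' combined with the end behavior of $p'$ forced by the odd-degree hypothesis is a standard and clean way to fill in the details.
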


%%AAA:commenting out the proof because of the comment by the referee.
%\begin{proof} Suppose that monotonicity does not hold for the points $a < b < c$. This can happen
%in two ways: \[ p(a)<p(b)\ \ {\rm and}\ \ p(c)<p(b),\] or \[ p(a)
%> p(b)\ \ {\rm and}\ \   p(c)>p(b).
%\] In the first case, pick $\alpha$ such that $p(b) > \alpha >
%\max(p(a),p(c))$. The \aaan{sublevel} set $\{x\mid p(x) \leq
%\alpha \}$ includes $a$ and $c$, but not $b$, and is therefore
%non-convex, which contradicts the quasiconvexity of $p$.
%
%In the second case, pick $\alpha$ such that $\min(p(a),p(c)) >
%\alpha > p(b)$, and consider the \aaan{sublevel} set
%$\aaan{\mathcal{S}(\alpha)} = \{ x \mid p(x) \leq \alpha \}$. If
%the leading coefficient of $p$ is positive, then
%$\aaan{\mathcal{S}(\alpha)}$ contains contains $b$ and some point
%to the left of $a$ (since $\lim_{t \rightarrow -\infty} p(t) =
%-\infty$) but not $a$, which is a contradiction. Similarly, if the
%leading coefficient of $p$ is negative, then
%$\aaan{\mathcal{S}(\alpha)}$ contains $b$ and some point to the
%right of $c$ but not $c$, which is a contradiction.
%\end{proof}

Next, we use the \aaan{preceding lemma} to characterize the
complements of \aaan{sublevel} sets of quasiconvex polynomials of
odd degree.

\begin{lemma} \label{lemma:halfspaces} Suppose that $p(x)$
is a quasiconvex polynomial of odd degree $d$. Then the set $\{ x
\mid  p(x) \geq \alpha \}$ is convex.
\end{lemma}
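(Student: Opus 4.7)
The plan is to argue by contradiction. Suppose the set $\{x \in \mathbb{R}^n \mid p(x) \geq \alpha\}$ is not convex; then there exist $x, y \in \mathbb{R}^n$ with $p(x), p(y) \geq \alpha$ and some $\lambda_0 \in (0,1)$ such that the point $z_0 := (1-\lambda_0)x + \lambda_0 y$ satisfies $p(z_0) < \alpha$. Replacing $\alpha$ with any value strictly between $p(z_0)$ and $\alpha$ (say the average), I may assume without loss of generality that $p(x) > \alpha$ and $p(y) > \alpha$ hold strictly, while still $p(z_0) < \alpha$. This strict version is all that is needed later to leave room for a small perturbation.

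The natural idea is to restrict $p$ to the affine line through $x$ and $y$. Define $q(t) := p(x + t(y-x))$; this is a univariate polynomial of degree at most $d$, and it is quasiconvex because its sublevel sets are intersections of sublevel sets of $p$ with a line and hence convex. If $q$ has odd degree, then Lemma~\ref{lemma:quasiconvex-odd-degree} gives that $q$ is monotonic on $\mathbb{R}$, which is incompatible with $q(0) > \alpha$, $q(\lambda_0) < \alpha$, $q(1) > \alpha$, and yields a contradiction at once. The only obstruction is that $q$ might have even degree, which happens exactly when the degree-$d$ homogeneous part $p_d$ of $p$ (nonzero since $\deg p = d$) vanishes on the direction $y - x$.

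To rule this out, I perturb $y$. Choose any $v \in \mathbb{R}^n$ with $p_d(v) \neq 0$, and consider $y_\delta := y + \delta v$. The scalar polynomial $\delta \mapsto p_d(y_\delta - x) = p_d(y - x + \delta v)$ has leading coefficient $p_d(v) \neq 0$ in $\delta^d$, hence is a nonzero polynomial in $\delta$ and vanishes only at finitely many values. Combined with continuity of $p$, this allows me to pick $\delta > 0$ small enough that simultaneously (i) $p_d(y_\delta - x) \neq 0$, (ii) $p(y_\delta) > \alpha$, and (iii) $p((1-\lambda_0)x + \lambda_0 y_\delta) < \alpha$. Replacing $y$ by $y_\delta$, the univariate polynomial $q(t) := p(x + t(y_\delta - x))$ now has degree exactly $d$, which is odd, and is quasiconvex. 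Lemma~\ref{lemma:quasiconvex-odd-degree} then forces $q$ to be monotonic on $\mathbb{R}$, contradicting $q(0), q(1) > \alpha$ together with $q(\lambda_0) < \alpha$. The main subtlety lies in this perturbation step: the three conditions (i)--(iii) are each open (or generic) in $\delta$, so they can be arranged jointly, but the prior reduction to strict inequalities at $x$ and $y$ is what guarantees conditions (ii) and (iii) survive under a small shift of $y$.
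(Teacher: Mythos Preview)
Your proof is correct and follows essentially the same approach as the paper: argue by contradiction, restrict to the line through $x$ and $y$, invoke Lemma~\ref{lemma:quasiconvex-odd-degree} when the restriction has odd degree, and perturb the endpoints to force degree exactly $d$ otherwise. The differences are cosmetic --- you perturb only $y$ along a direction $v$ with $p_d(v)\neq 0$ and first pass to strict inequalities by shrinking $\alpha$, while the paper perturbs both endpoints within a small ball and uses the strict inequality $q(0),q(1)>q(t')$ directly. One small imprecision: you write that $q$ having even degree ``happens exactly when'' $p_d(y-x)=0$, but in fact $p_d(y-x)=0$ only forces $\deg q<d$, which could still be odd; this does not affect your argument, since your perturbation forces $\deg q=d$ regardless.
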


\begin{proof} Suppose not. In that case, there exist $x,y,z$ such that
$z$ is on the line segment connecting $x$ and $y$, \jntn{and such
that} $p(x), p(y) \geq \alpha$ but $p(z) < \alpha$. Consider the
polynomial
\[ q(t)=p(x + t(y-x)).\] This is, of course, a quasiconvex
polynomial with $q(0)=p(x)$, $q(1)=p(y)$, and $q(t')=p(z)$, for
some $t' \in (0,1)$. If $q(t)$ has degree $d$, then, by Lemma
\ref{lemma:quasiconvex-odd-degree}, it must be monotonic, which
immediately provides a contradiction.

Suppose now that $q(t)$ has degree less than $d$. Let us attempt
to perturb $x$ to $x+x'$, and $y$ to $y+y'$, so that the new
polynomial \[ \hat{q}(t)=p \left( x+x' + t(y+y'-x-x') \right)\]
has the following two properties: (i) $\hat{q}(t)$ is a polynomial
of degree $d$, and (ii) $\hat{q}(0)
> \hat{q}(t')$, $\hat{q}(1) > \hat{q}(t')$. If such perturbation
vectors $x', y'$ can be found, then we obtain a contradiction as
in the previous paragraph.

To satisfy condition (ii), it suffices (by continuity) to take
$x',y'$ with $\|x'\|, \|y'\|$ small enough. Thus, we \jntn{only}
need to argue that we can find arbitrarily small $x',y'$ that
satisfy condition (i). Observe that  the \jntn{coefficient of
$t^d$ in the polynomial} $\hat{q}(t)$ is a nonzero polynomial in
$x+x', y+y'$; let us denote \jntn{that coefficient} as
$\jntn{r}(x+x',y+y')$. \jntn{Since $r$ is a nonzero polynomial, it
cannot vanish at all points of any given ball. Therefore, even
when considering a small ball around $(x,y)$ (to satisfy condition
(ii)), we can find $(x+x',y+y')$ in that ball, with
$r(x+x',y+y')\neq 0$, thus establishing that the degree of $\hat
q$ is indeed $d$.} This completes the proof.

\old{Since the only polynomial which is zero on a ball of positive
radius is the zero polynomial, and $l$ is nonzero, we have that no
matter how small a ball ${\cal B}$ \aaa{we} take around $(x,y)$
there are some $x',y'$ with $(x+x',y+y') \in {\cal B}$ and
$l(x+x', y+y') \neq 0$. \aaa{[* Can you rephrase this argument?
Right now it is a bit confusing because it says that we know the
leading coefficient is non-zero but then for any ball we can take
the leading coefficient to be non-zero .... ?? *]} This completes
the proof.}
\end{proof}

%\jntn{We now proceed to the desired representation of quasiconvex
%polynomials of odd degree.}

\jntn{We now proceed to \aaan{a characterization} of quasiconvex
polynomials of odd degree.}

%\jntn{CHANGED THE NEXT THM TO A PROPOSITION}

%%%%%%%% OLD
\old{ \alex{Of course, by definition, the closed set
$\{\aaa{x\mid} p(x) \leq \alpha \}$ is convex for a quasiconvex
$p(x)$; the previous lemma says that the \aaa{closure of the}
complement of this set is also convex. It is a simple exercise to
see that \aaa{if a closed set is convex, and the closure of its
complement is also convex, then the set must be a halfspace.}
\aaa{[* Is the last sentence OK? *]} Thus, we conclude that every
\aaa{sub}level set $\{\aaa{x\mid} p(x) \leq \alpha\}$ is a
halfspace, i.e., it can be written as $\{\jnt{x\mid} q^T x \leq
c\}$ for some $q \in \mathbb{R}^n, c \in \mathbb{R}$. This of
course implies that the set $\{x\mid p(x) = \alpha \}$ has the
representation $\{x\mid q^T x = \aaa{c}\}$.}

\alex{Moreover, we can say something \aaa{more} about the normal
vectors of these halfspaces. Suppose that the set $\{x\mid p(x) =
\aaa{\alpha_1} \}$ can be written as $\{x\mid q_1^T x = c_1 \}$
and that the set $\{x\mid p(x) = \aaa{\alpha_2} \}$ can be written
as $\{x \mid q_2^T x = c_2 \}$. Then $q_1$ and $q_2$ are multiples
of each other. Indeed, this follows from the definition of a
\aaa{sub}level set: if $q_1, q_2$ were not multiples of each
other, one could find points which are in exactly one of the two
sets $\{x\mid p(x) \leq \aaa{\alpha_1}\}, \{x\mid p(x) \leq
\aaa{\alpha_2}\}$. But this cannot be because one of these sets
must be contained in the other. \aaa{[* Do the last two sentences
need rephrasing? Even if one set is contained in another, one
could find points that are exactly in one of the two sets. *]} }

\alex{Let us then pick one of these normal vectors $q$
arbitrarily, i.e., let $\{x\mid q^T x = c \}$ be a representation
of the level set $\{x\mid p(x) = 1 \}$. The discussion in the
previous paragraph implies that $p(x)$ is completely determined by
$q^T x$, i.e., if $q^T x_1 = q^T x_2$ then $p(x_1)=p(x_2)$. }

\alex{These observations are now sufficient to establish the
following theorem.} }
%%%%%%%%%  END OLD

\begin{proposition}\label{prop:pseudo.odd.degree.special.struc.} %label should be prop:quasi.... but i didn't change it.
Let $p(x)$ be a polynomial of odd degree $d$. Then, $p(x)$ is
quasiconvex if and only if it can be written as
\begin{equation} \label{quasiconvexdecomposition}
p(x)=h(\xi^Tx),
\end{equation}
for some \jntn{nonzero} $\xi \in \mathbb{R}^n$, and for some
\jntn{monotonic univariate} polynomial \jntn{$h(t)$ of degree $d$.
If, in addition, we require the nonzero component of $\xi$ with
the smallest index to be equal to unity, then $\xi$ and $h(t)$ are
uniquely determined by $p(x)$.}
\end{proposition}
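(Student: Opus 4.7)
For the easy direction, suppose $p(x)=h(\xi^Tx)$ with $h$ a monotonic univariate polynomial of degree $d$. Then for any $\alpha$, the sublevel set $\{p\le\alpha\}$ is the preimage under the linear map $x\mapsto\xi^Tx$ of $h^{-1}((-\infty,\alpha])$, which is a half-line by monotonicity of $h$; hence the sublevel set is a halfspace, and $p$ is quasiconvex.

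For the hard direction, assume $p$ is quasiconvex of odd degree $d\ge 3$. I will proceed in three stages. \textbf{Stage 1 (each level set of $p$ is a hyperplane).} Write $A_\alpha:=\{p\le\alpha\}$, $B_\alpha:=\{p\ge\alpha\}$, and $L_\alpha:=A_\alpha\cap B_\alpha=\{p=\alpha\}$. By Definition~\ref{def:quasiconvex.fn}, $A_\alpha$ is convex, and by Lemma~\ref{lemma:halfspaces}, $B_\alpha$ is convex, so $L_\alpha$ is convex. If $L_\alpha$ contains two distinct points $x_1,x_2$, convexity places the whole segment in $L_\alpha$; but a univariate polynomial that vanishes on a segment vanishes identically, so the entire line through $x_1,x_2$ lies in $L_\alpha$. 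Hence $L_\alpha$ is a union of lines, and being convex, an affine subspace. Because $p$ has odd degree it is surjective, so $\{p<\alpha\}$ and $\{p>\alpha\}$ are both nonempty open sets whose disjoint union is $\mathbb{R}^n\setminus L_\alpha$. If $\dim L_\alpha\le n-2$, then $\mathbb{R}^n\setminus L_\alpha$ would be connected, contradicting this partition; hence $\dim L_\alpha=n-1$, and $L_\alpha$ is an affine hyperplane.

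\textbf{Stage 2 (the level hyperplanes are parallel).} For $\alpha_1<\alpha_2$ we have $A_{\alpha_1}\subseteq A_{\alpha_2}$, and Stage~1 implies both are closed halfspaces. A nested pair of halfspaces must have parallel bounding hyperplanes, so all $L_\alpha$ share a common normal direction $\xi\in\mathbb{R}^n\setminus\{0\}$. \textbf{Stage 3 (assembling $h$).} Since $p$ is constant on each hyperplane $\{\xi^Tx=c\}$, the value $p(x)$ depends only on $\xi^Tx$; define $h(t):=p\bigl(t\xi/\|\xi\|^2\bigr)$. Then $h$ is a univariate polynomial (the restriction of $p$ to a line) satisfying $p(x)=h(\xi^Tx)$, and comparing degrees in $x$ forces $\deg h=d$. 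Finally, $h$ is itself a quasiconvex univariate polynomial of odd degree $d$ (its sublevel sets pull back to those of $p$), so Lemma~\ref{lemma:quasiconvex-odd-degree} gives that $h$ is monotonic.

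\textbf{Uniqueness and main obstacle.} The level hyperplanes of $p$ determine $\xi$ up to a nonzero scalar, and requiring the first nonzero component of $\xi$ to equal $1$ pins down $\xi$ uniquely; then $h$ is determined from $p(x)=h(\xi^Tx)$ since $\xi^Tx$ ranges over all of $\mathbb{R}$. The main obstacle in the argument is Stage~1---in particular the topological step forcing $L_\alpha$ to have codimension exactly one via connectedness of the complement---after which Stages~2 and~3 and uniqueness are routine.
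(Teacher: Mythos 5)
Your proof is correct and follows essentially the same route as the paper: use Lemma~\ref{lemma:halfspaces} together with quasiconvexity to conclude the sublevel sets are halfspaces, deduce from nestedness that all bounding hyperplanes share a common normal $\xi$, define $h$ by restriction, invoke Lemma~\ref{lemma:quasiconvex-odd-degree} for monotonicity, and normalize $\xi$ for uniqueness. The only difference is that you spell out the halfspace step (level set convex $\Rightarrow$ affine subspace, codimension one by connectedness and odd-degree surjectivity), which the paper leaves as a verification for the reader.
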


\begin{proof} \old{\jntn{PROOF REWRITTEN}} It is easy to see that any polynomial
that can be written in the above form is quasiconvex. In order to
prove the converse, let us assume that $p(x)$ is quasiconvex. By
the definition of quasiconvexity, the closed set
$\aaan{\mathcal{S}}(\alpha)=\{x\mid p(x) \leq \alpha \}$ is
convex. On the other hand, Lemma \ref{lemma:halfspaces} states
that the closure of the complement of $\aaan{\mathcal{S}}(\alpha)$
is also convex. It is not hard to verify that, as a consequence of
these two properties, the set $\aaan{\mathcal{S}}(\alpha)$ must be
a halfspace. Thus, for any given $\alpha$, the sublevel set
$\aaan{\mathcal{S}}(\alpha)$ can be written as $\{x\mid
\xi(\alpha)^T x \leq c\aaan{(\alpha)}\}$ for some $\xi(\alpha) \in
\mathbb{R}^n$ and $ c(\alpha) \in \mathbb{R}$. This of course
implies that the level sets $\{x\mid p(x) = \alpha \}$ are
hyperplanes of the form $\{x\mid \xi(\alpha)^T x = c(\alpha)\}$.

We note that the sublevel sets are necessarily nested: if
$\alpha<\beta$, then $\aaan{\mathcal{S}}(\alpha)\subseteq
\aaan{\mathcal{S}}(\beta)$. An elementary consequence of this
property is that the hyperplanes must be collinear, i.e., that the
vectors $\xi(\alpha)$ must be positive multiples of each other.
Thus, by suitably scaling the coefficients $c(\alpha)$, we can
assume, without loss of generality, that $\xi(\alpha)=\xi$, for
some $\xi \in \mathbb{R}^n$, and for all $\alpha$. We then have
that $\{x\mid p(x)=\alpha\}=\{x\mid \xi^T x= c(\alpha)\}$.
Clearly, there is a one-to-one correspondence between $\alpha$ and
$c(\alpha)$, and therefore the value of $p(x)$ is completely
determined by $\xi^T x$. In particular, there exists a function
$h(t)$ such that $p(x)=h(q^Tx)$. Since $p(x)$ is a polynomial of
degree $d$, it follows that $h(t)$ is a univariate polynomial of
degree $d$. Finally, we observe that if $h(t)$ is not monotonic,
then $p(x)$ is not quasiconvex. This proves that a representation
of the desired form exists. Note that by suitably scaling $\xi$,
we can also impose the condition that the nonzero component of
$\xi$ with the smallest index is equal to one.

Suppose that now that $p(x)$ can also be represented in the form
$p(x)=\bar h(\bar \xi^T x)$ for some other polynomial $\bar h(t)$
and vector $\aaan{\bar{\xi}}$. Then, the gradient vector of $p(x)$
must be proportional to both $\xi$ and $\bar\xi$. The vectors
$\xi$ and $\bar \xi$ are therefore collinear. Once we impose the
requirement that the nonzero component of $\xi$ with the smallest
index is equal to one, we obtain that $\xi=\bar\xi$ and,
consequently, $h=\bar h$. This establishes the claimed uniqueness
of the representation.
\end{proof}
%%%%%%%% OLD
\old{ \alex{\noindent {\it \aaa{Remark.}} Observe that if Eq.\
(\ref{quasiconvexdecomposition}) holds for one \jnt{choice of}
$q,h$, then it holds for infinitely many \jnt{choices of $q,h$.}
For example, one can multiply every entry of the vector $q$ by
some number $c$ while multiplying the $i$th coefficient of $h$ by
$1/c^i$. If the original $h$ is monotonic, the new $h$ will be as
well, since it can be written as $h(x/c)$. In particular, we may
assume without loss of generality that $h$ is monic.
Alternatively, we may assume without loss of generality that
$q_1=1$; this may be assured by the above transformation unless it
happens that $q_1=0$, in which case $p(x)$ does not depend on
$x_1$.} }
%%%%%%%%% END OLD

\bigskip
\noindent {\it Remark.} It is not hard to see that if $p(x)$ is
homogeneous and quasiconvex, then one can additionally conclude
that $h(t)$ can be taken to be $h(t)=t^d$, where $d$ is the degree
of $p(x)$.

\begin{theorem}\label{thm:quasi.odd.degree.poly.time}
\jntn{For any fixed odd degree $d$, the} quasiconvexity of
polynomials of degree $d$ can be checked in polynomial time.
\end{theorem}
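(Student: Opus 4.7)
The plan is to turn the structural characterization of Proposition~\ref{prop:pseudo.odd.degree.special.struc.} into a concrete polynomial-time algorithm. By that proposition, an odd-degree polynomial $p(x)$ is quasiconvex if and only if there exist a nonzero $\xi\in\mathbb{R}^n$ and a monotonic univariate polynomial $h(t)$ of degree $d$ such that $p(x)=h(\xi^T x)$. The algorithm therefore has three stages: (i) extract a candidate $\xi$; (ii) verify that $p$ really factors through $\xi^T x$ and recover $h$; (iii) test monotonicity of $h$. Each stage will be shown to take time polynomial in the bit-size of the input.

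\textbf{Stage (i): extracting $\xi$.} If $p(x)=h(\xi^Tx)$ with $h$ of degree $d$ and leading coefficient $c_d\neq 0$, then the degree-$d$ homogeneous component $p_d(x)$ of $p$ must equal $c_d(\xi^T x)^d$. So I would first isolate $p_d(x)$ and test whether it is a $d$-th power of a linear form. Since $d$ is odd, the real $d$-th root is uniquely defined, and one way to find $\xi$ is as follows: if $p_d(x)=c_d(\xi^Tx)^d$, then $\nabla p_d(x)=d c_d(\xi^Tx)^{d-1}\xi$, so the gradient is proportional to $\xi$ at every point. Equivalently, the vector of coefficients of $\partial p_d/\partial x_i$ viewed as a polynomial in $x$ is, for each $i$, a scalar multiple of a common vector; we can read off the ratios $\xi_i/\xi_1$ from the coefficients of the monomial $x_1^{d-1}$ in each $\partial p_d/\partial x_i$, provided that coefficient is nonzero for $i=1$ (otherwise permute variables). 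Normalize so that the first nonzero component of $\xi$ equals $1$, and then check directly whether the reconstructed form $c_d(\xi^T x)^d$ really equals $p_d(x)$; if not, declare $p$ not quasiconvex.

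\textbf{Stage (ii): verifying the factorization and recovering $h$.} With the candidate $\xi$ in hand, I would check the condition
\[
\xi_j\,\frac{\partial p}{\partial x_i}(x)\;=\;\xi_i\,\frac{\partial p}{\partial x_j}(x),\qquad 1\leq i,j\leq n.
\]
This is a finite set of polynomial identities in $x$, which can be tested in polynomial time simply by comparing coefficients. As noted in my sketch, these identities are equivalent to $\nabla p(x)$ being everywhere parallel to $\xi$, which in turn is equivalent to $p$ being constant on every hyperplane $\{\xi^T x=\text{const}\}$; hence to the existence of some univariate $h$ (automatically a polynomial of degree $\leq d$) with $p(x)=h(\xi^T x)$. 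Once this test is passed, I recover $h$ by substituting any convenient $x(t)$ with $\xi^T x(t)=t$ (for example, $x(t)=(t/\xi_k)\,e_k$ where $\xi_k$ is the first nonzero entry of $\xi$) and expanding $p(x(t))$ as a univariate polynomial in $t$; this is again a polynomial-time operation.

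\textbf{Stage (iii): monotonicity of $h$.} Finally, $h(t)$ is monotonic if and only if $h'(t)$ has constant sign on $\mathbb{R}$, i.e., $h'(t)\geq 0$ for all $t$ or $-h'(t)\geq 0$ for all $t$. Since $h'$ is a univariate polynomial, nonnegativity of $h'$ can be decided in polynomial time by several classical methods—for instance, by computing a Sturm sequence and counting real roots, or by performing a square-free factorization and checking that every real root has even multiplicity while the leading coefficient is nonnegative. Accept if either $h'$ or $-h'$ passes this test; otherwise reject.

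\textbf{Correctness and the main obstacle.} Correctness follows from Proposition~\ref{prop:pseudo.odd.degree.special.struc.}: if $p$ is quasiconvex the algorithm finds the (essentially unique) $\xi$ in Stage (i), verifies it in Stage (ii), and confirms monotonicity in Stage (iii); conversely, if the algorithm accepts, then $p(x)=h(\xi^T x)$ with $h$ monotonic, and such a $p$ is manifestly quasiconvex. The main delicate point I expect is Stage (i): one must argue carefully that testing whether $p_d(x)$ is a $d$-th power of a linear form, and extracting that linear form, can be done in polynomial time and with rational arithmetic (using the fact that $d$ is odd so that the real $d$-th root of a rational number appears only through the ratios $\xi_i/\xi_1$, which are themselves rational). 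After handling this bookkeeping, everything else reduces to comparing polynomial coefficients and running a standard univariate real-root routine, both of which are polynomial time.
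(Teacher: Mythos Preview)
Your proposal is correct and follows essentially the same strategy as the paper: extract a candidate direction $\xi$, verify that $p$ factors as $h(\xi^Tx)$, recover $h$, and test monotonicity of the resulting univariate polynomial. The only cosmetic differences are that the paper reads off $\xi$ from proportionality of the components of the full gradient $\nabla p$ (rather than from the top-degree part $p_d$) and recovers $h$ by interpolation at the $d+1$ points $k\xi$, $k=1,\ldots,d+1$ (rather than by direct substitution along a line).
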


\begin{proof} \old{\jntn{NEW PROOF}}
The algorithm consists of attempting to build a representation of
$p(x)$ of the form given in Proposition
\ref{prop:pseudo.odd.degree.special.struc.}. The polynomial $p(x)$
is quasiconvex if and only if the attempt is successful.

Let us proceed under the assumption that $p(x)$ is quasiconvex. We
differentiate $p(x)$ symbolically to obtain its gradient vector.
Since a representation of the form given in Proposition
\ref{prop:pseudo.odd.degree.special.struc.} exists, the gradient
is of the form $\nabla p(x) =\xi h'(\xi^T x)$, where $h'(t)$ is
the derivative of $h(t)$. In particular, the different components
of the gradient are polynomials that are proportional to each
other. (If they are not proportional, we conclude that $p(x)$ is
not quasiconvex, and the algorithm terminates.) By considering the
ratios between different components, we can identify the vector
$\xi$, up to a scaling factor. By imposing the additional
requirement that the nonzero component of $\xi$ with the smallest
index is equal to one, we can identify $\xi$ uniquely.

We now proceed to identify the polynomial $h(t)$. For
$k=1,\ldots,d+1$, we evaluate $p(k\aaan{\xi})$, which must be
equal to $h(\xi^T\xi  k)$. We thus obtain the values of $h(t)$ at
$d+1$ distinct points, from which $h(t)$ is completely determined.
We then verify that $h(\xi^T x)$ is indeed equal to $p(x)$. This
is easily done, in polynomial time, by writing out the $O(n^d)$
coefficients of these two polynomials in $x$ and verifying that
they are equal. (If they are not all equal, we conclude that
$p(x)$ is not quasiconvex, and the algorithm terminates.)

Finally, we test whether the above constructed univariate
polynomial $h$ is monotonic, i.e., whether its derivative $h'(t)$
is either nonnegative or nonpositive. This can be accomplished,
e.g., by quantifier elimination or by other well-known algebraic
techniques for counting the number and the multiplicity of real
roots of univariate polynomials;
see~\cite{Algo_real_algeb_geom_Book}. Note that this requires only
a constant number of arithmetic operations since the degree $d$ is
fixed. If $h$ fails this test, then $p(x)$ is not quasiconvex.
Otherwise, our attempt has been successful and we decide that
$p(x)$ is indeed quasiconvex.
\end{proof}
%%%%%%%% OLD
\old{
\begin{proof} \alex{Our input is a \jnt{a degree $d$ polynomial $p(x)$.} We will assume, without loss of generality, that $p(x)$ depends on each of the variables $x_1, \ldots, x_n$; we can ensure
that this is the case by going through all the monomials of $p$
and throwing out variables which do not appear in at least one
monomial.}

\alex{We attempt to find some $h,q$ that satisfy Eq.
(\ref{quasiconvexdecomposition}), with the additional proviso that
$q_1=1$. If we fail, then \jntn{Proposition}
\ref{prop:pseudo.odd.degree.special.struc.} and the ensuing Remark
allow us to conclude that the polynomial $p(x)$ is not
quasiconvex. If we succeed, and we are able to additionally check
that $h$ is monotonic, we can conclude that $p(x)$ is indeed
quasiconvex. }

\alex{We will adopt the notation \aaa{$c(i_1,i_2,\ldots,i_n)$} for
the coefficient of \aaa{$x_1^{i_1} x_2^{i_2}\cdots x_n^{i_n}$} in
$p(x)$. \aaa{[* This notation is inconsistent with the notation of
Section~\ref{subsec:convexity.basics}. *]} Our first observation
is that Eq. (\ref{quasiconvexdecomposition}) cannot hold if
$c(d,0, \ldots,0)=0$, in which case we can simply output that
$p(x)$ is not quasiconvex. Indeed, if $c(d,0, \ldots,0)=0$, then
if Eq. (\ref{quasiconvexdecomposition}) holds, $q_1=0$, and $p(x)$
is independent of $x_1$, which we assumed is not the case.}

\alex{Assume, then, that $c(d,0, \ldots, 0)$ is not zero. We
observe that, by the binomial formula, the numbers $q_2, \ldots,
q_n$ satisfy:}

\alex{\[ d q_2 = \frac{c(d-1,1,0,\ldots,0)}{c(d,0,\ldots,0)},\]
and \[ d q_3 = \frac{c(d-1,0,1,0\ldots,0)}{c(d,0,\ldots,0)},\] and
similarly \[ d q_k =
\frac{c(d-1,0,\ldots,0,1,0,\ldots,0)}{c(d,0,\ldots,0)},\] where
the $1$ in $c(d-1,0,\ldots,0,1,0,\ldots,0)$ is located in the
$k$th position. Moreover, writing $h(t)=\sum_{i=0}^d \jnt{h_i}
t^{i}$, the coefficients of $h$ can be read off \jnt{from
$h_0=c(0,0,\ldots,0)$ and}
\[ \jnt{h_i} = c(i,0,\ldots,0),\jnt{\qquad i=1,\ldots,\aaa{d}.}\] Thus if Eq. (\ref{quasiconvexdecomposition})
holds with $q_1=1$, it must hold with the $q_2, \ldots, q_n$ and
$h(t)$ satisfying the above equations.}

\alex{To complete our goal of checking whether $p(x)$ can be
written as $h(1 \cdot x_1 + q_2 x_2 + \cdots + q_n x_n)$, we must
check that each coefficient of $p(x)$ equals to the corresponding
coefficient of $h(q^T x)$.   To do this, we examine each
coefficient \aaa{[* Why should we check all coefficients? Wouldn't
it be less ambiguous to the reader if we say that we solve for
$q_i$ and $h_i$ based on some coefficients and then check that the
rest of the coefficients match? *]}
\[ c(i_1,i_2,\ldots,i_n)
\] and check that it equals
\[ \jnt{h_{i_1+\cdots+i_n}}
%h(\sum_{k=1}^n i_k)
~q_1^{i_1} \cdots q_n^{i_n}  \frac{ (i_1 + \cdots \aaa{+}
i_n)!}{i_1! i_2! \cdots i_n!}. \]} If any of these conditions is
violated, we output that $p(x)$ is not quasiconvex. If all of them
are satisfied, we proceed to the following step.

\alex{Now that we have established the decomposition $p(x)=h(q^T
x)$, we need to check whether the univariate polynomial $h$ is
monotonic, i.e., that its derivative $h'$ is either nonnegative or
nonpositive. This may be accomplished \aaa{e.g.} by quantifier
elimination \aaa{or by other well-known algebraic techniques for
counting the number and the multiplicity of real roots of
univariate polynomials; see~\cite{Algo_real_algeb_geom_Book}. Note
that this step of the algorithm takes constant time since the
degree $d$ is fixed.} We output that $p$ is quasiconvex if $h$ is
monotonic, and that it isn't otherwise.}
\end{proof}
}
%%%%%%%%% END OLD
\old{ \jntn{NEED TO CHANGE $q$ to $\xi$ IN THE NEXT SUBSECTION}}

\subsubsection{Pseudoconvexity of polynomials of odd degree}
\aaan{In analogy to
Proposition~\ref{prop:pseudo.odd.degree.special.struc.}, we
present next a characterization of odd degree pseudoconvex
polynomials, which gives rise to a polynomial time algorithm for
checking this property.}

\alex{\begin{corollary}\label{thm:quasi.odd.degree.special.struc.} %why is this the wrong label?
Let $p(x)$ be a polynomial of odd degree $d$. Then, $p(x)$ is
pseudoconvex if and only if $p(x)$ can be written in the form
\begin{equation} \label{pseudoconvexrepresentation}
p(x)=h(\aaan{\xi}^T x),
\end{equation}
for some $\aaan{\xi} \in \mathbb{R}^n$ and some univariate
polynomial $h$ of degree $d$ such that \aaa{its derivative}
$h'(t)$ has no real roots.
\end{corollary}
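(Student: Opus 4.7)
The plan is to deduce the corollary from Proposition~\ref{prop:pseudo.odd.degree.special.struc.} by leveraging the implication ``pseudoconvex $\Rightarrow$ quasiconvex'' from (\ref{eq:implications.convex.quasi.pseudo}), and then refining the ``monotonic'' conclusion to ``strictly monotonic with nonvanishing derivative'' by exploiting a feature unique to pseudoconvexity, namely, that every critical point must be a global minimum.

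For the forward direction, suppose $p(x)$ is pseudoconvex. Since pseudoconvexity implies quasiconvexity, Proposition~\ref{prop:pseudo.odd.degree.special.struc.} gives a representation $p(x)=h(\xi^T x)$ with $\xi\neq 0$ and $h$ a monotonic univariate polynomial of degree $d$. I would argue by contradiction that $h'$ has no real root. Assume $h'(t_0)=0$ for some $t_0\in\mathbb{R}$. Since $\xi\neq 0$, pick any $x_0\in\mathbb{R}^n$ with $\xi^T x_0=t_0$. A direct computation gives $\nabla p(x_0)=\xi\, h'(\xi^T x_0)=0$. The pseudoconvexity implication (\ref{eq:pseudoconvexity.defn}) applied at $x_0$ then reads: for every $y\in\mathbb{R}^n$, $\nabla p(x_0)^T(y-x_0)=0\geq 0$, and hence $p(y)\geq p(x_0)$. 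This forces $x_0$ to be a global minimum of $p$. However, $h$ is a univariate polynomial of odd degree $d$, and is therefore unbounded below. Picking $t_1$ with $h(t_1)<h(t_0)$ and any $y_0$ with $\xi^T y_0=t_1$ produces $p(y_0)=h(t_1)<h(t_0)=p(x_0)$, a contradiction. Thus $h'$ has no real roots.

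For the converse direction, suppose $p(x)=h(\xi^T x)$ where $h$ is a univariate polynomial of degree $d$ whose derivative $h'$ has no real roots. Then $h'$ is a polynomial of constant sign on $\mathbb{R}$; without loss of generality, $h'(t)>0$ for all $t$, so $h$ is strictly increasing. To verify pseudoconvexity, take arbitrary $x,y\in\mathbb{R}^n$ with $\nabla p(x)^T(y-x)\geq 0$. Using $\nabla p(x)=\xi\, h'(\xi^T x)$, this inequality reads $h'(\xi^T x)\,\xi^T(y-x)\geq 0$. Since $h'(\xi^T x)>0$, we conclude $\xi^T y\geq \xi^T x$, and strict monotonicity of $h$ then yields $p(y)=h(\xi^T y)\geq h(\xi^T x)=p(x)$, establishing (\ref{eq:pseudoconvexity.defn}).

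There is no real obstacle here: Proposition~\ref{prop:pseudo.odd.degree.special.struc.} does the heavy lifting, and the only genuinely new ingredient is the one-line observation that a critical point of an odd-degree polynomial of the form $h(\xi^T x)$ cannot be a global minimum (because $h$ is unbounded below), which is precisely how pseudoconvexity rules out a vanishing $h'$. Note also that this characterization immediately gives a polynomial-time test for pseudoconvexity of odd degree polynomials: run the algorithm of Theorem~\ref{thm:quasi.odd.degree.poly.time} to identify $\xi$ and $h$, and then check that $h'$ has no real roots via standard univariate root-counting techniques such as Sturm sequences.
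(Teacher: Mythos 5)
Your proof is correct and follows essentially the same route as the paper: both directions rest on Proposition~\ref{prop:pseudo.odd.degree.special.struc.}, with the forward direction ruling out a real root of $h'$ by noting that a critical point would be a global minimum while an odd-degree polynomial is unbounded below, exactly as in the paper. The only (immaterial) difference is in the converse, where you verify the pseudoconvexity implication via the explicit gradient $\nabla p(x)=\xi\,h'(\xi^Tx)$ rather than the paper's restriction to the line through $x$ and $y$; both arguments are the same in substance.
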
}

\alex{ \noindent {\it \aaa{Remark.}} Observe that polynomials $h$
with $h'$ having no real roots \jnt{comprise a subset of the set}
of monotonic polynomials.}

\begin{proof} \alex{\jnt{Suppose that $p(x)$ is pseudoconvex.}
Since a pseudoconvex polynomial is quasiconvex, it admits a
representation $h(\aaan{\xi}^T x)$ where $h$ is monotonic. If
$h'(t)=0$ for some $t$, then picking $a = t \cdot
\aaan{\xi}/\|\aaan{\xi}\|_2^2$, we have that $\nabla p(a)=0$, so
that by pseudoconvexity, $p(x)$ is minimized at $a$. This,
however, is impossible \aaa{since an odd degree polynomial is
never bounded below}.}
% since as in the proof of Lemma
%\ref{lemma:halfspaces}, we can find a line $u+tv$ such that the
%univariate polynomial $p(u+tv)$ has odd degree, and hence $\inf_{x
%\in \mathbb{R}^n} p(x)=-\infty$.}
\alex{Conversely, suppose $p(x)$ can be represented as in Eq.
(\ref{pseudoconvexrepresentation}). \jnt{Fix some} $x,y$, and
define the polynomial $\aaa{u}(t)=p(x+t(y-x))$. Since
$\aaa{u}(t)=h(\aaan{\xi}^T x + t \aaan{\xi}^T (y-x))$, we have
that either (i) $\aaa{u}(t)$ is constant, or (ii) $\aaa{u}'(t)$
has no real roots. Now if $\nabla p(x)(y-x) \geq 0$, then
$\aaa{u}'(0) \geq 0$. Regardless of whether (i) or (ii) holds,
this implies that $\aaa{u}'(t) \geq 0$ everywhere, so that
$\aaa{u}(1) \geq \aaa{u}(0)$ or $p(y) \geq p(x)$.}
 \end{proof}
%\smallskip
%
%\label{subsubsec:pseudo.odd.degree} We now describe a polynomial
%time algorithm for deciding pseudoconvexity of odd degree
%polynomials.
\begin{corollary}\label{thm:pseudo.odd.degree.poly.time}
\aaan{For any fixed odd degree $d$, the} pseudoconvexity of
polynomials of degree $d$ can be checked in polynomial time.
\end{corollary}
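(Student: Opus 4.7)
The plan is to piggyback on the polynomial time algorithm for quasiconvexity developed in Theorem~\ref{thm:quasi.odd.degree.poly.time}, using the characterization of pseudoconvex odd degree polynomials provided in Corollary~\ref{thm:quasi.odd.degree.special.struc.}. Since every pseudoconvex polynomial is quasiconvex, and since both classes share the same representation $p(x) = h(\xi^T x)$ (with the only distinction being whether $h'$ is merely monotonic, i.e., sign-preserving, or strictly has no real roots), the algorithm for pseudoconvexity can reuse essentially all the work done for quasiconvexity, differing only in the final univariate test.

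Concretely, I would proceed as follows. First, run the algorithm of Theorem~\ref{thm:quasi.odd.degree.poly.time} on the input polynomial $p(x)$. If that algorithm reports that $p(x)$ is not quasiconvex, then output that $p(x)$ is not pseudoconvex (using the standard implication quoted in (\ref{eq:implications.convex.quasi.pseudo})) and halt. Otherwise, the algorithm has produced, in polynomial time, a vector $\xi \in \mathbb{R}^n$ and a univariate polynomial $h(t)$ of degree $d$ with $p(x) = h(\xi^T x)$, together with the guarantee that $h$ is monotonic. At this point, by Corollary~\ref{thm:quasi.odd.degree.special.struc.}, $p(x)$ is pseudoconvex if and only if the derivative $h'(t)$ has no real roots.

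The second and final step is therefore to test whether the univariate polynomial $h'(t)$, which has degree $d-1$ and is explicitly known from the previous step, has any real root. Since the degree $d$ is a fixed constant, this test requires only a constant number of arithmetic operations, and can be carried out by any standard real root counting procedure such as Sturm's algorithm or more generally by the quantifier elimination / root-counting techniques referenced in~\cite{Algo_real_algeb_geom_Book}. If $h'(t)$ has no real root we output that $p(x)$ is pseudoconvex; otherwise we output that it is not.

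The correctness of the algorithm is immediate from Corollary~\ref{thm:quasi.odd.degree.special.struc.}, and the running time is polynomial since Step~1 runs in polynomial time by Theorem~\ref{thm:quasi.odd.degree.poly.time} and Step~2 runs in constant time. There is really no hard part here: the whole proof is a short reduction to the quasiconvexity algorithm, and the only conceptual point to verify is that in our representation the distinction between pseudoconvexity and quasiconvexity is detected entirely by a univariate root-existence test on $h'$, which is a bounded-degree problem.
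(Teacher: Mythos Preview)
Your proposal is correct and follows essentially the same approach as the paper: reuse the representation-finding step from the quasiconvexity algorithm to obtain $\xi$ and $h$, then replace the monotonicity test on $h$ by a real-root-existence test on $h'$ (the paper suggests the Hermite form, you suggest Sturm sequences or quantifier elimination, which are equivalent standard tools). The only cosmetic difference is that you run the full quasiconvexity algorithm first (including the monotonicity check), which is slightly redundant since ``$h'$ has no real roots'' already implies monotonicity, but this does not affect correctness or the polynomial running time.
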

\begin{proof} \alex{This is a simple modification of our algorithm
for testing quasiconvexity
(Theorem~\ref{thm:quasi.odd.degree.poly.time}). \aaan{The first
step of the algorithm is in fact identical: once we impose the
additional requirement that the nonzero component of $\xi$ with
the smallest index should be equal to one, we can uniquely
determine the vector $\xi$ and the coefficients of the univariate
polynomial $h(t)$ that satisfy Eq.
(\ref{pseudoconvexrepresentation}) . (If we fail, $p(x)$ is not
quasiconvex and hence also not pseudoconvex.) Once we have $h(t)$,
we can check whether $h'(t)$ has no real roots e.g. by computing
the signature of the Hermite \aaa{form of $h'(t)$;
see~\cite{Algo_real_algeb_geom_Book}}.}}

\end{proof}

\begin{remark}
Homogeneous polynomials of odd degree $d\geq3$ are never
pseudoconvex. The reason is that the gradient of these polynomials
vanishes at the origin, but yet the origin is not a global minimum
since odd degree polynomials are unbounded below.
\end{remark}

\subsection{Degrees that are hard}\label{subsec:quasi.pseudo.hard.degrees}
The main result of this section is the following theorem.

\begin{theorem}\label{thm:quasi.pseudo.quartic.nphard}
It is NP-hard to check quasiconvexity/pseudoconvexity of degree
four polynomials. This is true even when the polynomials are
restricted to be homogeneous.
\end{theorem}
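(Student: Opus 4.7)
The plan is to reduce the theorem to Theorem~\ref{thm:convexity.quartic.nphard} by showing that for homogeneous quartic polynomials the three notions of convexity, pseudoconvexity, and quasiconvexity coincide; the non-homogeneous case then follows because quartic forms are a special case of quartic polynomials.

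One direction is immediate: for any differentiable function, the implications convex $\Rightarrow$ pseudoconvex $\Rightarrow$ quasiconvex hold by~(\ref{eq:implications.convex.quasi.pseudo}). So the entire argument reduces to proving the nontrivial direction, namely that a quasiconvex quartic form $p$ must be convex. I would split this into two steps.

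Step 1: \emph{Every quasiconvex quartic form $p$ is nonnegative.} Suppose, for contradiction, that $p(x_0)<0$ for some $x_0$. Since $p$ is a form of even degree $4$, $p(-x_0)=p(x_0)<0$, while $p(0)=0$. The origin is the midpoint of $-x_0$ and $x_0$, so $p(0) > \max\{p(-x_0),p(x_0)\}$ contradicts Definition~\ref{def:quasiconvex.fn}.

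Step 2: \emph{A nonnegative quasiconvex quartic form $p$ is convex.} Set $q(x):=p(x)^{1/4}$, a continuous, nonnegative function that is positively homogeneous of degree one. The sublevel sets $\{q\leq t\} = \{p\leq t^4\}$ at nonnegative levels are convex, so $q$ is quasiconvex. For a nonnegative positively homogeneous function of degree one, quasiconvexity is equivalent to convexity: if $q(x)=a$ and $q(y)=b$ are both positive, then $x/a,\,y/b\in\{q\leq 1\}$, so their convex combination with weights $a/(a+b)$ and $b/(a+b)$, which equals $(x+y)/(a+b)$, also lies in $\{q\leq 1\}$, yielding the subadditivity $q(x+y)\leq a+b = q(x)+q(y)$; combined with positive homogeneity this gives convexity of $q$. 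Consequently $p=q^4$ is convex as the composition of the convex nondecreasing map $t\mapsto t^4$ on $[0,\infty)$ with the nonnegative convex function $q$.

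Combining the two steps with the trivial implications gives the equivalence of convexity, pseudoconvexity, and quasiconvexity on the class of quartic forms, so Theorem~\ref{thm:convexity.quartic.nphard} immediately yields the NP-hardness of the other two problems. The main subtlety I anticipate is in Step 2, namely handling the degenerate cases where $q(x)=0$ or $q(y)=0$ on nonzero vectors (i.e., $p$ has nontrivial zeros); these can be dispatched by a short continuity/perturbation argument, after which the rest of the proof is routine.
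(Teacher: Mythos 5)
Your proposal is correct and follows essentially the same route as the paper: the paper proves the same Lemma (a quasiconvex even-degree form is nonnegative, via $p(\pm\bar{x})<0$ and $p(0)=0$) and then the same normalization argument, scaling $a/p(a)^{1/d}$ and $b/p(b)^{1/d}$ into the unit sublevel set to obtain $p(a+b)\leq\left(p(a)^{1/d}+p(b)^{1/d}\right)^d$, handling the zeros of $p$ by the same density/continuity patch you anticipate. Your packaging via $q=p^{1/4}$ (subadditive plus homogeneous, hence convex, then compose with $t^4$) is only a cosmetic rephrasing of the paper's direct midpoint-convexity computation, so the two proofs are essentially identical.
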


In view of Theorem~\ref{thm:convexity.quartic.nphard}, which
established NP-hardness of deciding convexity of
\emph{homogeneous} quartic polynomials,
Theorem~\ref{thm:quasi.pseudo.quartic.nphard} follows immediately
from the following result.

%\footnote{A slight variant of
%Theorem~\ref{thm:quasiconvexity.homog.same.convexity} has appeared
%in~\cite{AAA_PP_CDC10_algeb_convex}.}

\begin{theorem}\label{thm:quasiconvexity.homog.same.convexity}
For a homogeneous polynomial $p(x)$ of even degree $d$, the
notions of convexity, pseudoconvexity, and quasiconvexity are all
equivalent.\footnote{The result is more generally true for
differentiable functions that are homogeneous of even degree.
Also, the requirements of homogeneity and having an even degree
both need to be present. Indeed, $x^3$ and $x^4-8x^3+18x^2$ are
both quasiconvex but not convex, the first being homogeneous of
odd degree and the second being nonhomogeneous of even degree.}
\end{theorem}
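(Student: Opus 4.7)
The plan is to prove the only nontrivial implication, namely, that a quasiconvex homogeneous polynomial $p(x)$ of even degree $d\geq 2$ must be convex; the other two directions are already contained in (\ref{eq:implications.convex.quasi.pseudo}). My approach has three stages: first establish nonnegativity of $p$, then pass to a positively homogeneous degree-one gauge $q:=p^{1/d}$ and argue that $q$ is convex, and finally recover convexity of $p=q^d$ by composition.

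For nonnegativity, I would use a short symmetry argument. Since $d$ is even, $p(-x)=p(x)$ for every $x$. If $p(x_0)<0$ for some $x_0$, then both $x_0$ and $-x_0$ lie in the sublevel set $\{x:p(x)\leq p(x_0)\}$, which is convex by quasiconvexity. Their midpoint is the origin, where $p(0)=0$ by homogeneity, contradicting $p(0)\leq p(x_0)<0$. Hence $p\geq 0$ on $\mathbb{R}^n$.

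With $p\geq 0$ in hand, define $q(x):=p(x)^{1/d}$. This is a continuous, nonnegative function satisfying $q(\lambda x)=|\lambda|\,q(x)$ for every $\lambda\in\mathbb{R}$. Its unit sublevel set $\{q\leq 1\}=\{p\leq 1\}$ is convex by quasiconvexity of $p$, and for any $\alpha>0$ the set $\{q\leq \alpha\}=\{p\leq \alpha^d\}$ is convex for the same reason. I would then invoke the standard gauge argument: for $x,y$ with $q(x),q(y)>0$, set $\mu=\lambda q(x)+(1-\lambda)q(y)$ and observe that $(\lambda x+(1-\lambda)y)/\mu$ is a convex combination of $x/q(x)$ and $y/q(y)$, both of which lie in $\{q\leq 1\}$. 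Convexity of $\{q\leq 1\}$ together with positive homogeneity then yields $q(\lambda x+(1-\lambda)y)\leq \mu$, which is exactly the convexity inequality for $q$ at the pair $(x,y)$.

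To conclude, since $q$ is convex and nonnegative and the scalar map $f(t)=t^d$ is convex and nondecreasing on $[0,\infty)$, the composition $p=f\circ q$ is convex. The one delicate point of the plan is handling the boundary of the gauge argument when $q(x)=0$ or $q(y)=0$; I would dispose of these cases by a continuity/perturbation argument, noting that $\{q=0\}=\{p=0\}$ is a symmetric convex cone, hence a subspace, and approximating points on it by points with $q>0$ (which exist unless $q\equiv 0$, in which case the conclusion is trivial).
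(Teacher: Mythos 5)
Your proposal is correct and follows essentially the same route as the paper's proof: nonnegativity via the even-symmetry argument on a sublevel set, the key inequality obtained from the convex combination of $a/p(a)^{1/d}$ and $b/p(b)^{1/d}$ inside $\{p\leq 1\}$, and the zero set handled by density of $\{p>0\}$ plus continuity. The only difference is cosmetic packaging: you phrase the middle step as convexity of the gauge $q=p^{1/d}$ followed by composition with $t\mapsto t^d$, whereas the paper derives (midpoint) convexity of $p$ directly from the same inequality.
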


We start the proof of this theorem by first proving an easy lemma.

\begin{lemma} \label{lem:quasiconvex.homog.then.psd}
Let $p(x)$ be a quasiconvex homogeneous polynomial of even degree
\aaan{$d\geq2$}. Then $p(x)$ is nonnegative.
\end{lemma}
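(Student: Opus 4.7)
The plan is to argue by contradiction, exploiting the fact that even homogeneity forces $p$ to take the same value at $x_0$ and $-x_0$, while the origin must lie on the segment between them but has value $0$.

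Suppose, toward a contradiction, that $p(x_0) < 0$ for some $x_0 \in \mathbb{R}^n$. Since $d$ is even, homogeneity gives $p(-x_0) = (-1)^d p(x_0) = p(x_0) < 0$. Set $\alpha \eqdef p(x_0) < 0$ and consider the sublevel set
\[
\mathcal{S}(\alpha) = \{x \in \mathbb{R}^n \mid p(x) \leq \alpha\},
\]
which is convex by quasiconvexity of $p$ (Definition~\ref{def:quasiconvex.fn}). Both $x_0$ and $-x_0$ belong to $\mathcal{S}(\alpha)$, so by convexity their midpoint $0 = \tfrac{1}{2} x_0 + \tfrac{1}{2}(-x_0)$ must also lie in $\mathcal{S}(\alpha)$. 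However, since $p$ is homogeneous of degree $d \geq 2$, we have $p(0) = 0 > \alpha$, so $0 \notin \mathcal{S}(\alpha)$. This contradiction establishes that $p(x) \geq 0$ for all $x$.

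There is essentially no obstacle here: the argument is a two-line contradiction that uses only homogeneity (to get $p(\pm x_0)$ equal and $p(0)=0$) and convexity of a single sublevel set. The lemma will be used in the subsequent proof of Theorem~\ref{thm:quasiconvexity.homog.same.convexity} to rule out pathological behavior of quasiconvex even-degree forms and promote quasiconvexity to convexity.
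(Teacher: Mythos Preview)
Your proof is correct and essentially identical to the paper's: both argue by contradiction, use even-degree homogeneity to get $p(-x_0)=p(x_0)<0$ and $p(0)=0$, and then observe that the origin lies on the segment between $x_0$ and $-x_0$ yet fails to belong to the sublevel set $\mathcal{S}(p(x_0))$, contradicting quasiconvexity.
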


\begin{proof}
Suppose, to derive a contradiction, that there exist some
$\epsilon>0$ and $\bar{x}\in\mathbb{R}^n$ such that
$p(\bar{x})=-\epsilon$. Then by homogeneity of even degree we must
have $p(-\bar{x})=p(\bar{x})=-\epsilon$. On the other hand,
homogeneity of $p$ implies that $p(0)=0$. Since the origin is on
the line between $\bar{x}$ and $-\bar{x}$, this shows that the
\aaan{sublevel set} $\mathcal{S}(-\epsilon)$ is not convex,
contradicting the quasiconvexity of $p$.
\end{proof}

\begin{proof}[Proof of
Theorem~\ref{thm:quasiconvexity.homog.same.convexity}] \alex{We
show that a quasiconvex homogeneous polynomial of even degree is
convex. In view of implication
(\ref{eq:implications.convex.quasi.pseudo}), this \alex{proves the
theorem.}}

\alex{Suppose that $p(x)$ is a quasiconvex polynomial. Define
$\mathcal{S}=\{x\in\mathbb{R}^n \mid\  p(x)\leq1\}$. By
homogeneity, for any $a \in \mathbb{R}^n$ with $p(a)>0$, we have
that \[ \frac{a}{p(a)^{1/d}} \in \mathcal{S}.\] By quasiconvexity,
this \jnt{implies} that for any $a,b$ with $p(a),p(b)>0$, any
point on the line connecting $a/p(a)^{1/d}$ and $b/p(b)^{1/d}$ is
in $\mathcal{S}$. In particular, consider
\[ c=\frac{a+b}{p(a)^{1/d}+p(b)^{1/d}}.\]  Because $c$ can be written as
\[ %\frac{a+b}{p(a)^{1/d} + p(b)^{1/d}}
c = \left( \frac{p(a)^{1/d}}{p(a)^{1/d} + p(b)^{1/d}}\right)
\left( \frac{a}{p(a)^{1/d}} \right) + \left(
\frac{p(b)^{1/d}}{p(a)^{1/d} + p(b)^{1/d}} \right) \left(
\frac{b}{p(b)^{1/d}} \right),\] we have that $c \in \mathcal{S}$,
\jnt{i.e., $p(c)\leq 1$. By homogeneity, this  inequality} can be
restated as
\[ p(a+b) \leq (p(a)^{1/d} + p(b)^{1/d})^d,\] and therefore
\begin{equation} p\Big(\frac{a+b}{2}\Big) \leq \left( \frac{p(a)^{1/d} + p(b)^{1/d}}{2}\right) ^d \leq \frac{p(a)+p(b)}{2}, \label{convexity-where-positive} \end{equation}
where the last inequality is due to the convexity of $x^d$.}

\alex{Finally, note that for any polynomial $p$, the set
\jnt{$\{x\mid p(x) \neq 0$\}} is dense in $\mathbb{R}^n$ (here we
again appeal to the fact that the only polynomial that is zero on
a ball of positive radius is the zero polynomial); and since $p$
is nonnegative due to Lemma \ref{lem:quasiconvex.homog.then.psd},
the set \jnt{$\{x\mid p(x)
> 0$\}} is dense in $\mathbb{R}^n$.
\jnt{Using the continuity of $p$, it follows that Eq.
(\ref{convexity-where-positive}) holds not only when $a,b$ satisfy
$p(a),p(b)>0$, but for all $a$, $b$. Appealing to the continuity
of $p$ again, we see that for all $a,b$, $p(\lambda a +
(1-\lambda) b) \leq \lambda p(a) + (1-\lambda) p(b)$, for all
$\lambda \in [0,1]$. This establishes} that $p$ is convex.}

\end{proof}

\paragraph{Quasiconvexity/pseudoconvexity of polynomials of even degree larger than four.}

\begin{corollary}\label{cor:quasi.nphard.d>=4}
It is NP-hard to decide quasiconvexity of polynomials of any fixed
even degree $d\geq4$.
\end{corollary}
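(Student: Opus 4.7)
The plan is to give a simple polynomial-time reduction from the problem of deciding quasiconvexity of homogeneous quartic polynomials---which is NP-hard by Theorem~\ref{thm:quasi.pseudo.quartic.nphard}---to the problem of deciding quasiconvexity of polynomials of (fixed) even degree $d\geq 4$. Given a homogeneous quartic polynomial $p(x)\mathrel{\mathop:}=p(x_1,\dots,x_n)$, I would form the polynomial
\begin{equation}\nonumber
q(x,x_{n+1})\mathrel{\mathop:}=p(x)+x_{n+1}^d
\end{equation}
in $n+1$ variables, and then argue that $q$ is quasiconvex if and only if $p$ is quasiconvex. Since $d$ is fixed, this construction clearly runs in polynomial time.

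For the forward direction, suppose $p$ is quasiconvex. Because $p$ is homogeneous of even degree, Theorem~\ref{thm:quasiconvexity.homog.same.convexity} implies that $p$ is in fact convex. Since $x_{n+1}^d$ is convex for even $d$, the sum $q=p(x)+x_{n+1}^d$ is convex and therefore quasiconvex by the implications in (\ref{eq:implications.convex.quasi.pseudo}).

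For the reverse direction, suppose $p$ is not quasiconvex. Then there exist points $a,b\in\mathbb{R}^n$, a scalar $\lambda\in(0,1)$, and a threshold $\alpha\in\mathbb{R}$ such that $p(a)\leq\alpha$ and $p(b)\leq\alpha$ but $p(\lambda a+(1-\lambda)b)>\alpha$. Embedding into $\mathbb{R}^{n+1}$ by appending a zero last coordinate, we get $q(a,0)=p(a)\leq\alpha$, $q(b,0)=p(b)\leq\alpha$, but $q(\lambda a+(1-\lambda)b,\,0)=p(\lambda a+(1-\lambda)b)>\alpha$. Hence the sublevel set $\{(x,x_{n+1})\mid q(x,x_{n+1})\leq\alpha\}$ is not convex, so $q$ is not quasiconvex.

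There is really no serious obstacle here; the whole argument is a routine lifting, and the only nontrivial ingredient is the equivalence between convexity and quasiconvexity for homogeneous polynomials of even degree (Theorem~\ref{thm:quasiconvexity.homog.same.convexity}), which is what lets the forward direction go through. (The same construction would also handle pseudoconvexity in degree $d\geq 4$, using the full chain of implications in (\ref{eq:implications.convex.quasi.pseudo}), though this is not needed for the corollary as stated.)
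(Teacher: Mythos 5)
Your proposal is correct and follows essentially the same route as the paper: the same construction $q(x,x_{n+1})=p(x)+x_{n+1}^d$, the same embedding argument with a zero last coordinate for the non-quasiconvex direction, and the same reliance on Theorem~\ref{thm:quasiconvexity.homog.same.convexity}. The only cosmetic difference is that you reduce from quasiconvexity of homogeneous quartics while the paper reduces from convexity of homogeneous quartics, but by that equivalence theorem these are the same problem, so the arguments coincide.
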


\begin{proof}\label{thm:quasi.nphard.even.degree}
We have already proved the result for $d=4$. To establish the
result for even degree $d\geq6$, recall that we have established
NP-hardness of deciding convexity of homogeneous quartic
polynomials. Given such a quartic form
$p(x)\mathrel{\mathop:}=p(x_1,\ldots,x_n)$, consider the
polynomial
\begin{equation}\label{eq:q.in.reduction.quasi.even.degree}
q(x_1,\ldots, x_{n+1}) = p(x_1,\ldots,x_n) + x_{n+1}^d.
\end{equation}
We claim that $q$ is quasiconvex if and only if $p$ is convex.
Indeed, if $p$ is convex, then obviously so is $q$, and therefore
$q$ is quasiconvex. Conversely, if $p$ is not convex, then by
Theorem~\ref{thm:quasiconvexity.homog.same.convexity}, it is not
quasiconvex. So, there exist points $a,b,c\in\mathbb{R}^n$, with
$c$ on the line connecting $a$ and $b$, such that $p(a)\leq1$,
$p(b)\leq1$, but $p(c)>1$. Considering points $(a,0)$, $(b,0)$,
$(c,0)$, we see that $q$ is not quasiconvex. It follows that it is
NP-hard to decide quasiconvexity of polynomials of even degree
four or larger.
\end{proof}

\begin{corollary}
It is NP-hard to decide pseudoconvexity of polynomials of any
fixed even degree $d\geq4$.
\end{corollary}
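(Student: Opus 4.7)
The plan is to mimic the reduction used in Corollary~\ref{cor:quasi.nphard.d>=4} for quasiconvexity, exploiting the fact that pseudoconvexity implies quasiconvexity. The case $d=4$ is already handled by Theorem~\ref{thm:quasi.pseudo.quartic.nphard}, so only $d\geq 6$ requires new argument, and even there the same construction $q(x_1,\ldots,x_{n+1})=p(x_1,\ldots,x_n)+x_{n+1}^d$ with $p$ a homogeneous quartic form should suffice.

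More precisely, starting from a homogeneous quartic $p(x_1,\ldots,x_n)$ (for which deciding convexity is NP-hard by Theorem~\ref{thm:convexity.quartic.nphard}), I would form $q$ as above and argue the following equivalence: $p$ is convex if and only if $q$ is pseudoconvex. The forward direction is immediate: if $p$ is convex, then $q$ is a sum of two convex functions and hence convex, and convexity implies pseudoconvexity by the chain of implications in~\eqref{eq:implications.convex.quasi.pseudo}.

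For the converse, I would argue the contrapositive. Suppose $p$ is not convex. Then by Theorem~\ref{thm:quasiconvexity.homog.same.convexity}, $p$ is not quasiconvex, and the proof of Corollary~\ref{cor:quasi.nphard.d>=4} exhibits points $a,b,c\in\mathbb{R}^n$ with $c$ on the segment between $a$ and $b$, $p(a),p(b)\leq 1$, and $p(c)>1$; appending a zero coordinate, the points $(a,0),(b,0),(c,0)$ witness that $q$ is not quasiconvex. Since pseudoconvexity implies quasiconvexity, $q$ fails to be pseudoconvex. This establishes the reduction and the NP-hardness claim for every fixed even $d\geq 6$.

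There is no real obstacle here: the argument is essentially a one-line invocation of the implication ``pseudoconvex $\Rightarrow$ quasiconvex'' on top of the construction already validated in Corollary~\ref{cor:quasi.nphard.d>=4}. The only minor thing to check is that the reduction itself runs in polynomial time, which is clear since we only add a single monomial $x_{n+1}^d$ with a unit coefficient.
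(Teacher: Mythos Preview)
Your proposal is correct and follows essentially the same approach as the paper: it reuses the construction $q(x_1,\ldots,x_{n+1})=p(x_1,\ldots,x_n)+x_{n+1}^d$ from Corollary~\ref{cor:quasi.nphard.d>=4}, observes that if $p$ is convex then $q$ is convex and hence pseudoconvex, and that if $p$ is not convex then $q$ is not quasiconvex and hence not pseudoconvex. The paper's proof is even terser, simply citing the quasiconvexity corollary and invoking the implication pseudoconvex $\Rightarrow$ quasiconvex.
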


\begin{proof}
The proof is almost identical to the proof of
\aaa{Corollary~\ref{cor:quasi.nphard.d>=4}}. \aaa{Let} $q$ be
defined as in (\ref{eq:q.in.reduction.quasi.even.degree}). If $p$
is convex, then $q$ is convex and hence also pseudoconvex. If $p$
is not convex, we showed that $q$ is not quasiconvex and hence
also not pseudoconvex.
\end{proof}

\section{Summary and conclusions}\label{sec:summary.conclusions}
In this chapter, we studied the computational complexity of
testing convexity and some of its variants, for polynomial
functions. The notions that we considered and the implications
among them are summarized below:

 \vspace{3mm} \scalefont{.87} \noindent strong convexity
$\Longrightarrow$ strict convexity $\Longrightarrow$ convexity
$\Longrightarrow$ pseudoconvexity $\Longrightarrow$
quasiconvexity. \vspace{3mm} \normalsize

%\begin{equation}\nonumber
%$$
%\mbox{strong convexity} \ \Longrightarrow \  \mbox{strict
%convexity}\ \Longrightarrow \ \mbox{convexity}\ \Longrightarrow \
%\mbox{pseudoconvexity}\ \Longrightarrow \ \mbox{quasiconvexity}.
%$$
%\end{equation}

Our complexity results as a function of the degree of the
polynomial are listed in Table~\ref{table:summary}. %%%\ref{table:summary}.
%\begin{table} [h] \label{table:summary}
%  \caption{Summary of our complexity results. A yes (no) entry means that the question is trivial for that particular entry because the answer is always yes (no) independent of the input. By $\in$P, we mean that the problem can be solved in polynomial time.}
%  \begin{center}
%\begin{tabular}{l||c|c|c|c}
%
%  % after \\: \hline or \cline{col1-col2} \cline{col3-col4} ...
%\textbf{property vs. degree} & 1 & 2 & odd $\geq3$ & even $\geq4$ \\
%  \hline \hline
%  \noindent\large{convexity} & yes & $\in$P & no & strongly NP-hard \\
%  %\hline
%  \large{strict convexity} & no & $\in$P & no & strongly NP-hard  \\
%  %\hline
%  \large{strong convexity} & no & $\in$P & no & strongly NP-hard \\
%  %\hline
%  \large{quasiconvexity} & yes & $\in$P & $\in$P & strongly NP-hard \\
%  %\hline
%\large{pseudoconvexity} & yes & $\in$P & $\in$P & strongly NP-hard \\
%
%\end{tabular}
%\end{center}
%\end{table}
\begin{table}
\begin{center}
\begin{tabular}{l||c|c|c|c}
  % after \\: \hline or \cline{col1-col2} \cline{col3-col4} ...
\textbf{property vs. degree} & 1 & 2 & odd $\geq3$ & even $\geq4$ \\
  \hline \hline
   \noindent\large{strong convexity} & no & P & no & strongly NP-hard \\
  %\hline
  \large{strict convexity} & no & P & no & strongly NP-hard  \\
  %\hline
  \large{convexity} & yes & P & no & strongly NP-hard \\
  %\hline
  \large{pseudoconvexity} & yes & P & P & strongly NP-hard \\
  %\hline
  \large{quasiconvexity} & yes & P & P & strongly NP-hard \\
\end{tabular}
\end{center}
\caption{Summary of our complexity results. A yes (no) entry means
that the question is trivial for that particular entry because the
answer is always yes (no) independent of the input. By \aaan{P},
we mean that the problem can be solved in polynomial time.}
\label{table:summary}
\end{table}
We gave polynomial time algorithms for checking pseudoconvexity
and quasiconvexity of odd degree polynomials that can be useful in
many applications. Our negative results, on the other hand, imply
(\jnt{under P$\neq$NP}) the impossibility of a polynomial time (or
even pseudo-polynomial time) algorithm for testing any of the
properties listed in Table~\ref{table:summary} for polynomials of
even degree four or larger. \aaa{Although the implications of
convexity are very significant in optimization theory, our results
suggest that \aaan{unless additional structure is present,}
ensuring the mere presence of convexity is likely an intractable
task.} It is therefore natural to wonder whether there are other
properties of optimization problems that share some of the
attractive consequences of convexity, but are easier to check.

The hardness results of this chapter also lay emphasis on the need
for finding good approximation algorithms for recognizing
convexity that can deal with a large number of instances. This is
our motivation for the next chapter as we turn our attention to
the study of algebraic counterparts of convexity that can be
efficiently checked with semidefinite programming.

%, and our goal will be to understand when
%
%
%Towards this end, we will devote the next chapter to the study of
%sos-convexity, an algebraic counterpart of convexity that can be
%efficiently checked with semidefinite programming.
%
%semidefinite programming based relaxations relying on algebraic
%concepts such as sum of squares decomposition of polynomials
%currently seem to be very promising techniques for recognizing
%convexity of polynomials and basic semialgebraic sets. The study
%of these algebraic relaxations and characterizing the special
%cases where they are exact is the subject of the next chapter.

\chapter{Convexity and SOS-Convexity}\label{chap:convexity.sos.convexity}

%AAA: for right aligning entries of matrices
\makeatletter
\renewcommand*\env@matrix[1][c]{\hskip -\arraycolsep
  \let\@ifnextchar\new@ifnextchar
  \array{*\c@MaxMatrixCols #1}}
\makeatother

%%%%%%%%%%%%%%%%%%%%%%%%%%%%%%%%%%%%%%%%%%%%%%%%%%%%%%%%%%%%%%%%%%%%%%%%%%%%%%%%
The overall contribution of this chapter is a complete
characterization of the containment of the sets of convex and
sos-convex polynomials in every degree and dimension. The content
of this chapter is mostly based on the work
in~\cite{AAA_PP_table_sos-convexity}, but also includes parts
of~\cite{AAA_PP_not_sos_convex_journal}
and~\cite{AAA_GB_PP_Convex_ternary_quartics}.

\section{Introduction}
\subsection{Nonnegativity and sum of squares}
One of the cornerstones of real algebraic geometry is Hilbert's
seminal paper in 1888~\cite{Hilbert_1888}, where he gives a
complete characterization of the degrees and dimensions in which
nonnegative polynomials can be written as sums of squares of
polynomials. In particular, Hilbert proves in~\cite{Hilbert_1888}
that there exist nonnegative polynomials that are not sums of
squares, although explicit examples of such polynomials appeared
only about 80 years later and the study of the gap between
nonnegative and sums of squares polynomials continues to be an
active area of research to this day.

%
%The main results of Hilbert's paper include the rather surprising
%fact that all nonnegative bivariate quartic polynomials are sums
%of squares and the first proof of existence of nonnegative
%polynomials that are not sums of squares, which he specifically
%showed must exist within bivariate sextic and ternary quartic
%polynomials. Interestingly, explicit examples of such polynomials
%appeared only about 80 years later with the first examples due
%respectively to Motzkin~\cite{MotzkinSOS} and
%Robinson~\cite{RobinsonSOS}; see~\cite{Reznick}.

Motivated by a wealth of new applications and a modern viewpoint
that emphasizes efficient computation, there has also been a great
deal of recent interest from the optimization community in the
representation of nonnegative polynomials as sums of squares
(sos). Indeed, many fundamental problems in applied and
computational mathematics can be reformulated as either deciding
whether certain polynomials are nonnegative or searching over a
family of nonnegative polynomials. It is well-known however that
if the degree of the polynomial is four or larger, deciding
nonnegativity is an NP-hard problem. (As we mentioned in the last
chapter, this follows e.g. as an immediate corollary of
NP-hardness of deciding matrix
copositivity~\cite{nonnegativity_NP_hard}.) On the other hand, it
is also well-known that deciding whether a polynomial can be
written as a sum of squares can be reduced to solving a
semidefinite program, for which efficient algorithms e.g. based on
interior point methods is available. The general machinery of the
so-called ``sos relaxation'' has therefore been to replace the
intractable nonnegativity requirements with the more tractable sum
of squares requirements that obviously provide a sufficient
condition for polynomial nonnegativity.

Some relatively recent applications that sum of squares
relaxations have found span areas as diverse as control
theory~\cite{PhD:Parrilo},~\cite{PositivePolyInControlBook},
quantum computation~\cite{Pablo_Sep_Entang_States}, polynomial
games~\cite{Pablo_poly_games}, combinatorial
optimization~\cite{Stability_number_SOS}, geometric theorem
proving~\cite{Pablo_Geometry_Packing_SOS}, and many others.

\subsection{Convexity and sos-convexity}

Aside from nonnegativity, \emph{convexity} is another fundamental
property of polynomials that is of both theoretical and practical
significance. In the previous chapter, we already listed a number
of applications of establishing convexity of polynomials including
global optimization, convex envelope approximation, Lyapunov
analysis, data fitting, defining norms, etc. Unfortunately,
however, we also showed that just like nonnegativity, convexity of
polynomials is NP-hard to decide for polynomials of degree as low
as four. Encouraged by the success of sum of squares methods as a
viable substitute for nonnegativity, our focus in this chapter
will be on the analogue of sum of squares for polynomial
convexity: a notion known as \emph{sos-convexity}.

As we mentioned in our previous chapters in passing, sos-convexity
(which gets its name from the work of Helton and Nie
in~\cite{Helton_Nie_SDP_repres_2}) is a sufficient condition for
convexity of polynomials based on an appropriately defined sum of
squares decomposition of the Hessian matrix; see the equivalent
Definitions~\ref{defn:sos-convex} and~\ref{def:sos.convex}. The
main computational advantage of sos-convexity stems from the fact
that the problem of deciding whether a given polynomial is
sos-convex amounts to solving a single semidefinite program. We
will explain how this is exactly done in Section~\ref{sec:prelims}
of this chapter where we briefly review the well-known connection
between sum of squares decomposition and semidefinite programming.

Besides its computational implications, sos-convexity is an
appealing concept since it bridges the geometric and algebraic
aspects of convexity. Indeed, while the usual definition of
convexity is concerned only with the geometry of the epigraph, in
sos-convexity this geometric property (or the nonnegativity of the
Hessian) must be certified through a ``simple'' algebraic
identity, namely the sum of squares factorization of the Hessian.
The original motivation of Helton and Nie for defining
sos-convexity was in relation to the question of semidefinite
representability of convex sets~\cite{Helton_Nie_SDP_repres_2}.
But this notion has already appeared in the literature in a number
of other
settings~\cite{Lasserre_Jensen_inequality},~\cite{Lasserre_Convex_Positive},~\cite{convex_fitting},~\cite{Chesi_Hung_journal}.
In particular, there has been much recent interest in the role of
convexity in semialgebraic geometry
~\cite{Lasserre_Jensen_inequality},~\cite{Blekherman_convex_not_sos},~\cite{Monique_Etienne_Convex},~\cite{Lasserre_set_convexity}
and sos-convexity is a recurrent figure in this line of research.

\subsection{Contributions and organization of this chapter}
The main contribution of this chapter is to establish the
counterpart of Hilbert's characterization of the gap between
nonnegativity and sum of squares for the notions of convexity and
sos-convexity. We start by presenting some background material in
Section~\ref{sec:prelims}. In
Section~\ref{sec:equiv.defs.of.sos.convexity}, we prove an
algebraic analogue of a classical result in convex analysis, which
provides three equivalent characterizations for sos-convexity
(Theorem~\ref{thm:sos.convexity.3.equivalent.defs}). This result
substantiates the fact that sos-convexity is \emph{the} right sos
relaxation for convexity. In Section~\ref{sec:first.examples}, we
present two explicit examples of convex polynomials that are not
sos-convex, one of them being the first known such example. In
Section~\ref{sec:full.characterization}, we provide the
characterization of the gap between convexity and sos-convexity
(Theorem~\ref{thm:full.charac.polys} and
Theorem~\ref{thm:full.charac.forms}).
Subsection~\ref{subsec:proof.equal.cases} includes the proofs of
the cases where convexity and sos-convexity are equivalent and
Subsection~\ref{subsec:proof.non.equal.cases} includes the proofs
of the cases where they are not. In particular,
Theorem~\ref{thm:minimal.2.6.and.3.6} and
Theorem~\ref{thm:minimal.3.4.and.4.4} present explicit examples of
convex but not sos-convex polynomials that have dimension and
degree as low as possible, and
Theorem~\ref{thm:conv_not_sos_conv_forms_n3d} provides a general
construction for producing such polynomials in higher degrees.
Some concluding remarks and an open problem are presented in
Section~\ref{sec:concluding.remarks}.

This chapter also includes two appendices. In Appendix A, we
explain how the first example of a convex but not sos-convex
polynomial was found with software using sum of squares
programming techniques and the duality theory of semidefinite
optimization. As a byproduct of this numerical procedure, we
obtain a simple method for searching over a restricted family of
nonnegative polynomials that are not sums of squares. In Appendix
B, we give a formal (computer assisted) proof of validity of one
of our minimal convex but not sos-convex polynomials.

\section{Preliminaries}\label{sec:prelims}
\subsection{Background on nonnegativity and sum of squares}\label{subsec:nonnegativity.sos.basics}

For the convenience of the reader, we recall some basic concepts
from the previous chapter and then introduce some new ones.
%
%A (multivariate) \emph{polynomial} $p\mathrel{\mathop:}=p(x)$ in
%variables $x\mathrel{\mathop:}=(x_1,\ldots,x_n)^T$ is a function
%from $\mathbb{R}^n$ to $\mathbb{R}$ that is a finite linear
%combination of monomials:
%\begin{equation}\nonumber
%p(x)=\sum_{\alpha}c_\alpha x^\alpha=\sum_{\alpha_1, \ldots,
%\alpha_n} c_{\alpha_1,\ldots,\alpha_n} x_1^{\alpha_1} \cdots
%x_n^{\alpha_n} ,
%\end{equation}
%where the sum is over $n$-tuples of nonnegative integers
%$\alpha_i$.
We will be concerned throughout this chapter with polynomials with
real coefficients. The ring of polynomials in $n$ variables with
real coefficients is denoted by $\mathbb{R}[x]$. A polynomial $p$
is said to be \emph{nonnegative} or \emph{positive semidefinite
(psd)} if $p(x)\geq0$ for all $x\in\mathbb{R}^n$. We say that $p$
is a \emph{sum of squares (sos)}, if there exist polynomials
$q_{1},\ldots,q_{m}$ such that $p=\sum_{i=1}^{m}q_{i}^{2}$. We
denote the set of psd (resp. sos) polynomials in $n$ variables and
degree $d$ by $\tilde{P}_{n,d}$ (resp. $\tilde{\Sigma}_{n,d}$).
Any sos polynomial is clearly psd, so we have
$\tilde{\Sigma}_{n,d}\subseteq \tilde{P}_{n,d}$.
%\begin{equation}\label{eq:sos.decomp.q_i}
%p(x)=\sum_{i=1}^{m}q_{i}^{2}(x).
%\end{equation}
Recall that a \emph{homogeneous polynomial} (or a \emph{form}) is
a polynomial where all the monomials have the same degree. A form
$p$ of degree $d$ is a homogeneous function of degree $d$ since it
satisfies $p(\lambda x)=\lambda^d p(x)$ for any scalar
$\lambda\in\mathbb{R}$. We say that a form $p$ is \emph{positive
definite} if $p(x)>0$ for all $x\neq0$ in $\mathbb{R}^n$.
Following standard notation, we denote the set of psd (resp. sos)
homogeneous polynomials in $n$ variables and degree $d$ by
$P_{n,d}$ (resp. $\Sigma_{n,d}$). Once again, we have the obvious
inclusion $\Sigma_{n,d}\subseteq P_{n,d}$. All of the four sets
$\Sigma_{n,d}, P_{n,d}, \tilde{\Sigma}_{n,d}, \tilde{P}_{n,d}$ are
closed convex cones. The closedness of the sum of squares cone may
not be so obvious. This fact was first proved by
Robinson~\cite{RobinsonSOS}. We will make crucial use of it in the
proof of Theorem~\ref{thm:sos.convexity.3.equivalent.defs} in the
next section.

Any form of degree $d$ in $n$ variables can be ``dehomogenized''
into a polynomial of degree $\leq d$ in $n-1$ variables by setting
$x_n=1$. Conversely, any polynomial $p$ of degree $d$ in $n$
variables can be ``homogenized'' into a form $p_h$ of degree $d$
in $n+1$ variables, by adding a new variable $y$, and letting $$
p_h(x_1,\ldots,x_n,y)\mathrel{\mathop:}=y^{d} \, p\left({x_1}/{y},
\ldots, {x_n}/{y}\right).$$ The properties of being psd and sos
are preserved under homogenization and
dehomogenization~\cite{Reznick}.

A very natural and fundamental question that as we mentioned
earlier was answered by Hilbert is to understand in what
dimensions and degrees nonnegative polynomials (or forms) can be
represented as sums of squares, i.e, for what values of $n$ and
$d$ we have $\tilde{\Sigma}_{n,d}=\tilde{P}_{n,d}$ or
$\Sigma_{n,d}=P_{n,d}$. Note that because of the argument in the
last paragraph, we have $\tilde{\Sigma}_{n,d}=\tilde{P}_{n,d}$ if
and only if $\Sigma_{n+1,d}=P_{n+1,d}$. Hence, it is enough to
answer the question just for polynomials or just for forms and the
answer to the other one comes for free.

\begin{theorem}[Hilbert,~\cite{Hilbert_1888}]\label{thm:Hilbert}
$\tilde{\Sigma}_{n,d}=\tilde{P}_{n,d}$ if and only if $n=1$ or
$d=2$ or $(n,d)=(2,4)$. Equivalently, $\Sigma_{n,d}=P_{n,d}$ if
and only if $n=2$ or $d=2$ or $(n,d)=(3,4)$.
\end{theorem}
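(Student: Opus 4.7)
The plan is to prove both directions of Hilbert's theorem, working primarily with the form version $\Sigma_{n,d} = P_{n,d}$ iff $n=2$ or $d=2$ or $(n,d)=(3,4)$, since the polynomial version follows by the homogenization/dehomogenization equivalence already noted in the excerpt. I would split the argument into a positive part (showing equality in the three listed cases) and a negative part (exhibiting gap polynomials in every other case).

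For the positive part, the case $d=2$ is the simplest: a psd quadratic form is $x^T Q x$ with $Q \succeq 0$, so a Cholesky or spectral factorization $Q = L^T L$ gives $x^T Q x = \sum_i (\ell_i^T x)^2$. The case $n=2$ (binary forms, equivalently univariate polynomials after dehomogenization) I would handle by factoring over $\mathbb{R}$: a psd binary form must split into linear factors (each appearing with even multiplicity) times positive definite quadratic factors. Every positive definite binary quadratic form is a sum of two squares by completing the square, and then the Gauss/Brahmagupta--Fibonacci identity
\begin{equation*}
(a^2+b^2)(c^2+d^2) = (ac-bd)^2 + (ad+bc)^2
\end{equation*}
propagates this through products, yielding an sos decomposition (in fact with just two squares). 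The hard case of the positive part is ternary quartics $(n,d)=(3,4)$, which is Hilbert's deep theorem and the main obstacle of the proof. I would only sketch the idea: one studies the variety cut out by a psd ternary quartic, uses a dimension count to show that the $15$-dimensional cone $P_{3,4}$ is contained in the closure of sums of three squares of quadratics (which is already a $15$-dimensional set after accounting for orthogonal rotations of the three quadratics), and then invokes closedness of $\Sigma_{3,4}$ (the Robinson closedness result recalled in Subsection~\ref{subsec:nonnegativity.sos.basics}) to conclude equality. A modern alternative I would mention is to use the fact that any psd ternary quartic can be written as a sum of three squares of quadratic forms by the Powers--Reznick--Scheiderer--Sottile argument.

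For the negative part, I would produce explicit psd-not-sos forms in the two minimal ``bad'' cases and then propagate them. For $(n,d) = (3,6)$, I would exhibit the Motzkin form
\begin{equation*}
M(x,y,z) = x^4 y^2 + x^2 y^4 + z^6 - 3 x^2 y^2 z^2,
\end{equation*}
verifying psd-ness via AM--GM on the monomials $x^4 y^2, x^2 y^4, z^6$, and proving it is not sos by the standard argument: if $M = \sum q_i^2$, then by inspecting which monomials of $M$ have even exponent vectors and which square-free monomials can appear, one shows that each $q_i$ would have to be a combination of $x^2 z, x y^2, x z^2, y z^2, x y z, x^2 y$, etc.; matching coefficients leads to a contradiction on the coefficient of $x^2 y^2 z^2$. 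For $(n,d)=(4,4)$, I would use the Choi--Lam form
\begin{equation*}
Q(x,y,z,w) = w^4 + x^2 y^2 + y^2 z^2 + z^2 x^2 - 4 x y z w,
\end{equation*}
with a similar AM--GM and coefficient-matching argument.

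Finally, I would propagate these two examples to every remaining $(n,d)$ with $n\geq 3$, $d$ even, $d \geq 4$, and $(n,d) \ne (3,4)$. For this I would use two standard moves: (i) if $p(x_1,\dots,x_n) \in P_{n,d} \setminus \Sigma_{n,d}$, then $p(x_1,\dots,x_n) + 0 \cdot x_{n+1}^{d}$ (or more safely $p + x_{n+1}^d$ after inspecting supports) lies in $P_{n+1,d} \setminus \Sigma_{n+1,d}$, since any sos decomposition of the augmented form would, upon setting $x_{n+1}=0$, yield one for $p$; (ii) if $p \in P_{n,d} \setminus \Sigma_{n,d}$, then $x_1^2 \cdot p \in P_{n,d+2} \setminus \Sigma_{n,d+2}$, by a similar substitution argument (setting other variables appropriately or by comparing Newton polytopes). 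Starting from Motzkin in $(3,6)$ and Choi--Lam in $(4,4)$, these two moves cover every $(n,d)$ outside the three positive cases. The negative direction for polynomials then follows from the form/polynomial equivalence $\tilde\Sigma_{n,d} = \tilde P_{n,d} \iff \Sigma_{n+1,d} = P_{n+1,d}$ already recorded in the excerpt. The main obstacle throughout is the ternary quartic case; everything else is either routine linear algebra or a standard coefficient-matching calculation.
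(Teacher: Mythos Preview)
The paper does not actually prove Theorem~\ref{thm:Hilbert}; it is stated as Hilbert's classical 1888 result and only discussed informally in the paragraph that follows (easy cases $n=1$ and $d=2$; the difficult ternary quartic case with references to modern proofs; Hilbert's nonconstructive existence of gap forms in $P_{3,6}$ and $P_{4,4}$; explicit Motzkin and Robinson examples; propagation to higher $(n,d)$ via~\cite{Reznick}). So there is no paper proof to compare against, only this outline.

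Your sketch is a correct rendering of the standard modern proof and matches the paper's outline almost point for point. The one deviation is cosmetic: for the base case $(4,4)$ you use the Choi--Lam form $w^4 + x^2y^2 + y^2z^2 + z^2x^2 - 4xyzw$, whereas the paper quotes Robinson's form~(\ref{eq:Robinston.form}); both are valid, and your AM--GM plus coefficient-matching argument goes through for either. Your two propagation moves (add a variable; multiply by $x_1^2$) are exactly the ``simple arguments'' the paper alludes to, and your justification that $x_1^2 p$ sos forces $x_1 \mid q_i$ and hence $p$ sos is the right one. The only place your sketch is genuinely incomplete is the same place Hilbert's theorem is genuinely hard: the $(3,4)$ case, which you correctly flag as the main obstacle and for which the paper likewise offers only references rather than a proof.
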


The proofs of $\tilde{\Sigma}_{1,d}=\tilde{P}_{1,d}$ and
$\tilde{\Sigma}_{n,2}=\tilde{P}_{n,2}$ are relatively simple and
were known before Hilbert. On the other hand, the proof of the
fairly surprising fact that $\tilde{\Sigma}_{2,4}=\tilde{P}_{2,4}$
(or equivalently $\Sigma_{3,4}=P_{3,4}$) is rather involved. We
refer the interested reader to
\cite{NewApproach_Hilbert_Ternary_Quatrics},
\cite{Scheiderer_ternary_quartic},
\cite{Choi_Lam_extremalPSDforms}, and references in~\cite{Reznick}
for some modern expositions and alternative proofs of this result.
Hilbert's other main contribution was to show that these are the
only cases where nonnegativity and sum of squares are equivalent
by giving a nonconstructive proof of existence of polynomials in
$\tilde{P}_{2,6}\setminus\tilde{\Sigma}_{2,6}$ and
$\tilde{P}_{3,4}\setminus\tilde{\Sigma}_{3,4}$ (or equivalently
forms in $P_{3,6}\setminus\Sigma_{3,6}$ and
$P_{4,4}\setminus\Sigma_{4,4}$). From this, it follows with simple
arguments that in all higher dimensions and degrees there must
also be psd but not sos polynomials; see~\cite{Reznick}. Explicit
examples of such polynomials appeared in the 1960s starting from
the celebrated Motzkin form~\cite{MotzkinSOS}:
\begin{equation}\label{eq:Motzkin.form}
M(x_1,x_2,x_3)=x_1^4x_2^2+x_1^2x_2^4-3x_1^2x_2^2x_3^2+x_3^6,
\end{equation}
which belongs to $P_{3,6}\setminus\Sigma_{3,6}$, and continuing a
few years later with the Robinson form~\cite{RobinsonSOS}:
\scalefont{.92}
\begin{equation}\label{eq:Robinston.form}
R(x_1,x_2,x_3,x_4)=x_1^2(x_1-x_4)^2+x_2^2(x_2-x_4)^2+x_3^2(x_3-x_4)^2+2x_1x_2x_3(x_1+x_2+x_3-2x_4),
\end{equation} \normalsize
which belongs to $P_{4,4}\setminus\Sigma_{4,4}$.

Several other constructions of psd polynomials that are not sos
have appeared in the literature since. An excellent survey
is~\cite{Reznick}. See also~\cite{Reznick_Hilbert_construciton}
and~\cite{Blekherman_nonnegative_and_sos}.

%A polynomial $p(x)$ of degree $d$ in $n$ variables has
%$n+d\choose{d}$ coefficients, whereas the number of coefficients
%of a form of degree $d$ in $n$ variables is $n+d-1\choose{d}$.
%It is easy to show that if a form of degree $d$ is sos, then $d$
%is even and the polynomials $q_i$ in the sos decomposition are
%forms of degree $d/2$.

\subsection{Connection to semidefinite programming and matrix
generalizations}\label{subsec:sos.sdp.and.matrix.generalize} As we
remarked before, what makes sum of squares an appealing concept
from a computational viewpoint is its relation to semidefinite
programming. It is well-known (see e.g. \cite{PhD:Parrilo},
\cite{sdprelax}) that a polynomial $p$ in $n$ variables and of
even degree $d$ is a sum of squares if and only if there exists a
positive semidefinite matrix $Q$ (often called the Gram matrix)
such that
$$p(x)=z^{T}Qz,$$
where $z$ is the vector of monomials of degree up to $d/2$
\begin{equation}\label{eq:monomials}
z=[1,x_{1},x_{2},\ldots,x_{n},x_{1}x_{2},\ldots,x_{n}^{d/2}].
\end{equation}
The set of all such matrices $Q$ is the feasible set of a
semidefinite program (SDP). For fixed $d$, the size of this
semidefinite program is polynomial in $n$. Semidefinite programs
can be solved with arbitrary accuracy in polynomial time. There
are several implementations of semidefinite programming solvers,
based on interior point algorithms among others, that are very
efficient in practice and widely used; see~\cite{VaB:96} and
references therein.

%The conversion step of going from an sos decomposition problem to
%an SDP problem is fully algorithmic and has been implemented in
%software packages such as SOSTOOLS~\cite{sostools} and
%YALMIP~\cite{yalmip}.

The notions of positive semidefiniteness and sum of squares of
scalar polynomials can be naturally extended to polynomial
matrices, i.e., matrices with entries in $\mathbb{R}[x]$. We say
that a symmetric polynomial matrix $U(x)\in \mathbb{R}[x]^{m
\times m}$ is positive semidefinite if $U(x)$ is positive
semidefinite in the matrix sense for all $x\in \mathbb{R}^n$, i.e,
if $U(x)$ has nonnegative eigenvalues for all $x\in \mathbb{R}^n$.
It is straightforward to see that this condition holds if and only
if the polynomial $y^{T}U(x)y$ in $m+n$ variables $[x; y]$ is psd.
A homogeneous polynomial matrix $U(x)$ is said to be positive
definite, if it is positive definite in the matrix sense, i.e.,
has positive eigenvalues, for all $x\neq0$ in $\mathbb{R}^n$. The
definition of an sos-matrix is as follows
\cite{Kojima_SOS_matrix}, \cite{Symmetry_groups_Gatermann_Pablo},
\cite{matrix_sos_Hol}.

\begin{definition}\label{def:sos-matrix}
A symmetric polynomial matrix $U(x)\in~\mathbb{R}[x]^{m \times
m}$,$ \ x\in \mathbb{R}^n,$ is an \emph{sos-matrix} if there
exists a polynomial matrix $V(x)\in \mathbb{R}[x]^{s \times m}$
for some $s\in\mathbb{N}$, such that $U(x)~=~V^{T}(x)V(x)$.
\end{definition}

It turns out that a polynomial matrix $U(x)\in \mathbb{R}[x]^{m
\times m}$, $\ x\in~\mathbb{R}^n,$ is an sos-matrix if and only if
the scalar polynomial $y^{T}U(x)y$ is a sum of squares in
$\mathbb{R}[x; y]$; see~\cite{Kojima_SOS_matrix}. This is a useful
fact because in particular it gives us an easy way of checking
whether a polynomial matrix is an sos-matrix by solving a
semidefinite program. Once again, it is obvious that being an
sos-matrix is a sufficient condition for a polynomial matrix to be
positive semidefinite.

%The converse is not true except for special cases, e.g. for
%univariate matrices~\cite{CLRrealzeros},~\cite{SOS_KYP}.

\subsection{Background on convexity and sos-convexity}\label{subsec:convexity.sos.convexity.basics}

A polynomial $p$ is (globally) convex if for all $x$ and $y$ in
$\mathbb{R}^n$ and all $\lambda \in [0,1]$, we have
\begin{equation}\label{eq:convexity.defn.}
p(\lambda x+(1-\lambda)y)\leq \lambda p(x)+(1-\lambda)p(y).
\end{equation}
Since polynomials are continuous functions, the inequality in
(\ref{eq:convexity.defn.}) holds if and only if it holds for a
fixed value of $\lambda\in(0,1)$, say, $\lambda=\frac{1}{2}$. In
other words, $p$ is convex if and only if
\begin{equation}\label{eq:convexity.with.lambda.0.5}
p\left(\textstyle{\frac{1}{2}}
x+\textstyle{\frac{1}{2}}y\right)\leq \textstyle{\frac{1}{2}
p(x)}+\textstyle{\frac{1}{2}}p(y)
\end{equation} for all $x$ and
$y$; see e.g.~\cite[p. 71]{Rudin_RealComplexAnalysis}. Recall from
the previous chapter that except for the trivial case of linear
polynomials, an odd degree polynomial is clearly never convex.

For the sake of direct comparison with a result that we derive in
the next section
(Theorem~\ref{thm:sos.convexity.3.equivalent.defs}), we recall
next a classical result from convex analysis on the first and
second order characterization of convexity. The proof can be found
in many convex optimization textbooks, e.g.~\cite[p.
70]{BoydBook}. The theorem is of course true for any twice
differentiable function, but for our purposes we state it for
polynomials.

\begin{theorem}\label{thm:classical.first.2nd.order.charac.}
Let $p\mathrel{\mathop:}=p(x)$ be a polynomial. Let $\nabla
p\mathrel{\mathop:}=\nabla p(x)$ denote its gradient and let
$H\mathrel{\mathop:}=H(x)$ be its Hessian, i.e., the $n \times n$
symmetric matrix of second derivatives. Then the following are
equivalent.

\textbf{(a)} $p\left(\textstyle{\frac{1}{2}}
x+\textstyle{\frac{1}{2}}y\right)\leq \textstyle{\frac{1}{2}
p(x)}+\textstyle{\frac{1}{2}}p(y),\quad \forall
x,y\in\mathbb{R}^n$; (i.e., $p$ is convex).

\textbf{(b)} $p(y) \geq p(x)+\nabla p(x)^T(y-x),\quad \forall
x,y\in\mathbb{R}^n.$

\textbf{(c)} $y^TH(x)y\geq0,\quad \forall x,y\in\mathbb{R}^n$;
(i.e., $H(x)$ is a positive semidefinite polynomial matrix).
\end{theorem}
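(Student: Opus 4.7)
The result is classical, so the plan is the standard cyclic chain of implications $(a)\Rightarrow(b)\Rightarrow(c)\Rightarrow(a)$. Since polynomials are smooth, the only analytic ingredients needed are Taylor's theorem with remainder and the fact, already noted in the paragraph preceding the theorem statement, that midpoint convexity together with continuity upgrades to the full inequality $p(\lambda x+(1-\lambda)y)\leq \lambda p(x)+(1-\lambda)p(y)$ for every $\lambda\in[0,1]$. I will use this upgrade silently in the first implication.

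For $(a)\Rightarrow(b)$, I would fix $x,y$, set $\phi(t)=p(x+t(y-x))$, and rewrite the full convexity inequality as $\phi(t)\leq (1-t)\phi(0)+t\phi(1)$ for $t\in[0,1]$. Rearranging gives $(\phi(t)-\phi(0))/t\leq \phi(1)-\phi(0)$; letting $t\downarrow 0$ and using $\phi'(0)=\nabla p(x)^T(y-x)$ yields $(b)$. For $(b)\Rightarrow(c)$, I would apply $(b)$ at the pair $(x, x+\varepsilon d)$ for arbitrary $d\in\Rm^n$ and $\varepsilon>0$ and expand via Taylor: $p(x+\varepsilon d)=p(x)+\varepsilon \nabla p(x)^T d+\tfrac{\varepsilon^{2}}{2}d^{T}H(x)d+O(\varepsilon^{3})$. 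The inequality in $(b)$ forces $\tfrac{\varepsilon^{2}}{2}d^{T}H(x)d+O(\varepsilon^{3})\geq 0$, and dividing by $\varepsilon^{2}$ and sending $\varepsilon\downarrow 0$ gives $d^{T}H(x)d\geq 0$ for all $x,d$, which is $(c)$.

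For $(c)\Rightarrow(a)$, the cleanest route is Taylor's theorem with integral remainder applied to $p$ along the segment from $x$ to $y$:
\begin{equation*}
p(y)=p(x)+\nabla p(x)^T(y-x)+\int_{0}^{1}(1-t)\,(y-x)^{T}H\bigl(x+t(y-x)\bigr)(y-x)\,dt.
\end{equation*}
Since $(c)$ says the integrand is pointwise nonnegative, we obtain $(b)$ as an intermediate consequence. Then, with $z=\tfrac{1}{2}(x+y)$, applying $(b)$ at the pairs $(z,x)$ and $(z,y)$ and adding the two inequalities makes the gradient terms cancel, leaving $p(x)+p(y)\geq 2p(z)$, which is $(a)$.

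There is no real obstacle; the only thing to be slightly careful about is justifying the passage from midpoint convexity to full convexity (used implicitly in $(a)\Rightarrow(b)$), which follows from continuity of $p$ and the standard dyadic density argument already referenced in the text preceding the theorem. Everything else is one line of Taylor expansion or one line of algebra, so the proof will be short.
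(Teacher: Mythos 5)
Your proof is correct: the upgrade from midpoint to full convexity is justified by the continuity remark the paper makes right before the theorem, and each implication in your cycle $(a)\Rightarrow(b)\Rightarrow(c)\Rightarrow(a)$ is sound. The paper does not actually prove this classical statement (it defers to convex optimization textbooks), but your argument is exactly the standard one and parallels the proof the paper does give for its algebraic analogue, Theorem~\ref{thm:sos.convexity.3.equivalent.defs}, where \textbf{(b)}$\Rightarrow$\textbf{(c)} is likewise obtained from a second-order Taylor expansion and \textbf{(c)}$\Rightarrow$\textbf{(b)} from the Taylor expansion with integral remainder.
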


Helton and Nie proposed in~\cite{Helton_Nie_SDP_repres_2} the
notion of \emph{sos-convexity} as an sos relaxation for the second
order characterization of convexity (condition \textbf{(c)}
above).

\begin{definition}\label{def:sos.convex}
A polynomial $p$ is \emph{sos-convex} if its Hessian
$H\mathrel{\mathop:}=H(x)$ is an sos-matrix.
\end{definition}

With what we have discussed so far, it should be clear that
sos-convexity is a sufficient condition for convexity of
polynomials that can be checked with semidefinite programming. In
the next section, we will show some other natural sos relaxations
for polynomial convexity, which will turn out to be equivalent to
sos-convexity.

We end this section by introducing some final notation:
$\tilde{C}_{n,d}$ and $\tilde{\Sigma C}_{n,d}$ will respectively
denote the set of convex and sos-convex polynomials in $n$
variables and degree $d$; $C_{n,d}$ and $\Sigma C_{n,d}$ will
respectively denote set of convex and sos-convex homogeneous
polynomials in $n$ variables and degree $d$. Again, these four
sets are closed convex cones and we have the obvious inclusions
$\tilde{\Sigma C}_{n,d}\subseteq\tilde{C}_{n,d}$ and $\Sigma
C_{n,d}\subseteq C_{n,d}$.

\section{Equivalent algebraic relaxations for convexity of
polynomials}\label{sec:equiv.defs.of.sos.convexity} An obvious way
to formulate alternative sos relaxations for convexity of
polynomials is to replace every inequality in
Theorem~\ref{thm:classical.first.2nd.order.charac.} with its sos
version. In this section we examine how these relaxations relate
to each other. We also comment on the size of the resulting
semidefinite programs.

Our result below can be thought of as an algebraic analogue of
Theorem~\ref{thm:classical.first.2nd.order.charac.}.
\begin{theorem} \label{thm:sos.convexity.3.equivalent.defs}
Let $p\mathrel{\mathop:}=p(x)$ be a polynomial of degree $d$ in
$n$ variables with its gradient and Hessian denoted respectively
by $\nabla p\mathrel{\mathop:}=\nabla p(x) $ and
$H\mathrel{\mathop:}=H(x)$. Let $g_{\lambda}$, $g_\nabla$, and
$g_{\nabla^2}$ be defined as
\begin{equation} \label{eq:defn.g_lambda.g_grad.g_grad2}
\begin{array}{lll}
g_{\lambda}(x,y)&=&(1-\lambda)p(x)+\lambda p(y)-p((1-\lambda)
x+\lambda y),\\
g_\nabla(x,y)&=&p(y)-p(x)-\nabla p(x)^T(y-x), \\
g_{\nabla^2}(x,y)&=&y^{T}H(x)y.
\end{array}
\end{equation}
Then the following are equivalent:

\textbf{(a)}  \  $g_{\frac{1}{2}}(x,y)$ is sos\footnote{The
constant $\frac{1}{2}$ in $g_{\frac{1}{2}}(x,y)$ of condition
\textbf{(a)} is arbitrary and is chosen for convenience. One can
show that $g_{\frac{1}{2}}$ being sos implies that $g_{\lambda}$
is sos for any fixed $\lambda\in[0,1]$. Conversely, if
$g_{\lambda}$ is sos for some $\lambda\in(0,1)$, then
$g_{\frac{1}{2}}$ is sos. The proofs are similar to the proof of
\textbf{(a)$\Rightarrow$(b)}. }.

%\textbf{(a')} $g_\lambda(x,y)$ is sos for any fixed
%$\lambda\in[0,1].$

\textbf{(b)}  \  $g_\nabla(x,y)$ is sos.

\textbf{(c)}   \  $g_{\nabla^2}(x,y)$ is sos; (i.e., $H(x)$ is an
sos-matrix).
\end{theorem}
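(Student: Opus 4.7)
The plan is to establish the cyclic chain $(c)\Rightarrow(b)\Rightarrow(a)\Rightarrow(c)$, which parallels the classical argument for Theorem~\ref{thm:classical.first.2nd.order.charac.} but carefully replaces each passage to a limit by an appeal to the closedness of the sos cone recalled in Section~\ref{subsec:nonnegativity.sos.basics}. Two of the three implications will in fact be constructive, producing explicit sos certificates; only one genuinely uses closedness.

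For $(c)\Rightarrow(b)$, I would begin with the integral form of Taylor's theorem applied to $f(t):=p(x+t(y-x))$, which gives
\[
g_\nabla(x,y)=\int_{0}^{1}(1-t)\,(y-x)^{T}H(x+t(y-x))(y-x)\,dt.
\]
If $y^{T}H(x)y=\sum_{i}s_{i}(x,y)^{2}$, then substituting $(x,y)\mapsto(x+t(y-x),\,y-x)$ shows that, for each fixed $t\in[0,1]$, the integrand is the nonnegative multiple $(1-t)\sum_{i}s_{i}(x+t(y-x),y-x)^{2}$ of a sos polynomial of degree $\leq d$ in $(x,y)$. A Riemann-sum approximation of the integral is therefore a nonnegative combination of squares, hence sos, and its coefficients converge to those of $g_\nabla(x,y)$; closedness of the sos cone of degree $d$ in $2n$ variables then forces $g_\nabla$ itself to be sos.

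For $(b)\Rightarrow(a)$, the key is the purely algebraic identity
\[
g_{\tfrac12}(x,y)=\tfrac{1}{2}\Bigl[g_\nabla\!\left(\tfrac{x+y}{2},\,x\right)+g_\nabla\!\left(\tfrac{x+y}{2},\,y\right)\Bigr],
\]
which is immediate from the definitions because the two gradient contributions at the midpoint cancel. Writing $g_\nabla=\sum_{j}q_{j}^{2}$ and substituting the two argument pairs above yields an explicit sos decomposition of $g_{1/2}$ with no limiting argument.

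For $(a)\Rightarrow(c)$, I would introduce a scalar parameter $\alpha$ and substitute $y\mapsto x+\alpha y$ in an sos decomposition $g_{1/2}(x,y)=\sum_{k}q_{k}(x,y)^{2}$. The crucial observation is that $g_{1/2}(x,x)\equiv 0$, so $q_{k}(x,x)=0$ for each $k$, which means $q_{k}(x,x+\alpha y)=\alpha\,r_{k}(x,y,\alpha)$ for some polynomial $r_{k}\in\mathbb{R}[x,y,\alpha]$. Thus, as an identity of polynomials in $(x,y,\alpha)$,
\[
\frac{g_{1/2}(x,x+\alpha y)}{\alpha^{2}}=\sum_{k}r_{k}(x,y,\alpha)^{2}.
\]
On the other hand, a direct Taylor expansion in $\alpha$ gives $g_{1/2}(x,x+\alpha y)=\tfrac{\alpha^{2}}{8}\,y^{T}H(x)y+O(\alpha^{3})$. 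Setting $\alpha=0$ in the identity above then produces the explicit sos certificate $y^{T}H(x)y=8\sum_{k}r_{k}(x,y,0)^{2}$. The main obstacle in the whole argument is really the implication $(c)\Rightarrow(b)$, since integration does not in general preserve the sos property at the level of individual squares, and one cannot avoid invoking closedness of the sos cone to pass from the Riemann-sum certificates to the integral.
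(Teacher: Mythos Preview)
Your proof is correct. Two of your three implications coincide with the paper's: the identity $g_{1/2}(x,y)=\tfrac12\bigl[g_\nabla(\tfrac{x+y}{2},x)+g_\nabla(\tfrac{x+y}{2},y)\bigr]$ for $(b)\Rightarrow(a)$, and the integral Taylor remainder together with closedness of the sos cone for $(c)\Rightarrow(b)$, are exactly what the paper uses.

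Where you genuinely diverge is in closing the cycle with $(a)\Rightarrow(c)$. The paper does not argue this directly; it instead proves $(a)\Rightarrow(b)$ by a dyadic iteration (showing $g_{1/2^k}$ is sos for all $k$ via the relation $g_{1/2^{k+1}}(x,y)=\tfrac12 g_{1/2^k}(x,y)+g_{1/2}\bigl(x,\tfrac{2^k-1}{2^k}x+\tfrac{1}{2^k}y\bigr)$ and then passing to the limit), followed by $(b)\Rightarrow(c)$ via the substitution $y=x+\epsilon z$ and another limit $\epsilon\to 0$. Both of these steps invoke closedness of the sos cone. Your argument---factoring $q_k(x,x+\alpha y)=\alpha\,r_k(x,y,\alpha)$ from $q_k(x,x)=0$ and then specializing $\alpha=0$---is more elementary and fully constructive: it produces an explicit sos certificate for $y^TH(x)y$ directly from one for $g_{1/2}$, with no limiting argument at all. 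So your cycle uses closedness only once, in $(c)\Rightarrow(b)$, whereas the paper's proof of the full equivalence invokes it three times. The trade-off is that the paper, by proving all four implications $(a)\Leftrightarrow(b)$ and $(b)\Leftrightarrow(c)$ separately, obtains $(a)\Rightarrow(b)$ as a standalone result, which you recover only indirectly through $(a)\Rightarrow(c)\Rightarrow(b)$.
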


\begin{proof}
\textbf{(a)$\Rightarrow$(b)}: Assume $g_{\frac{1}{2}}$ is sos. We
start by proving that $g_{\frac{1}{2^k}}$ will also be sos for any
integer $k\geq2$. A little bit of straightforward algebra yields
the relation
\begin{equation}\label{eq:g_diatic_relation}
g_{\frac{1}{2^{k+1}}}(x,y)=\textstyle{\frac{1}{2}}g_{\frac{1}{2^{k}}}(x,y)+g_{\frac{1}{2}}\left(x,\textstyle{\frac{2^{k}-1}{2^{k}}}x+\textstyle{\frac{1}{2^{k}}}y\right).
\end{equation}
The second term on the right hand side of
(\ref{eq:g_diatic_relation}) is always sos because
$g_{\frac{1}{2}}$ is sos. Hence, this relation shows that for any
$k$, if $g_{\frac{1}{2^k}}$ is sos, then so is
$g_{\frac{1}{2^{k+1}}}$. Since for $k=1$, both terms on the right
hand side of (\ref{eq:g_diatic_relation}) are sos by assumption,
induction immediately gives that $g_{\frac{1}{2^k}}$ is sos for
all $k$.

%When $k=2$, both terms on the right hand side of this equation are
%sos by assumption and hence (\ref{eq:g_diatic_relation}) implies
%that $g_{\frac{1}{4}}$ must be sos. Once this is established, we
%can reapply equation (\ref{eq:g_diatic_relation}) with $k=3$ to
%conclude that $g_{\frac{1}{8}}$ is sos, and so on.

Now, let us rewrite $g_{\lambda}$ as
\begin{equation}\nonumber
g_{\lambda}(x,y)=p(x)+\lambda(p(y)-p(x))-p(x+\lambda(y-x)).
\end{equation}
We have
\begin{equation}\label{eq:g.lambda.rewritten}
\frac{g_{\lambda}(x,y)}{\lambda}=p(y)-p(x)-\frac{p(x+\lambda(y-x))-p(x)}{\lambda}.
\end{equation}
Next, we take the limit of both sides of
(\ref{eq:g.lambda.rewritten}) by letting
$\lambda=\frac{1}{2^k}\rightarrow0$ as $k\rightarrow\infty$.
Because $p$ is differentiable, the right hand side of
(\ref{eq:g.lambda.rewritten}) will converge to $g_\nabla$. On the
other hand, our preceding argument implies that
$\frac{g_{\lambda}}{\lambda}$ is an sos polynomial (of degree $d$
in $2n$ variables) for any $\lambda=\frac{1}{2^k}$. Moreover, as
$\lambda$ goes to zero, the coefficients of
$\frac{g_{\lambda}}{\lambda}$ remain bounded since the limit of
this sequence is $g_\nabla$, which must have bounded coefficients
(see (\ref{eq:defn.g_lambda.g_grad.g_grad2})). By closedness of
the sos cone, we conclude that the limit $g_\nabla$ must be sos.

\textbf{(b)$\Rightarrow$(a)}: Assume $g_\nabla$ is sos. It is easy
to check that
\begin{equation} \nonumber
g_{\frac{1}{2}}(x,y)=\textstyle{\frac{1}{2}}g_\nabla\left(\textstyle{\frac{1}{2}}x+\textstyle{\frac{1}{2}}y,x\right)+\textstyle{\frac{1}{2}}g_\nabla\left(\textstyle{\frac{1}{2}}x+\textstyle{\frac{1}{2}}y,y\right),
\end{equation}
and hence $g_{\frac{1}{2}}$ is sos.

\textbf{(b)$\Rightarrow$(c)}: Let us write the second order Taylor
approximation of $p$ around $x$:
\begin{equation}\nonumber%\label{eq:Taylor.second.order}
\begin{array}{ll}
p(y)=&p(x)+\nabla^{T}p(x)(y-x) \\
\   &+\frac{1}{2}(y-x)^{T}H(x)(y-x)+o(||y-x||^2).
\end{array}
\end{equation}
After rearranging terms, letting $y=x+\epsilon z$ (for
$\epsilon>0$), and dividing both sides by $\epsilon^2$ we get:
\begin{equation}\label{eq:Taylor.second.order.rearranged}
%\begin{array}{ll}
(p(x+\epsilon
z)-p(x))/\epsilon^2-\nabla^{T}p(x)z/\epsilon=\frac{1}{2}z^{T}H(x)z+1/\epsilon^2o(\epsilon^2||z||^2).
%\end{array}
\end{equation}
The left hand side of (\ref{eq:Taylor.second.order.rearranged}) is
$g_\nabla(x,x+\epsilon z)/\epsilon^2$ and therefore for any fixed
$\epsilon>0$, it is an sos polynomial by assumption. As we take
$\epsilon\rightarrow0$, by closedness of the sos cone, the left
hand side of (\ref{eq:Taylor.second.order.rearranged}) converges
to an sos polynomial. On the other hand, as the limit is taken,
the term $\frac{1}{\epsilon^2}o(\epsilon^2||z||^2)$ vanishes and
hence we have that $z^TH(x)z$ must be sos.

\textbf{(c)$\Rightarrow$(b)}: Following the strategy of the proof
of the classical case in~\cite[p. 165]{Tits_lec.notes}, we start
by writing the Taylor expansion of $p$ around $x$ with the
integral form of the remainder:
\begin{equation}\label{eq:Taylor.expan.Cauchy.rem}
%\begin{array}{ll}
p(y)=p(x)+\nabla^{T}p(x)(y-x)+\int_0^1(1-t)(y-x)^{T}H(x+t(y-x))(y-x)dt.
%\end{array}
\end{equation}
Since $y^{T}H(x)y$ is sos by assumption, for any $t\in[0,1]$ the
integrand
$$(1-t)(y-x)^{T}H(x+t(y-x))(y-x)$$ is an sos polynomial of degree $d$ in $x$ and
$y$. From (\ref{eq:Taylor.expan.Cauchy.rem}) we have
$$g_\nabla=\int_0^1(1-t)(y-x)^{T}H(x+t(y-x))(y-x)dt.$$
It then follows that $g_\nabla$ is sos because integrals of sos
polynomials, if they exist, are sos.

%This is a standard fact, but to see it intuitively one can think
%of discretizing $t$ and writing the integral as a limit of a
%sequence of Riemann sums. Every finite Riemann sum will be a
%polynomial of degree $d$ and because the sos cone is closed, the
%limit of this sequence must be sos.
%
%The conclusion that $g_\nabla$ is sos then follows by noting that
%from (\ref{eq:Taylor.expan.Cauchy.rem}) we have
%$$g_\nabla=\int_0^1(1-t)(y-x)^{T}H(x+t(y-x))(y-x)d_t,$$
%and by appealing to the fact that integrals of sos polynomials, if
%they exist, are sos. To see this fact, one can think of
%discretizing $t$ and writing the integral as a limit of a sequence
%of Riemann sums. Every finite Riemann sum will be a polynomial of
%degree $d$ and because the sos cone is closed, the limit of this
%sequence must be sos.
\end{proof}
We conclude that conditions \textbf{(a)}, \textbf{(b)}, and
\textbf{(c)} are equivalent sufficient conditions for convexity of
polynomials, and can each be checked with a semidefinite program
as explained in
Subsection~\ref{subsec:sos.sdp.and.matrix.generalize}. It is easy
to see that all three polynomials $g_{\frac{1}{2}}(x,y)$,
$g_\nabla(x,y)$, and $g_{\nabla^2}(x,y)$ are polynomials in $2n$
variables and of degree $d$. (Note that each differentiation
reduces the degree by one.) Each of these polynomials have a
specific structure that can be exploited for formulating smaller
SDPs. For example, the symmetries
$g_{\frac{1}{2}}(x,y)=g_{\frac{1}{2}}(y,x)$ and
$g_{\nabla^2}(x,-y)=g_{\nabla^2}(x,y)$ can be taken advantage of
via symmetry reduction techniques developed
in~\cite{Symmetry_groups_Gatermann_Pablo}.

The issue of symmetry reduction aside, we would like to point out
that formulation \textbf{(c)} (which was the original definition
of sos-convexity) can be significantly more efficient than the
other two conditions. The reason is that the polynomial
$g_{\nabla^2}(x,y)$ is always quadratic and homogeneous in $y$ and
of degree $d-2$ in $x$. This makes $g_{\nabla^2}(x,y)$ much more
sparse than $g_\nabla(x,y)$ and $g_{\nabla^2}(x,y)$, which have
degree $d$ both in $x$ and in $y$. Furthermore, because of the
special bipartite structure of $y^TH(x)y$, only monomials of the
form $x_i^ky_j$ will appear in the vector of monomials
(\ref{eq:monomials}). This in turn reduces the size of the Gram
matrix, and hence the size of the SDP. It is perhaps not too
surprising that the characterization of convexity based on the
Hessian matrix is a more efficient condition to check. After all,
this is a local condition (curvature at every point in every
direction must be nonnegative), whereas conditions \textbf{(a)}
and \textbf{(b)} are both global.

\begin{remark}
There has been yet another proposal for an sos relaxation for
convexity of polynomials in~\cite{Chesi_Hung_journal}. However, we
have shown in~\cite{AAA_PP_CDC10_algeb_convex} that the condition
in~\cite{Chesi_Hung_journal} is at least as conservative as the
three conditions in
Theorem~\ref{thm:sos.convexity.3.equivalent.defs} and also
significantly more expensive to check.
\end{remark}

\begin{remark}\label{rmk:sos-convexity.restriction}
Just like convexity, the property of sos-convexity is preserved
under restrictions to affine subspaces. This is perhaps most
directly seen through characterization \textbf{(a)} of
sos-convexity in
Theorem~\ref{thm:sos.convexity.3.equivalent.defs}, by also noting
that sum of squares is preserved under restrictions. Unlike
convexity however, if a polynomial is sos-convex on every line (or
even on every proper affine subspace), this does not imply that
the polynomial is sos-convex.
\end{remark}

As an application of
Theorem~\ref{thm:sos.convexity.3.equivalent.defs}, we use our new
characterization of sos-convexity to give a short proof of an
interesting lemma of Helton and Nie.

%\begin{lemma}[Helton and Nie~\cite{Helton_Nie_SDP_repres_2}]\label{lem:helton.nie.sos-convex.then.sos}
%Every sos-convex form is sos.
%\end{lemma}

%AAA: When I try to put the Lemma number in Helton and Nie, [ [] ] doesn't work, so I have to cheat it.
\begin{lemma}\label{lem:helton.nie.sos-convex.then.sos}\emph{(Helton and Nie~\cite[Lemma 8]{Helton_Nie_SDP_repres_2})}.
Every sos-convex form is sos.
\end{lemma}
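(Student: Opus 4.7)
The plan is to give a direct two-line argument based on Euler's identity for homogeneous functions, combined with the characterization of sos-convexity in condition \textbf{(c)} of Theorem~\ref{thm:sos.convexity.3.equivalent.defs}.

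First, I would recall that if $p$ is a form of degree $d\geq 2$ in $n$ variables with Hessian $H(x)$, then applying Euler's identity twice yields the homogeneous function identity
\[
x^T H(x)\, x \;=\; d(d-1)\, p(x).
\]
Second, since $p$ is sos-convex, by Definition~\ref{def:sos-convex} the Hessian $H(x)$ is an sos-matrix, so by the discussion in Subsection~\ref{subsec:sos.sdp.and.matrix.generalize} the polynomial $y^T H(x) y$ is a sum of squares in the $2n$ variables $(x,y)$. Setting $y=x$ is a linear substitution, which preserves the sos property, so $x^T H(x) x$ is itself a sum of squares in $x$. Combining this with Euler's identity gives
\[
p(x) \;=\; \frac{1}{d(d-1)}\, x^T H(x) x,
\]
which exhibits $p$ as a nonnegative scalar multiple of an sos polynomial, hence sos. (The degree $d<2$ cases are outside the scope of the lemma as used in the chapter, since for $d=2$ the statement is trivial and for $d<2$ the Hessian vanishes identically.)

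There is no real obstacle here; the only subtlety worth flagging in the write-up is the justification that an sos-matrix $H(x)$ produces an sos polynomial $x^T H(x) x$, which is immediate from the factorization $H(x)=V^T(x)V(x)$ giving $x^T H(x) x = \|V(x)x\|^2 = \sum_i (V(x)x)_i^{\,2}$. The entire proof is only a few lines and exploits homogeneity in an essential way, which is why the analogous statement fails for non-homogeneous sos-convex polynomials.
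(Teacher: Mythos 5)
Your proof is correct, but it takes a different route from the one in the chapter. You work directly with the Hessian characterization (Definition~\ref{def:sos-convex}, i.e., condition \textbf{(c)} of Theorem~\ref{thm:sos.convexity.3.equivalent.defs}): since $y^TH(x)y$ is sos, the substitution $y=x$ preserves the sos property, and Euler's identity applied twice gives $x^TH(x)x=d(d-1)p(x)$, so $p$ is a positive multiple of an sos polynomial. The paper instead invokes the equivalent characterization \textbf{(a)}, namely that $g_{\frac{1}{2}}(x,y)=\frac{1}{2}p(x)+\frac{1}{2}p(y)-p\bigl(\frac{x+y}{2}\bigr)$ is sos, restricts to $y=0$ (sos is preserved under restriction to affine subspaces), and uses $p(0)=0$ to conclude that $\bigl(\frac{1}{2}-(\frac{1}{2})^d\bigr)p(x)$ is sos. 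Both arguments exploit homogeneity in an essential way (Euler's identity in yours, $p(0)=0$ and the strictly positive constant $\frac{1}{2}-(\frac{1}{2})^d$ for $d\geq 2$ in the paper's), and both degenerate only in the trivial linear case $d=1$, which you correctly set aside. One advantage of your version is that it is self-contained relative to the original definition of sos-convexity: it does not rely on the implication \textbf{(c)}$\Rightarrow$\textbf{(a)} of Theorem~\ref{thm:sos.convexity.3.equivalent.defs}, whose proof goes through the integral form of the Taylor remainder and closedness of the sos cone. The paper's version, on the other hand, showcases the restriction argument (cf. Remark~\ref{rmk:sos-convexity.restriction}) and, exactly like yours, yields the analogous statement that convex forms are psd by replacing ``sos'' with ``nonnegative'' throughout.
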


\begin{proof}
Let $p$ be an sos-convex form of degree $d$. We know from
Theorem~\ref{thm:sos.convexity.3.equivalent.defs} that
sos-convexity of $p$ is equivalent to the polynomial
%\begin{equation}\nonumber
$g_{\frac{1}{2}}(x,y)=\textstyle{\frac{1}{2}
p(x)}+\textstyle{\frac{1}{2}}p(y)-p\left(\textstyle{\frac{1}{2}}
x+\textstyle{\frac{1}{2}}y\right)$
%\end{equation}
being sos. But since sos is preserved under restrictions and
$p(0)=0$, this implies that
$$g_{\frac{1}{2}}(x,0)=\textstyle{\frac{1}{2}
p(x)}-p(\frac{1}{2}x)=\left(\frac{1}{2}-(\frac{1}{2})^d
\right)p(x)$$ is sos.
\end{proof}

Note that the same argument also shows that convex forms are psd.

%We shall end this section by commenting on the necessity of these
%conditions. In~\cite{AAA_PP_not_sos_convex_journal},
%\cite{AAA_PP_CDC09_HessianNotFactor}, the authors showed with an
%explicit example that there exist convex polynomials that are not
%sos-convex. In view of
%Theorems~\ref{thm:classical.first.2nd.order.charac.}~and~\ref{thm:main.result.equivalence},
%it follows immediately that the same example rejects necessity of
%conditions \textbf{(a)} and \textbf{(b)}. However, standard
%Positivstellensatz results related to Hilbert's 17th problem can
%be used to make these conditions necessary under mild assumptions
%but at the cost of solving larger and larger SDPs; see
%e.g.~\cite{Reznick_Unif_denominator}.

\section{Some constructions of convex but not sos-convex
polynomials}\label{sec:first.examples} It is natural to ask
whether sos-convexity is not only a sufficient condition for
convexity of polynomials but also a necessary one. In other words,
could it be the case that if the Hessian of a polynomial is
positive semidefinite, then it must factor? To give a negative
answer to this question, one has to prove existence of a convex
polynomial that is not sos-convex, i.e, a polynomial $p$ for which
one (and hence all) of the three polynomials $g_{\frac{1}{2}},
g_\nabla,$ and $g_{\nabla^2}$ in
(\ref{eq:defn.g_lambda.g_grad.g_grad2}) are psd but not sos. Note
that existence of psd but not sos polynomials does not imply
existence of convex but not sos-convex polynomials on its own. The
reason is that the polynomials $g_{\frac{1}{2}}, g_\nabla,$ and
$g_{\nabla^2}$ all possess a very special
structure.\footnote{There are many situations where requiring a
specific structure on polynomials makes psd equivalent to sos. As
an example, we know that there are forms in
$P_{4,4}\setminus\Sigma_{4,4}$. However, if we require the forms
to have only even monomials, then all such nonnegative forms in 4
variables and degree 4 are sums of
squares~\cite{Even_quartics_4vars_sos}.} For example, $y^TH(x)y$
has the structure of being quadratic in $y$ and a Hessian in $x$.
(Not every polynomial matrix is a valid Hessian.) The Motzkin or
the Robinson polynomials in (\ref{eq:Motzkin.form}) and
(\ref{eq:Robinston.form}) for example are clearly not of this
structure.

\subsection{The first example}
In~\cite{AAA_PP_not_sos_convex_journal},\cite{AAA_PP_CDC09_HessianNotFactor},
we presented the first example of a convex polynomial that is not
sos-convex\footnote{Assuming P$\neq$NP, and given the NP-hardness
of deciding polynomial convexity proven in the previous chapter,
one would expect to see convex polynomials that are not
sos-convex. However, we found the first such polynomial before we
had proven the NP-hardness result. Moreover, from complexity
considerations, even assuming P$\neq$NP, one cannot conclude
existence of convex but not sos-convex polynomials for any fixed
finite value of the number of variables $n$.}:
\begin{equation}\label{eq:first.convex.not.sos.convex}
\begin{array}{rlll}
p(x_1,x_2,x_3)&=&32x_1^8+118x_1^6x_2^2+40x_1^6x_3^2+25x_1^4x_2^4-43x_1^4x_2^2x_3^2-35x_1^4x_3^4
\\
\\ \quad&\
&+3x_1^2x_2^4x_3^2-16x_1^2x_2^2x_3^4+24x_1^2x_3^6+16x_2^8+44x_2^6x_3^2+70x_2^4x_3^4\\
\\ \quad&\
&+60x_2^2x_3^6+30x_3^8.
\end{array}
\end{equation}
As we will see later in this chapter, this form which lives in
$C_{3,8}\setminus\Sigma C_{3,8}$ turns out to be an example in the
smallest possible number of variables but not in the smallest
degree.

In Appendix A, we will explain how the polynomial in
(\ref{eq:first.convex.not.sos.convex}) was found. The proof that
this polynomial is convex but not sos-convex is omitted and can be
found in~\cite{AAA_PP_not_sos_convex_journal}. However, we would
like to highlight an idea behind this proof that will be used
again in this chapter. As the following lemma demonstrates, one
way to ensure a polynomial is \emph{not sos-convex} is by
enforcing one of the principal minors of its Hessian matrix to be
\emph{not sos}.

%First, a natural question is that if a polynomial is not
%sos-convex, then how do we go about proving that it is convex? The
%same question of course applies to polynomial nonnegativity: how
%can prove nonnegativity of a polynomial which is not sos?
%Depending on the polynomial at hand, several different
%tricks/methods can do the job. For example, the Motzkin form in
%(\ref{eq:Motzkin.form}) was shown to be nonnegative by using the
%arithmetic-geometric inequality~\cite{MotzkinSOS}. The approach
%that we will take however throughout this chapter is based on
%using higher order hierarchies of semidefinite programs that
%approximate the psd cone from the inside with cones that strictly
%contain the sos cone. One such hierarchy which is particularly
%simple can be obtained by using a result of
%Reznick~\cite{Reznick_Unif_denominator} related to Hilbert's 17th
%problem. The 17th problem asks whether every psd form must be a
%sum of squares of rational functions. In 1927,
%Artin~\cite{Artin_Hilbert17} answered Hilbert's question in the
%affirmative. This result implies that if a form $g$ is psd, then
%there must exist an sos form $s$, such that $g\cdot s$ is sos.
%Reznick showed in~\cite{Reznick_Unif_denominator} that if $g$ is
%positive definite, one can always take $s(x)=(\sum_i x_i^2)^{r}$,
%for sufficiently large $r$. For all the polynomials that we are
%concerned with in this chapter, it suffices to take $r=1$.

\begin{lemma}\label{lem:sos.matrix.then.minor.sos}
If $P(x)\in \mathbb{R}[x]^{m \times m}$ is an sos-matrix, then all
its $2^{m}-1$ principal minors\footnote{The principal minors of an
$m \times m$ matrix $A$ are the determinants of all $k \times k$
($1\leq k \leq m$) sub-blocks whose rows and columns come from the
same index set $S \subset \{1, \ldots, m\}$.} are sos polynomials.
In particular, $\det(P)$ and the diagonal elements of $P$ must be
sos polynomials.
\end{lemma}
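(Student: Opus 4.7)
The plan is to reduce the statement to two facts: (i) every principal submatrix of an sos-matrix is itself an sos-matrix, and (ii) the determinant of an sos-matrix is sos. Together these imply the conclusion, since each principal minor of $P(x)$ is by definition the determinant of a principal submatrix.

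For (i), I would use the factorization form of Definition~\ref{def:sos-matrix} directly. Write $P(x)=V^T(x)V(x)$ with $V(x)\in\mathbb{R}[x]^{s\times m}$. Given any index set $S\subseteq\{1,\ldots,m\}$, let $V_S(x)$ denote the submatrix of $V(x)$ formed by the columns indexed by $S$. Then the principal submatrix of $P(x)$ indexed by $S$ is exactly $V_S^T(x)V_S(x)$, so it is an sos-matrix. This is the easy step.

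The main content is step (ii), and the key tool will be the Cauchy–Binet formula. Writing again $P(x)=V^T(x)V(x)$ with $V(x)\in\mathbb{R}[x]^{s\times m}$, if $s<m$ then $\det(P(x))\equiv 0$, which is trivially sos; otherwise,
\begin{equation}\nonumber
\det(P(x))=\det\bigl(V^T(x)V(x)\bigr)=\sum_{T}\bigl(\det V_T(x)\bigr)^2,
\end{equation}
where the sum is over all $m$-element subsets $T$ of the row index set $\{1,\ldots,s\}$ and $V_T(x)$ is the corresponding $m\times m$ submatrix of $V(x)$. Each $\det V_T(x)$ is a polynomial in $x$, so the right-hand side is a sum of squares of polynomials. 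This is the crux; I do not expect any real obstacle here since Cauchy–Binet is a polynomial identity and therefore carries over verbatim from the field case to $\mathbb{R}[x]$.

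Combining (i) and (ii), for any nonempty $S\subseteq\{1,\ldots,m\}$ the principal minor of $P(x)$ indexed by $S$ is the determinant of the sos-matrix $V_S^T(x)V_S(x)$, hence sos. There are $2^m-1$ such nonempty subsets, giving the count. The two special cases mentioned in the statement, namely $\det(P)$ (take $S=\{1,\ldots,m\}$) and the diagonal entries $P_{ii}$ (take $S=\{i\}$, which gives $P_{ii}(x)=\sum_{k=1}^s V_{ki}^2(x)$), follow immediately and can be noted explicitly at the end.
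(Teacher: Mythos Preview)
Your proposal is correct and follows essentially the same approach as the paper: both use the factorization $P=V^TV$ to show that principal submatrices are again sos-matrices (by selecting columns of $V$), and both invoke the Cauchy--Binet formula to conclude that the determinant of an sos-matrix is a sum of squares of minors of $V$. The only cosmetic difference is the order of presentation---the paper proves the determinant case first and then reduces principal minors to it, whereas you do the reverse.
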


\begin{proof}
We first prove that $\det(P)$ is sos. By
Definition~\ref{def:sos-matrix}, we have $P(x)=M^{T}(x)M(x)$ for
some $s \times m$ polynomial matrix $M(x)$. If $s=m$, we have
$$\det(P)=\det(M^{T})\det(M)=(\det(M))^2$$ and the result is
immediate. If $s>m$, the result follows from the Cauchy-Binet
formula\footnote{Given matrices $A$ and $B$ of size $m \times s$
and $s \times m$ respectively, the Cauchy-Binet formula states
that
$$\det(AB)=\sum_S \det(A_S)\det(B_S),$$ where $S$ is a subset of $\{1, \ldots, s\}$ with $m$ elements, $A_S$ denotes the $m \times m$ matrix whose columns are the columns of $A$ with index from $S$, and similarly $B_S$ denotes the $m \times m$ matrix whose rows are the rows of $B$ with index from
$S$.}. We have $$ \begin{array}{rll}\det(P)&=&\sum_S \det(M^{T})_S
\det(M_S) \\ \\ \quad&=&\sum_S \det(M_S)^{T} \det(M_S)\\
\\ \quad &=&\sum_S(\det(M_S))^2. \end{array}$$ Finally, when $s<m$, $\det(P)$ is zero
which is trivially sos. In fact, the Cauchy-Binet formula also
holds for $s=m$ and $s<m$, but we have separated these cases for
clarity of presentation.

Next, we need to prove that the \aaa{minors corresponding to
smaller principal blocks} of $P$ are also sos. Define
$\mathcal{M}=\{1, \ldots, m\},$ and let $I$ and $J$ be nonempty
subsets of $\mathcal{M}$. Denote by $P_{IJ}$ a sub-block of $P$
with row indices from $I$ and column indices from $J$. It is easy
to see that
$$P_{JJ}=(M^{T})_{J\mathcal{M}}M_{\mathcal{M}J}=(M_{\mathcal{M}J})^TM_{\mathcal{M}J}.$$
Therefore, $P_{JJ}$ is an sos-matrix itself. By the proceeding
argument $\det(P_{JJ})$ must be sos, and hence all the principal
minors are sos.
\end{proof}

\begin{remark}
Interestingly, the converse of
Lemma~\ref{lem:sos.matrix.then.minor.sos} does not hold. A
counterexample is the Hessian of the form $f$ in
(\ref{eq:minim.form.3.6}) that we will present in the next
section. All 7 principal minors of the $3\times 3$ Hessian this
form are sos polynomials, even though the Hessian is not an
sos-matrix. This is in contrast with the fact that a polynomial
matrix is positive semidefinite if and only if all its principal
minors are psd polynomials. The latter statement follows
immediately from the well-known fact that a constant matrix is
positive semidefinite if and only if all its principal minors are
nonnegative.
\end{remark}

\subsection{A ``clean'' example}
We next present another example of a convex but not sos-convex
form whose construction is in fact related to our proof of
NP-hardness of deciding convexity of quartic forms from
Chapter~\ref{chap:nphard.convexity}. The example is in
$C_{6,4}\setminus\Sigma C_{6,4}$ and by contrast to the example of
the previous subsection, it will turn out to be minimal in the
degree but not in the number of variables. What is nice about this
example is that unlike the other examples in this chapter it has
not been derived with the assistance of a computer and
semidefinite programming:

\begin{equation}\label{eq:clean.quartic.convex.not.sos.convex}
\begin{array}{rlll}
q(x_1,\ldots,x_6)&=&x_1^4+x_2^4+x_3^4+x_4^4+x_5^4+x_6^4 \\ \\
     \quad&\
&+2(x_1^2x_2^2+x_1^2x_3^2+x_2^2x_3^2
+x_4^2x_5^2+x_4^2x_6^2+x_5^2x_6^2)\\ \\
\quad&\ &     +\frac{1}{2}(x_1^2x_4^2+x_2^2x_5^2+x_3^2x_6^2)+
x_1^2x_6^2+x_2^2x_4^2+x_3^2x_5^2 \\ \\\quad&\ &
-(x_1x_2x_4x_5+x_1x_3x_4x_6+x_2x_3x_5x_6).
\end{array}
\end{equation}
The proof that this polynomial is convex but not sos-convex can be
extracted from Theorems~\ref{thm:main.reduction}
and~\ref{thm:algeb.ver.of.reduction} of
Chapter~\ref{chap:nphard.convexity}. The reader can observe that
these two theorems put together give us a general procedure for
producing convex but not sos-convex quartic forms from \emph{any}
example of a psd but not sos biquadratic form\footnote{The reader
can refer to Definition~\ref{defn:biquad.forms} of the previous
chapter to recall the definition of a biquadratic form.}. The
biquadratic form that has led to the form above is that of Choi
in~\cite{Choi_Biquadratic}.

The example in (\ref{eq:clean.quartic.convex.not.sos.convex}) also
shows that convex forms that possess strong symmetry properties
can still fail to be sos-convex. The symmetries in this form are
inherited from the rich symmetry structure of the biquadratic form
of Choi (see~\cite{Symmetry_groups_Gatermann_Pablo}). In general,
symmetries are of interest in the study of positive semidefinite
and sums of squares polynomials because the gap between psd and
sos can often behave very differently depending on the symmetry
properties; see e.g.~\cite{Symmetric_quartics_sos}.

%%%AAA: Pablo told me to remove the following counterexample.
%We claim that the form \scalefont{.9}
%\begin{equation}\label{eq:symnmetric.quartic.convex.not.sos.convex}
%\begin{array}{rlll}
%s(x_1,\ldots,x_6)&=&4x_4x_5x_1^2+5x_1x_2x_6^2-10x_1x_3x_4x_6+5x_3^2x_4x_5+9x_1x_3x_4^2+13x_1x_3x_5^2+3x_1x_3x_6^2
%\\
%\  &\ &\ \\
%\  &\
%&+9x_4x_6x_1^2-10x_5x_6x_1^2-11x_1x_3x_5x_6+23x_2^2x_4x_5+5x_5x_1x_2x_6-10x_2x_3x_4^2+13x_2^2x_4x_6
%\\
%\  &\ &\ \\
%\  &\
%&+5x_2x_3x_4x_5-11x_2x_3x_4x_6+13x_2^2x_5x_6+6x_3^2x_4^2+3x_1x_3x_4x_5-5x_2x_3x_5x_6+13x_2x_3x_5^2
%\\
%\  &\ &\ \\
%\  &\
%&+12x_2^2x_5^2+12x_2^2x_4^2+12x_3^2x_5^2+12x_3^2x_6^2+7x_2x_3x_6^2+3x_3^2x_4x_6+7x_3^2x_5x_6+12x_2^2x_6^2
%\\
%\  &\ &\ \\
%\  &\
%&+12x_4^2x_1^2+12x_5^2x_1^2+6x_6^2x_1^2+31x_4x_5x_1x_2+3x_4x_6x_1x_2+4x_4^2x_1x_2+23x_5^2x_1x_2
%\\
%\  &\ &\ \\
%\  &\
%&+12(x_1^4+x_2^4+x_3^4+x_4^4+x_5^4+x_6^4+x_1^2x_2^2+x_1^2x_3^2+x_2^2x_3^2+x_4^2x_5^2+x_4^2x_6^2+x_5^2x_6^2)
%\end{array}
%\end{equation} \normalsize
%is convex, not sos-convex, and satisfies the symmetry
%$$s(x_1,x_2,x_3,x_4,x_5,x_6)=s(x_4,x_5,x_6,x_1,x_2,x_3).$$ Just
%like the form $q$ in
%(\ref{eq:clean.quartic.convex.not.sos.convex}), the form $s$ in
%(\ref{eq:symnmetric.quartic.convex.not.sos.convex}) is also
%derived from a psd but not sos biquaratic form. This biquadratic
%form is a symmetric one and is presented
%in~\cite{AAA_GB_PP_Convex_ternary_quartics}.

\section{Characterization of the gap between convexity and
sos-convexity}\label{sec:full.characterization}

Now that we know there exist convex polynomials that are not
sos-convex, our final and main goal is to give a complete
characterization of the degrees and dimensions in which such
polynomials can exist. This is achieved in the next theorem.

\begin{theorem}\label{thm:full.charac.polys}
$\tilde{\Sigma C}_{n,d}=\tilde{C}_{n,d}$ if and only if $n=1$ or
$d=2$ or $(n,d)=(2,4)$.
\end{theorem}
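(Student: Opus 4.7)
The plan is to prove the two directions separately. For the ``if'' direction I verify each of the three listed cases; for the ``only if'' direction I manufacture a convex-but-not-sos-convex polynomial for every other pair $(n,d)$ by first producing two minimal witnesses and then extending them via elementary lifting operations in both the number of variables and the degree. Throughout I restrict attention to even $d$, since convex polynomials of odd degree $d\geq 3$ do not exist and the claim is vacuous in that case.

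The case $n=1$ is immediate: the Hessian of a convex univariate polynomial is a single nonnegative univariate polynomial, and by Hilbert's $\tilde\Sigma_{1,d-2}=\tilde P_{1,d-2}$ it is a sum of squares, so the polynomial is sos-convex by Definition~\ref{def:sos.convex}. The case $d=2$ is also immediate: a convex quadratic has constant positive semidefinite Hessian, which factors as $Q=M^TM$, so $y^TQy=\|My\|^2$. The hard equivalence is $(n,d)=(2,4)$. My plan here is to pass to the homogeneous setting by homogenizing $p$ to a ternary quartic form $p_h(x_1,x_2,x_3)$, verifying that sos-convexity descends from $p_h$ to $p$ (which follows directly from the affine-restriction property in Remark~\ref{rmk:sos-convexity.restriction}), and then invoking the companion result of~\cite{AAA_GB_PP_Convex_ternary_quartics} that every convex ternary quartic form is sos-convex. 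That companion result is the true engine of the $(2,4)$ case and is the algebraic analogue of Hilbert's $\Sigma_{3,4}=P_{3,4}$.

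For the ``only if'' direction, I would first produce explicit minimal witnesses in $\tilde C_{3,4}\setminus\tilde{\Sigma C}_{3,4}$ and in $\tilde C_{2,6}\setminus\tilde{\Sigma C}_{2,6}$. A $(3,4)$ witness can be obtained systematically by applying the biquadratic-to-Hessian reductions of Theorems~\ref{thm:main.reduction} and~\ref{thm:algeb.ver.of.reduction} from Chapter~\ref{chap:nphard.convexity} to a psd-but-not-sos biquadratic form such as Choi's; a $(2,6)$ witness can be located using the semidefinite-programming search with duality detailed in Appendix A. In each case I would certify failure of sos-convexity by displaying a single principal minor of the Hessian that is nonnegative but not a sum of squares, invoking Lemma~\ref{lem:sos.matrix.then.minor.sos}. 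To cover the remaining pairs $(n,d)$ with $n\geq 2$, $d\geq 4$ even, and $(n,d)\neq(2,4)$, I would then lift by two operations: (i) \emph{adding a variable}: given $p\in\tilde C_{n,d}\setminus\tilde{\Sigma C}_{n,d}$, the polynomial $\tilde p(x,x_{n+1})=p(x)+x_{n+1}^d$ has block-diagonal Hessian $\mathrm{diag}(H_p(x),\,d(d-1)x_{n+1}^{d-2})$, so is still convex, and if $\tilde p$ were sos-convex then restricting to the hyperplane $x_{n+1}=0$ would make $p$ sos-convex by Remark~\ref{rmk:sos-convexity.restriction}, contradicting the choice of $p$; and (ii) \emph{raising the degree by two}: the general procedure announced in the abstract of this chapter that produces a form in $C_{n,d+2}\setminus\Sigma C_{n,d+2}$ from any psd-but-not-sos form in $n$ variables of degree $d$, whose suitable dehomogenization yields a polynomial witness. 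Both ingredients of (ii) are well-stocked by Hilbert's theorem, and iterating (i) and (ii) starting from the two minimal witnesses reaches every remaining $(n,d)$.

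The principal obstacle is the $(n,d)=(2,4)$ equivalence, which hinges on the deep companion result about convex ternary quartic forms; without that input the argument cannot close. A secondary challenge is finding a valid minimal sextic witness in two variables: the available degrees of freedom are so scarce that the polynomial cannot be produced by guesswork but must be uncovered by solving a carefully designed semidefinite program on the boundary of the sos cone, and then verified rigorously in exact arithmetic along the lines of Appendix B.
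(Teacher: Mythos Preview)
Your proposal has two genuine gaps, one in each direction.

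\textbf{The $(2,4)$ case.} Your plan is to homogenize a convex bivariate quartic $p$ to a ternary quartic form $p_h$ and then invoke the deep result $\Sigma C_{3,4}=C_{3,4}$. But homogenization does not preserve convexity: as the paper stresses in Remark~\ref{rmk:difficulty.homogz.dehomogz}, even a psd convex polynomial can have a non-convex homogenization (the example $10x_1^4-5x_1+2$ is given explicitly). So $p_h$ need not be convex, and you have nothing to feed into $\Sigma C_{3,4}=C_{3,4}$. The paper avoids this trap entirely and gives a much more elementary argument: it homogenizes not $p$ but the \emph{entries of the Hessian} $H(x)$, obtaining a $2\times 2$ matrix $\bar H(x_1,x_2,x_3)$ of quadratic forms. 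Since nonnegativity \emph{is} preserved under homogenization, $y^T\bar H(x)y$ is a psd biform that is quadratic in $(y_1,y_2)$ and quadratic in $(x_1,x_2,x_3)$, and the biform theorem (Theorem~\ref{thm:biform.thm}) with $u=y$, $v=x$ gives sos directly. Dehomogenizing in $x_3$ yields sos-convexity of $p$. No appeal to the much harder ternary-quartic-forms result is needed; indeed the paper explicitly points out that $\tilde{\Sigma C}_{2,4}=\tilde C_{2,4}$ is far easier than $\Sigma C_{3,4}=C_{3,4}$.

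\textbf{The $(3,4)$ witness.} You propose to get a polynomial in $\tilde C_{3,4}\setminus\tilde{\Sigma C}_{3,4}$ by applying the reductions of Theorems~\ref{thm:main.reduction} and~\ref{thm:algeb.ver.of.reduction} to Choi's biquadratic. But Choi's form lives in $(x_1,x_2,x_3;y_1,y_2,y_3)$, and the reduction outputs a quartic form in $2n=6$ variables (this is exactly the ``clean'' example (\ref{eq:clean.quartic.convex.not.sos.convex}) in $C_{6,4}\setminus\Sigma C_{6,4}$). Worse, the biform theorem says any psd biquadratic with $n=m=2$ is already sos, so this route can never yield a quartic form in four variables, and hence never a quartic polynomial in three. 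The paper's actual $(3,4)$ witness (Theorem~\ref{thm:minimal.3.4.and.4.4}) is an explicit computer-found form $h\in C_{4,4}\setminus\Sigma C_{4,4}$ whose dehomogenization $\tilde h(x_1,x_2,x_3)=h(x_1,x_2,x_3,1)$ lands in $\tilde C_{3,4}\setminus\tilde{\Sigma C}_{3,4}$; non-sos-convexity of $\tilde h$ is certified not by a principal minor but by a separating dual functional. (Your principal-minor plan is also fragile: the paper notes that the $(3,6)$ form in (\ref{eq:minim.form.3.6}) has \emph{all} principal minors sos yet is not sos-convex.)

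The rest of your outline---the $n=1$ and $d=2$ cases, the $(2,6)$ witness via SDP, the variable-adding lift $p(x)+x_{n+1}^d$, and the degree-raising construction of Theorem~\ref{thm:conv_not_sos_conv_forms_n3d} with Corollary~\ref{cor:bivariate.polys.8.10.12...}---matches the paper's approach.
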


We would also like to have such a characterization for homogeneous
polynomials. Although convexity is a property that is in some
sense more meaningful for nonhomogeneous polynomials than for
forms, one motivation for studying convexity of forms is in their
relation to norms~\cite{Blenders_Reznick}. Also, in view of the
fact that we have a characterization of the gap between
nonnegativity and sums of squares both for polynomials and for
forms, it is very natural to inquire the same result for convexity
and sos-convexity. The next theorem presents this characterization
for forms.

\begin{theorem}\label{thm:full.charac.forms}
$\Sigma C_{n,d}=C_{n,d}$ if and only if $n=2$ or $d=2$ or
$(n,d)=(3,4)$.
\end{theorem}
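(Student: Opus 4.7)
The plan is to tackle the two directions of the iff separately, handling the equality cases by a case analysis on $(n,d)$ and the strict inequality cases by explicit constructions plus a bootstrapping argument. Throughout, I will rely on characterization (c) of Theorem~\ref{thm:sos.convexity.3.equivalent.defs}, which recasts sos-convexity as the condition that the Hessian $H(x)$ be an sos-matrix.

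For the sufficient direction, I would dispatch the three equality cases in turn. The case $d=2$ is trivial: $H$ is a constant PSD matrix, which admits a Cholesky factorization $H=R^TR$ and is therefore an sos-matrix. The case $n=2$ reduces to the statement that every $2\times2$ PSD polynomial matrix in two variables is an sos-matrix; this is the matrix analogue of Hilbert's $\Sigma_{2,2k}=P_{2,2k}$ and is a classical bivariate polynomial-matrix factorization fact that I will invoke directly, noting that it does not follow by applying Hilbert entry-by-entry to $H_{11},H_{22},\det H$. The case $(n,d)=(3,4)$ of ternary quartics, whose Hessian is $3\times3$ with quadratic entries, is the most subtle of the three and I will defer to the companion paper~\cite{AAA_GB_PP_Convex_ternary_quartics}, whose argument combines Hilbert's $\Sigma_{3,4}=P_{3,4}$ with a Gram-matrix analysis of $y^TH(x)y$ specific to this dimension and degree.

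For the necessary direction, my plan proceeds in two stages. \textbf{Stage 1 (minimal counterexamples):} I will exhibit explicit forms in $C_{3,6}\setminus\Sigma C_{3,6}$ and $C_{4,4}\setminus\Sigma C_{4,4}$. The strategy is to search for convex forms whose Hessian has a principal minor (often a diagonal entry) that is psd-but-not-sos, so that by Lemma~\ref{lem:sos.matrix.then.minor.sos} the Hessian cannot be an sos-matrix. Candidate forms can be produced via semidefinite programming and duality, mirroring the methodology that led to the octic example~(\ref{eq:first.convex.not.sos.convex}) and described in Appendix A. Convexity of each candidate is then verified by an sos decomposition of $y^TH(x)y$, while non-sos-convexity is certified by producing a psd-not-sos principal minor. \textbf{Stage 2 (bootstrapping):} all remaining pairs $(n,d)$ with $n\geq3$, $d\geq4$ even, $(n,d)\neq(3,4)$, are covered by two degree- and dimension-preserving operations. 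First, adding variables: given $f\in C_{n_0,d}\setminus\Sigma C_{n_0,d}$, the form $F(x)=f(x_1,\ldots,x_{n_0})+x_{n_0+1}^d+\cdots+x_n^d$ has a block-diagonal Hessian whose second block is a diagonal sos-matrix, so $F$ inherits both convexity and failure of sos-convexity from $f$; this moves $(3,6)\to(n,6)$ and $(4,4)\to(n,4)$. Second, raising the degree: I will invoke Theorem~\ref{thm:conv_not_sos_conv_forms_n3d}, which produces a convex-not-sos-convex form in $(n,d+2)$ from any psd-not-sos form in $(n,d)$. Iterating with the known examples (Motzkin in $(3,6)$, Robinson in $(4,4)$, and their higher-degree analogues, whose existence follows from Hilbert's Theorem~\ref{thm:Hilbert}) fills in every remaining pair.

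The main obstacle is Stage 1: producing and rigorously certifying the minimal counterexamples. One must simultaneously enforce convexity (an sos decomposition of $y^TH(x)y$ sufficing, but with non-trivial interaction with the second requirement) and \emph{failure} of sos-convexity (a dual/infeasibility certificate showing the Hessian, or one of its minors, does not lie in $\Sigma$). A secondary but non-trivial issue is the bivariate polynomial-matrix theorem needed in the $n=2$ case, which, though classical, does not follow entry-wise from Hilbert and must be invoked as an independent result. The conceptual payoff is the striking parallel with Hilbert's Theorem~\ref{thm:Hilbert}: the minimal $(n,d)$ at which convex forms can fail to be sos-convex coincide exactly with the minimal $(n,d)$ at which nonnegative forms can fail to be sos.
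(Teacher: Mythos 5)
Your overall architecture coincides with the paper's proof: the $d=2$ case is immediate, the $n=2$ case rests on a biform-type result, $(3,4)$ is deferred to~\cite{AAA_GB_PP_Convex_ternary_quartics}, and the strict inclusions are handled by explicit minimal examples in $C_{3,6}\setminus\Sigma C_{3,6}$ and $C_{4,4}\setminus\Sigma C_{4,4}$, the $x_{n+1}^d$ variable-adding trick, and the degree-raising construction of Theorem~\ref{thm:conv_not_sos_conv_forms_n3d}. Two steps, however, would fail as you have stated them. For $n=2$, the ``classical fact'' you plan to invoke---that every $2\times2$ PSD polynomial matrix in two variables is an sos-matrix---is false without homogeneity: $\diag\bigl(M(x_1,x_2,1),\,1\bigr)$, with $M$ the Motzkin form, is PSD but not an sos-matrix, since sos-matrices have sos diagonal entries (Lemma~\ref{lem:sos.matrix.then.minor.sos}) and the dehomogenized Motzkin polynomial is not sos. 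What is true, and what the paper uses, is the biform theorem (Theorem~\ref{thm:biform.thm}): since the Hessian of a bivariate form has entries that are forms of equal degree, $y^TH(x)y$ is quadratic in $y$ and a form in the two variables $x$, so psd implies sos. State it in that homogeneous form.

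The more serious issue is your Stage 1 certification plan. Relying on Lemma~\ref{lem:sos.matrix.then.minor.sos} with a psd-but-not-sos principal minor, ``often a diagonal entry,'' is provably unavailable in the two minimal cases: for $(3,6)$ the diagonal Hessian entries are ternary quartics and for $(4,4)$ they are quadratic forms, and by Hilbert's Theorem~\ref{thm:Hilbert} psd forms of these types are automatically sos. Higher-order minors need not rescue you either: the paper remarks that for its minimal $(3,6)$ example \emph{all seven} principal minors of the Hessian are sos even though the Hessian is not an sos-matrix, so the converse of Lemma~\ref{lem:sos.matrix.then.minor.sos} fails exactly where you would need it, and it is not established that minimal examples with a non-sos minor exist at all. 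The certificate that actually carries the argument---and the one the paper uses---is the option you mention only in passing: a separating hyperplane (dual functional) showing directly that $y^TH(x)y$ is not sos, applied in the paper to a dehomogenization $\tilde f$ and pulled back to the form via invariance of sos-convexity under restriction to affine subspaces; convexity, in turn, is certified there with a Reznick-type multiplier, i.e., $(x_1^2+x_2^2)\cdot y^TH_f(x)y$ sos, so allow for a multiplier in your search. The minor-based route is perfectly fine (and is what the paper does) for the degree-$\geq 8$ constructions of Theorem~\ref{thm:conv_not_sos_conv_forms_n3d}, where the $(1,1)$ Hessian entry is the non-sos form $m$ by design; your Stage 2 is otherwise identical to the paper's.
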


The result $\Sigma C_{3,4}=C_{3,4}$ of this theorem is to be
presented in full detail
in~\cite{AAA_GB_PP_Convex_ternary_quartics}. The remainder of this
chapter is solely devoted to the proof of
Theorem~\ref{thm:full.charac.polys} and the proof of
Theorem~\ref{thm:full.charac.forms} except for the case
$(n,d)=(3,4)$. Before we present these proofs, we shall make two
important remarks.

\begin{remark}\label{rmk:difficulty.homogz.dehomogz} {\bf Difficulty with homogenization and dehomogenization.}
Recall from Subsection~\ref{subsec:nonnegativity.sos.basics} and
Theorem~\ref{thm:Hilbert} that characterizing the gap between
nonnegativity and sum of squares for polynomials is equivalent to
accomplishing this task for forms. Unfortunately, the situation is
more complicated for convexity and sos-convexity and that is the
reason why we are presenting Theorems~\ref{thm:full.charac.polys}
and~\ref{thm:full.charac.forms} as separate theorems. The
difficulty arises from the fact that unlike nonnegativity and sum
of squares, convexity and sos-convexity are not always preserved
under homogenization. (Or equivalently, the properties of being
not convex and not sos-convex are not preserved under
dehomogenization.) In fact, any convex polynomial that is not psd
will no longer be convex after homogenization. This is because
convex forms are psd but the homogenization of a non-psd
polynomial is a non-psd form. Even if a convex polynomial is psd,
its homogenization may not be convex. For example the univariate
polynomial $10x_1^4-5x_1+2$ is convex and psd, but its
homogenization $10x_1^4-5x_1x_2^3+2x_2^4$ is not
convex.\footnote{What is true however is that a nonnegative form
of degree $d$ is convex if and only if the $d$-th root of its
dehomogenization is a convex function~\cite[Prop.
4.4]{Blenders_Reznick}.} To observe the same phenomenon for
sos-convexity, consider the trivariate form $p$ in
(\ref{eq:first.convex.not.sos.convex}) which is convex but not
sos-convex and define $\tilde{p}(x_2,x_3)=p(1,x_2,x_3)$. Then, one
can check that $\tilde{p}$ is sos-convex (i.e., its $2\times 2$
Hessian factors) even though its homogenization which is $p$ is
not sos-convex~\cite{AAA_PP_not_sos_convex_journal}.
\end{remark}

\begin{remark}\label{rmk:resemb.to.Hilbert} {\bf Resemblance to the result of Hilbert.} The reader
may have noticed from the statements of Theorem~\ref{thm:Hilbert}
and Theorems~\ref{thm:full.charac.polys}
and~\ref{thm:full.charac.forms} that the cases where convex
polynomials (forms) are sos-convex are exactly the same cases
where nonnegative polynomials are sums of squares! We shall
emphasize that as far as we can tell, our results do not follow
(except in the simplest cases) from Hilbert's result stated in
Theorem~\ref{thm:Hilbert}. Note that the question of convexity or
sos-convexity of a polynomial $p(x)$ in $n$ variables and degree
$d$ is about the polynomials $g_{\frac{1}{2}}(x,y),
g_\nabla(x,y),$ or $g_{\nabla^2}(x,y)$ defined in
(\ref{eq:defn.g_lambda.g_grad.g_grad2}) being psd or sos. Even
though these polynomials still have degree $d$, it is important to
keep in mind that they are polynomials \emph{in $2n$ variables}.
Therefore, there is no direct correspondence with the
characterization of Hilbert. To make this more explicit, let us
consider for example one particular claim of
Theorem~\ref{thm:full.charac.forms}: $\Sigma C_{2,4}=C_{2,4}$. For
a form $p$ in 2 variables and degree 4, the polynomials
$g_{\frac{1}{2}}, g_\nabla,$ and $g_{\nabla^2}$ will be forms in 4
variables and degree 4. We know from Hilbert's result that in this
situation psd but not sos forms do in fact exist. However, for the
forms in 4 variables and degree 4 that have the special structure
of $g_{\frac{1}{2}}, g_\nabla,$ or $g_{\nabla^2}$, psd turns out
to be equivalent to sos.
%
%Therefore, even if we take e.g. the relatively simple case of a
%homogeneous polynomial $p$ in 2 variables and degree 4, then the
%polynomials $g_{\frac{1}{2}}, g_\nabla,$ and $g_{\nabla^2}$ will
%be homogeneous polynomials in 4 variables and degree 4. Hilbert's
%result states that in this situation there are psd forms that are
%not sos. However, we prove that $\Sigma C_{2,4}=C_{2,4}$. This, of
%course, is due to the fact that there is additional structure in
%the polynomials $g_{\frac{1}{2}}, g_\nabla,$ and $g_{\nabla^2}$.
\end{remark}

The proofs of Theorems~\ref{thm:full.charac.polys}
and~\ref{thm:full.charac.forms} are broken into the next two
subsections. In Subsection~\ref{subsec:proof.equal.cases}, we
provide the proofs for the cases where convexity and sos-convexity
are equivalent. Then in
Subsection~\ref{subsec:proof.non.equal.cases}, we prove that in
all other cases there exist convex polynomials that are not
sos-convex.

\subsection{Proofs of Theorems~\ref{thm:full.charac.polys} and~\ref{thm:full.charac.forms}: cases where $\tilde{\Sigma C}_{n,d}=\tilde{C}_{n,d}, \\ \Sigma
C_{n,d}=C_{n,d}$}\label{subsec:proof.equal.cases}

When proving equivalence of convexity and sos-convexity, it turns
out to be more convenient to work with the second order
characterization of sos-convexity, i.e., with the form
$g_{\nabla^2}(x,y)=y^TH(x)y$ in
(\ref{eq:defn.g_lambda.g_grad.g_grad2}). The reason for this is
that this form is always quadratic in $y$, and this allows us to
make use of the following key theorem, henceforth referred to as
the ``biform theorem''.

\begin{theorem}[e.g.~\cite{CLRrealzeros}]\label{thm:biform.thm}
Let $f\mathrel{\mathop:}=f(u_1,u_2,v_1,\ldots,v_m)$ be a form in
the variables $u\mathrel{\mathop:}=(u_1,u_2)^T$ and
$v\mathrel{\mathop:}=(v_1,,\ldots,v_m)^T$ that is a quadratic form
in $v$ for fixed $u$ and a form (of however large degree) in $u$
for fixed $v$. Then $f$ is psd if and only if it is
sos.\footnote{Note that the results $\Sigma_{2,d}=P_{2,d}$ and
$\Sigma_{n,2}=P_{n,2}$ are both special cases of this theorem.}
\end{theorem}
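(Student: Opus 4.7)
The plan is to reformulate the claim as a matrix analogue of Hilbert's theorem $\Sigma_{2,d}=P_{2,d}$. Since $f(u,v)$ is quadratic in $v$ for each fixed $u$, I would write
$$f(u,v) = v^T A(u) v,$$
where $A(u)$ is a symmetric $m \times m$ polynomial matrix whose entries are binary forms in $u=(u_1,u_2)$ of some common degree $2d$ (necessarily even, for otherwise $A(-u)=-A(u)$ would force $A(u) \equiv 0$ from pointwise positive semidefiniteness of $\pm A$). It is then immediate that $f$ is psd iff $A(u)\succeq 0$ for every $u \in \mathbb{R}^2$; and, by grouping any sos decomposition of $f$ according to powers of the $v_j$, that $f$ is sos iff $A(u)$ is an sos-matrix in the sense of Definition~\ref{def:sos-matrix}, with factors of degree $d$ in $u$.

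The theorem therefore reduces to the following matrix Hilbert-type statement: every symmetric matrix of binary forms that is pointwise positive semidefinite is an sos-matrix. The approach I would take is spectral factorization of matrix polynomials in one variable. Dehomogenize by setting $u_2=1$ to obtain a univariate matrix polynomial $\tilde A(t) := A(t,1) \in \mathbb{R}[t]^{m\times m}$ that is psd for every real $t$. A classical spectral factorization theorem for such matrix polynomials then yields $\tilde A(t) = \tilde M(t)^T \tilde M(t)$ for some polynomial matrix $\tilde M(t)$ of degree at most $d$. Rehomogenizing, i.e., multiplying each row of $\tilde M(t)$ by the appropriate power of $u_2$ so that every entry becomes a binary form of degree $d$ in $u$, yields a polynomial matrix $M(u)$ with $A(u)=M(u)^T M(u)$, which gives the desired sos decomposition of $f$ via $f(u,v)=\|M(u)v\|^2$.

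The main obstacle is establishing the one-variable spectral factorization theorem and then verifying that the rehomogenization step preserves the form structure with the right degree bounds. I would prove the factorization by induction on $m$. The base case $m=1$ is the univariate instance of Hilbert's result: a nonnegative binary form (equivalently, a nonnegative univariate polynomial after dehomogenization) is a sum of two squares. For the inductive step, after a unimodular polynomial change of basis one may assume $\tilde A(t)$ is generically of full rank; one then writes the $(1,1)$ entry $\tilde a_{11}(t)$ as a sum of two squares and performs a polynomial Schur-complement pivot, applying the inductive hypothesis to the resulting $(m-1)\times(m-1)$ psd matrix. The delicate point is degree bookkeeping through the recursion, which must match the homogeneous degree $d$ after rehomogenization; this is controlled by the fact that $\det A(u)$ is itself a binary form of degree $2dm$, and any drop of degree in the Schur complement is compensated by extra factors of $u_2$ that get absorbed in the rehomogenization.
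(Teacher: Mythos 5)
Your reduction is fine and is essentially the standard one: writing $f(u,v)=v^TA(u)v$ with $A(u)$ a symmetric matrix of binary forms of (necessarily even) degree $2d$, observing via bihomogeneity that $f$ is sos iff $A$ is an sos-matrix in the sense of Definition~\ref{def:sos-matrix} with factors of degree $d$, and then dehomogenizing to the statement that a univariate polynomial matrix $\tilde A(t)$ that is psd on $\mathbb{R}$ factors as $\tilde M(t)^T\tilde M(t)$ with $\deg \tilde M\le d$ (the degree bound, which your rehomogenization genuinely needs, is available because the leading coefficient matrix of $\tilde A$ is $A(1,0)\succeq 0$). Note that the thesis does not prove this theorem at all; it cites it, pointing to~\cite[Sec.~7]{CLRrealzeros} and to~\cite{SOS_KYP}, the latter of which establishes exactly the univariate matrix factorization (Jakubovi\v{c}/matrix Fej\'er--Riesz on the real line) that your argument rests on. Had you simply invoked that classical factorization, your write-up would be a correct account of the known route.

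The gap is in your proposed proof of the factorization itself, and it is not primarily ``degree bookkeeping.'' The Schur complement $C(t)-a_{11}(t)^{-1}b(t)b(t)^T$ of a polynomial matrix is a \emph{rational} matrix, so the inductive hypothesis (stated for polynomial matrices) does not apply to it, and any factorization you extract from it carries denominators. The natural repair is to clear the pivot: $a_{11}C-bb^T$ is polynomial and psd on $\mathbb{R}$, so induction on size gives $a_{11}C-bb^T=N^TN$, whence $a_{11}\tilde A$ is an sos-matrix. But you must then divide the identity by $a_{11}$, and this is where the real difficulty of the theorem lives: at real zeros of $a_{11}$ psd-ness forces the off-diagonal entries to vanish to the right order, but at the \emph{complex} zeros of $a_{11}$ (e.g.\ $a_{11}=t^2+1$) nothing forces the rank-one summands to vanish, and ``$a_{11}\tilde A$ sos-matrix and $a_{11}$ a sum of two squares $\Rightarrow$ $\tilde A$ sos-matrix'' is not automatic -- in the scalar case one escapes only because univariate psd polynomials are sos, which for matrices is precisely the statement being proved, so the argument becomes circular. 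Writing $a_{11}=p^2+q^2$ does not by itself provide a pivot usable in a polynomial Cholesky step, and your sketch never says how the two squares enter. The known elementary proofs avoid size induction altogether: they induct on the degree of $\det\tilde A$, extracting real zeros and complex-conjugate pairs of zeros with unimodular/constant orthogonal operations, or they argue via operator-theoretic Fej\'er--Riesz or KYP/semidefinite-duality machinery as in~\cite{SOS_KYP}. Unless you supply such a zero-extraction (or division) argument, the inductive step as written does not go through.
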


The biform theorem has been proven independently by several
authors. See~\cite{CLRrealzeros} and~\cite{SOS_KYP} for more
background on this theorem and in particular~\cite[Sec.
7]{CLRrealzeros} for a an elegant proof and some refinements. We
now proceed with our proofs which will follow in a rather
straightforward manner from the biform theorem.

\begin{theorem}
$\tilde{\Sigma C}_{1,d}=\tilde{C}_{1,d}$ for all $d$. $\Sigma
C_{2,d}=C_{2,d}$ for all $d$.
\end{theorem}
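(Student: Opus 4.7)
My plan is to reduce both claims to statements about when psd polynomials are sos by invoking the second-order characterization of sos-convexity (condition \textbf{(c)} of Theorem~\ref{thm:sos.convexity.3.equivalent.defs}): namely, a polynomial $p$ is sos-convex iff $g_{\nabla^2}(x,y) = y^T H(x) y$ is sos, while convexity of $p$ is equivalent to $g_{\nabla^2}$ being psd. In both cases $n=1$ and $n=2$ (forms), the polynomial $g_{\nabla^2}$ lands in a class where psd equals sos, so the implication convex $\Longrightarrow$ sos-convex is automatic (the reverse direction is always true).

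For $\tilde{\Sigma C}_{1,d} = \tilde{C}_{1,d}$, the Hessian is the $1\times 1$ matrix $[p''(x)]$ with $p''$ a univariate polynomial. Convexity says $p''(x)\geq 0$ for all $x\in\mathbb{R}$, and by Hilbert's classical result $\tilde{\Sigma}_{1,d-2} = \tilde{P}_{1,d-2}$, any nonnegative univariate polynomial is a sum of squares. Writing $p''(x) = \sum_i q_i(x)^2$ gives $g_{\nabla^2}(x,y) = y^2 p''(x) = \sum_i (y\, q_i(x))^2$, so $p$ is sos-convex.

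For $\Sigma C_{2,d} = C_{2,d}$, let $p$ be a convex form in $n=2$ variables of degree $d$. The entries of $H(x_1,x_2)$ are forms of degree $d-2$ in $(x_1,x_2)$, so $g_{\nabla^2}(x_1,x_2,y_1,y_2) = y^T H(x) y$ is a polynomial in four variables that is (i) quadratic in $y=(y_1,y_2)$ for fixed $x$, and (ii) a form of degree $d-2$ in $x=(x_1,x_2)$ for fixed $y$. This is exactly the setting of the biform theorem (Theorem~\ref{thm:biform.thm}) with $u=(x_1,x_2)$ and $v=(y_1,y_2)$. Since convexity of $p$ implies $g_{\nabla^2}$ is psd, the biform theorem gives that $g_{\nabla^2}$ is sos, i.e., $H(x)$ is an sos-matrix, so $p$ is sos-convex.

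There is no real obstacle here: once the problem is cast via the Hessian characterization, both cases reduce immediately to known sos representation results (Hilbert for $n=1$, the biform theorem for $n=2$ forms). The remaining equivalence $\tilde{\Sigma C}_{2,4} = \tilde{C}_{2,4}$ and the ternary quartic form case $\Sigma C_{3,4} = C_{3,4}$, which do not fit this framework because of the homogenization/dehomogenization subtleties flagged in Remark~\ref{rmk:difficulty.homogz.dehomogz}, will require separate arguments.
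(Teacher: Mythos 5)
Your proof is correct and is essentially identical to the paper's argument: the univariate case is handled by Hilbert's result that nonnegative univariate polynomials are sos (applied to $p''$), and the bivariate-form case by applying the biform theorem to $y^TH(x)y$ with $u=(x_1,x_2)$ and $v=(y_1,y_2)$. Your slightly more careful bookkeeping of the degree ($p''\in\tilde{P}_{1,d-2}$) and the explicit writing of $y^2p''(x)$ as a sum of squares are harmless refinements of the same proof.
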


\begin{proof}
For a univariate polynomial, convexity means that the second
derivative, which is another univariate polynomial, is psd. Since
$\tilde{\Sigma}_{1,d}=\tilde{P}_{1,d}$, the second derivative must
be sos. Therefore, $\tilde{\Sigma C}_{1,d}=\tilde{C}_{1,d}$. To
prove $\Sigma C_{2,d}=C_{2,d}$, suppose we have a convex bivariate
form $p$ of degree $d$ in variables
$x\mathrel{\mathop:}=(x_1,x_2)^T$. The Hessian
$H\mathrel{\mathop:}=H(x)$ of $p$ is a $2\times 2$ matrix whose
entries are forms of degree $d-2$. If we let
$y\mathrel{\mathop:}=(y_1,y_2)^T$, convexity of $p$ implies that
the form $y^TH(x)y$ is psd. Since $y^TH(x)y$ meets the
requirements of the biform theorem above with
$(u_1,u_2)=(x_1,x_2)$ and $(v_1,v_2)=(y_1,y_2)$, it follows that
$y^TH(x)y$ is sos. Hence, $p$ is sos-convex.
\end{proof}

\begin{theorem}
$\tilde{\Sigma C}_{n,2}=\tilde{C}_{n,2}$ for all $n$. $\Sigma
C_{n,2}=C_{n,2}$ for all $n$.
\end{theorem}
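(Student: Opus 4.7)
The plan is to observe that for a quadratic polynomial or form, the Hessian is a \emph{constant} symmetric matrix, and then to exploit the fact that positive semidefiniteness and sum of squares coincide for quadratic forms.

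First, I would write a quadratic polynomial as $p(x)=\tfrac{1}{2}x^TQx+q^Tx+c$ (or $p(x)=\tfrac{1}{2}x^TQx$ in the homogeneous case), so that the Hessian $H(x)\equiv Q$ does not depend on $x$ at all. By the second order characterization of convexity (Theorem~\ref{thm:classical.first.2nd.order.charac.}, part \textbf{(c)}), $p$ is convex if and only if the constant symmetric matrix $Q$ is positive semidefinite in the ordinary sense.

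Next, I would invoke the standard fact that any positive semidefinite symmetric matrix $Q\in\mathbb{R}^{n\times n}$ admits a factorization $Q=M^TM$ for some constant matrix $M\in\mathbb{R}^{s\times n}$ (for example via a Cholesky or spectral decomposition). Taking $V(x)\mathrel{\mathop:}=M$ as a constant polynomial matrix, Definition~\ref{def:sos-matrix} shows immediately that $H(x)=Q=V^T(x)V(x)$ is an sos-matrix, so $p$ is sos-convex. Equivalently, using the characterization of sos-convexity via $g_{\nabla^2}(x,y)=y^TH(x)y$, we have $g_{\nabla^2}(x,y)=y^TQy=\|My\|^2$, which is manifestly a sum of squares (of linear forms in $y$). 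This argument applies verbatim whether $p$ is a general quadratic polynomial or a quadratic form, which yields both $\tilde{\Sigma C}_{n,2}=\tilde{C}_{n,2}$ and $\Sigma C_{n,2}=C_{n,2}$.

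There is essentially no obstacle here; the statement reduces to the trivial observation that the Hessian of a quadratic is constant, together with the classical equivalence of PSD and sos for quadratic forms (which is itself the base case $d=2$ of Hilbert's Theorem~\ref{thm:Hilbert}, and also a special case of the biform theorem used in the preceding result).
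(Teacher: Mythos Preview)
Your proof is correct and follows essentially the same approach as the paper: both observe that the Hessian of a quadratic is the constant matrix $Q$, and then use the equivalence of psd and sos for quadratic forms to conclude that $y^TQy$ is sos. The paper cites $\Sigma_{n,2}=P_{n,2}$ directly, whereas you make this explicit via the factorization $Q=M^TM$, but this is the same argument.
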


\begin{proof}
Let $x\mathrel{\mathop:}=(x_1,\ldots,x_n)^T$ and
$y\mathrel{\mathop:}=(y_1,\ldots,y_n)^T$. Let
$p(x)=\frac{1}{2}x^TQx+b^Tx+c$ be a quadratic polynomial. The
Hessian of $p$ in this case is the constant symmetric matrix $Q$.
Convexity of $p$ implies that $y^TQy$ is psd. But since
$\Sigma_{n,2}=P_{n,2}$, $y^TQy$ must be sos. Hence, $p$ is
sos-convex. The proof of $\Sigma C_{n,2}=C_{n,2}$ is identical.
\end{proof}

\begin{theorem}
$\tilde{\Sigma C}_{2,4}=\tilde{C}_{2,4}$.
\end{theorem}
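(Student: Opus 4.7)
The plan is to use the Hessian-based characterization of sos-convexity (part \textbf{(c)} of Theorem~\ref{thm:sos.convexity.3.equivalent.defs}): for a convex quartic $p\in\mathbb{R}[x_1,x_2]$, we must show that $g_{\nabla^2}(x,y)=y^T H(x)y$ is a sum of squares in $\mathbb{R}[x_1,x_2,y_1,y_2]$. The Hessian $H(x)$ is a $2\times 2$ symmetric matrix whose entries are polynomials of degree at most $2$ in $(x_1,x_2)$, so $g_{\nabla^2}$ is a quadratic form in $y$ but is not homogeneous in $x$. This prevents a direct appeal to the biform theorem (Theorem~\ref{thm:biform.thm}), so the strategy will be to first homogenize in the $x$-variables, invoke the biform theorem there, and then restrict.

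Concretely, introduce an auxiliary variable $x_0$ and let $\tilde H(x_0,x_1,x_2)$ be the polynomial matrix obtained by multiplying each entry of $H$ by the appropriate power of $x_0$ so that every entry becomes a form of degree $2$ in $(x_0,x_1,x_2)$. Then $y^T\tilde H(x_0,x_1,x_2)y$ is a biform of bidegree $(2,2)$ in $((x_0,x_1,x_2),(y_1,y_2))$. The crux of the argument is to verify that this biform is psd. For $x_0\neq 0$, homogeneity of the entries gives $\tilde H(x_0,x_1,x_2)=x_0^2\, H(x_1/x_0,x_2/x_0)$, so psd-ness is immediate from convexity of $p$. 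The subtle case is $x_0=0$, where $\tilde H(0,x_1,x_2)$ equals the Hessian of the top-degree homogeneous component $p_4$ of $p$; one must show that $p_4$ is itself convex. I would prove this by substituting $tx$ for $x$ in $H_p(x)\succeq 0$, which yields $t^2 H_{p_4}(x)+t H_{p_3}(x)+H_{p_2}\succeq 0$ for all $t\in\mathbb{R}$; dividing by $t^2$ and letting $t\to\infty$ forces $H_{p_4}(x)\succeq 0$. This is the step where care is required, since, as highlighted in Remark~\ref{rmk:difficulty.homogz.dehomogz}, the full homogenization of $p$ need not be convex even though its leading form is.

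With $y^T\tilde H y$ known to be psd, the biform theorem applies with $u=(y_1,y_2)\in\mathbb{R}^2$ (matching the two-variable hypothesis on $u$) and $v=(x_0,x_1,x_2)\in\mathbb{R}^3$: the biform is quadratic in $v$ for each $u$ and a form in $u$ for each $v$. Therefore $y^T\tilde H(x_0,x_1,x_2)y$ is a sum of squares in $\mathbb{R}[x_0,x_1,x_2,y_1,y_2]$. Restricting to the affine subspace $x_0=1$ preserves the sos property, and recovers the original polynomial: $y^T H(x_1,x_2)y$ is sos in $\mathbb{R}[x_1,x_2,y_1,y_2]$. By the equivalence between sos-matrices and the scalar condition on $y^T U(x)y$ recalled in Subsection~\ref{subsec:sos.sdp.and.matrix.generalize}, this shows that $H$ is an sos-matrix, i.e., $p$ is sos-convex. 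The main obstacle I anticipate is justifying the $x_0=0$ slice (the convexity of $p_4$), which is what makes the homogenization route viable despite the general failure of convexity to be preserved under homogenization.
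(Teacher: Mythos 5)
Your proof is correct and follows essentially the same route as the paper's: homogenize the (entries of the) Hessian in the $x$-variables, apply the biform theorem with $u=(y_1,y_2)$ and $v$ the three homogenized $x$-variables, then dehomogenize to conclude $y^TH(x)y$ is sos. The only difference is that you verify psd-ness of the homogenized biform by explicitly checking the $x_0=0$ slice (convexity of the leading form $p_4$), whereas the paper simply invokes the standard fact that psd-ness is preserved under homogenization, which covers that slice by continuity.
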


\begin{proof}
Let $p(x)\mathrel{\mathop:}=p(x_1,x_2)$ be a convex bivariate
quartic polynomial. Let $H\mathrel{\mathop:}=H(x)$ denote the
Hessian of $p$ and let $y\mathrel{\mathop:}=(y_1,y_2)^T$. Note
that $H(x)$ is a $2\times 2$ matrix whose entries are (not
necessarily homogeneous) quadratic polynomials. Since $p$ is
convex, $y^TH(x)y$ is psd. Let $\bar{H}(x_1,x_2,x_3)$ be a
$2\times 2$ matrix whose entries are obtained by homogenizing the
entries of $H$. It is easy to see that $y^T\bar{H}(x_1,x_2,x_3)y$
is then the form obtained by homogenizing $y^TH(x)y$ and is
therefore psd. Now we can employ the biform theorem
(Theorem~\ref{thm:biform.thm}) with $(u_1,u_2)=(y_1,y_2)$ and
$(v_1,v_2,v_3)=(x_1,x_2,x_3)$ to conclude that
$y^T\bar{H}(x_1,x_2,x_3)y$ is sos. But upon dehomogenizing by
setting $x_3=1$, we conclude that $y^TH(x)y$ is sos. Hence, $p$ is
sos-convex.
\end{proof}

\begin{theorem}[Ahmadi, Blekherman, Parrilo~\cite{AAA_GB_PP_Convex_ternary_quartics}]
$\Sigma C_{3,4}=C_{3,4}$.
\end{theorem}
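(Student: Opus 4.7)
The goal is to show that every convex ternary quartic form $p$ is sos-convex, equivalently that $y^T H(x) y$ is a sum of squares as a polynomial on $(x,y)\in\mathbb{R}^3\times\mathbb{R}^3$, where $H(x)$ is the Hessian of $p$. This form is biquadratic (quadratic in $x$ and in $y$) and has total degree four in six variables. The biform theorem does \emph{not} apply here, since neither $x$ nor $y$ has only two components, and Choi's classical psd-but-not-sos biquadratic form on $\mathbb{R}^3\times\mathbb{R}^3$ already shows that biquadratic structure alone is insufficient. The proof must therefore use the Hessian constraint $\partial_k H_{ij}(x) = \partial_j H_{ik}(x)$ in an essential way.

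My plan is to combine Hilbert's 1888 theorem (every psd ternary quartic is a sum of squares) with an analysis of the extreme rays of the closed convex cone $C_{3,4}$. Since $\Sigma C_{3,4}$ is also a closed convex cone contained in $C_{3,4}$, it suffices to verify that every extreme ray of $C_{3,4}$ lies in $\Sigma C_{3,4}$. For an extreme $p\in C_{3,4}$, the Hessian $H(x)$ must drop rank on a nontrivial real algebraic subvariety of $\mathbb{R}^3$: otherwise $H(x)$ would be positive definite for every $x\ne 0$, and arbitrary small quartic perturbations would preserve convexity, contradicting extremality. This rank-drop locus provides extra algebraic identities beyond mere pointwise positive semidefiniteness.

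The central step would then be a Schur-complement/Gram-matrix construction driven by the minors of $H(x)$. The determinant $\det H(x)$ is a ternary form of degree six, the $2\times 2$ principal minors are ternary forms of degree four, and the diagonal entries are ternary quadratic forms. Each is pointwise nonnegative and hence, by Hilbert's theorem together with Lemma~\ref{lem:sos.matrix.then.minor.sos} applied in reverse, a sum of squares. Using these sos representations together with the commutativity of mixed partials, I would assemble an explicit sos certificate for $y^T H(x) y$, morally by writing $H(x)=L(x)^T D(x) L(x)$ via pivoting and then clearing denominators using polynomial identities supplied by the Hessian constraint.

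The main obstacle I anticipate is twofold. First, pivots in a Schur-complement decomposition are rational functions, and clearing denominators while preserving sos-ness is delicate; this is exactly the step where the Hessian structure, unavailable in Choi's counterexample, must play a decisive role. Second, the pivot construction degenerates at rank-deficient points of $H$. To handle both difficulties, I would first establish the result for $p$ whose Hessian is positive definite away from the origin, by treating the perturbations $p + \epsilon(x_1^2+x_2^2+x_3^2)^2 \in C_{3,4}$ for small $\epsilon>0$, extract an sos decomposition of $y^T H_\epsilon(x) y$ with degree and coefficient bounds uniform in $\epsilon$, and then pass to the limit $\epsilon\to 0^+$ using closedness of the sos cone to obtain the desired decomposition for $y^T H(x) y$ itself.
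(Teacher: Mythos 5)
Your proposal has a genuine gap at its central step. First, the claim that every principal minor of $H(x)$ is a sum of squares ``by Hilbert's theorem'' fails for the determinant: $\det H(x)$ is a nonnegative ternary \emph{sextic}, i.e.\ an element of $P_{3,6}$, which is exactly a case where Hilbert gives no sos guarantee (the Motzkin form (\ref{eq:Motzkin.form}) is the standard counterexample, and the existence of such forms is precisely why $C_{3,6}\neq\Sigma C_{3,6}$). Second, and more fundamentally, invoking Lemma~\ref{lem:sos.matrix.then.minor.sos} ``in reverse'' is invoking its false converse: the remark immediately following that lemma exhibits a genuine Hessian matrix --- that of the form $f$ in (\ref{eq:minim.form.3.6}) --- all seven of whose principal minors are sos, yet which is not an sos-matrix. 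So even with sos certificates for all minors in hand, the Schur-complement/$LDL^T$ assembly with denominator clearing has no general justification, and you offer no concrete mechanism by which the commutation of mixed partials enters. Note also that Theorem~\ref{thm:sym.biquad.psd.not.sos} shows the symmetry $y^TH(x)y=x^TH(y)x$ enjoyed by Hessian biquadratic forms is itself insufficient, so whatever use is made of the Hessian structure must go well beyond such soft identities; your proposal never identifies it.

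The two reductions you propose also work against each other and neither reaches the hard core. The perturbation $p+\epsilon(x_1^2+x_2^2+x_3^2)^2$ together with closedness of the sos cone in fixed degree does correctly reduce to the case where $H(x)\succ 0$ for $x\neq 0$ (no uniform coefficient bounds are needed, since the coefficients converge), but then there is no rank-drop locus, so the ``extra algebraic identities'' from extremality evaporate; and positive definiteness of a biquadratic form buys nothing by itself, since Choi's form $y^TC(x)y$ from (\ref{eq:choi.matrix}) stays non-sos after adding $\epsilon\,\|x\|^2\|y\|^2$ for small $\epsilon>0$ (again by closedness of the sos cone) while becoming positive definite. Thus the entire weight of the argument rests on the unproven assembly step. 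For what it is worth, this chapter does not prove the theorem either --- it is explicitly deferred to~\cite{AAA_GB_PP_Convex_ternary_quartics}, with the warning that the proof is ``considerably more involved'' than the $(2,4)$ case --- so there is no in-thesis argument your sketch could be matched against; but as written, your route does not close.
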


Unlike Hilbert's results $\tilde{\Sigma}_{2,4}=\tilde{P}_{2,4}$
and $\Sigma_{3,4}=P_{3,4}$ which are equivalent statements and
essentially have identical proofs, the proof of $\Sigma
C_{3,4}=C_{3,4}$ is considerably more involved than the proof of
$\tilde{\Sigma C}_{2,4}=\tilde{C}_{2,4}$. Here, we briefly point
out why this is the case and refer the reader
to~\cite{AAA_GB_PP_Convex_ternary_quartics} for more details.

If $p(x)\mathrel{\mathop:}=p(x_1,x_2,x_3)$ is a ternary quartic
form, its Hessian $H(x)$ is a $3\times 3$ matrix whose entries are
quadratic forms. In this case, we can no longer apply the biform
theorem to the form $y^TH(x)y$. In fact, the matrix
\begin{equation}\label{eq:choi.matrix}
C(x)=\begin{bmatrix} x_1^2+2x_2^2&-x_1x_2&-x_1x_3 \\ \\
-x_1x_2&x_2^2+2x_3^2&-x_2x_3 \\ \\
-x_1x_3&-x_2x_3&x_3^2+2x_1^2
\end{bmatrix},
\end{equation}
due to Choi~\cite{Choi_Biquadratic} serves as an explicit example
of a $3\times 3$ matrix with quadratic form entries that is
positive semidefinite but not an sos-matrix; i.e., the biquadratic
form $y^TC(x)y$ is psd but not sos. However, the matrix $C(x)$
above is \emph{not} a valid Hessian, i.e., it cannot be the matrix
of the second derivatives of any polynomial. If this was the case,
the third partial derivatives would commute. On the other hand, we
have in particular
$$\frac{\partial C_{1,1}(x)}{\partial x_3}=0\neq-x_3=\frac{\partial C_{1,3}(x)}{\partial
x_1}.$$

A \emph{biquadratic Hessian form} is a biquadratic form $y^TH(x)y$
where $H(x)$ is the Hessian of some quartic form. Biquadratic
Hessian forms satisfy a special symmetry property. Let us call a
biquadratic form $b(x;y)$ \emph{symmetric} if it satisfies the
symmetry relation $b(y;x)=b(x;y)$. It is an easy exercise to show
that biquadratic Hessian forms satisfy $y^TH(x)y=x^TH(y)x$ and are
therefore symmetric biquadratic forms. This symmetry property is a
rather strong condition that is not satisfied e.g. by the Choi
biquadratic form $y^TC(x)y$ in (\ref{eq:choi.matrix}).

A simple dimension counting argument shows that the vector space
of biquadratic forms, symmetric biquadratic forms, and biquadratic
Hessian forms in variables $(x_1,x_2,x_3;y_1,y_2,y_3)$
respectively have dimensions $36$, $21$, and $15$. Since the
symmetry requirement drops the dimension of the space of
biquadratic forms significantly, and since sos polynomials are
known to generally cover much larger volume in the set of psd
polynomials in presence of symmetries (see
e.g.~\cite{Symmetric_quartics_sos}), one may initially suspect (as
we did) that the equivalence between psd and sos ternary Hessian
biquadratic forms is a consequence of the symmetry property. Our
next theorem shows that interestingly enough this is not the case.

\begin{theorem}\label{thm:sym.biquad.psd.not.sos}
There exist symmetric biquadratic forms in two sets of three
variables that are positive semidefinite but not a sum of squares.
\end{theorem}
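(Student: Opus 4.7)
The plan is to exhibit an explicit symmetric biquadratic form $b(x;y)$ in variables $x=(x_1,x_2,x_3)$ and $y=(y_1,y_2,y_3)$ that is psd but not sos. The natural first attempt is to start from Choi's form $y^{T}C(x)y$ in (\ref{eq:choi.matrix}), which is psd-not-sos, and symmetrize: set $s(x;y)=\tfrac{1}{2}\bigl(y^{T}C(x)y+x^{T}C(y)x\bigr)$. This object is automatically symmetric and psd, but a short direct expansion gives
\[
s(x;y)=(x_1y_1)^{2}+(x_2y_2)^{2}+(x_3y_3)^{2}+(x_1y_2-x_2y_1)^{2}+(x_1y_3-x_3y_1)^{2}+(x_2y_3-x_3y_2)^{2},
\]
so the naive symmetrization is manifestly sos and the approach fails. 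This is instructive: the extra symmetry collapses exactly the off-diagonal terms that provided the Choi obstruction.

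To get around this, I would parameterize the 21-dimensional vector space of symmetric biquadratic forms and, to make the search tractable, restrict to the subspace invariant under the diagonal cyclic action $x_i\mapsto x_{i+1}, y_i\mapsto y_{i+1}$ (mod 3) that Choi's form enjoys. Within this low-dimensional invariant subspace I would use semidefinite programming (as in Appendix A of this chapter) to look for a form on the boundary of the psd cone that is not in the sos cone. The candidate will typically be extremal, detectable by a nontrivial real zero locus that obstructs any sos representation.

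For the two certifications, I would proceed as follows. To certify that the candidate $b$ is psd, I would either write it as a sum of a psd piece and a nonnegative remainder verified on its zero set, or produce a PSD Gram matrix representation for $b$ viewed as a polynomial in $(x,y)$ (a standard computation once the form is fixed). To certify that $b$ is not sos, I would use the dual of the sos semidefinite program: exhibit a linear functional $L$ on degree-four forms in $(x,y)$ such that $L(b)<0$ while $L(q^{2})\geq 0$ for every bilinear form $q(x,y)=\sum c_{ij}x_iy_j$; the latter condition is equivalent to positive semidefiniteness of a small, explicit Gram-type matrix associated to $L$, and hence is rigorously checkable.

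The main obstacle is precisely the phenomenon illustrated by the Choi symmetrization: the constraint $b(x;y)=b(y;x)$ forces many monomials to share coefficients, which tends to enlarge the proportion of the psd cone that is sos and makes the gap harder to hit. Overcoming this requires either an SDP-guided search inside a carefully chosen symmetric ansatz, or a construction that transplants a psd-not-sos obstruction (for instance, one arising from a Motzkin-type ternary sextic, which lives in $P_{3,6}\setminus \Sigma_{3,6}$) into the symmetric biquadratic setting. Either route culminates in an explicit example together with the two rational certificates described above.
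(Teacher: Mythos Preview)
Your proposal is a sound research plan whose strategy matches what the paper actually does: exhibit an explicit symmetric biquadratic form and certify ``psd'' and ``not sos'' separately, the latter via a separating dual functional. Your observation that the naive symmetrization of Choi's form collapses to an explicit sum of squares is correct and illuminating (the paper does not include this computation), and it explains why the construction is nontrivial.

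That said, the proposal remains a plan rather than a proof. Two points are worth flagging. First, the paper's explicit example in (\ref{eq:sym.biquad.psd.not.sos}) is \emph{not} invariant under the cyclic action $x_i\mapsto x_{i+1}$, $y_i\mapsto y_{i+1}$: the coefficients of, say, $x_1^2 y_1 y_2$, $x_1^2 y_1 y_3$, and $x_1^2 y_2 y_3$ are $4$, $9$, and $-10$ respectively. So your proposed restriction to the cyclically invariant subspace is a gamble; it keeps the search small, but there is no guarantee that this smaller slice of the symmetric biquadratic cone contains a psd-not-sos point. You would need either to verify this numerically or to drop the symmetry reduction and search over the full $21$-dimensional space. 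Second, neither of your two suggested routes for certifying positivity is quite routine here: the form will typically sit on the boundary of the psd cone (it must, if it is to be extremal and not sos), so a PSD Gram matrix for $b$ itself does not exist, and a decomposition into ``psd piece plus remainder'' requires real work. The paper sidesteps both issues by simply writing down the example and deferring the two certificates to~\cite{AAA_GB_PP_Convex_ternary_quartics}; your proposal would need to carry them out explicitly to constitute a self-contained proof.
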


\begin{proof}
We claim that the following biquadratic form has the required
properties:
\begin{equation}\label{eq:sym.biquad.psd.not.sos}
\begin{array}{rlll}
b(x_1,x_2,x_3;y_1,y_2,y_3)&=&4y_1y_2x_1^2+4x_1x_2y_1^2+9y_1y_3x_1^2+9x_1x_3y_1^2-10y_2y_3x_1^2 \\ \  &\ &\ \\
\  &\
&-10x_2x_3y_1^2+12y_1^2x_1^2+12y_2^2x_1^2+12x_2^2y_1^2+6y_3^2x_1^2
 \\ \  &\ &\ \\
\  &\
&+6x_3^2y_1^2+23x_2^2y_1y_2+23y_2^2x_1x_2+13x_2^2y_1y_3+13x_1x_3y_2^2
 \\ \  &\ &\ \\
\  &\
&+13y_2y_3x_2^2+13x_2x_3y_2^2+12x_2^2y_2^2+12x_2^2y_3^2+12y_2^2x_3^2
 \\ \  &\ &\ \\
\  &\ & +5x_3^2y_1y_2+5y_3^2x_1x_2+12x_3^2y_3^2+3x_3^2y_1y_3+3y_3^2x_1x_3 \\  \  &\ &\ \\
\  &\ &+7x_3^2y_2y_3+7y_3^2x_2x_3+31y_1y_2x_1x_2-10x_1x_3y_1y_3
 \\ \  &\ &\ \\
\  &\
&-11x_1x_3y_2y_3-11y_1y_3x_2x_3+5x_1x_2y_2y_3+5y_1y_2x_2x_3 \\ \  &\ &\ \\
\  &\ &+3x_1x_3y_1y_2+3y_1y_3x_1x_2-5x_2x_3y_2y_3.
\end{array}
\end{equation}
The fact that $b(x;y)=b(y;x)$ can readily be seen from the order
in which we have written the monomials. The proof that $b(x;y)$ is
psd but not sos is given
in~\cite{AAA_GB_PP_Convex_ternary_quartics} and omitted from here.
\end{proof}

In view of the above theorem, it is rather remarkable that all
positive semidefinite biquadratic Hessian forms in
$(x_1,x_2,x_3;y_1,y_2,y_3)$ turn out to be sums of squares, i.e.,
that $\Sigma C_{3,4}=C_{3,4}$.

\subsection{Proofs of Theorems~\ref{thm:full.charac.polys} and~\ref{thm:full.charac.forms}: cases where $\tilde{\Sigma C}_{n,d}\subset\tilde{C}_{n,d},\\ \Sigma
C_{n,d}\subset C_{n,d}$}\label{subsec:proof.non.equal.cases}

The goal of this subsection is to establish that the cases
presented in the previous subsection are the \emph{only} cases
where convexity and sos-convexity are equivalent. We will first
give explicit examples of convex but not sos-convex
polynomials/forms that are ``minimal'' jointly in the degree and
dimension and then present an argument for all dimensions and
degrees higher than those of the minimal cases.

\subsubsection{Minimal convex but not sos-convex polynomials/forms}
 The minimal examples of convex but not sos-convex
 polynomials (resp. forms) turn out to belong to $\tilde{C}_{2,6}\setminus\tilde{\Sigma C}_{2,6}$ and $\tilde{C}_{3,4}\setminus\tilde{\Sigma C}_{3,4}$ (resp. $C_{3,6}\setminus\Sigma C_{3,6}$ and $C_{4,4}\setminus\Sigma C_{4,4}$).
 Recall from Remark~\ref{rmk:difficulty.homogz.dehomogz} that we
 lack a general argument for going from convex but not sos-convex
 forms to polynomials or vice versa. Because of this, one would need to
 present four different polynomials in the sets mentioned above
 and prove that each polynomial is (i) convex and (ii) not
 sos-convex. This is a total of eight arguments to make which is
 quite cumbersome. However, as we will see in the proof of Theorem~\ref{thm:minimal.2.6.and.3.6} and~\ref{thm:minimal.3.4.and.4.4} below, we have been able to find examples that
 act ``nicely'' with respect to particular ways of dehomogenization. This will allow us to reduce the
 total number of claims we have to prove from eight to four.

%%%AAA: Pablo asked me to remove the following lemma.
%\begin{lemma}\label{lem:conv.sos-conv.preserv.dh}
%Let $p\mathrel{\mathop:}=p(x_1,\ldots,x_n)$ be a form. Define the
%polynomial $\tilde{p}$ as
%$$\tilde{p}(x_1,\ldots,x_{n-1})\mathrel{\mathop:}=p(x_1,\ldots,x_{n-1},a_0+a_1x_1+\cdots+a_{n-1}x_{n-1}),$$
%where $a_0,\ldots,a_{n-1}$ are some constants. If $p$ is convex,
%then so is $\tilde{p}$. If $p$ is sos-convex, then so is
%$\tilde{p}$.
%\end{lemma}
%
%\begin{proof}
%We only prove the claim about sos-convexity. The proof for
%convexity is similar and in fact standard. Let
%$x\mathrel{\mathop:}=(x_1,\ldots,x_n)^T$,
%$y\mathrel{\mathop:}=(y_1,\ldots,y_n)^T$,
%$\tilde{x}\mathrel{\mathop:}=(x_1,\ldots,x_{n-1})^T$, and
%$\tilde{y}\mathrel{\mathop:}=(y_1,\ldots,y_{n-1})^T$. If $p$ is
%sos-convex, then by
%Theorem~\ref{thm:sos.convexity.3.equivalent.defs} the form
%$$g_{\frac{1}{2}}(x,y)=\frac{1}{2}p(x)+\frac{1}{2}p(y)-p\textstyle{\left(\frac{1}{2}x+\frac{1}{2}x\right)}$$
%is sos. Let
%$$\tilde{g}_{\frac{1}{2}}(\tilde{x},\tilde{y})=\frac{1}{2}\tilde{p}(\tilde{x})+\frac{1}{2}\tilde{p}(\tilde{y})-\tilde{p}\textstyle{\left(\frac{1}{2}\tilde{x}+\frac{1}{2}\tilde{x}\right)}.$$
%It is easy to check that
%$$\tilde{g}_{\frac{1}{2}}(\tilde{x},\tilde{y})=g_{\frac{1}{2}}(\tilde{x},a_0+a_1x_1+\cdots+a_{n-1}x_{n-1},\tilde{y},a_0+a_1y_1+\cdots+a_{n-1}y_{n-1}).$$
%Therefore, $\tilde{g}_{\frac{1}{2}}$ is sos, which means that
%$\tilde{p}$ is sos-convex.
%\end{proof}

 The polynomials that we are about to present next have been
 found with the assistance of a computer and by employing some ``tricks'' with semidefinite
 programming similar to those presented in Appendix A.\footnote{The approach of Appendix A, however, does not lead to examples that are minimal. But the idea is similar.} In this process, we have made use of software
 packages YALMIP~\cite{yalmip}, SOSTOOLS~\cite{sostools}, and the
 SDP solver SeDuMi~\cite{sedumi}, which we acknowledge here. To
 make the chapter relatively self-contained and to emphasize the
 fact that using \emph{rational sum of squares certificates} one
 can make such computer assisted proofs fully formal, we present the proof of Theorem~\ref{thm:minimal.2.6.and.3.6} below in the Appendix B. On the other hand, the proof of Theorem~\ref{thm:minimal.3.4.and.4.4}, which is very similar in style to the proof of Theorem~\ref{thm:minimal.2.6.and.3.6}, is largely omitted to save space. All of the proofs are available in electronic
 form and in their entirety at \texttt{http://aaa.lids.mit.edu/software}.

\begin{theorem}\label{thm:minimal.2.6.and.3.6} $\tilde{\Sigma C}_{2,6}$ is a proper subset of
$\tilde{C}_{2,6}$. $\Sigma C_{3,6}$ is a proper subset of
$C_{3,6}$.
\end{theorem}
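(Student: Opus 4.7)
My plan is to exhibit explicit polynomials and verify their properties by symbolic manipulation, guided by two principles. First, for the non-sos-convexity direction, I will invoke Lemma~\ref{lem:sos.matrix.then.minor.sos}: if any principal minor of the Hessian $H(x)$ fails to be a sum of squares, then $H(x)$ is not an sos-matrix, which by the equivalence recalled in Subsection~\ref{subsec:sos.sdp.and.matrix.generalize} is precisely the statement that $y^T H(x) y$ is not sos, i.e., the polynomial is not sos-convex. Second, for the convexity direction, I will produce explicit nonnegativity certificates for the leading principal minors of $H(x)$, regarded as polynomials in $x$; by the PSD characterization of symmetric matrices via principal minors, this is equivalent to $H(x) \succeq 0$ on all of $\mathbb{R}^n$, hence to convexity.

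To minimize duplication of effort, I would aim to produce a single ternary sextic form $f(x_1,x_2,x_3) \in C_{3,6}\setminus \Sigma C_{3,6}$ whose dehomogenization $p(x_1,x_2) := f(x_1,x_2,1)$ already lies in $\tilde{C}_{2,6}\setminus \tilde{\Sigma C}_{2,6}$. As Remark~\ref{rmk:difficulty.homogz.dehomogz} warns, neither convexity nor sos-convexity transfers automatically under dehomogenization, so the two claims must be verified independently; however, building both examples from the same underlying polynomial keeps the Hessians $H_f$ (a $3\times 3$ matrix of ternary quartic forms) and $H_p$ (a $2\times 2$ matrix of bivariate quartics obtained by setting $x_3=1$ in $H_f$) tightly linked, so that a single successful construction yields both claims.

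The non-sos-convexity conclusions will be forced by arranging a principal minor of $H_f$ to lie in $P_{3,6}\setminus \Sigma_{3,6}$ (such minors exist by Hilbert's theorem, the Motzkin form being the paradigmatic template) and by arranging the corresponding minor of $H_p$ to remain psd-but-not-sos after dehomogenization; the latter is possible because $\tilde{\Sigma}_{2,6}$ is properly contained in $\tilde{P}_{2,6}$. Convexity of $f$ (respectively $p$) will then be certified by producing explicit sum-of-squares decompositions of the remaining principal minors of $H_f$ (respectively $H_p$), or, failing a direct sos decomposition, a rational sum-of-squares certificate $\sigma(x)\,m(x) = \tau(x)$ with $\sigma,\tau$ sos and $m$ a fixed sos multiplier. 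These are finite collections of polynomial identities that can be verified symbolically by expanding both sides and matching coefficients, making the proof fully rigorous in the style of the certificates listed in Appendix~B.

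The main obstacle will be the \emph{construction} step: the coefficient vector of $f$ must simultaneously satisfy (i) $y^T H_f(x)y \geq 0$ on $\mathbb{R}^3 \times \mathbb{R}^3$, (ii) some principal minor of $H_f$ is psd-but-not-sos, (iii) the dehomogenization $p$ remains convex, and (iv) some principal minor of $H_p$ is psd-but-not-sos. Each of (i) and (iii) is a semidefinite feasibility condition while (ii) and (iv) ask the construction to land strictly inside the cone dual to sos-matrices; the conjunction is therefore a delicate SDP whose feasible set is \emph{a priori} thin. I would approach this by solving the joint feasibility problem numerically with the solvers cited in the paper, rounding to rational coefficients, and then presenting the rational sos certificates as formal proof. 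The expected payoff is a compact pair $(f,p)$ whose four properties can each be checked by polynomial arithmetic alone.
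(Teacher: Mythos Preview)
Your plan has a genuine error and diverges substantially from the paper's argument.

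First the error: nonnegativity of the \emph{leading} principal minors of $H(x)$ does not imply $H(x)\succeq 0$; Sylvester's criterion with leading minors characterizes positive definiteness, not semidefiniteness (take $\mathrm{diag}(0,-1)$). Since a convex form only guarantees $H(x)\succeq 0$, you would need all $2^n-1$ principal minors. There is also a degree miscount: for $f\in C_{3,6}$ the Hessian entries are ternary \emph{quartics}, so the $1\times1$ minors lie in $P_{3,4}=\Sigma_{3,4}$ (Hilbert), the $2\times2$ minors in $P_{3,8}$, and $\det H_f$ in $P_{3,12}$---none lives in $P_{3,6}$, so the Motzkin template does not slot in where you put it. Similarly, the diagonal entries of $H_p$ are bivariate quartics, hence always sos when psd, so a non-sos minor of $H_p$ would have to be its degree-$8$ determinant. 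Your feasibility problem is thus more constrained than you state.

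The paper's route is different on both fronts. For convexity of $f$ it produces a single rational Gram-matrix certificate that $(x_1^2+x_2^2)\,y^T H_f(x)\,y$ is sos, avoiding minors entirely. For non-sos-convexity it exhibits a dual functional separating $\tilde y^T H_{\tilde f}(\tilde x)\,\tilde y$ from the sos cone; this matters because, per the Remark following Lemma~\ref{lem:sos.matrix.then.minor.sos}, \emph{all seven} principal minors of $H_f$ happen to be sos, so your minor test would not detect non-sos-convexity of the paper's example. The paper also halves the workload via Remark~\ref{rmk:sos-convexity.restriction}: since convexity and sos-convexity are both preserved under affine restriction, it suffices to prove that $f$ is convex and that $\tilde f$ is not sos-convex---the other two claims then follow for free. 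Finally, the paper notes in a footnote that the straight dehomogenization $f(x_1,x_2,1)$ is sos-convex and hence useless; it instead takes the oblique slice $\tilde f(x_1,x_2)=f(x_1,x_2,1-\tfrac12 x_2)$.
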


\begin{proof}
We claim that the form
\begin{equation}\label{eq:minim.form.3.6}
\begin{array}{lll}
f(x_1,x_2,x_3)&=&77x_1^6-155x_1^5x_2+445x_1^4x_2^2+76x_1^3x_2^3+556x_1^2x_2^4+68x_1x_2^5 \\
\  &\ &\ \\
   \  &\ &+240x_2^6-9x_1^5x_3-1129x_1^3x_2^2x_3+62x_1^2x_2^3x_3+1206x_1x_2^4x_3 \\
   \  &\ &\ \\
    \ &\ &-343x_2^5x_3+363x_1^4x_3^2+773x_1^3x_2x_3^2+891x_1^2x_2^2x_3^2-869x_1x_2^3x_3^2\\
    \  &\ &\ \\
    \ &\ &+1043x_2^4x_3^2-14x_1^3x_3^3-1108x_1^2x_2x_3^3-216x_1x_2^2x_3^3-839x_2^3x_3^3\\
    \  &\ &\ \\
    \ &\ &+721x_1^2x_3^4+436x_1x_2x_3^4+378x_2^2x_3^4+48x_1x_3^5-97x_2x_3^5+89x_3^6
\end{array}
\end{equation}
belongs to $C_{3,6}\setminus\Sigma C_{3,6}$, and the
polynomial\footnote{The polynomial $f(x_1,x_2,1)$ turns out to be
sos-convex, and therefore does not do the job. One can of course
change coordinates, and then in the new coordinates perform the
dehomogenization by setting $x_3=1$.}
\begin{equation}\label{eq:minim.poly.2.6}
\tilde{f}(x_1,x_2)=f(x_1,x_2,1-\frac{1}{2}x_2)
\end{equation}
belongs to $\tilde{C}_{2,6}\setminus\tilde{\Sigma C}_{2,6}$. Note
that since convexity and sos-convexity are both preserved under
restrictions to affine subspaces (recall
Remark~\ref{rmk:sos-convexity.restriction}), it suffices to show
that the form $f$ in (\ref{eq:minim.form.3.6}) is convex and the
polynomial $\tilde{f}$ in (\ref{eq:minim.poly.2.6}) is not
sos-convex. Let $x\mathrel{\mathop:}=(x_1,x_2,x_2)^T$,
$y\mathrel{\mathop:}=(y_1,y_2,y_3)^T$,
$\tilde{x}\mathrel{\mathop:}=(x_1,x_2)^T$,
$\tilde{y}\mathrel{\mathop:}=(y_1,y_2)^T$, and denote the Hessian
of $f$ and $\tilde{f}$ respectively by $H_f$ and $H_{\tilde{f}}$.
In Appendix B, we provide rational Gram matrices which prove that
the form
\begin{equation}\label{eq:y.H_f.y.xi^2.3.6.example}
(x_1^2+x_2^2)\cdot y^TH_f(x)y
\end{equation}
is sos. This, together with nonnegativity of $x_1^2+x_2^2$ and
continuity of $y^TH_f(x)y$, implies that $y^TH_f(x)y$ is psd.
Therefore, $f$ is convex. The proof that $\tilde{f}$ is not
sos-convex proceeds by showing that $H_{\tilde{f}}$ is not an
sos-matrix via a separation argument. In Appendix B, we present a
separating hyperplane that leaves the appropriate sos cone on one
side and the polynomial
\begin{equation}\label{eq:y.H_f_tilda.y.xi^2.2.6.example}
\tilde{y}^TH_{\tilde{f}}(\tilde{x})\tilde{y}
\end{equation}
on the other.
\end{proof}

\begin{theorem}\label{thm:minimal.3.4.and.4.4} $\tilde{\Sigma C}_{3,4}$ is a proper subset of
$\tilde{C}_{3,4}$. $\Sigma C_{4,4}$ is a proper subset of
$C_{4,4}$.
\end{theorem}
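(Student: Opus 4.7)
The plan is to mirror the strategy used for Theorem~\ref{thm:minimal.2.6.and.3.6} in the previous case, producing a single explicit quartic form $h(x_1,x_2,x_3,x_4)$ in $C_{4,4}\setminus\Sigma C_{4,4}$ together with a suitable affine dehomogenization $\tilde{h}(x_1,x_2,x_3)\mathrel{\mathop:}=h(x_1,x_2,x_3,1-\alpha^Tx)$ for some vector $\alpha$, lying in $\tilde{C}_{3,4}\setminus\tilde{\Sigma C}_{3,4}$. Since convexity and sos-convexity are both preserved under restrictions to affine subspaces (Remark~\ref{rmk:sos-convexity.restriction}), it suffices to verify (i) that $h$ is convex as a form and (ii) that the specific dehomogenization $\tilde{h}$ is not sos-convex. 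The coefficients of $h$ would be located numerically by the same sort of semidefinite-programming search described in Appendix A: parametrize a candidate family of quartic forms with sufficiently few monomials to retain minimality, require $y^T H_h(x) y$ to be psd but just barely fail to be sos, and at the same time require the quadratic-in-$y$ Hessian polynomial of the dehomogenization to lie on the boundary of (and outside) the appropriate sos-matrix cone.

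To certify convexity of $h$, the plan is to produce a positive multiplier $q(x)$ (analogous to $x_1^2+x_2^2$ in the proof of Theorem~\ref{thm:minimal.2.6.and.3.6}) such that
\[
q(x)\cdot y^T H_h(x)\,y
\]
admits an explicit rational sum of squares decomposition, given by a rational positive semidefinite Gram matrix against a monomial basis of the form $\{x_i^a y_j\}$. Since $q$ is nonnegative and vanishes only on a low-dimensional set, and since $y^T H_h(x) y$ is continuous, psd-ness of the product implies psd-ness of $y^T H_h(x) y$, i.e.\ convexity of $h$. Finding a multiplier that works with a modest-size Gram matrix is the most delicate numerical step, but the same SOS-with-denominator trick that succeeded in the ternary sextic case is expected to work here with a carefully chosen $q$ (likely quadratic).

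To certify that $\tilde{h}$ is not sos-convex, the plan is to exhibit a separating hyperplane between the biquadratic form $\tilde{y}^T H_{\tilde{h}}(\tilde{x})\tilde{y}$ (in variables $\tilde{x}\in\mathbb{R}^3$, $\tilde{y}\in\mathbb{R}^3$) and the cone of sos biquadratics with the right bidegree. Concretely, we write down a linear functional $L$ on the finite-dimensional space of such biforms, together with a rational certificate that $L\geq 0$ on every rank-one square $(z(\tilde{x},\tilde{y}))^2$ where $z$ ranges over the corresponding monomial basis (so $L$ is nonnegative on the dual sos cone), yet $L(\tilde{y}^T H_{\tilde{h}}(\tilde{x})\tilde{y})<0$. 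This is standard SDP duality and its existence is guaranteed by the numerical procedure that produced $h$ in the first place.

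The main obstacle, as in Theorem~\ref{thm:minimal.2.6.and.3.6}, is not conceptual but computational: one must find a single pair $(h,\alpha)$ such that the \emph{form} $h$ is genuinely convex (requiring an SOS certificate for $y^T H_h(x)y$, possibly with a multiplier) while simultaneously the restriction $\tilde{h}$ is not sos-convex (requiring a separating functional against the sos-matrix cone), all at the minimal bidegree $(n,d)=(4,4)$ and $(3,4)$. Once such an $h$ is identified, the proof reduces to (a) displaying the rational Gram matrix witnessing the SOS identity for $q(x)\cdot y^T H_h(x)y$, (b) displaying the rational separating hyperplane for $\tilde{y}^T H_{\tilde{h}}(\tilde{x})\tilde{y}$, and (c) invoking the restriction principle to conclude that $\tilde{h}\in\tilde{C}_{3,4}$ and $h\notin\Sigma C_{4,4}$, which together with the obvious inclusions $\tilde{\Sigma C}_{3,4}\subseteq\tilde{C}_{3,4}$ and $\Sigma C_{4,4}\subseteq C_{4,4}$ yields both strict containments.
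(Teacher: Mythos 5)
Your proposal follows essentially the same route as the paper: the paper exhibits an explicit computer-found quartic form $h$ in (\ref{eq:minim.form.4.4}), certifies its convexity by showing the multiplied form $(x_2^2+x_3^2+x_4^2)\cdot y^TH_h(x)y$ is sos, certifies that the dehomogenization $\tilde{h}(x_1,x_2,x_3)=h(x_1,x_2,x_3,1)$ is not sos-convex via a separating hyperplane, and then uses the restriction principle exactly as you do. The only cosmetic differences are that the paper dehomogenizes by simply setting $x_4=1$ rather than along a general affine slice $1-\alpha^Tx$, and its multiplier is the specific quadratic $x_2^2+x_3^2+x_4^2$.
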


\begin{proof}
We claim that the form
\begin{equation}\label{eq:minim.form.4.4}
\begin{array}{lll}
h(x_1,\ldots,x_4)&=&1671x_1^4-4134x_1^3x_2-3332x_1^3x_3+5104x_1^2x_2^2+4989x_1^2x_2x_3 \\
\  &\ &\ \\
 \  &\ &+3490x_1^2x_3^2-2203x_1x_2^3-3030x_1x_2^2x_3-3776x_1x_2x_3^2 \\
 \  &\ &\ \\
 \  &\ &-1522x_1x_3^3+1227x_2^4-595x_2^3x_3+1859x_2^2x_3^2+1146x_2x_3^3 \\
 \  &\ &\ \\
 \  &\ &+1195728x_4^4-1932x_1x_4^3-2296x_2x_4^3-3144x_3x_4^3+1465x_1^2x_4^2 \\
 \  &\ &\ \\
 \  &\ &-1376x_1^3x_4-263x_1x_2x_4^2+2790x_1^2x_2x_4+2121x_2^2x_4^2+979x_3^4 \\
 \  &\ &\ \\
 \  &\ &-292x_1x_2^2x_4-1224x_2^3x_4+2404x_1x_3x_4^2+2727x_2x_3x_4^2 \\
  \  &\ &\ \\
 \  &\ & -2852x_1x_3^2x_4-388x_2x_3^2x_4-1520x_3^3x_4+2943x_1^2x_3x_4\\
  \  &\ &\ \\
 \  &\ &-5053x_1x_2x_3x_4 +2552x_2^2x_3x_4+3512x_3^2x_4^2
\end{array}
\end{equation}
belongs to $C_{4,4}\setminus\Sigma C_{4,4}$, and the polynomial
\begin{equation}\label{eq:minim.poly.3.4}
\tilde{h}(x_1,x_2,x_3)=h(x_1,x_2,x_3,1)
\end{equation}
belongs to $\tilde{C}_{3,4}\setminus\tilde{\Sigma C}_{3,4}$. Once
again, it suffices to prove that $h$ is convex and $\tilde{h}$ is
not sos-convex. Let $x\mathrel{\mathop:}=(x_1,x_2,x_3,x_4)^T$,
$y\mathrel{\mathop:}=(y_1,y_2,y_3,y_4)^T$, and denote the Hessian
of $h$ and $\tilde{h}$ respectively by $H_h$ and $H_{\tilde{h}}$.
The proof that $h$ is convex is done by showing that the form
\begin{equation}\label{eq:y.H_h.y.xi^2.4.4.example}
(x_2^2+x_3^2+x_4^2)\cdot y^TH_h(x)y
\end{equation}
is sos.\footnote{The choice of multipliers in
(\ref{eq:y.H_f.y.xi^2.3.6.example}) and
(\ref{eq:y.H_h.y.xi^2.4.4.example}) is motivated by a result of
Reznick in~\cite{Reznick_Unif_denominator} explained in Appendix
A.} The proof that $\tilde{h}$ is not sos-convex is done again by
means of a separating hyperplane.
\end{proof}

\subsubsection{Convex but not sos-convex polynomials/forms in all higher degrees and
dimensions}\label{subsubsec:increasing.degree.and.vars} Given a
convex but not sos-convex polynomial (form) in $n$ variables , it
is very easy to argue that such a polynomial (form) must also
exist in a larger number of variables. If $p(x_1,\ldots,x_n)$ is a
form in $C_{n,d}\setminus\Sigma C_{n,d}$, then
$$\bar{p}(x_1,\ldots,x_{n+1})=p(x_1,\ldots,x_n)+x_{n+1}^d$$
belongs to $C_{n+1,d}\setminus\Sigma C_{n+1,d}$. Convexity of
$\bar{p}$ is obvious since it is a sum of convex functions. The
fact that $\bar{p}$ is not sos-convex can also easily be seen from
the block diagonal structure of the Hessian of $\bar{p}$: if the
Hessian of $\bar{p}$ were to factor, it would imply that the
Hessian of $p$ should also factor. The argument for going from
$\tilde{C}_{n,d}\setminus\tilde{\Sigma C}_{n,d}$ to
$\tilde{C}_{n+1,d}\setminus\tilde{\Sigma C}_{n+1,d}$ is identical.

Unfortunately, an argument for increasing the degree of convex but
not sos-convex forms seems to be significantly more difficult to
obtain. In fact, we have been unable to come up with a natural
operation that would produce a from in $C_{n,d+2}\setminus\Sigma
C_{n,d+2}$ from a form in $C_{n,d}\setminus\Sigma C_{n,d}$. We
will instead take a different route: we are going to present a
general procedure for going from a form in
$P_{n,d}\setminus\Sigma_{n,d}$ to a form in
$C_{n,d+2}\setminus\Sigma C_{n,d+2}$.  This will serve our purpose
of constructing convex but not sos-convex forms in higher degrees
and is perhaps also of independent interest in itself. For
instance, it can be used to construct convex but not sos-convex
forms that inherit structural properties (e.g. symmetry) of the
known examples of psd but not sos forms. The procedure is
constructive modulo the value of two positive constants ($\gamma$
and $\alpha$ below) whose existence will be shown
nonconstructively.

%\footnote{The procedure can be thought of as a generalization of
%the approach in our earlier work
%in~\cite{AAA_PP_not_sos_convex_journal}.}

Although the proof of the general case is no different, we present
this construction for the case $n=3$. The reason is that it
suffices for us to construct forms in $C_{3,d}\setminus\Sigma
C_{3,d}$ for $d$ even and $\geq 8$. These forms together with the
two forms in $C_{3,6}\setminus\Sigma C_{3,6}$ and
$C_{4,4}\setminus\Sigma C_{4,4}$ presented in
(\ref{eq:minim.form.3.6}) and (\ref{eq:minim.form.4.4}), and with
the simple procedure for increasing the number of variables cover
all the values of $n$ and $d$ for which convex but not sos-convex
forms exist.

For the remainder of this section, let
$x\mathrel{\mathop:}=(x_1,x_2,x_3)^T$ and
$y\mathrel{\mathop:}=(y_1,y_2,y_3)^T$.

\begin{theorem}\label{thm:conv_not_sos_conv_forms_n3d}
Let $m\mathrel{\mathop:}=m(x)$ be a ternary form of degree $d$
(with $d$ necessarily even and $\geq 6$) satisfying the following
three requirements:
%\begin{itemize}
\begin{description}
\item[R1:] $m$ is positive definite. \item[R2:] $m$ is not a sum
of squares. \item[R3:] The Hessian $H_m$ of $m$ is positive
definite at the point $(1,0,0)^T$.
%\begin{equation}\nonumber
%\Big[\frac{\partial{\ }}{\partial{x}\partial{x}} \int\int m(x)
%dx_1 dx_1 \Big] \Big\vert_{x_2=0, x_3=0} \succ 0 \quad \forall
%x_1\neq0,
%\end{equation}
%i.e., the $3\times 3$ Hessian of the form $\int\int m(x) dx_1
%dx_1$, when restricted to $x_2=0, x_3=0$, is positive definite.
\end{description}
%\end{itemize}
Let $g\mathrel{\mathop:}=g(x_2,x_3)$ be any bivariate form of
degree $d+2$ whose Hessian is positive definite. \\ Then, there
exists a constant $\gamma>0$, such that the form $f$ of degree
$d+2$ given by
\begin{equation}\label{eq:construction.of.f(x)}
f(x)=\int_0^{x_1}\int_0^s m(t,x_2,x_3) dt ds + \gamma g(x_2,x_3)
\end{equation}
is convex but not sos-convex.
\end{theorem}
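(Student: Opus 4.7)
I would prove the two properties of $f$ separately; the non-sos-convexity is immediate and holds for every $\gamma \geq 0$, while convexity requires a compactness argument on the unit sphere and is what drives the need to take $\gamma$ large. For non-sos-convexity, note that $\partial^2 F/\partial x_1^2 = m(x_1,x_2,x_3)$ and that $\gamma g$ is independent of $x_1$, so the $(1,1)$ entry of $H_f$ is exactly $m(x)$. Hence if $H_f$ were an sos-matrix, Lemma~\ref{lem:sos.matrix.then.minor.sos} would force its diagonal entry $m$ to be sos, contradicting R2. (Equivalently, restricting the biform $y^T H_f(x) y$ to $y=(1,0,0)^T$ preserves sos and yields $m(x)$.)

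For convexity, by homogeneity it suffices to show $H_f(x) \succeq 0$ for $x$ on the unit sphere $S^2 \subset \mathbb{R}^3$, and the plan is to decompose $S^2$ into a small neighborhood $U$ of the poles $\pm(1,0,0)$ and its compact complement. Writing $H_f = H_F + \gamma \widetilde{H}_g$, where $\widetilde H_g \succeq 0$ is the padded $3 \times 3$ extension of the Hessian of $g$ (zeros in the $x_1$-row and column), the key computation, via Euler's identity applied to $m$ together with term-by-term integration in the definition of $F$, is the identity
\[
H_F(\pm 1, 0, 0) \;=\; \frac{1}{d(d-1)}\, H_m(\pm 1, 0, 0).
\]
By R3 (combined with the even parity of $H_m$, which has entries of even degree $d-2$), this is strictly positive definite, so by continuity there exist an open neighborhood $U \subset S^2$ of $\{(\pm 1, 0, 0)\}$ and an $\epsilon > 0$ with $H_F(x) \succeq \epsilon I$ on $U$; hence $H_f(x) \succeq \epsilon I$ on $U$ for every $\gamma \geq 0$.

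On the complement $S^2 \setminus U$, compactness yields $\alpha > 0$ with $x_2^2 + x_3^2 \geq \alpha^2$. By R1 and compactness, $m(x) \geq m_0 > 0$ on $S^2$, and the off-diagonal and lower-right entries of $H_F$ are uniformly bounded. Homogeneity of $H_g$ (degree $d$) together with its positive-definiteness on the unit circle in $\mathbb{R}^2$ yields a uniform lower bound $H_g(x_2,x_3) \succeq \alpha^d \mu\, I$ on $S^2 \setminus U$ for some $\mu > 0$. I would then apply the Schur complement to the $(1,1)$-block of $H_f$, reducing PSD-ness of $H_f$ to PSD-ness of the $2\times 2$ matrix $H_{F,22}(x) + \gamma H_g(x_2,x_3) - \frac{1}{m(x)} v(x)v(x)^T$, where $v(x)^T = ((H_F)_{12}(x),(H_F)_{13}(x))$. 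For $\gamma$ exceeding an explicit threshold depending only on $m_0$, $\alpha$, $\mu$, and the $C^0$-norms of $v$ and $H_{F,22}$ on $S^2$, this matrix is uniformly PSD on $S^2 \setminus U$, and any such $\gamma$ makes $H_f \succeq 0$ on all of $S^2$.

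The main obstacle is the interaction of the two summands near the $x_1$-axis: since $g$ depends only on $(x_2,x_3)$, its Hessian contribution vanishes at $\pm(1,0,0)$, and no choice of $\gamma$ could rescue convexity there unless $H_F$ is already positive definite at those points. Condition R3, transported from $H_m$ to $H_F$ via the displayed identity, is precisely what delivers this; away from the axis, R1 combined with positive-definiteness of $H_g$ and the Schur complement does the rest.
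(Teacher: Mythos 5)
Your proof is correct, and while its first half coincides with the paper's (non-sos-convexity via the $(1,1)$ entry of the Hessian being $m$ and Lemma~\ref{lem:sos.matrix.then.minor.sos}, and the identity $H_F(1,0,0)=\frac{1}{d(d-1)}H_m(1,0,0)$ to import R3), the convexity half takes a genuinely different route. The paper works with the scalar biform $y^TH_f(x)y$ on the bi-sphere $\{\|x\|=\|y\|=1\}$ and splits it into the set $\mathcal{S}$ of Lemma~\ref{lem:y.Hm.y>0.on.some.part} (where $x_2^2+x_3^2<\delta$ \emph{or} $y_2^2+y_3^2<\delta$), on which R3 handles the small-$x_{2,3}$ part and R1 handles the small-$y_{2,3}$ part (there $y^TH_{\hat m}y\approx y_1^2m(x)$), and its complement $\bar{\mathcal{S}}$, on which $\gamma>\lvert\beta_1\rvert/\beta_2$ is chosen by a min--min compactness argument. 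You instead split only the $x$-sphere into a pole neighborhood (where R3, transported to $H_F$ and extended to $-e_1$ by evenness of $d$, gives $H_F\succeq\epsilon I$ uniformly, independent of $\gamma$) and its complement, where you take a Schur complement with respect to the $(1,1)$ pivot $m(x)\geq m_0>0$; this absorbs the $y_1$-direction degeneracy algebraically rather than by a separate region of the bi-sphere, so R1 enters only as a lower bound on the pivot. Both arguments are sound; yours buys a slightly more explicit threshold for $\gamma$ in terms of $m_0$, $\alpha$, $\mu$ and sup-norms of the relevant Hessian blocks, while the paper's stays entirely scalar and avoids block-matrix manipulation at the cost of two compactness arguments on the four-dimensional bi-sphere rather than one on $S^2$.
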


Before we prove this theorem, let us comment on how one can get
examples of forms $m$ and $g$ that satisfy the requirements of the
theorem. The choice of $g$ is in fact very easy. We can e.g. take
\begin{equation}\nonumber
g(x_2,x_3)=(x_2^2+x_3^2)^{\frac{d+2}{2}},
\end{equation}
which has a positive definite Hessian. As for the choice of $m$,
essentially any psd but not sos ternary form can be turned into a
form that satisfies requirements {\bf R1}, {\bf R2}, and {\bf R3}.
Indeed if the Hessian of such a form is positive definite at just
one point, then that point can be taken to $(1,0,0)^T$ by a change
of coordinates without changing the properties of being psd and
not sos. If the form is not positive definite, then it can made so
by adding a small enough multiple of a positive definite form to
it. For concreteness, we construct in the next lemma a family of
forms that together with the above theorem will give us convex but
not sos-convex ternary forms of any degree $\geq 8$.

\begin{lemma}\label{lem:choice.of.m(x)}
For any even degree $d\geq 6$, there exists a constant $\alpha>0$,
such that the form
\begin{equation}\label{eq:choice.of.m(x)}
m(x)=x_1^{d-6}(x_1^2x_2^4+x_1^4x_2^2-3x_1^2x_2^2x_3^2+x_3^6)+\alpha(x_1^2+x_2^2+x_3^2)^{\frac{d}{2}}
\end{equation}
satisfies the requirements {\bf R1}, {\bf R2}, and {\bf R3} of
Theorem~\ref{thm:conv_not_sos_conv_forms_n3d}.
\end{lemma}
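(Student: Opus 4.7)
The plan is to recognize that the polynomial $x_1^2x_2^4+x_1^4x_2^2-3x_1^2x_2^2x_3^2+x_3^6$ appearing inside the first summand of $m$ is exactly the Motzkin form $M(x_1,x_2,x_3)$ from (\ref{eq:Motzkin.form}), so that $m(x)=x_1^{d-6}M(x)+\alpha(x_1^2+x_2^2+x_3^2)^{d/2}$. I will verify the three requirements in the order R1, R3, R2, picking $\alpha>0$ small at the end.

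For R1, since $d$ is even and $\geq 6$, the exponent $d-6$ is a nonnegative even integer, so $x_1^{d-6}\geq 0$ on $\mathbb{R}^3$; combined with the well-known nonnegativity of the Motzkin form, $x_1^{d-6}M(x)$ is psd. Adding $\alpha(x_1^2+x_2^2+x_3^2)^{d/2}$ with any $\alpha>0$ produces a form that is strictly positive on $\mathbb{R}^3\setminus\{0\}$, hence positive definite. For R3, the Hessian of $(x_1^2+x_2^2+x_3^2)^{d/2}$ at $(1,0,0)^T$ is straightforward to compute and equals $\mathrm{diag}(d(d-1),d,d)$, which is positive definite. For the first summand, I will expand $x_1^{d-6}M(x)$ as the sum of the four monomials $x_1^{d-4}x_2^4,\ x_1^{d-2}x_2^2,\ -3x_1^{d-4}x_2^2x_3^2,\ x_1^{d-6}x_3^6$, and observe that only the monomial $x_1^{d-2}x_2^2$ produces a nonzero entry in the Hessian at $(1,0,0)^T$ (namely a $2$ in the $(2,2)$ slot), because for every other monomial $x_1^a x_2^b x_3^c$ appearing, the exponents $b,c$ are too large for any second partial derivative to survive evaluation at $(1,0,0)^T$. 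Adding these two Hessians gives $\mathrm{diag}(\alpha d(d-1),\,2+\alpha d,\,\alpha d)$, which is positive definite for every $\alpha>0$, establishing R3.

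The main obstacle is R2. Here I will first show that the auxiliary form $q(x)\mathrel{\mathop:}= x_1^{d-6}M(x)$ is itself not a sum of squares. Suppose for contradiction that $q=\sum_i q_i^2$. If $d=6$, this is just Motzkin's theorem. If $d\geq 8$, setting $x_1=0$ forces $\sum_i q_i(0,x_2,x_3)^2=0$ and hence $x_1\mid q_i$ for each $i$; writing $q_i=x_1 r_i$ yields $x_1^{d-8}M(x)=\sum_i r_i^2$. Iterating this pull-out $(d-6)/2$ times produces an sos decomposition of $M$ itself, contradicting Hilbert's result that $M\in P_{3,6}\setminus\Sigma_{3,6}$. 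With $q\notin\Sigma_{3,d}$ established, I invoke the closedness of the sos cone $\Sigma_{3,d}$ (as recalled in Subsection~\ref{subsec:nonnegativity.sos.basics}): its complement in the space of forms of degree $d$ in three variables is open, so there exists $\alpha_0>0$ such that $q+\alpha(x_1^2+x_2^2+x_3^2)^{d/2}\notin\Sigma_{3,d}$ for all $\alpha\in(0,\alpha_0)$. Picking any such $\alpha$ (which is compatible with R1 and R3, since those hold for every $\alpha>0$) yields the desired form $m$ satisfying all three requirements.
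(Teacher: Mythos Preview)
Your proof is correct and follows essentially the same approach as the paper's. The only differences are in presentation: where the paper cites \cite{Reznick} for the fact that $x_1^{d-6}M(x)$ is not sos, you supply the standard $x_1$-divisibility argument directly; and where the paper argues R3 abstractly via the even-monomial structure of $m$, you compute the Hessian at $(1,0,0)^T$ explicitly and obtain $\mathrm{diag}(\alpha d(d-1),\,2+\alpha d,\,\alpha d)$.
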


\begin{proof}
The form $$x_1^2x_2^4+x_1^4x_2^2-3x_1^2x_2^2x_3^2+x_3^6$$ is the
familiar Motzkin form in (\ref{eq:Motzkin.form}) that is psd but
not sos~\cite{MotzkinSOS}. For any even degree $d\geq 6$, the form
$$x_1^{d-6}(x_1^2x_2^4+x_1^4x_2^2-3x_1^2x_2^2x_3^2+x_3^6)$$ is a
form of degree $d$ that is clearly still psd and less obviously
still not sos; see~\cite{Reznick}. This together with the fact
that $\Sigma_{n,d}$ is a closed cone implies existence of a small
positive value of $\alpha$ for which the form $m$ in
(\ref{eq:choice.of.m(x)}) is positive definite but not a sum of
squares, hence satisfying requirements {\bf R1} and {\bf R2}.

Our next claim is that for any positive value of $\alpha$, the
Hessian $H_m$ of the form $m$ in (\ref{eq:choice.of.m(x)})
satisfies
%\begin{equation}\label{eq:Hessian.restricted.to.x2=x3=0}
%\Big[\frac{\partial{\ }}{\partial{x}\partial{x}} \int\int m(x)
%dx_1 dx_1 \Big] \Big\vert_{x_2=0, x_3=0}=\begin{bmatrix} c_1x_1^d
%& 0 & 0 \\ 0 & c_2x_1^d & 0 \\ 0& 0& c_3x_1^d
%\end{bmatrix}
%\end{equation}
\begin{equation}\label{eq:Hessian.at.1.0.0.}
H_m(1,0,0)=\begin{bmatrix} c_1 & 0 & 0 \\ 0 & c_2 & 0 \\
0& 0& c_3
\end{bmatrix}
\end{equation}
for some positive constants $c_1,c_2,c_3$, therefore also passing
requirement {\bf R3}. To see the above equality, first note that
since $m$ is a form of degree $d$, its Hessian $H_m$ will have
entries that are forms of degree $d-2$. Therefore, the only
monomials that can survive in this Hessian after setting $x_2$ and
$x_3$ to zero are multiples of $x_1^{d-2}$. It is easy to see that
an $x_1^{d-2}$ monomial in an off-diagonal entry of $H_m$ would
lead to a monomial in $m$ that is not even. On the other hand, the
form $m$ in (\ref{eq:choice.of.m(x)}) only has even monomials.
This explains why the off-diagonal entries of the right hand side
of (\ref{eq:Hessian.at.1.0.0.}) are zero. Finally, we note that
for any positive value of $\alpha$, the form $m$ in
(\ref{eq:choice.of.m(x)}) includes positive multiples of $x_1^d$,
$x_1^{d-2}x_2^2$, and $x_1^{d-2}x_3^2$, which lead to positive
multiples of $x_1^{d-2}$ on the diagonal of $H_m$. Hence, $c_1,
c_2$, and $c_3$ are positive.
\end{proof}

Next, we state a lemma that will be employed in the proof of
Theorem~\ref{thm:conv_not_sos_conv_forms_n3d}.

\begin{lemma}\label{lem:y.Hm.y>0.on.some.part}
Let $m$ be a trivariate form satisfying the requirements {\bf R1}
and {\bf R3} of Theorem~\ref{thm:conv_not_sos_conv_forms_n3d}. Let
$H_{\hat{m}}$ denote the Hessian of the form $\int_0^{x_1}\int_0^s
m(t,x_2,x_3) dt ds$. Then, there exists a positive constant
$\delta,$ such that
$$y^TH_{\hat{m}}(x)y>0$$ on the set
\begin{equation}\label{eq:the.set.S}
\mathcal{S}\mathrel{\mathop:}=\{(x,y) \ |\  ||x||=1, ||y||=1, \
(x_2^2+x_3^2<\delta \ \mbox{or}\ y_2^2+y_3^2<\delta)\}.
\end{equation}
\end{lemma}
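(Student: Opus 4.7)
The plan is to show that $F(x,y) := y^T H_{\hat m}(x) y$ is strictly positive on each of the two compact ``edge'' sets
$$K_1 := \{(x,y) \mid ||x||=||y||=1,\ x_2=x_3=0\}, \quad K_2 := \{(x,y) \mid ||x||=||y||=1,\ y_2=y_3=0\},$$
and then to propagate positivity to an open neighborhood of $K_1\cup K_2$ by a continuity/compactness argument; for $\delta$ small enough, the set $\mathcal{S}$ will be contained in such a neighborhood.

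The first ingredient is that $\hat m$ is itself a ternary form of degree $d+2$: monomial-wise integration of $m = \sum_{|\alpha|=d} c_\alpha x^\alpha$ gives $\hat m(x) = \sum_\alpha c_\alpha x_1^{\alpha_1+2} x_2^{\alpha_2} x_3^{\alpha_3}/[(\alpha_1+1)(\alpha_1+2)]$, so the entries of $H_{\hat m}$ are ternary forms of degree $d$. The crux is the algebraic identity
$$H_m(1,0,0) \;=\; d(d-1)\, H_{\hat m}(1,0,0),$$
which I would verify entry by entry: after setting $x_2=x_3=0$, each second partial at $(1,0,0)$ collapses to a single monomial's contribution, and the factors $(\alpha_1+1)(\alpha_1+2)$ introduced by the double integration combine with the degree drops from differentiation to produce the uniform scalar $d(d-1)$ across all six independent entries. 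Requirement R3 then forces $H_{\hat m}(1,0,0)\succ 0$, and since $d$ is even the entries of $H_{\hat m}$ are forms of even degree, so $H_{\hat m}(-1,0,0)=H_{\hat m}(1,0,0)\succ 0$ as well. This yields $F>0$ on $K_1$. On $K_2$, using $[H_{\hat m}(x)]_{11}=\partial^2\hat m/\partial x_1^2 = m(x)$, we get $F(x,(\pm 1,0,0)^T)=m(x)>0$ on the unit sphere by R1.

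Since $K_1\cup K_2$ is compact and $F$ is continuous, $F$ attains a positive minimum $\mu>0$ on $K_1\cup K_2$. By uniform continuity of $F$ on the compact set $\{(x,y)\mid ||x||=||y||=1\}$, there exists $\delta>0$ such that $F\geq \mu/2$ at every point within Euclidean distance $O(\sqrt{\delta})$ of $K_1\cup K_2$. If $(x,y)\in\mathcal{S}$ satisfies $x_2^2+x_3^2<\delta$, then $x$ is within distance $O(\sqrt\delta)$ of $\pm(1,0,0)^T$, so $(x,y)$ lies near $K_1$; symmetrically for the other clause. Thus $F>0$ on $\mathcal{S}$ for all sufficiently small $\delta$.

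The main obstacle is the index bookkeeping behind the identity $H_m(1,0,0)=d(d-1)H_{\hat m}(1,0,0)$: one has to check that the constants inherited from the double integration in $x_1$ and from the repeated differentiations line up to the \emph{same} scalar in every one of the six independent entries (and in particular do not depend on whether the off-diagonal entry is $(1,2)$, $(2,2)$, $(2,3)$, etc.). Once this proportionality is secured, R3 immediately delivers positive definiteness at the axis points $\pm(1,0,0)^T$, and the remainder reduces to a routine compactness argument on the product of unit spheres.
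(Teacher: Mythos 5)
Your proposal is correct and follows essentially the same route as the paper's proof: positivity on the slice $y_2=y_3=0$ via $[H_{\hat m}(x)]_{11}=m(x)$ and requirement \textbf{R1}, positivity on the slice $x_2=x_3=0$ via the identity $H_{\hat m}(1,0,0)=\tfrac{1}{d(d-1)}H_m(1,0,0)$ and requirement \textbf{R3}, and then a continuity/compactness argument on the bi-sphere to extract $\delta$. The only cosmetic difference is that you take a single neighborhood of $K_1\cup K_2$ while the paper picks $\delta_x$, $\delta_y$ separately and sets $\delta=\min\{\delta_x,\delta_y\}$.
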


\begin{proof}
We observe that when $y_2^2+y_3^2=0$, we have
$$y^TH_{\hat{m}}(x)y=y_1^2m(x),$$ which by requirement {\bf R1} is
positive when $||x||=||y||=1$. By continuity of the form
$y^TH_{\hat{m}}(x)y$, we conclude that there exists a small
positive constant $\delta_y$ such that $y^TH_{\hat{m}}(x)y>0$ on
the set
$$\mathcal{S}_y\mathrel{\mathop:}=\{(x,y) \ |\  ||x||=1, ||y||=1, \ y_2^2+y_3^2<\delta_y\}.$$
Next, we leave it to the reader to check that
$$H_{\hat{m}}(1,0,0)=\frac{1}{d(d-1)}H_m(1,0,0).$$ Therefore, when $x_2^2+x_3^2=0$, requirement {\bf R3} implies that
$y^TH_{\hat{m}}(x)y$ is positive when $||x||=||y||=~1$. Appealing
to continuity again, we conclude that there exists a small
positive constant $\delta_x$ such that $y^TH_{\hat{m}}(x)y>0$ on
the set
$$\mathcal{S}_x\mathrel{\mathop:}=\{(x,y) \ |\  ||x||=1, ||y||=1, \
x_2^2+x_3^2<\delta_x\}.$$ If we now take
$\delta=\min\{\delta_y,\delta_x\}$, the lemma is established.
\end{proof}

%The last lemma that we need has already been proven in our earlier
%work in~\cite{AAA_PP_not_sos_convex_journal}.
%
%
%\begin{lemma}[\cite{AAA_PP_not_sos_convex_journal}]\label{lem:sos.mat.then.minor.sos}
%All principal minors of an sos-matrix are sos
%polynomials.\footnote{As a side note, we remark that the converse
%of Lemma~\ref{lem:sos.mat.then.minor.sos} is not true even for
%polynomial matrices that are valid Hessians. For example, all 7
%principal minors of the $3\times 3$ Hessian of the form $f$ in
%(\ref{eq:minim.form.3.6}) are sos polynomials, even though this
%Hessian is not an sos-matrix.}
%\end{lemma}

We are now ready to prove
Theorem~\ref{thm:conv_not_sos_conv_forms_n3d}.

\begin{proof}[Proof of Theorem~\ref{thm:conv_not_sos_conv_forms_n3d}]
We first prove that the form $f$ in
(\ref{eq:construction.of.f(x)}) is not sos-convex. By
Lemma~\ref{lem:sos.matrix.then.minor.sos}, if $f$ was sos-convex,
then all diagonal elements of its Hessian would have to be sos
polynomials. On the other hand, we have from
(\ref{eq:construction.of.f(x)}) that
$$\frac{\partial{f(x)}}{\partial{x_1}\partial{x_1}}=m(x),$$ which by requirement {\bf R2} is not sos. Therefore $f$ is not sos-convex.

It remains to show that there exists a positive value of $\gamma$
for which $f$ becomes convex. Let us denote the Hessians of $f$,
$\int_0^{x_1}\int_0^s m(t,x_2,x_3) dt ds$, and $g$, by $H_f$,
$H_{\hat{m}}$, and $H_g$ respectively. So, we have
$$H_f(x)=H_{\hat{m}}(x)+\gamma H_g(x_2,x_3).$$ (Here, $H_g$ is a
$3\times 3$ matrix whose first row and column are zeros.)
Convexity of $f$ is of course equivalent to nonnegativity of the
form $y^TH_f(x)y$. Since this form is bi-homogeneous in $x$ and
$y$, it is nonnegative if and only if $y^TH_f(x)y\geq0$ on the
bi-sphere
$$\mathcal{B}\mathrel{\mathop:}=\{(x,y) \ | \ ||x||=1,
||y||=1\}.$$ Let us decompose the bi-sphere as
$$\mathcal{B}=\mathcal{S}\cup\bar{\mathcal{S}},$$
where $\mathcal{S}$ is defined in (\ref{eq:the.set.S}) and
\begin{equation}\nonumber
\bar{\mathcal{S}}\mathrel{\mathop:}=\{(x,y) \ |\  ||x||=1,
||y||=1, x_2^2+x_3^2\geq\delta,  y_2^2+y_3^2\geq\delta\}.
\end{equation}
Lemma~\ref{lem:y.Hm.y>0.on.some.part} together with positive
definiteness of $H_g$ imply that $y^TH_f(x)y$ is positive on
$\mathcal{S}$. As for the set $\bar{\mathcal{S}}$, let
\begin{equation}\nonumber
\beta_1=\min_{x,y,\in\bar{\mathcal{S}}} y^TH_{\hat{m}}(x)y,
\end{equation}
and
\begin{equation}\nonumber
\beta_2=\min_{x,y,\in\bar{\mathcal{S}}} y^TH_g(x_2,x_3)y.
\end{equation}
By the assumption of positive definiteness of $H_g$, we have
$\beta_2>0$. If we now let $$\gamma>\frac{|\beta_1|}{\beta_2},$$
then $$\min_{x,y,\in\bar{\mathcal{S}}}
y^TH_f(x)y>\beta_1+\frac{|\beta_1|}{\beta_2}\beta_2\geq0.$$ Hence
$y^TH_f(x)y$ is nonnegative (in fact positive) everywhere on
$\mathcal{B}$ and the proof is completed.
\end{proof}

Finally, we provide an argument for existence of bivariate
polynomials of degree $8,10,12,\ldots$ that are convex but not
sos-convex.

\begin{corollary}\label{cor:bivariate.polys.8.10.12...}
Consider the form $f$ in (\ref{eq:construction.of.f(x)})
constructed as described in
Theorem~\ref{thm:conv_not_sos_conv_forms_n3d}. Let
$$\tilde{f}(x_1,x_2)=f(x_1,x_2,1).$$
Then, $\tilde{f}$ is convex but not sos-convex.
\end{corollary}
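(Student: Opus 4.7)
The plan is to deduce the corollary directly from the construction in Theorem~\ref{thm:conv_not_sos_conv_forms_n3d} together with Lemma~\ref{lem:sos.matrix.then.minor.sos} and the fact that the sos property is preserved under homogenization/dehomogenization. The argument naturally splits into the two claims: $\tilde f$ is convex, and $\tilde f$ is not sos-convex.

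For convexity, the work is essentially already done. Since $f$ is convex on $\mathbb{R}^3$ and $\tilde f(x_1,x_2)=f(x_1,x_2,1)$ is the restriction of $f$ to the affine subspace $\{x_3=1\}$, convexity is inherited. (This is the same observation used in Remark~\ref{rmk:sos-convexity.restriction} for sos-convexity, but it is of course classical for ordinary convexity.)

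For failure of sos-convexity, I would mimic the diagonal-entry obstruction from the proof of Theorem~\ref{thm:conv_not_sos_conv_forms_n3d}. Differentiating (\ref{eq:construction.of.f(x)}) twice in $x_1$ gives $\partial^2 f/\partial x_1^2 = m(x_1,x_2,x_3)$, so the $(1,1)$-entry of the Hessian of $\tilde f$ is
\[
\frac{\partial^2 \tilde f}{\partial x_1^2}(x_1,x_2) \;=\; m(x_1,x_2,1).
\]
If $\tilde f$ were sos-convex, then by Lemma~\ref{lem:sos.matrix.then.minor.sos} every diagonal entry of its Hessian would be sos; in particular the bivariate polynomial $m(x_1,x_2,1)$ would be sos.

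The key step is now to rule this out. Because $m(x_1,x_2,x_3)$ is a form of (even) degree $d$, its dehomogenization $m(x_1,x_2,1)$ has the property that its homogenization with respect to a new variable recovers $m(x_1,x_2,x_3)$ itself. Since being sos is preserved under homogenization (if $m(x_1,x_2,1)=\sum_i q_i(x_1,x_2)^2$ with $\deg q_i\le d/2$, then homogenizing each $q_i$ to degree $d/2$ exhibits $m(x_1,x_2,x_3)$ as a sum of squares of forms of degree $d/2$), an sos representation of $m(x_1,x_2,1)$ would yield one of $m(x_1,x_2,x_3)$. This contradicts requirement \textbf{R2} of Theorem~\ref{thm:conv_not_sos_conv_forms_n3d}. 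Hence $m(x_1,x_2,1)$ is not sos, so $\tilde f$ cannot be sos-convex, completing the proof. I don't anticipate any real obstacles here; the only subtle point is confirming that sos-ness really does survive (de)homogenization in both directions for forms of even degree, which is a standard fact (see Section~\ref{subsec:nonnegativity.sos.basics}).
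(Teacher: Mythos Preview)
Your proposal is correct and follows essentially the same route as the paper: convexity via restriction to an affine subspace, and failure of sos-convexity via Lemma~\ref{lem:sos.matrix.then.minor.sos} applied to the $(1,1)$ Hessian entry $m(x_1,x_2,1)$. The only difference is that you spell out the homogenization argument showing $m(x_1,x_2,1)$ is not sos, whereas the paper simply asserts this, implicitly invoking the standard fact (stated in Section~\ref{subsec:nonnegativity.sos.basics}) that sos is preserved under homogenization and dehomogenization.
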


\begin{proof}
The polynomial $\tilde{f}$ is convex because it is the restriction
of a convex function. It is not difficult to see that
$$\frac{\partial{\tilde{f}}(x_1,x_2)}{\partial{x_1}\partial{x_1}}=m(x_1,x_2,1),$$
which is not sos. Therefore from
Lemma~\ref{lem:sos.matrix.then.minor.sos} $\tilde{f}$ is not
sos-convex.
\end{proof}

Corollary~\ref{cor:bivariate.polys.8.10.12...} together with the
two polynomials in $\tilde{C}_{2,6}\setminus\tilde{\Sigma
C}_{2,6}$ and $\tilde{C}_{3,4}\setminus\tilde{\Sigma C}_{3,4}$
presented in (\ref{eq:minim.poly.2.6}) and
(\ref{eq:minim.poly.3.4}), and with the simple procedure for
increasing the number of variables described at the beginning of
Subsection~\ref{subsubsec:increasing.degree.and.vars} cover all
the values of $n$ and $d$ for which convex but not sos-convex
polynomials exist.

\section{Concluding remarks and an open problem}\label{sec:concluding.remarks}

\begin{figure}[h]
\centering \scalebox{0.33}
{\includegraphics{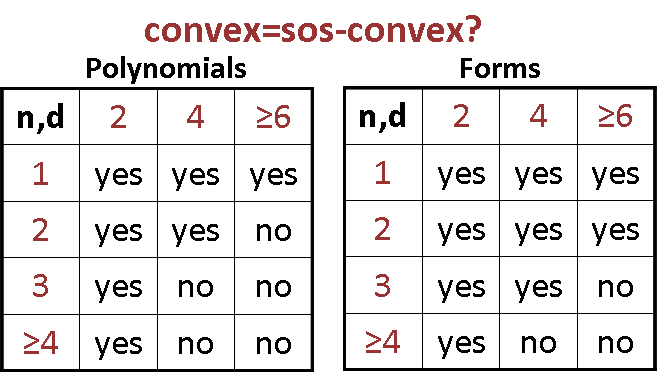}} \caption{The
tables answer whether every convex polynomial (form) in $n$
variables and of degree $d$ is sos-convex.}
\label{fig:sos.convexity.tables}
\end{figure}

A summary of the results of this chapter is given in
Figure~\ref{fig:sos.convexity.tables}. To conclude, we would like
to point out some similarities between nonnegativity and convexity
that deserve attention: (i) both nonnegativity and convexity are
properties that only hold for even degree polynomials, (ii) for
quadratic forms, nonnegativity is in fact equivalent to convexity,
(iii) both notions are NP-hard to check exactly for degree 4 and
larger, and most strikingly (iv) nonnegativity is equivalent to
sum of squares \emph{exactly} in dimensions and degrees where
convexity is equivalent to sos-convexity. It is unclear to us
whether there can be a deeper and more unifying reason explaining
these observations, in particular, the last one which was the main
result of this chapter.

Another intriguing question is to investigate whether one can give
a direct argument proving the fact that $\tilde{\Sigma
C}_{n,d}=\tilde{C}_{n,d}$ if and only if $\Sigma
C_{n+1,d}=C_{n+1,d}$. This would eliminate the need for studying
polynomials and forms separately, and in particular would provide
a short proof of the result $\Sigma_{3,4}=C_{3,4}$ given
in~\cite{AAA_GB_PP_Convex_ternary_quartics}.

Finally, an open problem related to the work in this chapter is to
\emph{find an explicit example of a convex form that is not a sum
of squares}. Blekherman~\cite{Blekherman_convex_not_sos} has shown
via volume arguments that for degree $d\geq 4$ and asymptotically
for large $n$ such forms must exist, although no examples are
known. In particular, it would interesting to determine the
smallest value of $n$ for which such a form exists. We know from
Lemma~\ref{lem:helton.nie.sos-convex.then.sos} that a convex form
that is not sos must necessarily be not sos-convex. Although our
several constructions of convex but not sos-convex polynomials
pass this necessary condition, the polynomials themselves are all
sos. The question is particularly interesting from an optimization
viewpoint because it implies that the well-known sum of squares
relaxation for minimizing
polynomials~\cite{Shor},~\cite{Minimize_poly_Pablo} may not be
exact even for the easy case of minimizing convex polynomials.

\newpage
\vspace{5mm}
\section{Appendix A: How the first convex but not sos-convex
polynomial was found}

%\addcontentsline{toc}{section}{Appendix A: How the first convex
%but not sos-convex polynomial was found}

In this appendix, we explain how the polynomial in
(\ref{eq:first.convex.not.sos.convex}) was found by solving a
carefully designed sos-program\footnote{The term ``sos-program''
is usually used to refer to semidefinite programs that have sum of
squares constraints.}. The simple methodology described here
allows one to search over a restricted family of nonnegative
polynomials that are not sums of squares. The procedure can
potentially be useful in many different settings and this is our
main motivation for presenting this appendix.

%
%The task of finding a polynomial $p(x)$ that is convex but not
%sos-convex is equivalent to finding a polynomial matrix $H(x)$
%that is a \emph{valid Hessian} (i.e., it is a matrix of second
%derivatives), and satisfies the following requirement on the
%scalar polynomial $y^{T}H(x)y$ in $[x;y]$:

Our goal is to find a polynomial $p\mathrel{\mathop:}=p(x)$ whose
Hessian $H\mathrel{\mathop:}=H(x)$ satisfies:
\begin{equation}\label{eq:y'H(x)y}
y^{T}H(x)y \quad \mbox{psd but not sos.}
\end{equation}
%Indeed, if such a matrix $H(x)$ is found, the desired polynomial
%$p(x)$ can be recovered from it by integration.
Unfortunately, a constraint of type (\ref{eq:y'H(x)y}) that
requires a polynomial to be psd but \emph{not} sos is a non-convex
constraint and cannot be easily handled with sos-programming. This
is easy to see from a geometric viewpoint. The feasible set of an
sos-program, being a semidefinite program, is always a convex set.
On the other hand, for a fixed degree and dimension, the set of
psd polynomials that are not sos is non-convex. Nevertheless, we
describe a technique that allows one to search over a \emph{convex
subset} of the set of psd but not sos polynomials using
sos-programming. Our strategy can simply be described as follows:
(i) Impose the constraint that the polynomial should \emph{not} be
sos by using a separating hyperplane (dual functional) for the sos
cone. (ii) Impose the constraint that the polynomial should be psd
by requiring that the polynomial times a nonnegative multiplier is
sos.

By definition, the dual cone $\Sigma_{n,d}^*$ of the sum of
squares cone $\Sigma_{n,d}$ is the set of all linear functionals
$\mu$ that take nonnegative values on it, i.e,
\begin{equation*}\label{eq:dual_cone_definition}
\Sigma_{n,d}^{*}\mathrel{\mathop:}=\{\mu\in \mathcal{H}_{n,d}^{*},
\ \ \langle \mu,p \rangle \geq 0 \; \; \forall p\in \Sigma_{n,d}
\}.
\end{equation*}
Here, the dual space $\mathcal{H}_{n,d}^{*}$ denotes the space of
all linear functionals on the space $\mathcal{H}_{n,d}$ of forms
in $n$ variables and degree $d$, and $\langle .,. \rangle$
represents the pairing between elements of the primal and the dual
space. If a form is not sos, we can find a dual functional $\mu
\in \Sigma_{n,d}^{*}$ that separates it from the closed convex
cone $\Sigma_{n,d}$. The basic idea behind this is the well known
separating hyperplane theorem in convex analysis; see
e.g.~\cite{BoydBook,Rockafellar}.

As for step (ii) of our strategy above, our approach for
guaranteeing that of a form $g$ is nonnegative will be to require
$g(x)\cdot\left( \sum_i x_i^2 \right)^r $ be sos for some integer
$r\geq 1$. Our choice of the multiplier $\left( \sum_i x_i^2
\right)^r$ as opposed to any other psd multiplier is motivated by
a result of Reznick~\cite{Reznick_Unif_denominator} on Hilbert's
17th problem. The 17th problem, which was answered in the
affirmative by Artin~\cite{Artin_Hilbert17}, asks whether every
psd form must be a sum of squares of rational functions. The
affirmative answer to this question implies that if a form $g$ is
psd, then there must exist an sos form $s$, such that $g\cdot s$
is sos. Reznick showed in~\cite{Reznick_Unif_denominator} that if
$g$ is positive definite, one can always take $s(x)=(\sum_i
x_i^2)^{r}$, for sufficiently large $r$. For all polynomials that
we needed prove psd in this chapter, taking $r=1$ has been good
enough.

For our particular purpose of finding a convex but not sos-convex
polynomial, we apply the strategy outlined above to make the first
diagonal element of the Hessian psd but not sos (recall
Lemma~\ref{lem:sos.matrix.then.minor.sos}). More concretely, the
polynomial in (\ref{eq:first.convex.not.sos.convex}) was derived
from a feasible solution to the following sos-program:

%\begin{equation} \label{sos.program.how.we.found.it}
%\begin{array}{rl}
%\ &p(x)\in\mathcal{H}_{3,8} \\
%%\ &\ \\
%\ &H(x)=\dfrac{\partial^2p}{\partial x^2} \\
%\ &\left(x_1^2+x_2^2+x_3^2\right)y^{T}H(x)y \quad \quad \mbox{sos} \\
%%(\mbox{for some integer}\  r \geq 1)&\ \\
%\ &\langle \mu, H_{1,1} \rangle =-1   \quad (\mbox{for some dual
%functional}\  \mu \in \Sigma_{3,6}^*).
%\end{array}
%\end{equation}

%\floatname{algorithm}{SOS-Program}
%%\newfloat{algorithm}{h}{SOS-program}
%\begin{algorithm*}
%\caption{\ } \label{alg:how.we.found.it}
%\begin{algorithmic}%[1]
%%\STATE {\bf SOS-PROGRAM *}
%\State Parameterize $p(x)\in\mathcal{H}_{3,8}$.  \State Compute
%$H(x)=\frac{\partial^2p}{\partial x^2}$. \State Impose the
%constraints
% \begin{equation}\label{eq:algo_xi^2.y.H.y} (x_1^2+x_2^2+x_3^2)y^{T}H(x)y \quad \mbox{sos}, \end{equation}
%\begin{equation}\label{eq:algo.mu.H11=-1}\quad \quad \quad \quad \quad \ \ \ \ \    \langle \mu, H_{1,1} \rangle=-1\end{equation}
%\quad \quad \quad \quad \quad \quad \quad \quad \quad \quad \quad
% \; (for some dual functional $\mu \in \Sigma_{3,6}^*$).
%\end{algorithmic}
%\end{algorithm*}
\begin{samepage}
\renewcommand{\labelitemi}{$-$}
\begin{itemize}
\item Parameterize $p\in\mathcal{H}_{3,8}$ and compute its Hessian
$H=\frac{\partial^2p}{\partial x^2}$.
%
% \item Compute$H(x)=\frac{\partial^2p}{\partial x^2}$.
%
 \item Impose the constraints
 \begin{equation}\label{eq:algo_xi^2.y.H.y} (x_1^2+x_2^2+x_3^2)\cdot y^{T}H(x)y \quad \mbox{sos}, \end{equation}
\begin{equation}\label{eq:algo.mu.H11=-1}\quad \quad \quad \quad \quad \ \ \ \ \    \langle \mu, H_{1,1} \rangle=-1\end{equation}
\quad \quad \quad \quad \quad \quad \quad \quad \quad \quad \quad
 \; (for some dual functional $\mu \in \Sigma_{3,6}^*$).
\end{itemize}
\end{samepage}

%\floatname{algorithm}{SOS-Program}
%\begin{algorithm*}
%\caption{\ } \label{alg:how.we.found.it}
%\begin{algorithmic}[1]
%%\STATE {\bf SOS-PROGRAM *}
%\State Parameterize $p(x)$ as a form of degree $8$ in $3$ variables.
%\State Compute the Hessian $H(x)=\frac{\partial^2p}{\partial x^2}$.
%\State Impose the constraint
% \begin{equation}\label{eq:algo_xi^2.y.H.y} (x_1^2+x_2^2+x_3^2)^ry^{T}H(x)y \quad \mbox{sos}. \end{equation}
%for some integer $r \geq 1$.
%\State Impose the constraint
%\begin{equation}\label{eq:b.vec(w)<0}\quad \quad \quad \quad \quad \ \ \ \ \ \   \langle \mu, H_{1,1} \rangle <0.\end{equation}
%for some (carefully chosen) dual functional $\mu \in \Sigma_{3,6}^*$.
%\end{algorithmic}
%\end{algorithm*}

The decision variables of this sos-program are the coefficients of
the polynomial $p$ that also appear in the entries of the Hessian
matrix $H$. (The polynomial $H_{1,1}$ in (\ref{eq:algo.mu.H11=-1})
denotes the first diagonal element of $H$.) The dual functional
$\mu$ must be fixed a priori as explained in the sequel. Note that
all the constraints are linear in the decision variables and
indeed the feasible set described by these constraints is a convex
set. Moreover, the reader should be convinced by now that if the
above sos-program is feasible, then the solution $p$ is a convex
polynomial that is not sos-convex.

%Note that the constraints (\ref{eq:algo_xi^2.y.H.y}) and
%(\ref{eq:b.vec(w)<0}) are linear in the decision variables and
%indeed the feasible set described by these constraints is a convex
%set.

The reason why we chose to parameterize $p$ as a form in
$\mathcal{H}_{3,8}$ is that a minimal case where a diagonal
element of the Hessian (which has $2$ fewer degree) can be psd but
not sos is among the forms in $\mathcal{H}_{3,6}$.
%
%our strategy was to force a diagonal element of the Hessian (which
%has $2$ fewer degree) to be psd but not sos. We know that a
%minimal case where this can happen is among the forms in
%$\mathcal{H}_{3,6}$.
%
The role of the dual functional $\mu\in\Sigma_{3,6}^{*}$ in
(\ref{eq:algo.mu.H11=-1}) is to separate the polynomial $H_{1,1}$
from $\Sigma_{3,6}$. Once an ordering on the monomials of
$H_{1,1}$ is fixed, this constraint can be imposed numerically as
\begin{equation}
\langle \mu, H_{1,1} \rangle=b^{T}\vec{H}_{1,1}=-1,
\end{equation}
where $\vec{H}_{1,1}$ denotes the vector of coefficients of the
polynomial $H_{1,1}$ and $b\in \mathbb{R}^{28}$ represents our
separating hyperplane, which must be computed prior to solving the
above sos-program.

There are several ways to obtain a separating hyperplane for
$\Sigma_{3,6}$. Our approach was to find a hyperplane that
separates the Motzkin form \aaa{$M$} in (\ref{eq:Motzkin.form})
from $\Sigma_{3,6}$. This can be done in at least a couple of
different ways. For example, we can formulate a semidefinite
program that requires the Motzkin form to be sos. This program is
clearly infeasible. Any feasible solution to its dual semidefinite
program will give us the desired separating hyperplane.
Alternatively, we can set up an sos-program that finds the
Euclidean projection \aaa{$M^p\mathrel{\mathop:}=M^p(x)$} of the
Motzkin form \aaa{$M$} onto the cone $\Sigma_{3,6}$. Since the
projection is done onto a convex set, the hyperplane tangent to
$\Sigma_{3,6}$ at \aaa{$M^p$} will be supporting $\Sigma_{3,6}$,
and can serve as our separating hyperplane.

To conclude, we remark that in contrast to previous techniques of
constructing examples of psd but not sos polynomials that are
usually based on some obstructions associated with the number of
zeros of polynomials (see e.g.~\cite{Reznick}), our approach has
the advantage that the resulting polynomials are positive
definite. Furthermore, additional linear or semidefinite
constraints can easily be incorporated in the search process to
impose e.g. various symmetry or sparsity patterns on the
polynomial of interest.

\newpage
\vspace{5mm}
\section{Appendix B: Certificates complementing the proof of
Theorem~\ref{thm:minimal.2.6.and.3.6}}

%\addcontentsline{toc}{section}{Appendix B: Certificates
%complementing the proof of Theorem~\ref{thm:minimal.2.6.and.3.6}}

Let $x\mathrel{\mathop:}=(x_1,x_2,x_2)^T$,
$y\mathrel{\mathop:}=(y_1,y_2,y_3)^T$,
$\tilde{x}\mathrel{\mathop:}=(x_1,x_2)^T$,
$\tilde{y}\mathrel{\mathop:}=(y_1,y_2)^T$, and let $f, \tilde{f},
H_f,$ and $H_{\tilde{f}}$ be as in the proof of
Theorem~\ref{thm:minimal.2.6.and.3.6}. This appendix proves that
the form $(x_1^2+x_2^2)\cdot y^TH_f(x)y$ in
(\ref{eq:y.H_f.y.xi^2.3.6.example}) is sos and that the polynomial
$\tilde{y}^TH_{\tilde{f}}(\tilde{x})\tilde{y}$ in
(\ref{eq:y.H_f_tilda.y.xi^2.2.6.example}) is not sos, hence
proving respectively that $f$ is convex and $\tilde{f}$ is not
sos-convex.

A rational sos decomposition of $(x_1^2+x_2^2)\cdot y^TH_f(x)y$,
which is a form in $6$ variables of degree $8$, is as follows:
\begin{equation}\nonumber
(x_1^2+x_2^2)\cdot y^TH_f(x)y=\frac{1}{84}z^TQz,
\end{equation}
where $z$ is the vector of monomials
\begin{equation}\nonumber
\begin{array}{ll}
z=&[  x_2x_3^2y_3,
  x_2x_3^2y_2,
  x_2x_3^2y_1,
  x_2^2x_3y_3,
  x_2^2x_3y_2,
  x_2^2x_3y_1,
     x_2^3y_3,
     x_2^3y_2,
     x_2^3y_1,\\
\ & \ \\
\ &    x_1x_3^2y_3,
  x_1x_3^2y_2,
  x_1x_3^2y_1,
 x_1x_2x_3y_3,
 x_1x_2x_3y_2,
 x_1x_2x_3y_1,
  x_1x_2^2y_3,
  x_1x_2^2y_2,
  x_1x_2^2y_1,\\
\ &  \ \\
\ &   x_1^2x_3y_3,
  x_1^2x_3y_2,
  x_1^2x_3y_1,
  x_1^2x_2y_3,
  x_1^2x_2y_2,
  x_1^2x_2y_1,
     x_1^3y_3,
     x_1^3y_2,
     x_1^3y_1]^T,
\end{array}
\end{equation}
and $Q$ is the $27\times 27$ positive definite
matrix\footnote{Whenever we state a matrix is positive definite,
this claim is backed up by a rational $LDL^T$ factorization of the
matrix that the reader can find online at
\texttt{http://aaa.lids.mit.edu/software}.} presented on the next
page
\begin{equation}\nonumber
Q=\begin{bmatrix} Q_1 & Q_2
\end{bmatrix},
\end{equation}

\newpage
\begin{landscape}
\setcounter{MaxMatrixCols}{30} \scalefont{.5}
\begin{equation}\nonumber
Q_1=\begin{bmatrix}[r]224280  & -40740  & 20160  & -81480  & 139692  & 93576  & -50540  & -27804  & -48384  & 0  & -29400  & -32172  & 21252  & 103404   \\
-40740  & 63504  & 36624  & 114324  & -211428  & -8316  & -67704  & 15372  & -47376  & 29400  & 0  & 16632  & 75936  & 15540   \\
20160  & 36624  & 121128  & 52920  & -27972  & -93072  & -42252  & -77196  & -58380  & 32172  & -16632  & 0  & 214284  & -57960   \\
-81480  & 114324  & 52920  & 482104  & -538776  & 36204  & -211428  & 362880  & -70644  & 19068  & 13524  & -588  & 179564  & 20258   \\
139692  & -211428  & -27972  & -538776  & 1020600  & -94416  & 338016  & -288120  & 188748  & -46368  & -33684  & -46620  & -113638  & -119112   \\
93576  & -8316  & -93072  & 36204  & -94416  & 266448  & -75348  & 216468  & 5208  & 3360  & -33432  & -31080  & -221606  & 254534   \\
-50540  & -67704  & -42252  & -211428  & 338016  & -75348  & 175224  & -144060  & 101304  & -20692  & 7826  & -4298  & -77280  & -108192   \\
-27804  & 15372  & -77196  & 362880  & -288120  & 216468  & -144060  & 604800  & 28560  & -35350  & -840  & -35434  & -132804  & 134736   \\
-48384  & -47376  & -58380  & -70644  & 188748  & 5208  & 101304  & 28560  & 93408  & -21098  & 2786  & -11088  & -104496  & -22680   \\
0  & 29400  & 32172  & 19068  & -46368  & 3360  & -20692  & -35350  & -21098  & 224280  & -40740  & 20160  & 35028  & 89964   \\
-29400  & 0  & -16632  & 13524  & -33684  & -33432  & 7826  & -840  & 2786  & -40740  & 63504  & 36624  & 51828  & -196476   \\
-32172  & 16632  & 0  & -588  & -46620  & -31080  & -4298  & -35434  & -11088  & 20160  & 36624  & 121128  & 29148  & -9408   \\
21252  & 75936  & 214284  & 179564  & -113638  & -221606  & -77280  & -132804  & -104496  & 35028  & 51828  & 29148  & 782976  & -463344   \\
103404  & 15540  & -57960  & 20258  & -119112  & 254534  & -108192  & 134736  & -22680  & 89964  & -196476  & -9408  & -463344  & 1167624   \\
267456  & -48132  & 27552  & -49742  & 78470  & 124236  & -100464  & 61404  & -90384  & 55524  & -50064  & -145908  & 41016  & -15456   \\
60872  & -25690  & -142478  & 22848  & -113820  & 259980  & -72996  & 237972  & 20412  & -95580  & -47964  & -27780  & -438732  & 514500   \\
37730  & -99036  & -10150  & -83160  & 473088  & 34188  & 167244  & 57120  & 159264  & 10752  & -93048  & -183540  & 230832  & -49980   \\
-119210  & 9170  & 81648  & 244356  & -41664  & -194124  & -9996  & 214368  & 19152  & -89184  & 2940  & -48480  & 204708  & -85344   \\
-116508  & 81564  & 26124  & 155832  & -308280  & -78180  & -74088  & 14616  & -49644  & 40320  & 87108  & 225456  & 135744  & 8568   \\
30660  & -14952  & 11844  & -21420  & 62604  & 14364  & 13608  & 1176  & 5124  & 59388  & -18144  & -99624  & 31332  & -178248   \\
35700  & 11340  & 52836  & -70788  & 86184  & 9396  & 12264  & -108024  & -11256  & 259056  & -86520  & -3528  & -19334  & 142128   \\
102156  & 2856  & 64536  & 22176  & -4200  & 77532  & -70896  & 54348  & -49616  & 72744  & -78876  & -144998  & 29316  & 23856   \\
-86100  & 47148  & 71820  & 230916  & -223692  & -131628  & -72156  & 59640  & -31416  & -75096  & -39396  & -44520  & 158508  & 308196   \\
95364  & -504  & -8412  & -23100  & 28140  & 81648  & -26768  & -25200  & -13944  & -51002  & -39228  & 71232  & 130298  & 298956   \\
11256  & 5208  & 32158  & -33264  & 45444  & 3122  & 6888  & -34440  & -5628  & 61320  & -19152  & 8988  & 18060  & -19467   \\
0  & -1344  & -3696  & -34692  & 33768  & 5964  & 9492  & -20244  & 5208  & -30072  & -9912  & 58884  & -50883  & 151956   \\
-51422  & 49056  & 32592  & 160370  & -229068  & -36792  & -68796
& 57708  & -39564  & 55944  & 31164  & -8008  & 141876  & -126483
\end{bmatrix},
\end{equation}
\normalsize

\setcounter{MaxMatrixCols}{30} \scalefont{.5}
\begin{equation}\nonumber
Q_2=\begin{bmatrix}[r]267456  & 60872  & 37730  & -119210  & -116508  & 30660  & 35700  & 102156  & -86100  & 95364  & 11256  & 0  & -51422   \\
-48132  & -25690  & -99036  & 9170  & 81564  & -14952  & 11340  & 2856  & 47148  & -504  & 5208  & -1344  & 49056   \\
27552  & -142478  & -10150  & 81648  & 26124  & 11844  & 52836  & 64536  & 71820  & -8412  & 32158  & -3696  & 32592   \\
-49742  & 22848  & -83160  & 244356  & 155832  & -21420  & -70788  & 22176  & 230916  & -23100  & -33264  & -34692  & 160370   \\
78470  & -113820  & 473088  & -41664  & -308280  & 62604  & 86184  & -4200  & -223692  & 28140  & 45444  & 33768  & -229068   \\
124236  & 259980  & 34188  & -194124  & -78180  & 14364  & 9396  & 77532  & -131628  & 81648  & 3122  & 5964  & -36792   \\
-100464  & -72996  & 167244  & -9996  & -74088  & 13608  & 12264  & -70896  & -72156  & -26768  & 6888  & 9492  & -68796   \\
61404  & 237972  & 57120  & 214368  & 14616  & 1176  & -108024  & 54348  & 59640  & -25200  & -34440  & -20244  & 57708   \\
-90384  & 20412  & 159264  & 19152  & -49644  & 5124  & -11256  & -49616  & -31416  & -13944  & -5628  & 5208  & -39564   \\
55524  & -95580  & 10752  & -89184  & 40320  & 59388  & 259056  & 72744  & -75096  & -51002  & 61320  & -30072  & 55944   \\
-50064  & -47964  & -93048  & 2940  & 87108  & -18144  & -86520  & -78876  & -39396  & -39228  & -19152  & -9912  & 31164   \\
-145908  & -27780  & -183540  & -48480  & 225456  & -99624  & -3528  & -144998  & -44520  & 71232  & 8988  & 58884  & -8008   \\
41016  & -438732  & 230832  & 204708  & 135744  & 31332  & -19334  & 29316  & 158508  & 130298  & 18060  & -50883  & 141876   \\
-15456  & 514500  & -49980  & -85344  & 8568  & -178248  & 142128  & 23856  & 308196  & 298956  & -19467  & 151956  & -126483   \\
610584  & 21840  & 127932  & -65184  & -323834  & 195636  & 90972  & 339794  & -100716  & -96012  & 24864  & -114219  & 36876   \\
21840  & 466704  & -110628  & -106820  & -54012  & -90636  & -111790  & -14952  & 63672  & 107856  & -67788  & 61404  & -88284   \\
127932  & -110628  & 1045968  & 142632  & -410592  & 171024  & 86268  & 176820  & 96516  & 199752  & 13524  & -70784  & -42756   \\
-65184  & -106820  & 142632  & 569856  & 21518  & -30156  & -159264  & -23016  & 410004  & -71484  & -62076  & -13860  & 74032   \\
-323834  & -54012  & -410592  & 21518  & 604128  & -229992  & -75516  & -297276  & 182385  & 75684  & -3528  & 94500  & 138432   \\
195636  & -90636  & 171024  & -30156  & -229992  & 169512  & 104748  & 187341  & -136332  & -145719  & 35364  & -94836  & 24612   \\
90972  & -111790  & 86268  & -159264  & -75516  & 104748  & 381920  & 147168  & -182595  & -36876  & 105504  & -24612  & -7560   \\
339794  & -14952  & 176820  & -23016  & -297276  & 187341  & 147168  & 346248  & -59304  & -137928  & 64932  & -90888  & 28392   \\
-100716  & 63672  & 96516  & 410004  & 182385  & -136332  & -182595  & -59304  & 776776  & 48972  & -98784  & 19152  & 180852   \\
-96012  & 107856  & 199752  & -71484  & 75684  & -145719  & -36876  & -137928  & 48972  & 494536  & -28392  & 118188  & -130200   \\
24864  & -67788  & 13524  & -62076  & -3528  & 35364  & 105504  & 64932  & -98784  & -28392  & 60984  & 0  & -3780   \\
-114219  & 61404  & -70784  & -13860  & 94500  & -94836  & -24612  & -90888  & 19152  & 118188  & 0  & 74760  & -65100   \\
36876  & -88284  & -42756  & 74032  & 138432  & 24612  & -7560  &
28392  & 180852  & -130200  & -3780  & -65100  & 194040
\end{bmatrix}.
\end{equation}
\normalsize
\end{landscape}

\newpage

Next, we prove that the polynomial
$\tilde{y}^TH_{\tilde{f}}(\tilde{x})\tilde{y}$ in
(\ref{eq:y.H_f_tilda.y.xi^2.2.6.example}) is not sos. Let us first
present this polynomial and give it a name:

\begin{equation}\nonumber
\begin{array}{lll}
t(\tilde{x},\tilde{y})\mathrel{\mathop:}=\tilde{y}^TH_{\tilde{f}}(\tilde{x})\tilde{y}&=&294x_1x_2y_2^2-6995x_2^4y_1y_2-10200x_1y_1y_2-4356x_1^2x_2y_1^2
\\ \\ \ &\ &-2904x_1^3y_1y_2-11475x_1x_2^2y_1^2+13680x_2^3y_1y_2+4764x_1x_2y_1^2\\\\ \ &\ &+4764x_1^2y_1y_2+6429x_1^2x_2^2y_1^2+294x_2^2y_1y_2
-13990x_1x_2^3y_2^2\\
\\ \ &\ &-12123x_1^2x_2y_2^2-3872x_2y_1y_2+\frac{2143}{2}x_1^4y_2^2+20520x_1x_2^2y_2^2
\\ \\ \ &\
&+29076x_1x_2y_1y_2-24246x_1x_2^2y_1y_2+14901x_1x_2^3y_1y_2
\\ \\ \ &\
&+15039x_1^2x_2^2y_1y_2+8572x_1^3x_2y_1y_2+\frac{44703}{4}x_1^2x_2^2y_2^2+1442y_1^2\\
\\ \ &\ &-12360x_2y_2^2
-5100x_2y_1^2+\frac{147513}{4}x_2^2y_2^2+7269x_2^2y_1^2\\
\\ \ &\ &+\frac{772965}{32}x_2^4y_2^2
+\frac{14901}{8}x_2^4y_1^2-1936x_1y_2^2-84x_1y_1^2+\frac{3817}{2}y_2^2
\\ \\ \ &\
&+7269x_1^2y_2^2+4356x_1^2y_1^2-3825x_1^3y_2^2-180x_1^3y_1^2+632y_1y_2\\
\\ \ &\ &+2310x_1^4y_1^2+5013x_1x_2^3y_1^2-22950x_1^2x_2y_1y_2-45025x_2^3y_2^2\\
\\ \ &\ &-1505x_1^4y_1y_2-4041x_2^3y_1^2-3010x_1^3x_2y_1^2+5013x_1^3x_2y_2^2.
\end{array}
\end{equation}
Note that $t$ is a polynomial in $4$ variables of degree $6$ that
is quadratic in $\tilde{y}$. Let us denote the cone of sos
polynomials in $4$ variables $(\tilde{x},\tilde{y})$ that have
degree $6$ and are quadratic in $\tilde{y}$ by
$\hat{\Sigma}_{4,6}$, and its dual cone by $\hat{\Sigma}_{4,6}^*$.
Our proof will simply proceed by presenting a dual functional
$\xi\in\hat{\Sigma}_{4,6}^*$ that takes a negative value on the
polynomial $t$. We fix the following ordering of monomials in what
follows:
\begin{equation}\label{eq:monomial.ordering.t}
\begin{array}{ll}
v=&[          y_2^2,
             y_1y_2,
              y_1^2,
           x_2y_2^2,
          x_2y_1y_2,
           x_2y_1^2,
         x_2^2y_2^2,
        x_2^2y_1y_2,
         x_2^2y_1^2,
         x_2^3y_2^2,
        x_2^3y_1y_2,
         x_2^3y_1^2,
         x_2^4y_2^2, \\
         \ &    x_2^4y_1y_2,
         x_2^4y_1^2,
                  x_1y_2^2,
          x_1y_1y_2,
           x_1y_1^2,
        x_1x_2y_2^2,
       x_1x_2y_1y_2,
        x_1x_2y_1^2,
      x_1x_2^2y_2^2,
     x_1x_2^2y_1y_2,\\
         \ &      x_1x_2^2y_1^2,
      x_1x_2^3y_2^2,
     x_1x_2^3y_1y_2,
           x_1x_2^3y_1^2,
         x_1^2y_2^2,
        x_1^2y_1y_2,
         x_1^2y_1^2,
           x_1^2x_2y_2^2,
     x_1^2x_2y_1y_2,
      x_1^2x_2y_1^2,\\
         \ &    x_1^2x_2^2y_2^2,
   x_1^2x_2^2y_1y_2,
    x_1^2x_2^2y_1^2,
         x_1^3y_2^2,
                      x_1^3y_1y_2,
           x_1^3y_1^2,
      x_1^3x_2y_2^2,
     x_1^3x_2y_1y_2,
      x_1^3x_2y_1^2,
               x_1^4y_2^2,\\
         \ &        x_1^4y_1y_2,
         x_1^4y_1^2]^T.
\end{array}
\end{equation}
Let $\vec{t}$ represent the vector of coefficients of $t$ ordered
according to the list of monomials above; i.e., $t=\vec{t}^Tv$.
Using the same ordering, we can represent our dual functional
$\xi$ with the vector
\begin{equation}\nonumber
\begin{array}{ll}
c=&[   19338,
       -2485,
       17155,
        6219,
       -4461,
       11202,
        4290,
       -5745,
       13748,
        3304,
       -5404,\\
         \ &       13227,
        3594,
               -4776,
       19284,
               2060,
        3506,
        5116,
         366,
       -2698,
        6231,
        -487,
       -2324,\\
         \ &
        4607,
         369,
       -3657,
        3534,
        6122,
         659,
        7057,
                 1646,
        1238,
        1752,
        2797,
        -940,
        4608,\\
         \ &
        -200,
        1577,
       -2030,
        -513,
       -3747,
             2541,
               15261,
                              220,
        7834]^T.
\end{array}
\end{equation}
We have
\begin{equation}\nonumber
\langle\xi,t\rangle=c^{T}\vec{t}=-\frac{364547}{16}<0.
\end{equation}
On the other hand, we claim that $\xi\in\hat{\Sigma}_{4,6}^*$;
i.e., for any form $w\in\hat{\Sigma}_{4,6}$, we should have
\begin{equation}\label{eq:c.w>=0.t}
\langle\xi,w\rangle=c^{T}\vec{w}\geq0,
\end{equation}
where $\vec{w}$ here denotes the coefficients of $w$ listed
according to the ordering in (\ref{eq:monomial.ordering.t}).
Indeed, if $w$ is sos, then it can be written in the form
\[
w(x)=\tilde{z}^{T}\tilde{Q}\tilde{z}= \mathrm{Tr} \ \tilde{Q}
\cdot \tilde{z}\tilde{z}^{T},
\]
for some symmetric positive semidefinite matrix $\tilde{Q}$, and a
vector of monomials
\[
\tilde{z}=[  y_2,
     y_1,
     x_2y_2,
     x_2y_1,
     x_1y_2,
     x_1y_1,
     x_2^2y_2,
     x_2^2y_1,
     x_1x_2y_2,
     x_1x_2y_1,
     x_1^2y_2,
     x_1^2y_1   ]^T.
\]
It is not difficult to see that
\begin{equation}\label{eq:c.vec(w)=traceQzzz'.t}
c^{T}\vec{w}= \mathrm{Tr}  \, \tilde{Q} \cdot
(\tilde{z}\tilde{z}^{T}) \vert_c,
\end{equation}
where by $(\tilde{z}\tilde{z}^{T})\vert_c$ we mean a matrix where
each monomial in $\tilde{z}\tilde{z}^{T}$ is replaced with the
corresponding element of the vector $c$. This yields the matrix
\setcounter{MaxMatrixCols}{30} \scalefont{.55}
\[(\tilde{z}\tilde{z}^{T})\vert_c=
\begin{bmatrix}[r]
19338 & -2485 & 6219 & -4461 & 2060 & 3506 & 4290 & -5745 & 366 & -2698 & 6122 & 659 \\
-2485 & 17155 & -4461 & 11202 & 3506 & 5116 & -5745 & 13748 & -2698 & 6231 & 659 & 7057 \\
6219 & -4461 & 4290 & -5745 & 366 & -2698 & 3304 & -5404 & -487 & -2324 & 1646 & 1238 \\
-4461 & 11202 & -5745 & 13748 & -2698 & 6231 & -5404 & 13227 & -2324 & 4607 & 1238 & 1752 \\
2060 & 3506 & 366 & -2698 & 6122 & 659 & -487 & -2324 & 1646 & 1238 & -200 & 1577 \\
3506 & 5116 & -2698 & 6231 & 659 & 7057 & -2324 & 4607 & 1238 & 1752 & 1577 & -2030 \\
4290 & -5745 & 3304 & -5404 & -487 & -2324 & 3594 & -4776 & 369 & -3657 & 2797 & -940 \\
-5745 & 13748 & -5404 & 13227 & -2324 & 4607 & -4776 & 19284 & -3657 & 3534 & -940 & 4608 \\
366 & -2698 & -487 & -2324 & 1646 & 1238 & 369 & -3657 & 2797 & -940 & -513 & -3747 \\
-2698 & 6231 & -2324 & 4607 & 1238 & 1752 & -3657 & 3534 & -940 & 4608 & -3747 & 2541 \\
6122 & 659 & 1646 & 1238 & -200 & 1577 & 2797 & -940 & -513 & -3747 & 15261 & 220 \\
659 & 7057 & 1238 & 1752 & 1577 & -2030 & -940 & 4608 & -3747 &
2541 & 220 & 7834
\end{bmatrix},
\] \normalsize
which is positive definite. Therefore, equation
(\ref{eq:c.vec(w)=traceQzzz'.t}) along with the fact that
$\tilde{Q}$ is positive semidefinite implies that
(\ref{eq:c.w>=0.t}) holds. This completes the proof.

\cleardoublepage \thispagestyle{empty} \vspace*{\fill}
\begin{center}
{ \sfbHuge Part II:\\ \  \\ Lyapunov Analysis and Computation }
\end{center}
\vspace*{\fill} \addcontentsline{toc}{chapter}{II: Lyapunov
Analysis and Computation} \cleardoublepage

\chapter{Lyapunov Analysis of Polynomial Differential
Equations}\label{chap:converse.lyap}

In the last two chapters of this thesis, our focus will turn to
Lyapunov analysis of dynamical systems. The current chapter
presents new results on Lyapunov analysis of polynomial vector
fields. The content here is based on the works
in~\cite{AAA_PP_CDC11_converseSOS_Lyap}
and~\cite{AAA_MK_PP_CDC11_no_Poly_Lyap}, as well as some more
recent results.

\section{Introduction}
We will be concerned for the most part of this chapter with a
continuous time dynamical system
\begin{equation}\label{eq:CT.dynamics}
\dot{x}=f(x),
\end{equation}
where $f:\mathbb{R}^n\rightarrow\mathbb{R}^n$ is a polynomial and
has an equilibrium at the origin, i.e., $f(0)=0$. Arguably, the
class of polynomial differential equations are among the most
widely encountered in engineering and sciences. For stability
analysis of these systems, it is most common (and quite natural)
to search for Lyapunov functions that are polynomials themselves.
When such a candidate Lyapunov function is used, then conditions
of Lyapunov's theorem reduce to a set of polynomial inequalities.
For instance, if establishing global asymptotic stability of the
origin is desired, one would require a radially unbounded
polynomial Lyapunov candidate
$V(x):\mathbb{R}^n\rightarrow\mathbb{R}$ to vanish at the origin
and satisfy
\begin{eqnarray}
V(x)&>&0\quad \forall x\neq0 \label{eq:V.positive} \\
\dot{V}(x)=\langle\nabla V(x),f(x)\rangle&<&0\quad \forall x\neq0.
\label{eq:Vdot.negative}
\end{eqnarray}
Here, $\dot{V}$ denotes the time derivative of $V$ along the
trajectories of (\ref{eq:CT.dynamics}), $\nabla V(x)$ is the
gradient vector of $V$, and $\langle .,. \rangle$ is the standard
inner product in $\mathbb{R}^n$. In some other variants of the
analysis problem, e.g. if LaSalle's invariance principle is to be
used, or if the goal is to prove boundedness of trajectories of
(\ref{eq:CT.dynamics}), then the inequality in
(\ref{eq:Vdot.negative}) is replaced with
\begin{equation}\label{eq:Vdot.nonpositive}
\dot{V}(x)\leq 0 \quad \forall x.
\end{equation}
In any case, the problem arising from this analysis approach is
that even though polynomials of a given degree are finitely
parameterized, the computational problem of searching for a
polynomial $V$ satisfying inequalities of the type
(\ref{eq:V.positive}), (\ref{eq:Vdot.negative}),
(\ref{eq:Vdot.nonpositive}) is intractable.
%In fact, even deciding if a given polynomial $V$ of degree four or
%larger satisfies (\ref{eq:V.positive}) is
%NP-hard~\cite{nonnegativity_NP_hard}.
An approach pioneered in~\cite{PhD:Parrilo} and widely popular by
now is to replace the positivity (or nonnegativity) conditions by
the requirement of the existence of a sum of squares (sos)
decomposition:
\begin{eqnarray}
V& \mbox{sos}\label{eq:V.SOS} \\
-\dot{V}=-\langle\nabla V,f\rangle& \mbox{sos}.
\label{eq:-Vdot.SOS}
\end{eqnarray}

As we saw in the previous chapter, sum of squares decomposition is
a sufficient condition for polynomial nonnegativity that can be
efficiently checked with semidefinite programming. For a fixed
degree of a polynomial Lyapunov candidate $V$, the search for the
coefficients of $V$ subject to the constraints (\ref{eq:V.SOS})
and (\ref{eq:-Vdot.SOS}) is a semidefinite program (SDP). We call
a Lyapunov function satisfying both sos conditions in
(\ref{eq:V.SOS}) and (\ref{eq:-Vdot.SOS}) a \emph{sum of squares
Lyapunov function}. We emphasize that this is the sensible
definition of a sum of squares Lyapunov function and not what the
name may suggest, which is a Lyapunov function that is a sum of
squares. Indeed, the underlying semidefinite program will find a
Lyapunov function $V$ if and only if $V$ satisfies \emph{both}
conditions (\ref{eq:V.SOS}) and (\ref{eq:-Vdot.SOS}).

%if a Lyapunov function $V$ satisfies (\ref{eq:V.SOS}) but not the
%derivative condition (\ref{eq:-Vdot.SOS}), then the semidefinite
%program will \emph{not} find this Lyapunov function.

Over the last decade, the applicability of sum of squares Lyapunov
functions has been explored and extended in many directions and a
multitude of sos techniques have been developed to tackle a range
of problems in systems and control. We refer the reader to the by
no means exhaustive list of works
\cite{PositivePolyInControlBook},
\cite{AutControlSpecial_PositivePolys}, \cite{Chesi_book},
\cite{ControlAppsSOS}, \cite{PraP03}, \cite{PapP02},
\cite{Pablo_Rantzer_synthesis},
\cite{Chest.et.al.sos.robust.stability},
\cite{AAA_PP_CDC08_non_monotonic}, \cite{Erin_Pablo_Contraction},
\cite{Tedrake_LQR_Trees} and references therein.
%Despite the wealth of research in this area, the converse question
%of whether the existence of a polynomial Lyapunov function implies
%the existence of a Lyapunov function satisfying the sos conditions
%in (\ref{eq:V.SOS}), (\ref{eq:-Vdot.SOS}) has remained unresolved.
Despite the wealth of research in this area, the converse question
of whether the existence of a polynomial Lyapunov function implies
the existence of a sum of squares Lyapunov function has remained
elusive.
%
%, i.e, a polynomial Lyapunov function satisfying \emph{both} sos
%conditions in (\ref{eq:V.SOS}), (\ref{eq:-Vdot.SOS})
%
This question naturally comes in two variants:

{\bf Problem 1:} Does existence of a polynomial Lyapunov function
of a given degree imply existence of a polynomial Lyapunov
function of the \emph{same degree} that satisfies the sos
conditions in (\ref{eq:V.SOS}) and (\ref{eq:-Vdot.SOS})?

{\bf Problem 2:} Does existence of a polynomial Lyapunov function
of a given degree imply existence of a polynomial Lyapunov
function of \emph{possibly higher degree} that satisfies the sos
conditions in (\ref{eq:V.SOS}) and (\ref{eq:-Vdot.SOS})?

The notion of stability of interest in this chapter, for which we
will study the questions above, is global asymptotic stability
(GAS); see e.g.~\cite[Chap. 4]{Khalil:3rd.Ed} for a precise
definition. Of course, a fundamental question that comes before
the problems mentioned above is the following:

{\bf Problem 0:} If a polynomial dynamical system is globally
asymptotically stable, does it admit a polynomial Lyapunov
function?

\subsection{Contributions and organization of this chapter}
In this chapter, we give explicit counterexamples that answer {\bf
Problem 0} and {\bf Problem 1} in the negative. This is done in
Section~\ref{sec:no.poly.Lyap} and
Subsection~\ref{subsec:the.counterexample} respectively. On the
other hand, in Subsection~\ref{subsec:converse.sos.results}, we
give a positive answer to {\bf Problem~2} for the case where the
vector field is homogeneous
(Theorem~\ref{thm:poly.lyap.then.sos.lyap}) or when it is planar
and an additional mild assumption is met
(Theorem~\ref{thm:poly.lyap.then.sos.lyap.PLANAR}). The proofs of
these two theorems are quite simple and rely on powerful
Positivstellensatz results due to Scheiderer
(Theorems~\ref{thm:claus} and~\ref{thm:claus.3vars}). In
Section~\ref{sec:extension.to.switched.sys}, we extend these
results to derive a converse sos Lyapunov theorem for robust
stability of switched linear systems. It will be proven that if
such a system is stable under arbitrary switching, then it admits
a common polynomial Lyapunov function that is sos and that the
negative of its derivative is also sos
(Theorem~\ref{thm:converse.sos.switched.sys}). We also show that
for switched linear systems (both in discrete and continuous
time), if the inequality on the decrease condition of a Lyapunov
function is satisfied as a sum of squares, then the Lyapunov
function itself is automatically a sum of squares
(Propositions~\ref{prop:switch.DT.V.automa.sos}
and~\ref{prop:switch.CT.V.automa.sos}). We list a number of
related open problems in Section~\ref{sec:summary.future.work}.

Before these contributions are presented, we establish a hardness
result for the problem of deciding asymptotic stability of cubic
homogeneous vector fields in the next section. We also present
some byproducts of this result, including a Lyapunov-inspired
technique for proving positivity of forms.

\section{Complexity considerations for deciding stability of polynomial vector fields}\label{sec:complexity.cubic.vec.field}

It is natural to ask whether stability of equilibrium points of
polynomial vector fields can be decided in finite time. In fact,
this is a well-known question of Arnold that appears
in~\cite{Arnold_Problems_for_Math}:

\vspace{-5pt}
\begin{itemize}
\item[] ``Is the stability problem for stationary points
algorithmically decidable? The well-known Lyapounov
theorem\footnote{The theorem that Arnold is referring to here is
the indirect method of Lyapunov related to linearization. This is
not to be confused with Lyapunov's direct method (or the second
method), which is what we are concerned with in sections that
follow.} solves the problem in the absence of eigenvalues with
zero real parts. In more complicated cases, where the stability
depends on higher order terms in the Taylor series, there exists
no algebraic criterion.

Let a vector field be given by polynomials of a fixed degree, with
rational coefficients. Does an algorithm exist, allowing to
decide, whether the stationary point is stable?''
\end{itemize}
\vspace{-5pt}

Later in~\cite{Costa_Doria_undecidabiliy}, the question of Arnold
is quoted with more detail:

\vspace{-5pt}
\begin{itemize}
\item[] ``In my problem the coefficients of the polynomials of
known degree and of a known number of variables are written on the
tape of the standard Turing machine in the standard order and in
the standard representation. The problem is whether there exists
an algorithm (an additional text for the machine independent of
the values of the coefficients) such that it solves the stability
problem for the stationary point at the origin (i.e., always stops
giving the answer ``stable'' or ``unstable'').

I hope, this algorithm exists if the degree is one.  It also
exists when the dimension is one. My conjecture has always been
that there is no algorithm for some sufficiently high degree and
dimension, perhaps for dimension $3$ and degree $3$ or even $2$. I
am less certain about what happens in dimension $2$. Of course the
nonexistence of a general algorithm for a fixed dimension working
for arbitrary degree or for a fixed degree working for an
arbitrary dimension, or working for all polynomials with arbitrary
degree and dimension would also be interesting.''
\end{itemize}
\vspace{-5pt}

To our knowledge, there has been no formal resolution to these
questions, neither for the case of stability in the sense of
Lyapunov, nor for the case of asymptotic stability (in its local
or global version). In~\cite{Costa_Doria_undecidabiliy}, da Costa
and Doria show that if the right hand side of the differential
equation contains elementary functions (sines, cosines,
exponentials, absolute value function, etc.), then there is no
algorithm for deciding whether the origin is stable or unstable.
They also present a dynamical system in~\cite{Costa_Doria_Hopf}
where one cannot decide whether a Hopf bifurcation will occur or
whether there will be parameter values such that a stable fixed
point becomes unstable. In earlier work, Arnold himself
demonstrates some of the difficulties that arise in stability
analysis of polynomial systems by presenting a parametric
polynomial system in $3$ variables and degree $5$, where the
boundary between stability and instability in parameter space is
not a semialgebraic set~\cite{Arnold_algebraic_unsolve}. A
relatively larger number of undecidability results are available
for questions related to other properties of polynomial vector
fields, such as
reachability~\cite{Undecidability_vec_fields_survey} or
boundedness of domain of
definition~\cite{Bounded_Defined_ODE_Undecidable}, or for
questions about stability of hybrid systems~\cite{TsiLinSat},
\cite{BlTi2}, \cite{BlTi_stab_contr_hybrid},
\cite{Deciding_stab_mortal_PWA}. We refer the interested reader to
the survey papers in~\cite{Survey_CT_Computation},
\cite{Undecidability_vec_fields_survey},
\cite{Sontag_complexity_comparison},
\cite{BlTi_complexity_3classes}, \cite{BlTi1}.

%There are, however, other properties of polynomial vector fields
%that are known to be undecidable. For example, this is the case
%for a class of reachability problems *. We refer the interested
%reader to survey papers in *,*,*. Interestingly, there seems to be
%many more undecidability results available in the literature for
%questions related to stability of hybrid systems *,*,*.

We are also interested to know whether the answer to the
undecidability question for asymptotic stability changes if the
dynamics is restricted to be homogeneous. A polynomial vector
field $\dot{x}=f(x)$ is \emph{homogeneous} if all entries of $f$
are homogeneous polynomials of the same degree. Homogeneous
systems are extensively studied in the literature on nonlinear
control~\cite{Stability_homog_poly_ODE}, \cite{Stabilize_Homog},
\cite{homog.feedback}, \cite{Baillieul_Homog_geometry},
\cite{Cubic_Homog_Planar}, \cite{HomogHomog},
\cite{homog.systems}, and some of the results of this chapter
(both negative and positive) are derived specifically for this
class of systems. A basic fact about homogeneous vector fields is
that for these systems the notions of local and global stability
are equivalent. Indeed, a homogeneous vector field of degree $d$
satisfies $f(\lambda x)=\lambda^d f(x)$ for any scalar $\lambda$,
and therefore the value of $f$ on the unit sphere determines its
value everywhere. It is also well-known that an asymptotically
stable homogeneous system admits a homogeneous Lyapunov
funciton~\cite{Hahn_stability_book},\cite{HomogHomog}.

Naturally, questions regarding complexity of deciding asymptotic
stability and questions about existence of Lyapunov functions are
related. For instance, if one proves that for a class of
polynomial vector fields, asymptotic stability implies existence
of a polynomial Lyapunov function together with a computable upper
bound on its degree, then the question of asymptotic stability for
that class becomes decidable. This is due to the fact that given
any polynomial system and any integer $d$, the question of
deciding whether the system admits a polynomial Lyapunov function
of degree $d$ can be answered in finite time using quantifier
elimination.

For the case of linear systems (i.e., homogeneous systems of
degree $1$), the situation is particularly nice. If such a system
is asymptotically stable, then there always exists a
\emph{quadratic} Lyapunov function. Asymptotic stability of a
linear system $\dot{x}=Ax$ is equivalent to the easily checkable
algebraic criterion that the eigenvalues of $A$ be in the open
left half complex plane. Deciding this property of the matrix $A$
can formally be done in polynomial time, e.g. by solving a
Lyapunov equation~\cite{BlTi1}.

Moving up in the degree, it is not difficult to show that if a
homogeneous polynomial vector field has even degree, then it can
never be asymptotically stable; see e.g.~\cite[p.
283]{Hahn_stability_book}. So the next interesting case occurs for
homogeneous vector fields of degree $3$. We will prove below that
determining asymptotic stability for such systems is strongly
NP-hard. This gives a lower bound on the complexity of this
problem. It is an interesting open question to investigate whether
in this specific setting, the problem is also undecidable.

One implication of our NP-hardness result is that unless P=NP, we
should not expect sum of squares Lyapunov functions of ``low
enough'' degree to always exist, even when the analysis is
restricted to cubic homogeneous vector fields. The semidefinite
program arising from a search for an sos Lyapunov function of
degree $2d$ for such a vector field in $n$ variables has size in
the order of ${n+d \choose d+1}$. This number is polynomial in $n$
for fixed $d$ (but exponential in $n$ when $d$ grows linearly in
$n$). Therefore, unlike the case of linear systems, we should not
hope to have a bound on the degree of sos Lyapunov functions that
is independent of the dimension.

%%%AAA: next three lines are not quite correct, so commented out.
%at least a linear growth in the minimum degree of an sos Lyapunov
%function for some hard class of instances of cubic vector fields
%with increasing dimension.

We postpone our study of existence of sos Lyapunov functions to
Section~\ref{sec:(non)-existence.sos.lyap} and proceed for now
with the following complexity result.

\begin{theorem}\label{thm:asym.stability.nphard}
Deciding asymptotic stability of homogeneous cubic polynomial
vector fields is strongly NP-hard.
\end{theorem}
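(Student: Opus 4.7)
The plan is to prove the theorem via a polynomial-time reduction from a known strongly NP-hard problem about quartic forms to the asymptotic stability question. The source problem I will use is deciding positive definiteness of a quartic form, which is strongly NP-hard as a consequence of the Ling--Nie--Qi reduction from CLIQUE recounted in the discussion preceding Theorem~\ref{thm:main.reduction}: the biquadratic forms $\hat b_G(x;y)$ produced there are quartic forms with integer coefficients of size $O(n)$, and a small rational perturbation by $\delta(\|x\|^{2}+\|y\|^{2})^{2}$ with $\delta$ of order $1/n$ shifts the threshold so that the resulting form is either positive definite or takes a strictly negative value somewhere, while preserving polynomial bit-length.

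The bridge between quartic forms and cubic vector fields is the gradient map. Given such a quartic form $q$ on $\mathbb{R}^{N}$, I associate to it the homogeneous cubic polynomial vector field
\[
\dot{x} \;=\; f(x) \;:=\; -\nabla q(x).
\]
The central claim to verify is that the origin is asymptotically stable for this system if and only if $q$ is positive definite. Because the vector field is homogeneous, local and global asymptotic stability of the origin coincide (trajectories are self-similar under the dilation $y(t)=\lambda x(\lambda^{d-1}t)$), so I need not distinguish the two notions in what follows.

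The forward direction is straightforward: assuming $q$ is positive definite, I propose using $V:=q$ itself as a strict Lyapunov function. It is positive definite and radially unbounded by homogeneity, its derivative along trajectories is
\[
\dot V(x) \;=\; \nabla q(x)^{T} f(x) \;=\; -\|\nabla q(x)\|^{2} \;\le\; 0,
\]
and by Euler's identity $x^{T}\nabla q(x)=4\,q(x)$ the positive definiteness of $q$ forces $\nabla q(x)\neq 0$ for every $x\neq 0$, so $\dot V$ vanishes only at the origin. Hence $V$ certifies global asymptotic stability.

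The main technical obstacle is the converse, together with the need to keep the reduction strongly polynomial. Suppose $q$ is not positive definite; by the design of the reduction I may assume there is some $x^{\ast}\neq 0$ with $q(x^{\ast})<0$. Along the trajectory $x(t)$ initialized at $x^{\ast}$, the monotonicity identity $\frac{d}{dt}q(x(t))=-\|\nabla q(x(t))\|^{2}\le 0$ forces $q(x(t))\le q(x^{\ast})<0$ for all $t\ge 0$. If the origin were globally attractive, continuity of $q$ would give $q(x(t))\to q(0)=0$, contradicting this strict negative upper bound; hence the system fails to be GAS and therefore, by the homogeneity argument above, is not asymptotically stable at all. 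Combining the two directions, the map $q\mapsto -\nabla q$ is a polynomial-time reduction with polynomially-sized integer coefficients from the strongly NP-hard positive-definiteness problem to the asymptotic stability problem, proving the theorem.
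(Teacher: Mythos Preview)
Your proposal is correct and follows the same core idea as the paper: reduce from positive definiteness of quartic forms by sending $q$ to the cubic gradient system $\dot x=-\nabla q(x)$, use $q$ itself as a Lyapunov function via $\dot q=-\|\nabla q\|^{2}$, and invoke Euler's identity for the forward direction. The two proofs differ only at the edges.

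First, for the source problem, the paper does not derive strong NP-hardness of positive definiteness from the CLIQUE/biquadratic route; it gives a separate, direct reduction from ONE-IN-THREE 3SAT (Proposition~\ref{prop:positivity.quartic.NPhard}) that produces quartic forms which are manifestly either positive definite or have a nontrivial zero. Your perturbation of $\hat b_G$ by $\delta(\|x\|^{2}+\|y\|^{2})^{2}$ does work (with $\delta<1/(4(k{+}1))$, using the gap $\min_{\|x\|=\|y\|=1}\hat b_G\le -1/(k{+}1)$ when $\omega(G)\ge k{+}1$), but you should spell out this threshold computation rather than assert it.

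Second, for the converse direction, the paper proves the full equivalence ``$q$ positive definite $\Leftrightarrow$ system GAS'' for \emph{arbitrary} quartic forms, and therefore must also treat the case where $q$ is merely positive semidefinite with a nontrivial zero $x^{\ast}$; it does so by noting that $x^{\ast}$ is then a global minimizer, hence $\nabla q(x^{\ast})=0$, so $x^{\ast}$ is a non-origin equilibrium obstructing GAS. You sidestep this case by engineering your reduction so that every non-pd instance actually attains a strictly negative value, which lets the monotonicity argument alone finish the job. Both routes are valid; the paper's is a bit more general (it characterizes GAS of gradient cubic fields in terms of the form, not just the instances arising from the reduction), while yours is slightly shorter once the perturbation is justified.
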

%Instead of relating the solution of a combinatorial problem to the
%behavior of the trajectories of a cubic vector field, we will
%think in terms of a Lyapunov function that proves asymptotic
%stability.

The main intuition behind the proof of this theorem is the
following idea: We will relate the solution of a combinatorial
problem not to the behavior of the trajectories of a cubic vector
field that are hard to get a handle on, but instead to properties
of a Lyapunov function that proves asymptotic stability of this
vector field. As we will see shortly, insights from Lyapunov
theory make the proof of this theorem quite simple. The reduction
is broken into two steps:
\begin{center}
ONE-IN-THREE 3SAT \\ $\downarrow$ \\ positivity of quartic forms
\\ $\downarrow$\\  asymptotic stability of cubic vector fields
\end{center}
In the course of presenting these reductions, we will also discuss
some corollaries that are not directly related to our study of
asymptotic stability, but are of independent interest.

%First, we give a reduction from ONE-IN-THREE 3SAT to the problem
%of deciding positive definiteness of quartic forms. Then, we give
%a reduction from the latter problem to the problem of deciding
%asymptotic stability of cubic vector fields.

\subsection{Reduction from ONE-IN-THREE 3SAT to positivity of quartic forms}

As we remarked in Chapter~\ref{chap:nphard.convexity}, NP-hardness
of deciding nonnegativity (i.e., positive semidefiniteness) of
quartic forms is well-known. The proof commonly cited in the
literature is based on a reduction from the matrix copositivity
problem~\cite{nonnegativity_NP_hard}: given a symmetric $n \times
n$ matrix $Q$, decide whether $x^TQx\geq0$ for all $x$'s that are
elementwise nonnegative. Clearly, a matrix $Q$ is copositive if
and only if the quartic form $z^TQz$, with
$z_i\mathrel{\mathop:}=x_i^2$, is nonnegative. The original
reduction~\cite{nonnegativity_NP_hard} proving NP-hardness of
testing matrix copositivity is from the subset sum problem and
only establishes weak NP-hardness. However, reductions from the
stable set problem to matrix copositivity are also
known~\cite{deKlerk_StableSet_copositive},
\cite{copositivity_NPhard} and they result in NP-hardness in the
strong sense. Alternatively, strong NP-hardness of deciding
nonnegativity of quartic forms follows immediately from
NP-hardness of deciding convexity of quartic forms (proven in
Chapter~\ref{chap:nphard.convexity}) or from NP-hardness of
deciding nonnegativity of biquadratic forms (proven
in~\cite{Ling_et_al_Biquadratic}).

For reasons that will become clear shortly, we are interested in
showing hardness of deciding \emph{positive definiteness} of
quartic forms as opposed to positive semidefiniteness. This is in
some sense even easier to accomplish. A very straightforward
reduction from 3SAT proves NP-hardness of deciding positive
definiteness of polynomials of degree $6$. By using ONE-IN-THREE
3SAT instead, we will reduce the degree of the polynomial from $6$
to $4$.

\begin{proposition}\label{prop:positivity.quartic.NPhard}
It is strongly\footnote{Just like our results in
Chapter~\ref{chap:nphard.convexity}, the NP-hardness results of
this section will all be in the strong sense. From here on, we
will drop the prefix ``strong'' for brevity.} NP-hard to decide
whether a homogeneous polynomial of degree $4$ is positive
definite.
\end{proposition}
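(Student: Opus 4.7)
The plan is to reduce ONE-IN-THREE 3SAT, which is a well-known strongly NP-complete problem, to the problem of deciding positive definiteness of a quartic form. Recall that in ONE-IN-THREE 3SAT we are given a collection of clauses $C_1,\dots,C_m$ over boolean variables $x_1,\dots,x_n$, where each clause is a disjunction of three literals (a literal being either a variable or its negation), and we are asked whether there exists an assignment of truth values to the $x_i$'s such that every clause contains \emph{exactly one} true literal. The key feature that allows us to land in degree four (rather than degree six as in a reduction from plain 3SAT) is that the ``exactly one true'' constraint can be captured by a \emph{quadratic} equation $l_{j1}+l_{j2}+l_{j3}=1$, rather than the cubic equation needed for at least one true.

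The construction proceeds as follows. Identify $x_i=1$ with ``true'' and $x_i=0$ with ``false,'' and to each literal $l_{jk}$ associate an affine expression $\tilde{l}_{jk}(x)$ equal to $x_i$ if the literal is $x_i$ and equal to $1-x_i$ if the literal is $\neg x_i$. Consider the nonhomogeneous polynomial
\begin{equation*}
p(x) \;=\; \sum_{i=1}^n x_i^2(1-x_i)^2 \;+\; \sum_{j=1}^m \bigl(\tilde{l}_{j1}(x)+\tilde{l}_{j2}(x)+\tilde{l}_{j3}(x)-1\bigr)^2.
\end{equation*}
By construction $p$ is a sum of squares, hence nonnegative, and has degree $4$. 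Moreover, a point $\bar{x}\in\mathbb{R}^n$ satisfies $p(\bar{x})=0$ if and only if every $\bar{x}_i\in\{0,1\}$ (forced by the first sum) and every clause has exactly one true literal (forced by the second sum). Thus $p$ has a real zero if and only if the ONE-IN-THREE 3SAT instance is satisfiable.

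To obtain a \emph{homogeneous} quartic form whose positive definiteness encodes unsatisfiability, I will homogenize with a new variable $y$ by defining
\begin{equation*}
p_h(y,x_1,\dots,x_n) \;=\; \sum_{i=1}^n x_i^2(y-x_i)^2 \;+\; y^2\sum_{j=1}^m \bigl(\tilde{l}_{j1}(x,y)+\tilde{l}_{j2}(x,y)+\tilde{l}_{j3}(x,y)-y\bigr)^2,
\end{equation*}
where now $\tilde l_{jk}(x,y)$ equals $x_i$ for a positive literal and $y-x_i$ for a negated one. Clearly $p_h$ is a homogeneous polynomial of degree $4$ in $n+1$ variables and is a sum of squares, hence nonnegative. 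To see the equivalence, note that any zero $(\bar{y},\bar{x})\neq 0$ of $p_h$ must have either $\bar{y}\neq 0$, in which case rescaling by $\bar{y}$ recovers a zero of $p$ and hence a ONE-IN-THREE satisfying assignment, or $\bar{y}=0$, in which case the second sum vanishes automatically and the first sum reduces to $\sum_i \bar{x}_i^4$, forcing $\bar{x}=0$, a contradiction. Conversely, any ONE-IN-THREE satisfying assignment $\bar{x}\in\{0,1\}^n$ produces the zero $(1,\bar{x})\neq 0$ of $p_h$. Therefore $p_h$ is positive definite if and only if the ONE-IN-THREE 3SAT instance has no satisfying assignment.

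The only remaining step is to check that this reduction is polynomial time and strong. The form $p_h$ has size polynomial in $n$ and $m$, and all of its coefficients are integers whose absolute values are bounded by a small constant independent of the instance size; in particular they can be written down in unary. This shows that the reduction is in fact polynomial in the unary encoding, establishing strong NP-hardness of deciding positive definiteness of quartic forms. I do not anticipate any serious obstacles in fleshing out the above argument; the main thing to verify carefully is the ``$\bar{y}=0$'' boundary case in the homogenization step, which is what dictates the choice of putting $x_i^2(y-x_i)^2$ (rather than, say, $(x_i(y-x_i))^2/y^0$ or a similar variant) in the first sum, so that the leading degree-$4$ slice $\sum_i x_i^4$ is itself positive definite on $\{y=0\}$.
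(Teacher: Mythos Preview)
Your proof is correct and follows essentially the same approach as the paper: reduce from ONE-IN-THREE 3SAT, build the sos polynomial $p(x)=\sum_i x_i^2(1-x_i)^2+\sum_j(\text{clause sum}-1)^2$, and homogenize with a new variable $y$, checking the boundary $y=0$ via the leading part $\sum_i x_i^4$. Your explicit homogenization is exactly $y^4 p(x/y)$ written out term by term, so the two arguments are literally the same. One small imprecision: after collecting like terms the coefficients of $p_h$ are not bounded by an absolute constant (a variable appearing in many clauses can make some coefficients grow like $m$), but they are polynomially bounded in the instance size, which is all that is needed for strong NP-hardness.
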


\begin{proof}
We give a reduction from ONE-IN-THREE 3SAT which is known to be
NP-complete~\cite[p. 259]{GareyJohnson_Book}. Recall that in
ONE-IN-THREE 3SAT, we are given a 3SAT instance (i.e., a
collection of clauses, where each clause consists of exactly three
literals, and each literal is either a variable or its negation)
and we are asked to decide whether there exists a $\{0,1\}$
assignment to the variables that makes the expression true with
the additional property that each clause has \emph{exactly one}
true literal.

To avoid introducing unnecessary notation, we present the
reduction on a specific instance. The pattern will make it obvious
that the general construction is no different. Given an instance
of ONE-IN-THREE 3SAT, such as the following
\begin{equation}\label{eq:reduciton.1-in-3.3sat.instance}
(x_1\vee\bar{x}_2\vee x_4)\wedge (\bar{x}_2\vee\bar{x}_3\vee
x_5)\wedge (\bar{x}_1\vee x_3\vee \bar{x}_5)\wedge (x_1\vee
x_3\vee x_4),
\end{equation}
we define the quartic polynomial $p$ as follows:
\begin{equation}\label{eq:reduction.p}
\begin{array}{lll}
p(x)&=&\sum_{i=1}^5 x_i^2(1-x_i)^2\\ \ &\
&+(x_1+(1-x_2)+x_4-1)^2+((1-x_2)+(1-x_3)+x_5-1)^2
\\ \ &\ &+((1-x_1)+x_3+(1-x_5)-1)^2+(x_1+x_3+x_4-1)^2.
\end{array}
\end{equation}
Having done so, our claim is that $p(x)>0$ for all $x\in
\mathbb{R}^5$ (or generally for all $x\in \mathbb{R}^n$) if and
only if the ONE-IN-THREE 3SAT instance is not satisfiable. Note
that $p$ is a sum of squares and therefore nonnegative. The only
possible locations for zeros of $p$ are by construction among the
points in $\{0,1\}^5$. If there is a satisfying Boolean assignment
$x$ to (\ref{eq:reduciton.1-in-3.3sat.instance}) with exactly one
true literal per clause, then $p$ will vanish at point $x$.
Conversely, if there are no such satisfying assignments, then for
any point in $\{0,1\}^5$, at least one of the terms in
(\ref{eq:reduction.p}) will be positive and hence $p$ will have no
zeros.

It remains to make $p$ homogeneous. This can be done via
introducing a new scalar variable $y$. If we let
\begin{equation}\label{eq:reduction.ph}
p_h(x,y)=y^4 p(\textstyle{\frac{x}{y}}),
\end{equation}
then we claim that $p_h$ (which is a quartic form) is positive
definite if and only if $p$ constructed as in
(\ref{eq:reduction.p}) has no zeros.\footnote{In general,
homogenization does not preserve positivity. For example, as shown
in~\cite{Reznick}, the polynomial $x_1^2+(1-x_1x_2)^2$ has no
zeros, but its homogenization $x_1^2y^2+(y^2-x_1x_2)^2$ has zeros
at the points $(1,0,0)^T$ and $(0,1,0)^T$. Nevertheless,
positivity is preserved under homogenization for the special class
of polynomials constructed in this reduction, essentially because
polynomials of type (\ref{eq:reduction.p}) have no zeros at
infinity.} Indeed, if $p$ has a zero at a point $x$, then that
zero is inherited by $p_h$ at the point $(x,1)$. If $p$ has no
zeros, then (\ref{eq:reduction.ph}) shows that $p_h$ can only
possibly have zeros at points with $y=0$. However, from the
structure of $p$ in (\ref{eq:reduction.p}) we see that
$$p_h(x,0)=x_1^4+\cdots+x_5^4,$$ which cannot be zero (except at
the origin). This concludes the proof.
\end{proof}

We present a simple corollary of the reduction we just gave on a
problem that is of relevance in polynomial integer
programming.\footnote{We are thankful to Amitabh Basu and
Jes\'{u}s De Loera for raising this question during a visit at UC
Davis, and for later insightful discussions.} Recall from
Chapter~\ref{chap:nphard.convexity}
(Definition~\ref{def:basic.semialgeb.set}) that a basic
semialgebraic set is a set defined by a finite number of
polynomial inequalities:
\begin{equation}\label{eq:semialgebraic.set.chap.Lyap}
\mathcal{S}=\{x\in\mathbb{R}^n|\ f_i(x)\geq0,\ i=1,\ldots,m\}.
\end{equation}
\begin{corollary}\label{cor:lattice.points.nphard}
Given a basic semialgebraic set, it is NP-hard to decide if the
set contains a lattice point, i.e., a point with integer
coordinates. This is true even when the set is defined by one
constraint ($m=1$) and the defining polynomial has degree $4$.
\end{corollary}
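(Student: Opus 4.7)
The plan is to give a direct reduction from ONE-IN-THREE 3SAT, reusing the quartic polynomial $p(x)$ already constructed in the proof of Proposition~\ref{prop:positivity.quartic.NPhard}. Recall that this $p(x)$ is a sum of squares (hence nonnegative on all of $\mathbb{R}^n$), has degree $4$, and by construction its only possible zeros lie in $\{0,1\}^n$; moreover $p$ has a zero in $\{0,1\}^n$ if and only if the ONE-IN-THREE 3SAT instance is satisfiable. The key observation is that nonnegativity of $p$ turns a zero-finding question into a single polynomial inequality: the semialgebraic set
\begin{equation}\nonumber
\mathcal{S} = \{x \in \mathbb{R}^n \mid -p(x) \geq 0\}
\end{equation}
coincides with $\{x \mid p(x) = 0\}$, and hence with the (finite) set of satisfying $\{0,1\}$-assignments.

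Given this, the reduction is immediate. Starting from any ONE-IN-THREE 3SAT instance, build $p(x)$ as in (\ref{eq:reduction.p}) in polynomial time, and output the basic semialgebraic set $\mathcal{S}$ defined by the single inequality $-p(x) \geq 0$. Every lattice point of $\mathcal{S}$ is a zero of $p$, so by the properties of $p$ noted above, $\mathcal{S}$ contains a lattice point if and only if the ONE-IN-THREE 3SAT instance admits an assignment in which each clause has exactly one true literal. Since ONE-IN-THREE 3SAT is strongly NP-hard, so is the lattice-point problem, even when $m=1$ and the defining polynomial has degree $4$.

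The main obstacle one might worry about is whether a lattice point of $\mathcal{S}$ outside $\{0,1\}^n$ could spoil the equivalence, but this cannot happen: the term $\sum_{i=1}^n x_i^2(1-x_i)^2$ in $p$ already forces every zero of $p$ (lattice or not) to have each coordinate in $\{0,1\}$, and the remaining squared terms encode the ONE-IN-THREE clause constraints exactly. Thus the correspondence between lattice points of $\mathcal{S}$ and satisfying assignments is tight, and no additional polynomial inequalities (such as box constraints $0 \leq x_i \leq 1$) need to be added, which is what allows us to keep $m=1$ and the degree at $4$.
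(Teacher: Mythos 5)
Your reduction is exactly the paper's: the same quartic $p$ from the proof of Proposition~\ref{prop:positivity.quartic.NPhard} and the same single-inequality set $\mathcal{S}=\{x \mid -p(x)\geq 0\}$, with the equivalence between lattice points of $\mathcal{S}$ and satisfying ONE-IN-THREE assignments argued from nonnegativity of $p$ and the fact that its zeros lie in $\{0,1\}^n$. The argument is correct and matches the paper's proof in substance.
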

\begin{proof}
Given an instance of ONE-IN-THREE 3SAT, we define a polynomial $p$
of degree $4$ as in (\ref{eq:reduction.p}), and let the basic
semialgebraic set be given by
$$\mathcal{S}=\{x\in\mathbb{R}^n|\ -p(x)\geq0\}.$$ Then, by
Proposition~\ref{prop:positivity.quartic.NPhard}, if the
ONE-IN-THREE 3SAT instance is not satisfiable, the set
$\mathcal{S}$ is empty and hence has no lattice points.
Conversely, if the instance is satisfiable, then $\mathcal{S}$
contains at least one point belonging to $\{0,1\}^n$ and therefore
has a lattice point.
\end{proof}

By using the celebrated result on undecidability of checking
existence of integer solutions to polynomial equations (Hilbert's
10th problem), one can show that the problem considered in the
corollary above is in fact
undecidable~\cite{Undecidable_Hilbert10Survey}. The same is true
for quadratic integer programming when both the dimension $n$ and
the number of constraints $m$ are allowed to grow as part of the
input~\cite{QuadraticIP_undecidable}. The question of deciding
existence of lattice points in polyhedra (i.e., the case where
degree of $f_i$ in (\ref{eq:semialgebraic.set.chap.Lyap}) is $1$
for all $i$) is also interesting and in fact very well-studied.
For polyhedra, if both $n$ and $m$ are allowed to grow, then the
problem is NP-hard. This can be seen e.g. as a corollary of the
NP-hardness of the INTEGER KNAPSACK problem (though this is
NP-hardness in the weak sense); see~\cite[p.
247]{GareyJohnson_Book}. However, if $n$ is fixed and $m$ grows,
it follows from a result of Lenstra~\cite{Lenstra_IP} that the
problem can be decided in polynomial time. The same is true if $m$
is fixed and $n$ grows~\cite[Cor. 18.7c]{Schrijver_LP_IP_Book}.
See also~\cite{Papadimitriou_IP}.

%%%AAA: removed and rewrote because realized our corollary is already undecidable.
%%
%Let us put the above corollary in some context. The question of
%deciding existence of lattice points in polyhedra (i.e., the case
%where degree of $f_i$ in (\ref{eq:semialgebraic.set.chap.Lyap}) is
%$1$ for all $i$) is well-studied. For polyhedra, if both the
%dimension $n$ and the number of constraints $m$ are allowed to
%grow, then the problem of deciding existence of lattice points is
%NP-hard. This follows e.g. as a corollary of the NP-hardness of
%the INTEGER KNAPSACK problem (though this is NP-hardness in the
%weak sense); see~\cite[p. 247]{GareyJohnson_Book}. However, if $n$
%is fixed and $m$ grows, it follows from a result of
%Lenstra~\cite{Lenstra_IP} that the problem can be decided in
%polynomial time. The same is true if $m$ is fixed and $n$
%grows~\cite[Cor. 18.7c]{Schrijver_LP_IP_Book}. See
%also~\cite{Papadimitriou_IP}. By contrast,
%Corollary~\ref{cor:lattice.points.nphard} shows that when the
%defining polynomials are quartic, even with $m$ fixed to $1$, the
%problem is (strongly) NP-hard. Interestingly, if we allow both $m$
%and $n$ grow, then already for the case where the defining
%polynomials have degree $2$, the problem is known to be
%undecidable~\cite{QuadraticIP_undecidable}.

\subsection{Reduction from positivity of quartic forms to asymptotic stability of cubic vector fields}
We now present the second step of the reduction and finish the
proof of Theorem~\ref{thm:asym.stability.nphard}.

\begin{proof}[Proof of Theorem~\ref{thm:asym.stability.nphard}]
We give a reduction from the problem of deciding positive
definiteness of quartic forms, whose NP-hardness was established
in Proposition~\ref{prop:positivity.quartic.NPhard}. Given a
quartic form $V\mathrel{\mathop:}=V(x)$, we define the polynomial
vector field
\begin{equation}\label{eq:xdot.cubic.reduction}
\dot{x}=-\nabla V(x).
\end{equation}
Note that the vector field is homogeneous of degree $3$. We claim
that the above vector field is (locally or equivalently globally)
asymptotically stable if and only if $V$ is positive definite.
First, we observe that by construction
\begin{equation}\label{eq:Vdot<=0.always}
\dot{V}(x)=\langle \nabla V(x), \dot{x} \rangle=-||\nabla
V(x)||^2\leq 0.
\end{equation}
Suppose $V$ is positive definite. By Euler's identity for
homogeneous functions,\footnote{Euler's identity is easily derived
by differentiating both sides of the equation $V(\lambda
x)~=~\lambda^d V(x)$ with respect to $\lambda$ and setting
$\lambda=1$.} we have $V(x)=\frac{1}{4}x^T\nabla V(x).$ Therefore,
positive definiteness of $V$ implies that $\nabla V(x)$ cannot
vanish anywhere except at the origin. Hence, $\dot{V}(x)<0$ for
all $x\neq 0$. In view of Lyapunov's theorem (see e.g.~\cite[p.
124]{Khalil:3rd.Ed}), and the fact that a positive definite
homogeneous function is radially unbounded, it follows that the
system in (\ref{eq:xdot.cubic.reduction}) is globally
asymptotically stable.

For the converse direction, suppose
(\ref{eq:xdot.cubic.reduction}) is GAS. Our first claim is that
global asymptotic stability together with $\dot{V}(x)\leq 0$
implies that $V$ must be positive semidefinite. This follows from
the following simple argument, which we have also previously
presented in~\cite{AAA_PP_ACC11_Lyap_High_Deriv} for a different
purpose. Suppose for the sake of contradiction that for some
$\hat{x}\in\mathbb{R}^n$ and some $\epsilon>0,$ we had
$V(\hat{x})=-\epsilon<0$. Consider a trajectory $x(t;\hat{x})$ of
system (\ref{eq:xdot.cubic.reduction}) that starts at initial
condition $\hat{x}$, and let us evaluate the function $V$ on this
trajectory. Since $V(\hat{x})=-\epsilon$ and $\dot{V}(x)\leq 0$,
we have $V(x(t;\hat{x}))\leq-\epsilon$ for all $t>0$. However,
this contradicts the fact that by global asymptotic stability, the
trajectory must go to the origin, where $V$, being a form,
vanishes.

To prove that $V$ is positive definite, suppose by contradiction
that for some nonzero point $x^*\in\mathbb{R}^n$ we had
$V(x^*)=0$. Since we just proved that $V$ has to be positive
semidefinite, the point $x^*$ must be a global minimum of $V$.
Therefore, as a necessary condition of optimality, we should have
$\nabla V(x^*)=0$. But this contradicts the system in
(\ref{eq:xdot.cubic.reduction}) being GAS, since the trajectory
starting at $x^*$ stays there forever and can never go to the
origin.
\end{proof}

Perhaps of independent interest, the reduction we just gave
suggests a method for proving positive definiteness of forms.
Given a form $V$, we can construct a dynamical system as in
(\ref{eq:xdot.cubic.reduction}), and then any method that we may
have for proving stability of vector fields (e.g. the use of
various kinds of Lyapunov functions) can serve as an algorithm for
proving positivity of $V$. In particular, if we use a polynomial
Lyapunov function $W$ to prove stability of the system in
(\ref{eq:xdot.cubic.reduction}), we get the following corollary.

\begin{corollary}\label{cor:positivity.forms.Lyap}
Let $V$ and $W$ be two forms of possibly different degree. If $W$
is positive definite, and $\langle \nabla W, \nabla V \rangle$ is
positive definite, then $V$ is positive definite.
\end{corollary}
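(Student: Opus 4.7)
The plan is to apply the construction used in the proof of Theorem~\ref{thm:asym.stability.nphard}. Given the forms $V$ and $W$ satisfying the hypotheses, I would consider the homogeneous polynomial vector field
\begin{equation}\nonumber
\dot{x}=-\nabla V(x),
\end{equation}
and use $W$ itself as a candidate Lyapunov function for it. The key observation is that
\begin{equation}\nonumber
\dot{W}(x)=\langle \nabla W(x),\dot{x}\rangle=-\langle \nabla W(x),\nabla V(x)\rangle,
\end{equation}
which by hypothesis is negative definite. Since $W$ is also positive definite and, being a form, is automatically radially unbounded, Lyapunov's theorem yields that the origin of $\dot{x}=-\nabla V(x)$ is globally asymptotically stable.

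Once GAS is established, I would invoke the converse half of the argument already carried out in the proof of Theorem~\ref{thm:asym.stability.nphard} to conclude that $V$ must be positive definite. Concretely, along trajectories we always have $\dot{V}(x)=-\|\nabla V(x)\|^2\le 0$; combined with GAS and the fact that the form $V$ vanishes at the origin, the standard argument (evaluate $V$ along a trajectory starting at a putative point where $V$ is negative, and note that $V$ cannot decrease further while the trajectory converges to the origin) forces $V\ge 0$. Finally, if $V(x^*)=0$ for some $x^*\neq 0$, then $x^*$ is a global minimum of $V$, hence $\nabla V(x^*)=0$, which means $x^*$ is an equilibrium of $\dot{x}=-\nabla V(x)$, contradicting GAS.

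Because both ingredients, the forward Lyapunov implication and the homogeneity-based converse, are already contained in the proof of Theorem~\ref{thm:asym.stability.nphard}, the corollary should follow essentially by assembling them. No real obstacle is anticipated; the only point that warrants a line of care is verifying that the standard hypotheses of Lyapunov's theorem and of the converse argument apply to the form $V$ (which has degree at least one less in its gradient, so the vector field is a genuine polynomial with $f(0)=0$) and to the form $W$ (where positive definiteness of a form automatically gives radial unboundedness).
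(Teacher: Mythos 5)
Your proof is correct and is essentially the paper's own argument: the corollary is stated precisely as the byproduct of the reduction in Theorem~\ref{thm:asym.stability.nphard}, where $W$ serves as a Lyapunov function proving GAS of $\dot{x}=-\nabla V(x)$ and the converse part of that proof (using $\dot{V}=-\|\nabla V\|^2\le 0$, then ruling out nonzero zeros of $V$ via $\nabla V(x^*)=0$) gives positive definiteness of $V$. No gaps; the assembly you describe is exactly what the paper intends.
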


One interesting fact about this corollary is that its algebraic
version with sum of squares replaced for positivity is not true.
In other words, we can have $W$ sos (and positive definite),
$\langle \nabla W, \nabla V \rangle$ sos (and positive definite),
but $V$ not sos. This gives us a way of proving positivity of some
polynomials that are not sos, using \emph{only} sos certificates.
Given a form $V$, since the expression $\langle \nabla W, \nabla V
\rangle$ is linear in the coefficients of $W$, we can use sos
programming to search for a form $W$ that satisfies $W$ sos and
$\langle \nabla W, \nabla V \rangle$ sos, and this would prove
positivity of $V$. The following example demonstrates the
potential usefulness of this approach.

\begin{example}
Consider the following form of degree $6$:
\begin{equation}\label{eq:V.positive.Motzkin}
V(x)=
x_1^4x_2^2+x_1^2x_2^4-3x_1^2x_2^2x_3^2+x_3^6+\frac{1}{250}(x_1^2+x_2^2+x_3^2)^3.
\end{equation}
One can check that this polynomial is not a sum of squares. (In
fact, this is the Motzkin form presented in equation
(\ref{eq:Motzkin.form}) of
Chapter~\ref{chap:convexity.sos.convexity} slightly perturbed.) On
the other hand, we can use YALMIP~\cite{yalmip} together with the
SDP solver SeDuMi~\cite{sedumi} to search for a form $W$
satisfying
\begin{equation}\label{eq:W.sos.gradW.gradV.sos}
\begin{array} {rl}
W & \mbox{sos} \\
\langle \nabla W, \nabla V \rangle & \mbox{sos.}
\end{array}
\end{equation}
If we parameterize $W$ as a quadratic form, no feasible solution
will be returned form the solver. However, when we increase the
degree of $W$ from $2$ to $4$, the solver returns the following
polynomial
\begin{equation}\nonumber
\begin{array}{lll}
W(x)&=&9x_2^4+9x_1^4-6x_1^2x_2^2+6x_1^2x_3^2+6x_2^2x_3^2+3x_3^4-x_1^3x_2-x_1x_2^3
\\ \ &\ &-x_1^3x_3-3x_1^2x_2x_3-3x_1x_2^2x_3-x_2^3x_3-4x_1x_2x_3^2-x_1x_3^3-x_2x_3^3
\end{array}
\end{equation}
that satisfies both sos constrains in
(\ref{eq:W.sos.gradW.gradV.sos}). The Gram matrices in these sos
decompositions are positive definite. Therefore, $W$ and $\langle
\nabla W, \nabla V \rangle$ are positive definite forms. Hence, by
Corollary~\ref{cor:positivity.forms.Lyap}, we have a proof that
$V$ in (\ref{eq:V.positive.Motzkin}) is positive definite.
$\triangle$
\end{example}

Interestingly, approaches of this type that use gradient
information for proving positivity of polynomials with sum of
squares techniques have been studied by Nie, Demmel, and Sturmfels
in~\cite{Gradient_Ideal_SOS}, though the derivation there is not
Lyapunov-inspired.

\section{Non-existence of polynomial Lyapunov functions}\label{sec:no.poly.Lyap}

As we mentioned at the beginning of this chapter, the question of
global asymptotic stability of polynomial vector fields is
commonly addressed by seeking a Lyapunov function that is
polynomial itself. This approach has become further prevalent over
the past decade due to the fact that we can use sum of squares
techniques to algorithmically search for such Lyapunov functions.
The question therefore naturally arises as to whether existence of
polynomial Lyapunov functions is necessary for global stability of
polynomial systems. In this section, we give a negative answer to
this question by presenting a remarkably simple counterexample. In
view of the fact that globally asymptotically stable linear
systems always admit quadratic Lyapunov functions, it is quite
interesting to observe that the following vector field that is
arguably ``the next simplest system'' to consider does not admit a
polynomial Lyapunov function of any degree.

\begin{theorem}\label{thm:GAS.poly.no.poly.Lyap}
Consider the polynomial vector field
\begin{equation}\label{eq:counterexample.sys}
\begin{array}{rll}
\dot{x}&=&-x+xy \\
\dot{y}&=&-y.
\end{array}
\end{equation}
The origin is a globally asymptotically stable equilibrium point,
but the system does not admit a polynomial Lyapunov function.
\end{theorem}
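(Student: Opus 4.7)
The plan is to explicitly integrate the system, observe that although trajectories converge to the origin, they can first swing arbitrarily far from it, and then use this ``overshoot'' to rule out any polynomial Lyapunov function by a growth/degree comparison.

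First I would solve the ODE in closed form. The second equation gives $y(t) = y_0 e^{-t}$, which plugged into the first yields the linear (in $x$) equation $\dot x = (y_0 e^{-t}-1)x$, whose solution is
\begin{equation*}
x(t) = x_0 \exp\!\bigl(y_0(1-e^{-t}) - t\bigr), \qquad y(t) = y_0 e^{-t}.
\end{equation*}
From these formulas, global asymptotic stability is immediate: $y(t)\to 0$ exponentially, and $x(t) \to x_0 e^{y_0}\cdot 0 = 0$ as $t\to\infty$, for every initial condition $(x_0,y_0)$.

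The crucial observation is that, before collapsing to the origin, the $x$-coordinate overshoots by a factor exponential in $y_0$. Concretely, for $x_0=1$ and $y_0>1$, the maximum of $x(t)$ occurs at $t^*=\ln y_0$ (where $\dot x=0$, i.e., $y(t^*)=1$), and equals
\begin{equation*}
x(t^*) = \tfrac{1}{y_0}\, e^{\,y_0-1}, \qquad y(t^*) = 1.
\end{equation*}
Now suppose, for contradiction, that a polynomial Lyapunov function $V(x,y)$ exists, establishing GAS. Then $V$ must be positive definite, radially unbounded, and strictly decreasing along every nontrivial trajectory. Monotonic decrease along the trajectory starting at $(1,y_0)$ forces
\begin{equation*}
V\!\left(\tfrac{1}{y_0}e^{y_0-1},\, 1\right) \;\le\; V(1,y_0).
\end{equation*}

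The finish is a degree/growth comparison between the two sides. The right-hand side $V(1,y_0)$ is a univariate polynomial in $y_0$, hence $O(y_0^{\deg V})$. For the left-hand side, consider the univariate polynomial $\varphi(x) := V(x,1)$. Radial unboundedness of $V$ implies $\varphi(x)\to\infty$ as $|x|\to\infty$, so $\varphi$ has even degree $d\ge 2$ with positive leading coefficient; therefore $\varphi(x) \sim c\, x^d$ for some $c>0$ as $x\to\infty$. Consequently,
\begin{equation*}
V\!\left(\tfrac{1}{y_0}e^{y_0-1},\, 1\right) \;\sim\; c\,\frac{e^{d(y_0-1)}}{y_0^{\,d}},
\end{equation*}
which grows super-polynomially in $y_0$ and eventually dwarfs the polynomial bound on $V(1,y_0)$, producing the desired contradiction. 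The only step that requires a small amount of care is justifying that $\varphi(x)=V(x,1)$ has positive leading coefficient and nonzero degree; this is where radial unboundedness of $V$ (a necessary consequence of $V$ being a polynomial Lyapunov function certifying \emph{global} asymptotic stability) is used, and it is the one place where we rely on something beyond the explicit solution formulas.
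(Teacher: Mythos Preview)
Your proof is correct and follows essentially the same strategy as the paper: explicit integration of the system, then a two-point comparison along a trajectory exploiting exponential overshoot in $x$ versus polynomial growth of $V$. The paper differs only in cosmetic choices: it establishes GAS via the explicit Lyapunov function $V(x,y)=\ln(1+x^2)+y^2$ rather than by reading off convergence from the solution formulas, and it compares $V$ at $(k,\alpha k)$ and $(e^{\alpha(k-1)},\alpha)$ with a \emph{generic} $\alpha$ to rule out the possibility that $V(\cdot,\alpha)$ is constant, whereas you fix $\alpha=1$ and use radial unboundedness to guarantee $V(x,1)$ has positive even degree---a clean way to sidestep the genericity argument.
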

\begin{proof}
Let us first show that the system is GAS. Consider the Lyapunov
function
\begin{equation}\nonumber
V(x,y)=\ln(1+x^2)+y^2,
\end{equation}
which clearly vanishes at the origin, is strictly positive for all
$(x,y)\neq(0,0)$, and is radially unbounded. The derivative of
$V(x,y)$ along the trajectories of (\ref{eq:counterexample.sys})
is given by
\begin{equation}\nonumber
\begin{array}{rll}
\dot{V}(x,y)&=&\frac{\partial{V}}{\partial{x}}\dot{x}+\frac{\partial{V}}{\partial{y}}\dot{y} \\
%\ &\ &\ \\
\ &=&\frac{2x^2(y-1)}{1+x^2}-2y^2 \\
\ &=&-\frac{x^2 + 2y^2 + x^2 y^2 + (x-xy)^2}{1+x^2},
\end{array}
\end{equation}
which is obviously strictly negative for all $(x,y)\neq(0,0)$. In
view of Lyapunov's stability theorem (see e.g.~\cite[p.
124]{Khalil:3rd.Ed}), this shows that the origin is globally
asymptotically stable.

Let us now prove that no positive definite polynomial Lyapunov
function (of any degree) can decrease along the trajectories of
system (\ref{eq:counterexample.sys}).
%\footnote{In fact our argument shows the
%  following stronger statement: No (nonconstant) nonnegative
%  polynomial function can decrease along all trajectories of system
%  (\ref{eq:counterexample.sys}), even without a radially unboundedness
%  requirement.}
The proof will be based on simply considering the value of a
candidate Lyapunov function at two specific points. We will look
at trajectories on the nonnegative orthant, with initial
conditions on the line $(k,\alpha k)$ for some constant
$\alpha>0$, and then observe the location of the crossing of the
trajectory with the horizontal line $y=\alpha$. We will argue that
by taking $k$ large enough, the trajectory will have to travel
``too far east'' (see Figure~\ref{fig:trajectories}) and this will
make it impossible for any polynomial Lyapunov function to
decrease.

\begin{figure}%[thpb]
\centering
\includegraphics[width=.6\columnwidth]{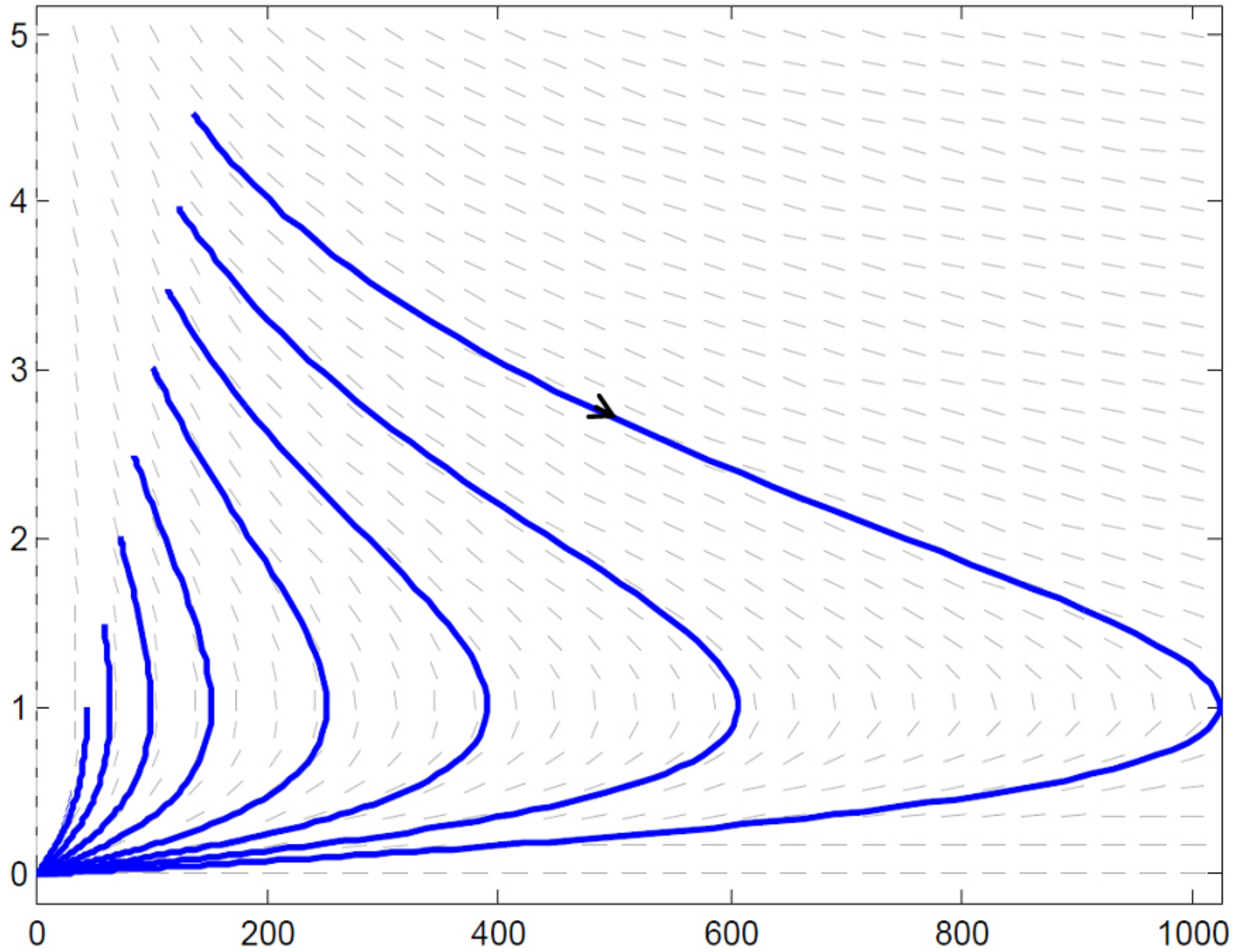}
\caption{Typical trajectories of the vector field in
(\ref{eq:counterexample.sys}) starting from initial conditions in
the nonnegative orthant.} \label{fig:trajectories}
\end{figure}

To do this formally, we start by noting that we can explicitly
solve
%\footnote{It is easy to give a slightly longer proof of
%Theorem~\ref{thm:GAS.poly.no.poly.Lyap} without computing the
%solutions explicitly.}
for the solution $(x(t),y(t))$ of the vector field in
(\ref{eq:counterexample.sys}) starting from any initial condition
$(x(0),y(0))$:

\begin{equation}\label{eq:explicit.soln}
\begin{array}{rll}
x(t)&=&x(0) e^{[y(0)-y(0)e^{-t}-t]}\\
y(t)&=&y(0) e^{-t}.
\end{array}
\end{equation}
Consider initial conditions
\begin{equation}\nonumber
(x(0),y(0))=(k,\alpha k)
\end{equation}
parameterized by $k>1$ and for some fixed constant $\alpha>0$.
From the explicit solution in (\ref{eq:explicit.soln}) we have
that the time $t^*$ it takes for the trajectory to cross the line
$y=\alpha$ is
$$t^*=\ln (k),$$ and that the location of this crossing is given by
$$(x(t^*),y(t^*))=(e^{\alpha(k-1)},\alpha).$$
Consider now any candidate nonnegative polynomial function
$V(x,y)$ that depends on both $x$ and $y$ (as any Lyapunov
function should). Since $k>1$ (and thus, $t^* > 0$), for $V(x,y)$
to be a valid Lyapunov function, it must satisfy $V(x(t^*),y(t^*))
< V(x(0),y(0))$, i.e.,
\begin{equation}\nonumber
V(e^{\alpha(k-1)},\alpha)<V(k,\alpha k).
\end{equation}
However, this inequality cannot hold for $k$ large enough, since
for a generic fixed $\alpha$, the left hand side grows
exponentially in $k$ whereas the right hand side grows only
polynomially in $k$. The only subtlety arises from the fact that
$V(e^{\alpha(k-1)},\alpha)$ could potentially be a constant for
some particular choices of $\alpha$. However, for any polynomial
$V(x,y)$ with nontrivial dependence on $y$, this may happen for at
most finitely many values of $\alpha$. Therefore, any generic
choice of $\alpha$ would make the argument work.
\end{proof}

\paragraph{Example of Bacciotti and Rosier.} After our
counterexample above was submitted for publication, Christian
Ebenbauer brought to our attention an earlier counterexample of
Bacciotti and Rosier~\cite[Prop.
5.2]{Bacciotti.Rosier.Liapunov.Book} that achieves the same goal
(though by using irrational coefficients). We will explain the
differences between the two examples below. At the time of
submission of our result, we were under the impression that no
such examples were known, partly because of a recent reference in
the controls literature that ends its conclusion with the
following
statement~\cite{Peet.Antonis.converse.sos.CDC},~\cite{Peet.Antonis.converse.sos.journal}:

\begin{itemize}
\item[] ``Still unresolved is the fundamental question of whether
\emph{globally} stable vector fields will also admit
sum-of-squares Lyapunov functions.''
\end{itemize}

In~\cite{Peet.Antonis.converse.sos.CDC},~\cite{Peet.Antonis.converse.sos.journal},
what is referred to as a sum of squares Lyapunov function (in
contrast to our terminology here) is a Lyapunov function that is a
sum of squares, with no sos requirements on its derivative.
Therefore, the fundamental question referred to above is on
existence of a polynomial Lyapunov function. If one were to exist,
then we could simply square it to get another polynomial Lyapunov
function that is a sum of squares (see Lemma~\ref{lem:W=V^2}).

The example of Bacciotti and Rosier is a vector field in $2$
variables and degree $5$ that is GAS but has no polynomial (and no
analytic) Lyapunov function even around the origin. Their very
clever construction is complementary to our example in the sense
that what creates trouble for existence of polynomial Lyapunov
functions in our Theorem~\ref{thm:GAS.poly.no.poly.Lyap} is growth
rates \emph{arbitrarily far} away from the origin, whereas the
problem arising in their example is slow decay rates
\emph{arbitrarily close} to the origin. The example crucially
relies on a parameter that appears as part of the coefficients of
the vector field being \emph{irrational}. (Indeed, one easily sees
that if that parameter is rational, their vector field does admit
a polynomial Lyapunov function.) In practical applications where
computational techniques for searching over Lyapunov functions on
finite precision machines are used, such issues with irrationality
of the input cannot occur. By contrast, the example in
(\ref{eq:counterexample.sys}) is much less contrived and
demonstrates that non-existence of polynomial Lyapunov functions
can happen for extremely simple systems that may very well appear
in applications.

%Since existence of an \emph{analytic} Lyapunov function on an open
%set around the origin implies existence of a polynomial one *, and
%since the example Bacciotti and Rosier admits no polynomial
%Lyapunov function even locally, it follows that their vector field
%also does not admit an analytic Lyapunov function *.
In~\cite{Peet.exp.stability}, Peet has shown that locally
exponentially stable polynomial vector fields admit polynomial
Lyapunov functions on compact sets. The example of Bacciotti and
Rosier implies that the assumption of exponential stability indeed
cannot be dropped.

\section{(Non)-existence of sum of squares Lyapunov
functions}\label{sec:(non)-existence.sos.lyap}

In this section, we suppose that the polynomial vector field at
hand admits a polynomial Lyapunov function, and we would like to
investigate whether such a Lyapunov function can be found with sos
programming. In other words, we would like to see whether the
constrains in (\ref{eq:V.SOS}) and (\ref{eq:-Vdot.SOS}) are more
conservative than the true Lyapunov inequalities in
(\ref{eq:V.positive}) and (\ref{eq:Vdot.negative}). We think of
the sos Lyapunov conditions in (\ref{eq:V.SOS}) and
(\ref{eq:-Vdot.SOS}) as sufficient conditions for the strict
inequalities in (\ref{eq:V.positive}) and (\ref{eq:Vdot.negative})
even though sos decomposition in general merely guarantees
non-strict inequalities. The reason for this is that when an sos
feasibility problem is strictly feasible, the polynomials returned
by interior point algorithms are automatically positive definite
(see~\cite[p. 41]{AAA_MS_Thesis} for more discussion).\footnote{We
expect the reader to recall the basic definitions and concepts
from Subsection~\ref{subsec:nonnegativity.sos.basics} of the
previous chapter. Throughout, when we say a Lyapunov function (or
the negative of its derivative) is \emph{positive definite}, we
mean that it is positive everywhere except possibly at the
origin.}

We shall emphasize that existence of nonnegative polynomials that
are not sums of squares does not imply on its own that the sos
conditions in (\ref{eq:V.SOS}) and (\ref{eq:-Vdot.SOS}) are more
conservative than the Lyapunov inequalities in
(\ref{eq:V.positive}) and (\ref{eq:Vdot.negative}). Since Lyapunov
functions are not in general unique, it could happen that within
the set of valid polynomial Lyapunov functions of a given degree,
there is always at least one that satisfies the sos conditions
(\ref{eq:V.SOS}) and (\ref{eq:-Vdot.SOS}). Moreover, many of the
known examples of nonnegative polynomials that are not sos have
multiple zeros and local minima~\cite{Reznick} and therefore
cannot serve as Lyapunov functions. Indeed, if a function has a
local minimum other than the origin, then its value evaluated on a
trajectory starting from the local minimum would not be
decreasing.

%On the other hand, the asymptotic picture is known to be quite
%different. Blekherman~\cite{BlekhermanSOS} has shown that for
%fixed degree $d\geq4$, as the number of variables $n$ goes to
%infinity, most nonnegative polynomials are in fact not sums of
%squares. However, the values of $n$ for which the asymptotics of
%these results kick in are

%A related question is Hilbert's 17th problem which asks if every
%nonnegative polynomial can be represented as a sum of squares of
%rational functions. The question was answered in the affirmative
%by Artin; see e.g.~\cite{Reznick}. Although we do not know how to
%check for such a decomposition via semidefinite programming, there
%are results on uniformity of denominators in this representation,
%such as the result of Reznick in~\cite{Reznick_Unif_denominator},
%that give us a hierarchy of semidefinite programs that approximate
%the set of nonnegative polynomials with increasing accuracy. In
%fact, the recent result of Scheiderer (Theorem~\ref{thm:claus})
%which we will crucially use in
%Section~\ref{sec:converse.sos.results} is a result of this type.

\subsection{A motivating example}\label{subsec:motivating.example}

The following example will help motivate the kind of questions
that we are addressing in this section.
\begin{example}\label{ex:poly8}
Consider the dynamical system
\begin{equation} \label{eq:poly8}
\begin{array}{lll}
\dot{x_{1}}&=&-0.15x_1^7+200x_1^6x_2-10.5x_1^5x_2^2-807x_1^4x_2^3\\
\ &\ &+14x_1^3x_2^4+600x_1^2x_2^5-3.5x_1x_2^6+9x_2^7
\\
\ &\ & \ \\
\dot{x_{2}}&=&-9x_1^7-3.5x_1^6x_2-600x_1^5x_2^2+14x_1^4x_2^3\\
\ &\ &+807x_1^3x_2^4-10.5x_1^2x_2^5-200x_1x_2^6-0.15x_2^7.
\end{array}
\end{equation}
A typical trajectory of the system that starts from the initial
condition $x_0=(2, 2)^T$ is plotted in Figure~\ref{fig:poly8}. Our
goal is to establish global asymptotic stability of the origin by
searching for a polynomial Lyapunov function. Since the vector
field is homogeneous, the search can be restricted to homogeneous
Lyapunov functions~\cite{Hahn_stability_book},~\cite{HomogHomog}.
To employ the sos technique, we can use the software package
SOSTOOLS~\cite{sostools} to search for a Lyapunov function
satisfying the sos conditions (\ref{eq:V.SOS}) and
(\ref{eq:-Vdot.SOS}). However, if we do this, we will not find any
Lyapunov functions of degree $2$, $4$, or $6$. If needed, a
certificate from the dual semidefinite program can be obtained,
which would prove that no polynomial of degree up to $6$ can
satisfy the sos requirements (\ref{eq:V.SOS}) and
(\ref{eq:-Vdot.SOS}).

At this point we are faced with the following question. Does the
system really not admit a Lyapunov function of degree $6$ that
satisfies the true Lyapunov inequalities in (\ref{eq:V.positive}),
(\ref{eq:Vdot.negative})? Or is the failure due to the fact that
the sos conditions in (\ref{eq:V.SOS}), (\ref{eq:-Vdot.SOS}) are
more conservative?

Note that when searching for a degree $6$ Lyapunov function, the
sos constraint in (\ref{eq:V.SOS}) is requiring a homogeneous
polynomial in $2$ variables and of degree $6$ to be a sum of
squares. The sos condition (\ref{eq:-Vdot.SOS}) on the derivative
is also a condition on a homogeneous polynomial in $2$ variables,
but in this case of degree $12$. (This is easy to see from
$\dot{V}=\langle \nabla V,f \rangle$.) Recall from
Theorem~\ref{thm:Hilbert} of the previous chapter that
nonnegativity and sum of squares are equivalent notions for
homogeneous bivariate polynomials, irrespective of the degree.
Hence, we now have a proof that this dynamical system truly does
not have a Lyapunov function of degree $6$ (or lower).

This fact is perhaps geometrically intuitive.
Figure~\ref{fig:poly8} shows that the trajectory of this system is
stretching out in $8$ different directions. So, we would expect
the degree of the Lyapunov function to be at least $8$. Indeed,
when we increase the degree of the candidate function to $8$,
SOSTOOLS and the SDP solver SeDuMi~\cite{sedumi} succeed in
finding the following Lyapunov function:
\begin{eqnarray}\nonumber
V(x)&=&0.02x_1^8+0.015x_1^7x_2+1.743x_1^6x_2^2-0.106x_1^5x_2^3
\nonumber \\
 \ &\ &-3.517x_1^4x_2^4+0.106x_1^3x_2^5+1.743x_1^2x_2^6
 \nonumber\\
 \ &\ &-0.015x_1x_2^7+0.02x_2^8. \nonumber
\end{eqnarray}
\begin{figure}%[thpb]
\centering \scalebox{0.38} {\includegraphics{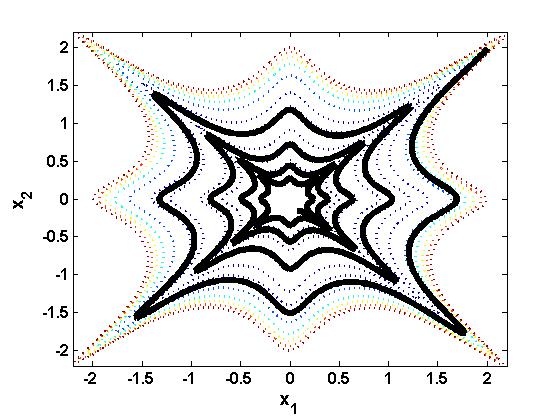}}
\caption{A typical trajectory of the vector filed in
Example~\ref{ex:poly8} (solid), level sets of a degree $8$
polynomial Lyapunov function (dotted).} \label{fig:poly8}
\end{figure}
The level sets of this Lyapunov function are plotted in
Figure~\ref{fig:poly8} and are clearly invariant under the
trajectory. $\triangle$
\end{example}

\subsection{A counterexample}\label{subsec:the.counterexample}
Unlike the scenario in the previous example, we now show that a
failure in finding a Lyapunov function of a particular degree via
sum of squares programming can also be due to the gap between
nonnegativity and sum of squares. What will be conservative in the
following counterexample is the sos condition on the
derivative.\footnote{This counterexample has appeared in our
earlier work~\cite{AAA_MS_Thesis} but not with a complete proof.}

%Recall that every time we search for a Lyapunov function using
%sos programming, we use the sos relaxation twice. First for
%nonnegativity of $V$, and second for nonpositivity of $\dot{V}$.
%Each of these relaxations may in general be conservative. Many of
%the well-known examples of nonnegative polynomials that are not
%sos have more than one local minimum~\cite{Reznick}. On the other
%hand, for continuous time systems, Lyapunov functions cannot have
%local minima, except for a single global minimum at the origin. To
%see this, not that that if a trajectory starts exactly at the
%local minima of $V$, irrespective of the direction in which $f$
%moves the trajectory, the Lyapunov function will locally
%increase\footnote{There are Lyapunov functions that are not
%required to decrease monotonically but still prove stability of
%dynamical systems; see~\cite{AAA_MS_Thesis}. These Lyapunov
%functions are allowed to increase locally and therefore can, in
%theory, have local minimas.}. As a consequence, many of the
%well-known nonnegative polynomials that are not sos cannot serve
%as Lyapunov functions. On the other hand, the following example
%shows that an sos relaxation on $\dot{V}$ can be conservative.

Consider the dynamical system
\begin{equation} \label{eq:sos.conservative.dynamics}
\begin{array}{lll}
\dot{x_{1}}&=&-x_1^3x_2^2+2x_1^3x_2-x_1^3+4x_1^2x_2^2-8x_1^2x_2+4x_1^2 \\
\ &\ &-x_1x_2^4+4x_1x_2^3-4x_1+10x_2^2
\\ \ &\ & \ \\
\dot{x_{2}}&=&-9x_1^2x_2+10x_1^2+2x_1x_2^3-8x_1x_2^2-4x_1-x_2^3
\\
\ &\ &+4x_2^2-4x_2.
\end{array}
\end{equation}
One can verify that the origin is the only equilibrium point for
this system, and therefore it makes sense to investigate global
asymptotic stability. If we search for a quadratic Lyapunov
function for (\ref{eq:sos.conservative.dynamics}) using sos
programming, we will not find one. It will turn out that the
corresponding semidefinite program is infeasible. We will prove
shortly why this is the case, i.e, why no quadratic function $V$
can satisfy
\begin{equation}\label{eq:both.sos.conditions}
\begin{array}{rl}
V & \mbox{sos} \\
-\dot{V} & \mbox{sos.}
\end{array}
\end{equation}
Nevertheless, we claim that
\begin{equation}\label{eq:V.0.5x1^2+.5x2^2}
V(x)=\frac{1}{2}x_1^2+\frac{1}{2}x_2^2
\end{equation}
is a valid Lyapunov function. Indeed, one can check that
\begin{equation}\label{eq:Vdot.-Motzkin}
\dot{V}(x)=x_1\dot{x}_1+x_2\dot{x}_2=-M(x_1-1,x_2-1),
\end{equation}
where $M(x_1,x_2)$ is the Motzkin polynomial~\cite{MotzkinSOS}:
\begin{equation}\nonumber%\label{eq:Motzkin}
M(x_1,x_2)=x_1^4x_2^2+x_1^2x_2^4-3x_1^2x_2^2+1.
\end{equation}
This polynomial is just a dehomogenized version of the Motzkin
form presented before, and it has the property of being
nonnegative but not a sum of squares. The polynomial $\dot{V}$ is
strictly negative everywhere, except for the origin and three
other points $(0,2)^{T}$, $(2,0)^{T}$, and $(2,2)^{T}$, where
$\dot{V}$ is zero. However, at each of these three points we have
$\dot{x}\neq0$. Once the trajectory reaches any of these three
points, it will be kicked out to a region where $\dot{V}$ is
strictly negative. Therefore, by LaSalle's invariance principle
(see e.g. \cite[p. 128]{Khalil:3rd.Ed}), the quadratic Lyapunov
function in (\ref{eq:V.0.5x1^2+.5x2^2}) proves global asymptotic
stability of the origin of (\ref{eq:sos.conservative.dynamics}).

\begin{figure}[h]
\begin{center}
    \mbox{
      \subfigure[Shifted Motzkin polynomial is nonnegative but not sos.]
      {\label{subfig:shifted.Motzkin}\scalebox{0.28}{\includegraphics{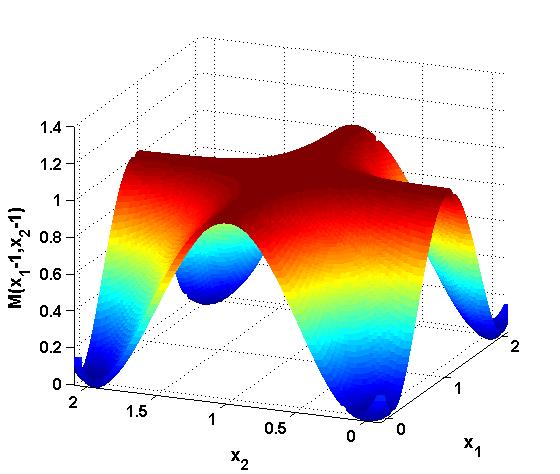}}}}
\mbox{
      \subfigure[Typical trajectories of (\ref{eq:sos.conservative.dynamics}) (solid), level sets of $V$ (dotted).]
      {\label{subfig:sos.hurt.trajec}\scalebox{0.25}{\includegraphics{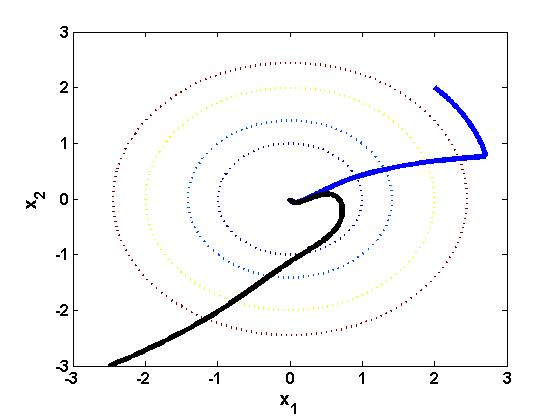}}}

      \subfigure[Level sets of a quartic Lyapunov function found through sos~programming.]
      {\label{subfig:sos.hurt.Lyap4}\scalebox{0.25}{\includegraphics{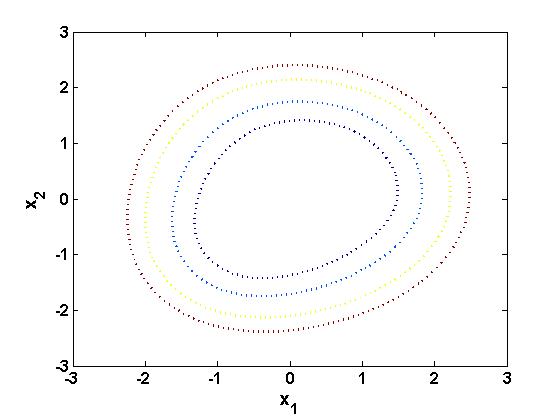}}} }

    \caption{The quadratic polynomial $\frac{1}{2}x_1^2+\frac{1}{2}x_2^2$ is a valid Lyapunov function for the vector field in (\ref{eq:sos.conservative.dynamics}) but it is not detected through sos programming.}
\label{fig:sos.hurt.Motzkin}
\end{center}
\end{figure}

The fact that $\dot{V}$ is zero at three points other than the
origin is not the reason why sos programming is failing. After
all, when we impose the condition that $-\dot{V}$ should be sos,
we allow for the possibility of a non-strict inequality. The
reason why our sos program does not recognize
(\ref{eq:V.0.5x1^2+.5x2^2}) as a Lyapunov function is that the
shifted Motzkin polynomial in (\ref{eq:Vdot.-Motzkin}) is
nonnegative but it is not a sum of squares. This sextic polynomial
is plotted in Figure~\ref{subfig:shifted.Motzkin}. Trajectories of
(\ref{eq:sos.conservative.dynamics}) starting at $(2,2)^{T}$ and
$(-2.5,-3)^{T}$ along with level sets of $V$ are shown in
Figure~\ref{subfig:sos.hurt.trajec}.

So far, we have shown that $V$ in (\ref{eq:V.0.5x1^2+.5x2^2}) is a
valid Lyapunov function but does not satisfy the sos conditions in
(\ref{eq:both.sos.conditions}). We still need to show why no other
quadratic Lyapunov function
\begin{equation}\label{eq:U(x).general.quadratic}
U(x)=c_1x_1^2+c_2x_1x_2+c_3x_2^2
\end{equation}
can satisfy the sos conditions either.\footnote{Since we can
assume that the Lyapunov function $U$ and its gradient vanish at
the origin, linear or constant terms are not needed in
(\ref{eq:U(x).general.quadratic}).} We will in fact prove the
stronger statement that $V$ in (\ref{eq:V.0.5x1^2+.5x2^2}) is the
only valid quadratic Lyapunov function for this system up to
scaling, i.e., any quadratic function $U$ that is not a scalar
multiple of $\frac{1}{2}x_1^2+\frac{1}{2}x_2^2$ cannot satisfy
$U\geq0$ and $-\dot{U}\geq0$. It will even be the case that no
such $U$ can satisfy $-\dot{U}\geq0$ alone. (The latter fact is to
be expected since global asymptotic stability of
(\ref{eq:sos.conservative.dynamics}) together with $-\dot{U}\geq0$
would automatically imply $U\geq0$; see~\cite[Theorem
1.1]{AAA_PP_ACC11_Lyap_High_Deriv}.)

So, let us show that $-\dot{U}\geq0$ implies $U$ is a scalar
multiple of $\frac{1}{2}x_1^2+\frac{1}{2}x_2^2$. Because Lyapunov
functions are closed under positive scalings, without loss of
generality we can take $c_1~=~1$. One can check that
$$-\dot{U}(0,2)=-80c_2,$$ so to have $-\dot{U}\geq0$, we need
$c_2\leq0$. Similarly, $$-\dot{U}(2,2)=-288c_1+288c_3,$$ which
implies that $c_3\geq1$. Let us now look at
\begin{equation}\label{eq:-Udot(x1,1)}
\begin{array}{lll}
-\dot{U}(x_1,1)&=& -c_2x_1^3 + 10c_2x_1^2 + 2c_2x_1 - 10c_2 -
2c_3x_1^2\\
\ &\ & + 20c_3x_1 + 2c_3 + 2x_1^2 - 20x_1.
\end{array}
\end{equation}
If we let $x_1\rightarrow -\infty$, the term $-c_2x_1^3$ dominates
this polynomial. Since $c_2\leq0$ and $-\dot{U}\geq0$, we conclude
that $c_2=0$. Once $c_2$ is set to zero in (\ref{eq:-Udot(x1,1)}),
the dominating term for $x_1$ large will be $(2-2c_3)x_1^2$.
Therefore to have $-\dot{U}(x_1,1)\geq0$ as
$x_1\rightarrow\pm\infty$ we must have $c_3\leq1$. Hence, we
conclude that $c_1=1, c_2=0, c_3=1$, and this finishes the proof.

Even though sos programming failed to prove stability of the
system in (\ref{eq:sos.conservative.dynamics}) with a quadratic
Lyapunov function, if we increase the degree of the candidate
Lyapunov function from $2$ to $4$, then SOSTOOLS succeeds in
finding a quartic Lyapunov function
\begin{eqnarray}\nonumber %\label{eq:sos.hurt.lyap.fn.poly4}
W(x)&=&0.08x_1^4-0.04x_1^3+0.13x_1^2x_2^2+0.03x_1^2x_2
\nonumber \\
 \ &\
 &+0.13x_1^2+0.04x_1x_2^2-0.15x_1x_2\nonumber
 \\
 \ &\ &+0.07x_2^4-0.01x_2^3+0.12x_2^2, \nonumber
\end{eqnarray}
which satisfies the sos conditions in
(\ref{eq:both.sos.conditions}). The level sets of this function
are close to circles and are plotted in
Figure~\ref{subfig:sos.hurt.Lyap4}.

Motivated by this example, it is natural to ask whether it is
always true that upon increasing the degree of the Lyapunov
function one will find Lyapunov functions that satisfy the sum of
squares conditions in (\ref{eq:both.sos.conditions}). In the next
subsection, we will prove that this is indeed the case, at least
for planar systems such as the one in this example, and also for
systems that are homogeneous.

%statement is indeed true at least for homogeneous vector fields.

%We should mention that the example we just described was contrived
%to make our point. For many practical problems, sos programming
%has provably shown to be a powerful technique~\cite{PabloLyap},
%\cite{PapP02}, \cite{PraP03}. There are some recent
%results~\cite{Blekhermansos}, however, that show for a fixed
%degree, as the dimension goes up the gap between nonnegativity and
%sos broadens. The extent to which this can impact existence of
%Lyapunov functions in higher dimensions is yet to be investigated.

\subsection{Converse sos Lyapunov
theorems}\label{subsec:converse.sos.results}

In~\cite{Peet.Antonis.converse.sos.CDC},
\cite{Peet.Antonis.converse.sos.journal}, it is shown that if a
system admits a polynomial Lyapunov function, then it also admits
one that is a sum of squares. However, the results there do not
lead to any conclusions as to whether the negative of the
derivative of the Lyapunov function is sos, i.e, whether condition
(\ref{eq:-Vdot.SOS}) is satisfied. As we remarked before, there is
therefore no guarantee that the semidefinite program can find such
a Lyapunov function. Indeed, our counterexample in the previous
subsection demonstrated this very phenomenon.

The proof technique used
in~\cite{Peet.Antonis.converse.sos.CDC},\cite{Peet.Antonis.converse.sos.journal}
is based on approximating the solution map using the Picard
iteration and is interesting in itself, though the actual
conclusion that a Lyapunov function that is sos exists has a far
simpler proof which we give in the next lemma.

\begin{lemma}\label{lem:W=V^2}
If a polynomial dynamical system has a positive definite
polynomial Lyapunov function $V$ with a negative definite
derivative $\dot{V}$, then it also admits a positive definite
polynomial Lyapunov function $W$ which is a sum of squares.
\end{lemma}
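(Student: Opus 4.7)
The plan is to exhibit $W$ explicitly by taking $W = V^2$. This immediately makes $W$ a sum of squares (indeed, a single square) of a polynomial, so the sos requirement is free. The rest of the proof then just amounts to verifying that $W$ still qualifies as a positive definite Lyapunov function, which follows from elementary sign-chasing using the hypotheses on $V$ and $\dot{V}$.

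First, I would check positive definiteness of $W$: since $V$ is a positive definite polynomial, $V(x) = 0$ iff $x = 0$, so $W(x) = V(x)^2 \geq 0$ with equality iff $x = 0$; moreover $W(0) = 0$. Hence $W$ is a positive definite polynomial that vanishes at the origin, and since it is a square it is manifestly sos.

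Next, I would compute $\dot{W}$ along the trajectories of the system. By the chain rule, $\dot{W}(x) = \langle \nabla W(x), f(x)\rangle = 2 V(x) \langle \nabla V(x), f(x)\rangle = 2 V(x)\,\dot{V}(x)$. For any $x \neq 0$, the hypothesis gives $V(x) > 0$ and $\dot{V}(x) < 0$, so $\dot{W}(x) < 0$; and $\dot{W}(0) = 0$. Thus $\dot{W}$ is negative definite, so $W$ is a valid Lyapunov function establishing (global or local, matching whatever $V$ gave) asymptotic stability.

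There is essentially no obstacle here; the only subtlety worth flagging is that this construction says nothing about whether $-\dot{W}$ is a sum of squares. In particular, this lemma addresses the question of existence of an sos Lyapunov function in the sense of condition (\ref{eq:V.SOS}) only, and is exactly why the counterexample in Subsection~\ref{subsec:the.counterexample}—which obstructs (\ref{eq:-Vdot.SOS})—is not ruled out by Lemma~\ref{lem:W=V^2}. If one also wanted $-\dot{W}$ to be sos, the squaring trick would not suffice, since $-\dot{W} = -2V\dot{V}$ need not be sos even when $V$ and $-\dot{V}$ are both nonnegative polynomials.
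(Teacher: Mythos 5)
Your proposal is correct and coincides with the paper's own proof: take $W=V^2$, which is trivially sos and positive definite, and observe that $-\dot{W}=-2V\dot{V}$ is positive definite (though possibly not sos). Your closing remark about $-\dot{W}$ not being sos is exactly the caveat the paper makes as well.
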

\begin{proof}
Take $W=V^2$. The negative of the derivative $-\dot{W}=-2V\dot{V}$
is clearly positive definite (though it may not be sos).
\end{proof}

We will next prove a converse sos Lyapunov theorem that guarantees
the derivative of the Lyapunov function will also satisfy the sos
condition, though this result is restricted to homogeneous
systems. The proof of this theorem relies on the following
Positivstellensatz result due to Scheiderer.

\begin{theorem}[Scheiderer,~\cite{Claus_Hilbert17}] \label{thm:claus}
Given any two positive definite homogeneous polynomials $p$ and
$q$, there exists an integer $k$ such that $pq^k$ is a sum of
squares.
\end{theorem}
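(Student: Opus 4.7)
The theorem is a Positivstellensatz-type result for positive definite forms on projective space, and my plan would center on Reznick's uniform denominator theorem as the main hammer. Specifically, Reznick's theorem (already used earlier in the thesis) gives us the special case $q = R := x_1^2 + \cdots + x_n^2$: for any positive definite form $p$, there exists $N$ such that $R^N p$ is a sum of squares. The whole content of Theorem~\ref{thm:claus} lies in replacing the distinguished form $R$ by an arbitrary positive definite form $q$ at the cost of enlarging the exponent.

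First, I would reduce to working on the unit sphere via homogeneity. Since $p$ and $q$ are PD and homogeneous, by compactness there exist constants $0 < c \leq C$ such that $c R^{e_p} \leq p \leq C R^{e_p}$ and $c R^{e_q} \leq q \leq C R^{e_q}$ pointwise on $\mathbb{R}^n$, where $2e_p = \deg p$ and $2e_q = \deg q$ (positive definiteness forces even degree). In particular, the form $q - \tfrac{c}{2} R^{e_q}$ is itself positive definite. Applying Reznick to this form gives an integer $M$ such that
\begin{equation*}
R^M \bigl(q - \tfrac{c}{2} R^{e_q}\bigr) = \sigma_0,
\end{equation*}
for some sos form $\sigma_0$, which rearranges to $R^M q = \tfrac{c}{2} R^{M+e_q} + \sigma_0$. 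This identity is the key algebraic link between $q$ and powers of $R$: it expresses $R^M q$ as an sos form plus a positive multiple of a higher power of $R$.

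Next I would try to iterate this identity to ``trade'' powers of $R$ for powers of $q$. Multiplying the identity by itself and combining with the Reznick decomposition $R^N p = \sigma_p$ (with $\sigma_p$ sos), one aims to express $p q^k$, after a suitable number of trades, as a nonnegative combination of products of sos forms --- hence sos --- provided $k$ is large enough to absorb all the $R$ factors that accumulate. The careful bookkeeping here is the main obstacle: naively one ends up showing $R^L p q^k$ is sos for various $L$, and the nontrivial step is to ensure the exponent of $R$ can be driven to zero. I would expect this to require a more delicate induction on degree, possibly exploiting the fact that the sos cone is closed (Robinson's theorem, recalled in Chapter~\ref{chap:convexity.sos.convexity}) together with a limiting argument in which a putative non-sos $p q^k$ is separated from the sos cone by a dual functional, and then the positivity of $p$ and $q$ on the sphere is used to derive a contradiction from letting $k \to \infty$.

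An alternative approach I would keep in reserve is more geometric: work directly on the compact projective variety $\mathbb{P}^{n-1}$, where PD forms correspond to strictly positive sections of line bundles. On a compact base, sums of squares decompositions tend to be governed by local-to-global principles (partitions of unity, \L{}ojasiewicz inequalities), and the passage from $q$ PD to $q^k$ mirrors the classical technique of using high tensor powers of an ample line bundle to make global sections positive. If the direct Reznick-based trading argument turns out to be intractable, this projective/sheaf-theoretic viewpoint --- which is the spirit of Scheiderer's original work --- would be my fallback. Either way, I expect the crux of the argument to be the uniform estimate that controls how many factors of $q$ are needed to swallow a given power of $R$.
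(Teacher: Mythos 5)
There is no internal proof to compare against here: the thesis quotes this statement as an external theorem of Scheiderer~\cite{Claus_Hilbert17} and uses it as a black box, so the only question is whether your sketch amounts to a proof, and it does not. The crux you yourself flag---driving the exponent of $R=x_1^2+\cdots+x_n^2$ down to zero---\emph{is} the entire content of the theorem, and neither of your mechanisms resolves it. Your toolkit consists of the Reznick certificate $R^N p=\sigma_p$ and the identity $R^M q=\tfrac{c}{2}R^{M+e_q}+\sigma_0$ with $\sigma_p,\sigma_0$ sos; combining these by sums and products with sos coefficients, every term that involves $p$ necessarily carries a positive power of $R$, because the only access you have to $p$ is through $\sigma_p=R^Np$. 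Hence all such bookkeeping can ever certify is that $R^L p q^k$ is sos for some $L>0$---which is already trivially true without any trading, since $R^N p\, q^{2j}$ is a product of an sos form and a perfect square. You cannot then divide out $R^L$: an sos form divisible by $R$ need not have an sos quotient (indeed, that is precisely why Reznick multipliers are useful in the first place). The natural binomial variant, writing $q=\epsilon R^{e_q}+(q-\epsilon R^{e_q})$ and expanding $q^k$, fails for the same structural reason: odd powers of the merely psd form $q-\epsilon R^{e_q}$ need not be sos.

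The fallback arguments do not close the gap either. Robinson's closedness of the sos cone~\cite{RobinsonSOS} is a statement about forms of a \emph{fixed} degree in a fixed number of variables, whereas $pq^k$ changes degree with $k$; there is no fixed cone in which to take a limit, and a sequence of separating functionals living in dual spaces of different degrees has no meaningful limit as $k\to\infty$, so the separation/contradiction sketch is not an argument. The projective ``ample line bundle'' picture is indeed the spirit of Scheiderer's actual proof (Archimedean local--global principles for sums of squares on compact real varieties), but gesturing at it is not a proof, and the theorem is not known to follow from Reznick's uniform denominator theorem~\cite{Reznick_Unif_denominator} by elementary manipulation---it is a genuinely deeper Positivstellensatz. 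For the purposes of this thesis the correct move is simply to cite Scheiderer; a self-contained proof would require importing his machinery, not a trading argument between $R$ and $q$.
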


\begin{theorem}\label{thm:poly.lyap.then.sos.lyap}
Given a homogeneous polynomial vector field, suppose there exists
a homogeneous polynomial Lyapunov function $V$ such that $V$ and
$-\dot{V}$ are positive definite. Then, there also exists a
homogeneous polynomial Lyapunov function $W$ such that $W$ is sos
and $-\dot{W}$ is sos.
\end{theorem}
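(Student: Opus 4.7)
The plan is to exhibit $W$ as an appropriate even power of $V$, namely $W = V^{2k+2}$ for a sufficiently large $k$ provided by Scheiderer's theorem. The choice of an even exponent makes $W = (V^{k+1})^2$ automatically a sum of squares, so the entire difficulty is pushed into the derivative condition, where Scheiderer's theorem (Theorem~\ref{thm:claus}) is precisely the right tool.

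The key observation is that $V\cdot(-\dot V)$ is itself a positive definite homogeneous polynomial: it is homogeneous because $V$ is a form and $\dot V = \langle \nabla V, f\rangle$ is again a form (as $f$ is homogeneous), and it is positive for $x\neq 0$ since $V(x)>0$ and $-\dot V(x)>0$ for all $x\neq 0$ by hypothesis. Similarly $V^2$ is a positive definite homogeneous polynomial. Therefore, applying Theorem~\ref{thm:claus} with $p = V\cdot(-\dot V)$ and $q = V^2$ produces an integer $k \geq 0$ such that
\begin{equation}\nonumber
V\cdot(-\dot V)\cdot (V^2)^{k} \;=\; V^{2k+1}\,(-\dot V)
\end{equation}
is a sum of squares.

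With this $k$ in hand, define $W \mathrel{\mathop:}= V^{2k+2}$. Then $W = (V^{k+1})^2$ is trivially sos and homogeneous. A direct computation along trajectories gives
\begin{equation}\nonumber
-\dot W \;=\; -\langle \nabla W, f\rangle \;=\; -(2k+2)\,V^{2k+1}\,\dot V \;=\; (2k+2)\,V^{2k+1}\,(-\dot V),
\end{equation}
which is a positive scalar multiple of the sos polynomial obtained above, hence itself sos. Moreover, because $V$ is positive definite, so are $W = V^{2k+2}$ and $-\dot W$, so $W$ is a bona fide Lyapunov function.

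There is no real obstacle here once one notices the trick of multiplying $(-\dot V)$ by $V$ before invoking Scheiderer: this synchronizes the parity so that the exponent of $V$ appearing in $-\dot W$ (which is always one less than the exponent in $W$) comes out odd, exactly matching the form $V^{2k+1}(-\dot V)$ that Scheiderer's theorem certifies. A naive application with $p = -\dot V$, $q = V$ would only guarantee sos-ness of $V^{k}(-\dot V)$ for some $k$ of uncontrolled parity, and since we can only increase exponents by multiples of two (using that $V^2$ is trivially sos and products of sos polynomials are sos), the parity mismatch would block the argument. The reformulation with $p = V(-\dot V)$ and $q = V^2$ cleanly sidesteps this issue.
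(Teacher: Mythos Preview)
Your proof is correct and follows essentially the same approach as the paper: both apply Scheiderer's theorem (Theorem~\ref{thm:claus}) with $p = V\cdot(-\dot V)$ (the paper uses $-2V\dot V$, a harmless constant multiple) and $q = V^2$, then set $W = V^{2k+2}$ and verify that $W$ and $-\dot W$ are sos. Your added remark explaining the parity issue and why one multiplies by $V$ before invoking Scheiderer is a nice clarification that the paper leaves implicit.
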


\begin{proof}
Observe that $V^2$ and $-2V\dot{V}$ are both positive definite and
homogeneous polynomials. Applying Theorem~\ref{thm:claus} to these
two polynomials, we conclude the existence of an integer $k$ such
that $(-2V\dot{V})(V^2)^k$ is sos. Let $$W=V^{2k+2}.$$ Then, $W$
is clearly sos since it is a perfect even power. Moreover,
$$-\dot{W}=-(2k+2)V^{2k+1}\dot{V}=-(k+1)2V^{2k}V\dot{V}$$
is also sos by the previous claim.\footnote{Note that $W$
constructed in this proof proves GAS since $-\dot{W}$ is positive
definite and $W$ itself being homogeneous and positive definite is
automatically radially unbounded.}
\end{proof}

Next, we develop a similar theorem that removes the homogeneity
assumption from the vector field, but instead is restricted to
vector fields on the plane. For this, we need another result of
Scheiderer.

\begin{theorem}[{Scheiderer,~\cite[Cor. 3.12]{Claus_3vars_sos}}] \label{thm:claus.3vars}
Let $p\mathrel{\mathop:}=p(x_1,x_2,x_3)$ and
$q\mathrel{\mathop:}=q(x_1,x_2,x_3)$ be two homogeneous
polynomials in three variables, with $p$ positive semidefinite and
$q$ positive definite. Then, there exists an integer $k$ such that
$pq^k$ is a sum of squares.
\end{theorem}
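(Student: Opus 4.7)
The plan is to reduce the homogeneous statement to an affine local-global analysis in two variables, and then handle the obstruction to sos-ness at the zero set of $p$ using a Positivstellensatz-type argument with $q$ supplying the denominator. After a linear change of coordinates I may assume $q(0,0,1)\neq 0$, which is possible because $q$ is positive definite and hence vanishes nowhere on $\mathbb{R}^3\setminus\{0\}$. Dehomogenizing by setting $\tilde p(x_1,x_2) := p(x_1,x_2,1)$ and $\tilde q(x_1,x_2) := q(x_1,x_2,1)$, I obtain a psd polynomial $\tilde p$ on $\mathbb{R}^2$ and a strictly positive polynomial $\tilde q$ that grows at least like $\|x\|^{\deg q}$ at infinity (since $q$ is a pd form). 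If I can show that $\tilde p\, \tilde q^k$ is sos on $\mathbb{R}^2$ for some $k$, then rehomogenizing (and possibly multiplying by one extra copy of $q$ to fix degree parity) would finish the proof, and the rehomogenization step will work precisely because $\tilde q$'s growth at infinity lets an affine sos certificate lift to a homogeneous one.

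The central difficulty lives at the zero set $Z = \{\tilde p = 0\}\subset \mathbb{R}^2$, which generically is a finite collection of points at which $\tilde p$ attains its global minimum with positive semidefinite Hessian. Away from $Z$, one has $\tilde p > 0$ on compacta and standard Artin-style arguments produce sos representations modulo a positive denominator. Near each zero, one analyzes the local Taylor expansion of $\tilde p$: multiplication by $\tilde q^k$ rescales the leading behavior by a positive constant $\tilde q(z_0)^k$ and perturbs higher-order terms, and the hope is that this suffices to promote a local psd expansion into a local sos expansion. In the nondegenerate case (positive definite Hessian at the zero), a Morse-lemma style change of coordinates gives a local sos form immediately; the degenerate case of higher-order vanishing is the thorniest and likely requires either a blow-up/resolution-of-singularities argument or an induction on the order of vanishing, together with the observation that $\tilde q^k$ can absorb lower-order ``bad'' terms once $k$ is sufficiently large relative to that order.

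The main obstacle will be the globalization: patching the away-from-$Z$ and near-$Z$ representations into a single global sos certificate for $\tilde p\, \tilde q^k$ with one universal exponent $k$ that works everywhere simultaneously. I would attempt this through a partition-of-unity argument combined with Artin-style denominator clearing, exploiting compactness of $\mathbb{P}^2(\mathbb{R})$ (the natural home of ternary forms) after rehomogenization to reduce to finitely many local problems and then choosing $k$ to be the maximum of the exponents needed at the finitely many zeros of $p$ and on finitely many coordinate charts at infinity. That Hilbert's theorem already rules out the equality of psd and sos for ternary sextics (recall the Motzkin form) means the multiplier is genuinely necessary, so no purely dimensional or soft argument can suffice: some honest local analysis at degenerate zeros of $p$ is unavoidable, and this is where I expect the real work to lie.
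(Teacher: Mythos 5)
There is a genuine gap, and it sits exactly where you say you ``expect the real work to lie.'' First, a structural remark: the paper does not prove this statement at all --- it is quoted as a known theorem of Scheiderer (Cor.~3.12 of the cited reference), so there is no in-paper proof to compare against; the question is whether your outline could be completed, and as written it cannot. The central conceptual problem is your hope that multiplying by $\tilde q^k$ can ``promote a local psd expansion into a local sos expansion'' near a zero $z_0$ of $\tilde p$. Since $q$ is positive definite, $\tilde q$ is a positive unit at $z_0$, and a positive unit is a square in the analytic (or formal power series) local ring; hence $\tilde p\,\tilde q^k$ is locally a sum of squares at $z_0$ if and only if $\tilde p$ already is. The multiplier is powerless against the local obstruction, no matter how large $k$ is or how degenerate the zero. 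What actually makes the theorem true is the nontrivial fact that a psd germ in \emph{two} variables is a sum of squares of formal/analytic germs (a Scheiderer/Bochnak--Risler type result), so that no local obstruction exists in this dimension; your Morse-lemma and blow-up sketches do not supply this, and your plan offers no substitute. (A smaller issue: the zero set of a psd ternary form need not be finite --- e.g.\ $(x_1^2+x_2^2-x_3^2)^2$ vanishes on a curve --- so the ``generically finitely many points'' reduction needs an extra step of factoring out squares.)

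Second, the globalization you defer to ``a partition-of-unity argument combined with Artin-style denominator clearing'' is precisely the deep content of Scheiderer's theorem, not a routine patching step. Partitions of unity are unavailable in the polynomial category, and Artin's theorem, while providing \emph{some} sos representation with denominators, gives no control whatsoever that the denominator can be taken to be a power of the prescribed form $q$; the one classical tool that does give a uniform denominator (Reznick's theorem, which the paper invokes elsewhere with $q=\sum x_i^2$) requires $p$ to be positive \emph{definite}, which is exactly the hypothesis that fails here --- the whole point of the statement is to handle psd forms with zeros. Passing from ``locally sos at each zero of $p$, and strictly positive elsewhere'' to a single global polynomial identity $pq^k=\sum \sigma_i^2$ requires Scheiderer's local-global principle for sums of squares in the two-dimensional/compact setting, and that machinery (or an equivalent) is the missing idea; without it your outline restates the difficulty rather than resolving it.
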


\begin{theorem}\label{thm:poly.lyap.then.sos.lyap.PLANAR}
Given a (not necessarily homogeneous) polynomial vector field in
two variables, suppose there exists a positive definite polynomial
Lyapunov function $V,$ with $-\dot{V}$ positive definite, and such
that the highest order term of $V$ has no zeros\footnote{This
requirement is only slightly stronger than the requirement of
radial unboundedness, which is imposed on $V$ by Lyapunov's
theorem anyway.}. Then, there also exists a polynomial Lyapunov
function $W$ such that $W$ is sos and $-\dot{W}$ is sos.
\end{theorem}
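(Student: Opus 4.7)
The plan is to follow the pattern of the proof of Theorem~\ref{thm:poly.lyap.then.sos.lyap} for the homogeneous case, but with Theorem~\ref{thm:claus.3vars} (Scheiderer's Positivstellensatz for three-variable forms) replacing Theorem~\ref{thm:claus}. Since the vector field is planar, the homogenization trick lifts the problem to three variables $(x_1,x_2,x_3)$. As a preliminary step I would record two degree-parity facts: the hypothesis that the top form $V_d$ of $V$ has no zeros forces $d=\deg V$ to be even (an odd-degree form in two variables must always vanish somewhere), and positivity of $-\dot V$ on $\mathbb{R}^2\setminus\{0\}$ forces $e:=\deg\dot V$ to be even as well (by looking at the one-variable polynomial $\lambda\mapsto -\dot V(\lambda v)$ along a generic ray where the top form of $-\dot V$ is nonzero). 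The same ray argument shows that the top form of $-\dot V$ is psd. Combining these facts, both homogenizations $V_h$ and $(-\dot V)_h$ are psd forms in three variables, so their product $p:=V_h\cdot(-\dot V)_h$ is a psd form of degree $d+e$.

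The subtlety is that $V_h$ vanishes on the entire $x_3$-axis (since $V(0)=\dot V(0)=0$), so Theorem~\ref{thm:claus.3vars} cannot be applied directly with $q=V_h^2$. The key trick I would introduce is the auxiliary form $q:=V_h^2+x_3^{2d}$: on the $x_3$-axis the summand $x_3^{2d}$ is strictly positive, and off that axis $V_h>0$, so $q$ is a positive definite form of degree $2d$. Theorem~\ref{thm:claus.3vars} applied to $(p,q)$ then yields an integer $k_0$ with $p\,q^{k_0}$ sos. Dehomogenizing by setting $x_3=1$ (which preserves the sos property) gives
\[
(-V\dot V)(V^2+1)^{k_0}\ \text{sos in }(x_1,x_2),
\]
and the binomial theorem rewrites this as $\sum_{j=0}^{k_0}\binom{k_0}{j}(-V^{2j+1}\dot V)$ sos.

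The final step is to package the above as a bona fide Lyapunov function. I would set
\[
W:=\sum_{j=0}^{k_0}\frac{\binom{k_0}{j}}{2j+2}\,V^{2j+2}.
\]
This $W$ is a conic combination of the perfect squares $(V^{j+1})^2$, so it is sos; differentiating term by term yields exactly $-\dot W=(-V\dot V)(V^2+1)^{k_0}$, which is sos by the previous paragraph. The remaining Lyapunov properties of $W$ (vanishing at the origin, positivity elsewhere, negative definiteness of $\dot W$, and radial unboundedness) all descend from the corresponding properties of $V$, with radial unboundedness using precisely the no-zeros hypothesis on $V_d$. The main obstacle in the argument is constructing the correct positive definite denominator $q$: the natural candidates $V_h$ and $V_h^2$ are both ruled out by the $x_3$-axis degeneracy, and it is the combined trick of adding $x_3^{2d}$ and then binomially unpacking the resulting ``$V^2+1$'' factor that produces a single polynomial $W$ whose derivative aligns correctly with the sos certificate handed out by Scheiderer's theorem.
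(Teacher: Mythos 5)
Your proof is correct, and at its core it is the same argument as the paper's: homogenize to three variables, apply Scheiderer's result (Theorem~\ref{thm:claus.3vars}) to a psd form built from $-V\dot V$ and a positive definite multiplier whose definiteness at $x_3=0$ is exactly what the no-zeros hypothesis on the top form of $V$ provides, dehomogenize at $x_3=1$, and package the certificate into even powers of $V$. Where you genuinely deviate is in the device used to remove the degeneracy at the origin and in the final construction of $W$. The paper shifts the Lyapunov function, setting $\tilde V=V+1$, applies Theorem~\ref{thm:claus.3vars} to the homogenizations of $-2\tilde V\dot V$ (psd) and $\tilde V^{2}$ (positive definite, again by the top-form hypothesis), and immediately takes the single perfect power $W=(V+1)^{2k+2}$, whose derivative is sos by direct dehomogenization. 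You instead keep $V$, repair the multiplier by adding $x_3^{2d}$ to $V_h^{2}$, and then need the extra binomial-expansion-and-termwise-integration step to manufacture $W=\sum_{j=0}^{k_0}\tbinom{k_0}{j}V^{2j+2}/(2j+2)$ so that $-\dot W$ equals the dehomogenized certificate $(-V\dot V)(V^{2}+1)^{k_0}$. Your route costs a slightly longer final packaging but buys a $W$ that vanishes at the origin and is positive definite, whereas the paper's $W$ equals $1$ at the origin and only attains its global minimum there (a point the paper handles in a footnote). Your preliminary parity and psd observations (evenness of $\deg V$ and $\deg\dot V$, psd-ness of the top form of $-\dot V$, hence psd-ness of the homogenizations) are correct and are the same facts the paper uses implicitly when it asserts that the degrees $d_1,d_2$ are even and that the homogenized forms are psd/positive definite.
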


\begin{proof}
Let $\tilde{V}=V+1$. So, $\dot{\tilde{V}}=\dot{V}$. Consider the
(non-homogeneous) polynomials $\tilde{V}^2$ and
$-2\tilde{V}\dot{\tilde{V}}$ in the variables
$x\mathrel{\mathop:}=(x_1,x_2)$. Let us denote the (even) degrees
of these polynomials respectively by $d_1$ and $d_2$. Note that
$\tilde{V}^2$ is nowhere zero and $-2\tilde{V}\dot{\tilde{V}}$ is
only zero at the origin. Our first step is to homogenize these
polynomials by introducing a new variable $y$. Observing that the
homogenization of products of polynomials equals the product of
homogenizations, we obtain the following two trivariate forms:
\begin{equation}\label{eq:V^2.homoegenized}
y^{2d_1}\tilde{V}^2(\textstyle{\frac{x}{y}}),
\end{equation}
\begin{equation}\label{eq:-2V.Vdot.homogenized}
-2y^{d_1}y^{d_2}\tilde{V}(\textstyle{\frac{x}{y}})\dot{\tilde{V}}(\textstyle{\frac{x}{y}}).
\end{equation}
Since by assumption the highest order term of $V$ has no zeros,
the form in (\ref{eq:V^2.homoegenized}) is positive definite . The
form in (\ref{eq:-2V.Vdot.homogenized}), however, is only positive
semidefinite. In particular, since $\dot{\tilde{V}}=\dot{V}$ has
to vanish at the origin, the form in
(\ref{eq:-2V.Vdot.homogenized}) has a zero at the point
$(x_1,x_2,y)=(0,0,1)$. Nevertheless, since
Theorem~\ref{thm:claus.3vars} allows for positive semidefiniteness
of one of the two forms, by applying it to the forms in
(\ref{eq:V^2.homoegenized}) and (\ref{eq:-2V.Vdot.homogenized}),
we conclude that there exists an integer $k$ such that
\begin{equation}\label{eq:-2V.Vdot.homog.*.V^2.homog^k}
-2y^{d_1(2k+1)}y^{d_2}\tilde{V}(\textstyle{\frac{x}{y}})\dot{\tilde{V}}(\textstyle{\frac{x}{y}})\tilde{V}^{2k}(\textstyle{\frac{x}{y}})
\end{equation}
is sos. Let $W=\tilde{V}^{2k+2}.$ Then, $W$ is clearly sos.
Moreover,
$$-\dot{W}=-(2k+2)\tilde{V}^{2k+1}\dot{\tilde{V}}=-(k+1)2\tilde{V}^{2k}\tilde{V}\dot{\tilde{V}}$$
is also sos because this polynomial is obtained from
(\ref{eq:-2V.Vdot.homog.*.V^2.homog^k}) by setting
$y=1$.\footnote{Once again, we note that the function $W$
constructed in this proof is radially unbounded, achieves its
global minimum at the origin, and has $-\dot{W}$ positive
definite. Therefore, $W$ proves global asymptotic stability.}
\end{proof}

\section{Existence of sos Lyapunov functions for switched linear
systems}\label{sec:extension.to.switched.sys} The result of
Theorem~\ref{thm:poly.lyap.then.sos.lyap} extends in a
straightforward manner to Lyapunov analysis of switched systems.
In particular, we are interested in the highly-studied problem of
stability analysis of arbitrary switched linear systems:
\begin{equation}\label{eq:switched.linear.system}
\dot{x}=A_i x, \quad i\in\{1,\ldots,m\},
\end{equation}
$A_i\in\mathbb{R}^{n\times n}$. We assume the minimum dwell time
of the system is bounded away from zero. This guarantees that the
solutions of (\ref{eq:switched.linear.system}) are well-defined.
Existence of a common Lyapunov function is necessary and
sufficient for (global) asymptotic stability under arbitrary
switching (ASUAS) of system (\ref{eq:switched.linear.system}). The
ASUAS of system (\ref{eq:switched.linear.system}) is equivalent to
asymptotic stability of the linear differential inclusion
\begin{equation}\nonumber
\dot{x}\in co\{A_i\}x, \quad i\in\{1,\ldots,m\},
\end{equation}
where $co$ here denotes the convex hull.
%This is obvious in view
%of the fact that a common Lyapunov function decreasing with
%respect to the matrices $A_i$ would also decrease with respect to
%any matrix in the convex hull.
It is also known that ASUAS of (\ref{eq:switched.linear.system})
is equivalent to exponential stability under arbitrary
switching~\cite{Angeli.homog.switched}. A common approach for
analyzing the stability of these systems is to use the sos
technique to search for a common polynomial Lyapunov
function~\cite{PraP03},\cite{Chest.et.al.sos.robust.stability}. We
will prove the following result.

\begin{theorem}\label{thm:converse.sos.switched.sys}
The switched linear system in (\ref{eq:switched.linear.system}) is
asymptotically stable under arbitrary switching if and only if
there exists a common homogeneous polynomial Lyapunov function $W$
such that
\begin{equation}\nonumber
\begin{array}{rl}
W & \mbox{sos} \\
-\dot{W}_i=-\langle \nabla W(x), A_ix\rangle & \mbox{sos},
\end{array}
\end{equation}
for $i=1,\ldots, m$, where the polynomials $W$ and $-\dot{W}_i$
are all positive definite.
\end{theorem}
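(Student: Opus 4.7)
The ``if'' direction is immediate: if such a $W$ exists, then the sos conditions imply that $W$ is a positive definite, radially unbounded common Lyapunov function whose derivative along each mode is negative definite, which by standard Lyapunov theory proves ASUAS. The substance of the theorem is the ``only if'' direction, and my plan is to mimic the strategy used in the proof of Theorem~\ref{thm:poly.lyap.then.sos.lyap}, applied simultaneously to all $m$ modes.

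The first step is to obtain a (not necessarily sos) common homogeneous polynomial Lyapunov function. Here I would appeal to a known converse result for arbitrary switched linear systems: ASUAS of (\ref{eq:switched.linear.system}) is equivalent to exponential stability under arbitrary switching, and for this class it is known that one can always find a \emph{homogeneous polynomial} common Lyapunov function $V$ of some even degree $2d$ such that $V$ is positive definite and each $-\dot V_i(x) = -\langle \nabla V(x), A_i x\rangle$ is positive definite (this is essentially the Mason--Shorten--Wulff / Dayawansa--Martin type converse theorem, and can also be derived by smoothing the convex homogeneous Lyapunov function whose existence is classical for stable linear differential inclusions). Crucially, because each $A_i x$ is homogeneous of degree one, $-\dot V_i$ is automatically a homogeneous polynomial of degree $2d$ in $x$, so we stay within the homogeneous setting required by Scheiderer's theorem.

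The second step is the algebraic upgrade from positivity to sos. For each $i \in \{1,\ldots,m\}$, the two forms $V^2$ and $-2V\dot V_i$ are positive definite homogeneous polynomials in $n$ variables. Applying Theorem~\ref{thm:claus} (Scheiderer) to each pair, I obtain an integer $k_i$ such that $(-2V\dot V_i)(V^2)^{k_i}$ is sos. Setting $k \mathrel{\mathop:}= \max_{1 \le i \le m} k_i$ and using that the product of an sos polynomial with the sos polynomial $(V^2)^{k-k_i}$ remains sos, I get that
\[
(-2V\dot V_i)(V^2)^{k} \quad \text{is sos for every } i=1,\ldots,m.
\]
Now define $W \mathrel{\mathop:}= V^{2k+2}$. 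Then $W$ is an even power, hence trivially sos (and homogeneous, positive definite, radially unbounded). For each mode,
\[
-\dot W_i \;=\; -(2k+2)\,V^{2k+1}\dot V_i \;=\; (k+1)\,\bigl[(-2V\dot V_i)(V^2)^{k}\bigr],
\]
which is a positive multiple of an sos polynomial, hence sos. Positive definiteness of $W$ and of each $-\dot W_i$ follows from the positive definiteness of $V$ and of each $-\dot V_i$, so the conclusion holds.

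The main obstacle I anticipate is justifying the first step cleanly, namely producing the initial common homogeneous polynomial Lyapunov function $V$ with \emph{strictly} negative definite derivative along every mode (as opposed to merely nonpositive, or as opposed to a non-polynomial convex Lyapunov function). The algebraic Step~2 is entirely parallel to the single-system case and is essentially a uniform application of Scheiderer's Positivstellensatz over the finite family $\{A_1,\ldots,A_m\}$, which works precisely because we may take the maximum of finitely many exponents $k_i$. The homogeneity of the vector fields $A_i x$ is used in an essential way, since Theorem~\ref{thm:claus} requires forms; this is why the construction does not extend directly to nonlinear switched systems.
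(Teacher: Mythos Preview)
Your proposal is correct and matches the paper's proof essentially line for line: the paper invokes the Mason et al.\ converse theorem (Theorem~\ref{thm:switch.poly.exists}) for Step~1, and then proves exactly your Step~2 as a separate proposition (Proposition~\ref{prop:switch.poly.then.sos.poly}), applying Scheiderer's result $m$ times and taking $k=\max_i k_i$ before setting $W=V^{2k+2}$. Your anticipated ``obstacle'' is precisely the external ingredient the paper imports by citation.
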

To prove this result, we will use the following theorem of Mason
et al.

\begin{theorem}[Mason et al.,~\cite{switch.common.poly.Lyap}] \label{thm:switch.poly.exists} If the switched linear system
in (\ref{eq:switched.linear.system}) is asymptotically stable
under arbitrary switching, then there exists a common homogeneous
polynomial Lyapunov function $V$ such that
\begin{equation}\nonumber
\begin{array}{rll}
V &>&0 \ \ \forall x\neq 0  \\
-\dot{V}_i(x)=-\langle \nabla V(x), A_ix\rangle &>&0 \ \  \forall
x\neq 0,
\end{array}
\end{equation}
for $i=1,\ldots,m$.
\end{theorem}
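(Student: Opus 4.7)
The plan is to go from asymptotic stability under arbitrary switching (ASUAS) to a common smooth homogeneous Lyapunov function, and then to polynomial-approximate it in a $C^1$ sense so that the positivity conditions are preserved. The reverse implication is trivial: if such a $V$ exists, then $V$ is a strict common Lyapunov function for all subsystems, which immediately implies ASUAS. So the content is the forward direction.

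First I would invoke a converse Lyapunov theorem for arbitrary switching. Because the right-hand sides $A_ix$ are linear (hence homogeneous of degree one) and the system is ASUAS, a classical result (Molchanov--Pyatnitskiy; see also the converse Lyapunov theorems of Lin--Sontag--Wang for differential inclusions) yields a continuous, convex, positively homogeneous common Lyapunov function. By Angeli's observation mentioned in the excerpt, ASUAS is actually equivalent to exponential stability under arbitrary switching, with some uniform rate $\alpha>0$. This lets me build an explicit homogeneous-of-degree-$2k$ candidate of the form
\begin{equation*}
V(x) \;=\; \int_0^\infty \sup_{\sigma(\cdot)} \|\phi(t,x,\sigma)\|^{2k}\, dt,
\end{equation*}
which is finite by exponential stability, homogeneous of degree $2k$, positive definite, and (after standard mollification arguments on the sphere) can be assumed $C^\infty$. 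By construction, $\dot V_i(x)=\langle \nabla V(x),A_ix\rangle$ is continuous, negative definite, and homogeneous of degree $2k$.

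Second, I would use polynomial approximation on the compact unit sphere $S^{n-1}$. By Weierstrass/Jackson-type theorems (applied component-wise to $V$ and $\nabla V$) there exists, for every $\epsilon>0$, a polynomial $\tilde P$ such that $\|V-\tilde P\|_\infty<\epsilon$ and $\|\nabla V-\nabla \tilde P\|_\infty<\epsilon$ on $S^{n-1}$. Replacing $\tilde P$ by its homogeneous component of degree $2k$, or equivalently by
\begin{equation*}
P(x) \;=\; \|x\|^{2k}\, \tilde P\!\left(\textstyle\frac{x}{\|x\|}\right),
\end{equation*}
suitably re-approximated by a polynomial, produces a form $P$ of degree $2k$ whose values and gradient stay $O(\epsilon)$-close to those of $V$ uniformly on $S^{n-1}$. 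This is the construction step that requires the most care: one must ensure that $P$ can be taken as an actual polynomial homogeneous of degree $2k$, which is why it is convenient to expand $V$ in spherical harmonics (or simply truncate a Taylor/Bernstein-type approximation to $V$ restricted to $S^{n-1}$) and then homogenize by multiplying by the appropriate power of $\|x\|^2$ within the polynomial class.

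Finally, I would close via a compactness argument. Since $V$ is positive on $S^{n-1}$ and $S^{n-1}$ is compact, $\min_{S^{n-1}} V>0$; likewise $\min_{S^{n-1}}(-\dot V_i)>0$ for each $i$. Choosing $\epsilon$ smaller than the smallest of these minima (divided by the maximum of $\|A_i\|$ for the derivative estimate) guarantees $P>0$ and $-\dot P_i=-\langle \nabla P,A_ix\rangle>0$ on $S^{n-1}$. Because both $P$ and each $\dot P_i$ are homogeneous (of degrees $2k$ and $2k$ respectively, using that $A_ix$ is homogeneous of degree one), positivity on the sphere lifts to positivity on $\mathbb{R}^n\setminus\{0\}$, giving the required common homogeneous polynomial Lyapunov function. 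The main obstacle, which is the crux of the argument, is step one --- producing a common homogeneous Lyapunov function that is simultaneously smooth enough to admit $C^1$ polynomial approximation; the $C^1$ control (not just $C^0$) is essential since the derivative conditions $-\dot V_i>0$ involve $\nabla V$, and this is where one must appeal nontrivially to converse Lyapunov theory for differential inclusions rather than to a bare Stone--Weierstrass density statement.
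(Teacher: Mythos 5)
Note first that the thesis does not prove this statement at all; it is imported with a citation to Mason et al.~\cite{switch.common.poly.Lyap}, so your argument has to stand on its own. Your Step 1 (a smooth, positive definite, homogeneous common Lyapunov function $V$ of some even degree $2k$ with each $-\dot V_i$ positive definite, obtained from converse Lyapunov theory for the differential inclusion plus homogenization and smoothing) is acceptable as a sketch. The genuine gap is in the approximation step, and it is not a technicality --- it is exactly where the theorem is hard. Weierstrass gives you a polynomial $\tilde P$ of \emph{unrestricted} degree that is $C^1$-close to $V$ on the sphere, but what you need is a \emph{homogeneous} polynomial, and the Lyapunov derivative sees the homogeneity degree through the radial part of the gradient: for $\|x\|=1$, writing $A_ix=(x^TA_ix)x+T_i(x)$ with $T_i(x)\perp x$, Euler's identity gives $\langle\nabla P(x),A_ix\rangle=2N\,P(x)\,x^TA_ix+\langle\nabla P(x),T_i(x)\rangle$ for a form $P$ of degree $2N$. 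Since $x^TA_ix$ need not be nonpositive (GUAS does not make the Euclidean norm a common Lyapunov function), the term $2N\,P(x)\,x^TA_ix$ is of order $N$ and positive at some points. Consequently: (i) homogenizing a good spherical approximant to whatever high degree $2N$ it has destroys the decrease property; indeed even the \emph{exact} re-homogenization $\|x\|^{2N-2k}V(x)$, which agrees with $V$ on the sphere, fails to be a Lyapunov function once $N-k$ is large, because its derivative contains the term $(2N-2k)(x^TA_ix)V(x)$; (ii) the alternative of keeping the degree fixed at $2k$ is unavailable, since forms of degree $2k$ form a finite-dimensional space and cannot approximate an arbitrary smooth $V$, the ``homogeneous component of degree $2k$'' of a high-degree approximant need not be anywhere near $V$, and $\|x\|^{2k}\tilde P(x/\|x\|)$ is not a polynomial (re-approximating it just recreates the same problem); (iii) matching degrees by approximating $V^N$ with forms of degree $2kN$ does not close the estimate either, because after normalization either the decrease margin of $V^N$ decays geometrically in $N$ or its $C^1$ data on the sphere grows geometrically, so Jackson-type approximation rates do not beat the margin. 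Your final ``choose $\epsilon$ smaller than the minima'' step silently assumes full $C^1$ closeness of the homogeneous $P$ to $V$ in a neighborhood of the sphere, which your construction does not deliver.

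This is precisely why the known proofs do not run through generic polynomial approximation plus compactness. One workable route starts from the Molchanov--Pyatnitskiy converse theorem, which provides a polyhedral common Lyapunov function $v(x)=\max_j|\langle c_j,x\rangle|$ with a strict decrease margin, and takes the structured candidate $P_N(x)=\sum_j\langle c_j,x\rangle^{2N}$: each summand that is (nearly) active at a given $x$ decreases with a uniform margin, while the inactive summands are geometrically negligible as $N\to\infty$, so $-\dot{(P_N)}_i>0$ on the sphere for $N$ large; homogeneity is built in from the start, so the radial-term obstruction never appears. The argument in the cited reference is likewise a careful margin analysis with tailored high-degree candidates rather than a Stone--Weierstrass density argument. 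To repair your proposal you would need an approximation scheme in which the approximant is homogeneous of the \emph{same} degree as the function whose decrease margin you invoke, with error controlled relative to that margin; supplying such a scheme is the missing idea.
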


The next proposition is an extension of
Theorem~\ref{thm:poly.lyap.then.sos.lyap} to switched systems (not
necessarily linear).

\begin{proposition}\label{prop:switch.poly.then.sos.poly}
Consider an arbitrary switched dynamical system
\begin{equation}\nonumber %\label{eq:switched.polynomial.system}
\dot{x}=f_i(x), \quad i\in\{1,\ldots,m\},
\end{equation}
where $f_i(x)$ is a homogeneous polynomial vector field of degree
$d_i$ (the degrees of the different vector fields can be
different). Suppose there exists a common positive definite
homogeneous polynomial Lyapunov function $V$ such that
$$-\dot{V}_i(x)=-\langle \nabla V(x), f_i(x)\rangle$$
is positive definite for all $i\in\{1,\ldots, m\}$. Then there
exists a common homogeneous polynomial Lyapunov function $W$ such
that $W$ is sos and the polynomials $$-\dot{W}_i=-\langle \nabla
W(x), f_i(x)\rangle,$$ for all $i\in\{1,\ldots, m\}$, are also
sos.
\end{proposition}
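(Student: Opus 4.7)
The plan is to reduce the proposition to a finite union of applications of Scheiderer's theorem (Theorem~\ref{thm:claus}), exactly as in the proof of Theorem~\ref{thm:poly.lyap.then.sos.lyap}, and then to pick one exponent that works uniformly for all the modes $i=1,\ldots,m$. Note that the hypothesis that each $f_i$ is homogeneous of degree $d_i$ guarantees that each product $V\cdot\dot{V}_i$ is a homogeneous polynomial, and that is the structural feature that makes Theorem~\ref{thm:claus} applicable.

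First, I would fix $i\in\{1,\ldots,m\}$ and consider the two homogeneous polynomials $V^2$ and $-2V\dot{V}_i$. Both are positive definite: $V^2$ because $V$ is, and $-2V\dot{V}_i$ because $V$ and $-\dot{V}_i$ are positive definite by hypothesis. Applying Theorem~\ref{thm:claus} to this pair yields a nonnegative integer $k_i$ such that $(-2V\dot{V}_i)\cdot(V^2)^{k_i}$ is a sum of squares. Now set $k\mathrel{\mathop:}=\max_{i=1,\ldots,m} k_i$. For every $i$, the polynomial
\[
(-2V\dot{V}_i)\cdot(V^2)^{k} \;=\; \bigl[(-2V\dot{V}_i)\cdot(V^2)^{k_i}\bigr]\cdot (V^2)^{\,k-k_i}
\]
is sos, because it is a product of two sos polynomials (the bracketed factor by construction, and $(V^2)^{k-k_i}$ because it is an even power of a polynomial).

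Second, I would define $W\mathrel{\mathop:}=V^{2k+2}$. Then $W$ is automatically sos as a perfect even power. Computing the derivative along the $i$-th mode,
\[
-\dot{W}_i \;=\; -\langle \nabla W(x), f_i(x)\rangle \;=\; -(2k+2)\,V^{2k+1}\,\dot{V}_i \;=\; (k+1)\cdot\bigl(-2V\dot{V}_i\bigr)\cdot V^{2k},
\]
which equals $(k+1)$ times the sos polynomial displayed in the previous paragraph, and is therefore itself sos. This establishes the desired common Lyapunov function.

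There is really no serious obstacle; the only point that requires a moment of thought is that the exponent $k_i$ produced by Scheiderer's theorem depends on $i$, and we need a single $W$ that certifies sos-ness of every $-\dot{W}_i$ simultaneously. Since $m$ is finite and sos polynomials form a cone closed under multiplication by sos polynomials, taking $k=\max_i k_i$ and absorbing the discrepancy into the extra factor $(V^2)^{k-k_i}$ resolves this. The homogeneity assumption on $V$ and on each $f_i$ is essential only insofar as it puts us in the homogeneous setting of Theorem~\ref{thm:claus}; no further structure is needed.
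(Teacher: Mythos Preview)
Your proof is correct and follows essentially the same approach as the paper's own proof: apply Theorem~\ref{thm:claus} to each pair $(V^2,-2V\dot V_i)$ to obtain exponents $k_i$, set $k=\max_i k_i$, define $W=V^{2k+2}$, and factor $-\dot W_i$ as $(k+1)\bigl[(-2V\dot V_i)(V^2)^{k_i}\bigr]\cdot V^{2(k-k_i)}$. The only cosmetic difference is that you first verify $(-2V\dot V_i)(V^2)^{k}$ is sos and then compute $-\dot W_i$, whereas the paper computes $-\dot W_i$ first and then factors; the argument is otherwise identical.
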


\begin{proof}
Observe that for each $i$, the polynomials $V^2$ and
$-2V\dot{V}_i$ are both positive definite and homogeneous.
Applying Theorem~\ref{thm:claus} $m$ times to these pairs of
polynomials, we conclude the existence of positive integers $k_i$
such that
\begin{equation}\label{eq:-2VVdotV^2V^k_i.sos}
(-2V\dot{V}_i)(V^2)^{k_i} \  \mbox{is sos,}
\end{equation}
for $i=1,\ldots,m$. Let $$k=\max\{k_1,\ldots,k_m\},$$ and let
$$W=V^{2k+2}.$$ Then, $W$ is clearly sos. Moreover, for each $i$,
the polynomial
\begin{equation}\nonumber
\begin{array}{rll}
-\dot{W}_i &=&-(2k+2)V^{2k+1}\dot{V}_i \\
\ &=&-(k+1)2V\dot{V}_iV^{2k_i}V^{2(k-k_i)}
\end{array}
\end{equation}
is sos since $(-2V\dot{V}_i)(V^{2k_i})$ is sos by
(\ref{eq:-2VVdotV^2V^k_i.sos}), $V^{2(k-k_i)}$ is sos as an even
power, and products of sos polynomials are sos.
\end{proof}
The proof of Theorem~\ref{thm:converse.sos.switched.sys} now
simply follows from Theorem~\ref{thm:switch.poly.exists} and
Proposition~\ref{prop:switch.poly.then.sos.poly} in the special
case where $d_i=1$ for all $i$.

Analysis of switched linear systems is also of great interest to
us in discrete time. In fact, the subject of the next chapter will
be on the study of systems of the type
\begin{equation}\label{eq:switched.linear.system.in.DT}
x_{k+1}=A_i x_k, \quad i\in\{1,\ldots,m\},
\end{equation}
where at each time step the update rule can be given by any of the
$m$ matrices $A_i$. The analogue of
Theorem~\ref{thm:converse.sos.switched.sys} for these systems has
already been proven by Parrilo and Jadbabaie
in~\cite{Pablo_Jadbabaie_JSR_journal}. It is shown that if
(\ref{eq:switched.linear.system.in.DT}) is asymptotically stable
under arbitrary switching, then there exists a homogeneous
polynomial Lyapunov function $W$ such that
\begin{equation}\nonumber
\begin{array}{rl}
W(x) & \mbox{sos} \\
W(x)-W(A_ix) & \mbox{sos},
\end{array}
\end{equation}
for $i=1,\ldots,m$. We will end this section by proving two
related propositions of a slightly different flavor. It will be
shown that for switched linear systems, both in discrete time and
in continuous time, the sos condition on the Lyapunov function
itself is never conservative, in the sense that if one of the
``decrease inequalities'' is sos, then the Lyapunov function is
automatically sos. These propositions are really statements about
linear systems, so we will present them that way. However, since
stable linear systems always admit quadratic Lyapunov functions,
the propositions are only interesting in the context where a
common polynomial Lyapunov function for a switched linear system
is seeked.

\begin{proposition}\label{prop:switch.DT.V.automa.sos}
Consider the linear dynamical system $x_{k+1}=Ax_k$ in discrete
time. Suppose there exists a positive definite polynomial Lyapunov
function $V$ such that $V(x)-V(Ax)$ is positive definite and sos.
Then, $V$ is sos.
\end{proposition}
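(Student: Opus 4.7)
Let $\sigma(x) \mathrel{\mathop:}= V(x) - V(Ax)$, which is sos and positive definite by hypothesis. The plan is to express $V$ as an infinite sum of sos polynomials all of bounded degree, and then invoke closedness of the sos cone to conclude that $V$ itself is sos. First, I would telescope the defining identity for $\sigma$: for every integer $k \geq 1$,
\begin{equation}\nonumber
V(x) \;=\; V(A^k x) \;+\; \sum_{j=0}^{k-1} \sigma(A^j x).
\end{equation}
Each term $\sigma(A^j x)$ is a polynomial in $x$ that is sos (since $\sigma$ is sos and the composition of an sos polynomial with a linear map is sos), so the partial sum $S_k(x) \mathrel{\mathop:}= \sum_{j=0}^{k-1} \sigma(A^j x)$ is sos for every $k$.

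Next I would argue that $V(A^k x) \to 0$ as $k \to \infty$, coefficient-wise. Since $V$ is a positive definite polynomial Lyapunov function and $V(x) - V(Ax)$ is positive definite, the standard discrete-time Lyapunov theorem ensures that $x_{k+1} = A x_k$ is (globally) asymptotically stable, which for a linear system is equivalent to $A$ having spectral radius strictly less than one. Hence the entries of $A^k$ converge to zero. Writing $V(y) = \sum_{|\alpha|\geq 1} c_\alpha y^\alpha$ (note $V(0)=0$ since $V$ is positive definite) and substituting $y = A^k x$, every coefficient of the polynomial $V(A^k x)$ (viewed as a polynomial in $x$) is a polynomial in the entries of $A^k$ with no constant term, hence tends to $0$ as $k \to \infty$. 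Consequently $S_k = V - V \circ A^k$ converges to $V$ coefficient-wise.

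Finally, observe that $\deg S_k \leq \deg V$ uniformly in $k$, so the entire sequence $\{S_k\}$ lives in the finite-dimensional vector space of polynomials of degree at most $\deg V$. Within that space, the cone of sos polynomials is closed (this is the Robinson closedness fact recalled in Subsection~\ref{subsec:nonnegativity.sos.basics}). Since $S_k$ is sos for every $k$ and $S_k \to V$ in this finite-dimensional space, the limit $V$ is sos, completing the proof.

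The only step with any subtlety is the second one: one must make sure that pointwise/asymptotic stability of the linear system actually yields \emph{coefficient-wise} convergence of $V \circ A^k$ to zero, which is what the closedness argument in the last step requires. Once one notes that $\rho(A) < 1$ forces $A^k \to 0$ entry-wise, this step is routine, and the rest of the argument is immediate.
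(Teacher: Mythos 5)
Your proposal is correct and follows essentially the same route as the paper's proof: the telescoping sum $\sum_{j=0}^{k-1}\sigma(A^jx)$ is exactly the paper's iterated addition of the sos polynomials $V(A^jx)-V(A^{j+1}x)$, yielding that $V(x)-V(A^kx)$ is sos, and both arguments then conclude via $A^k\to 0$ (from asymptotic stability), coefficient-wise convergence in the fixed-degree space, and Robinson's closedness of the sos cone. Your explicit remark about coefficient-wise convergence just makes precise a step the paper states more briefly.
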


\begin{proof}
Consider the polynomial $V(x)-V(Ax)$ that is sos by assumption. If
we replace $x$ by $Ax$ in this polynomial, we conclude that the
polynomial $V(Ax)-V(A^2 x)$ is also sos. Hence, by adding these
two sos polynomials, we get that $V(x)-V(A^2x)$ is sos. This
procedure can obviously be repeated to infer that for any integer
$k\geq 1$, the polynomial
\begin{equation}\label{eq:V-V(A^k)}
V(x)-V(A^kx)
\end{equation}
is sos. Since by assumption $V$ and $V(x)-V(Ax)$ are positive
definite, the linear system must be GAS, and hence $A^k$ converges
to the zero matrix as $k\rightarrow\infty$. Observe that for all
$k$, the polynomials in (\ref{eq:V-V(A^k)}) have degree equal to
the degree of $V$, and that the coefficients of $V(x)-V(A^kx)$
converge to the coefficients of $V$ as $k\rightarrow\infty$. Since
for a fixed degree and dimension the cone of sos polynomials is
closed~\cite{RobinsonSOS}, it follows that $V$ is sos.
\end{proof}

Similarly, in continuous time, we have the following proposition.
\begin{proposition}\label{prop:switch.CT.V.automa.sos}
Consider the linear dynamical system $\dot{x}=Ax$ in continuous
time. Suppose there exists a positive definite polynomial Lyapunov
function $V$ such that $-\dot{V}=-\langle \nabla V(x), Ax \rangle$
is positive definite and sos. Then, $V$ is sos.
\end{proposition}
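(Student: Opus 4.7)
The plan is to imitate the strategy of Proposition~\ref{prop:switch.DT.V.automa.sos}, replacing the discrete orbit $\{A^k x\}$ by the continuous flow $\{e^{At}x\}_{t\geq 0}$. The key identity to exploit is the fundamental theorem of calculus along trajectories:
\begin{equation}\label{eq:proposal.FTC}
V(x)-V(e^{At}x)=-\int_{0}^{t}\frac{d}{ds}V(e^{As}x)\,ds=\int_{0}^{t}\bigl[-\dot V\bigr](e^{As}x)\,ds,
\end{equation}
which holds for every $t\geq 0$ and every $x\in\mathbb{R}^n$. Since $-\dot V$ is sos by hypothesis, and sum of squares is preserved under any linear change of variables (if $-\dot V(x)=\sum_i q_i^2(x)$, then $-\dot V(Mx)=\sum_i q_i(Mx)^2$ is sos for every constant matrix $M$), the integrand in \eqref{eq:proposal.FTC} is, for each fixed $s$, an sos polynomial in $x$ of degree equal to $\deg V$.

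The first substantive step is to argue that the integral itself is sos. Here I would use the fact, recalled in the proof of Proposition~\ref{prop:switch.DT.V.automa.sos}, that the cone of sos polynomials of bounded degree in $n$ variables is closed~\cite{RobinsonSOS}, together with the fact that it is convex. Approximating the integral in \eqref{eq:proposal.FTC} by Riemann sums gives a sequence of nonnegative combinations of sos polynomials (hence sos), and the coefficients of the resulting polynomials converge to the coefficients of $V(x)-V(e^{At}x)$. Closedness then yields that $V(x)-V(e^{At}x)$ is sos for every $t\geq 0$.

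The second step is a limiting argument in $t$. Since $V$ is positive definite with $V(0)=0$ and $-\dot V$ is positive definite, Lyapunov's theorem (together with radial unboundedness, which is automatic for a positive definite polynomial on a linear system once one restricts to the homogeneous top-degree part, or can be ensured by squaring $V$ if necessary) implies that the linear system $\dot x=Ax$ is GAS, so that $e^{At}\to 0$ as $t\to\infty$. Viewing the coefficients of $V(e^{At}x)$ as a polynomial expression in the entries of $e^{At}$, these coefficients tend to those of the constant polynomial $V(0)=0$. Consequently, the coefficients of $V(x)-V(e^{At}x)$ converge, as $t\to\infty$, to the coefficients of $V(x)$, while the degree stays fixed at $\deg V$. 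Invoking closedness of the sos cone one final time, we conclude that $V$ itself is sos.

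The main potential obstacle is the first step, namely justifying that an integral of sos polynomials (parameterized continuously by $s$) is sos. I expect that the Riemann-sum plus closedness argument sketched above will handle this cleanly, since the map $s\mapsto$ (coefficient vector of $-\dot V(e^{As}x)$) is continuous on the compact interval $[0,t]$ and hence the Riemann sums converge in coefficient norm to the integral; everything else in the proof is essentially a direct transcription of the discrete-time argument.
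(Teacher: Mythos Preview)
Your proposal is correct and follows essentially the same approach as the paper: both use the fundamental theorem of calculus along trajectories to write $V(x)$ (or $V(x)-V(e^{At}x)$) as an integral of the sos polynomials $-\dot V(e^{As}x)$, and then appeal to the closedness of the sos cone. The paper integrates directly to $t=\infty$ and asserts in one line that ``converging integrals of sos polynomials are sos,'' whereas you spell out the Riemann-sum justification and do the limit in two stages; your parenthetical about radial unboundedness is unnecessary since for linear systems local asymptotic stability already implies $e^{At}\to 0$.
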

\begin{proof}
The value of the polynomial $V$ along the trajectories of the
dynamical system satisfies the relation
$$V(x(t))=V(x(0))+\int_o^t \dot{V}(x(\tau)) d_\tau.$$ Since the
assumptions imply that the system is GAS, $V(x(t))\rightarrow 0$
as $t$ goes to infinity. (Here, we are assuming, without loss of
generality, that $V$ vanishes at the origin.) By evaluating the
above equation at $t=\infty$, rearranging terms, and substituting
$e^{A\tau}x$ for the solution of the linear system at time $\tau$
starting at initial condition $x$, we obtain
$$V(x)=\int_0^\infty -\dot{V}(e^{A\tau}x) d_\tau.$$ By assumption, $-\dot{V}$ is sos and therefore for any value of $\tau$, the integrand
$-\dot{V}(e^{A\tau}x)$ is an sos polynomial. Since converging
integrals of sos polynomials are sos, it follows that $V$ is sos.
\end{proof}

\begin{remark}
The previous proposition does not hold if the system is not
linear. For example, consider any positive form $V$ that is not a
sum of squares and define a dynamical system by $\dot{x}=-\nabla
V(x)$. In this case, both $V$ and $-\dot{V}=||\nabla V(x)||^2$ are
positive definite and $-\dot{V}$ is sos, though $V$ is not sos.
\end{remark}

\section{Some open questions}\label{sec:summary.future.work}
Some open questions related to the problems studied in this
chapter are the following. Regarding complexity, of course the
interesting problem is to formally answer the questions of Arnold
on undecidability of determining stability for polynomial vector
fields. Regarding existence of polynomial Lyapunov functions, Mark
Tobenkin asked whether a globally exponentially stable polynomial
vector field admits a polynomial Lyapunov function. Our
counterexample in Section~\ref{sec:no.poly.Lyap}, though GAS and
locally exponentially stable, is not globally exponentially stable
because of exponential growth rates in the large. The
counterexample of Bacciotti and Rosier
in~\cite{Bacciotti.Rosier.Liapunov.Book} is not even locally
exponentially stable. Another future direction is to prove that
GAS homogeneous polynomial vector fields admit homogeneous
polynomial Lyapunov functions. This, together with
Theorem~\ref{thm:poly.lyap.then.sos.lyap}, would imply that
asymptotic stability of homogeneous polynomial systems can always
be decided via sum of squares programming. Also, it is not clear
to us whether the assumption of homogeneity and planarity can be
removed from Theorems~\ref{thm:poly.lyap.then.sos.lyap}
and~\ref{thm:poly.lyap.then.sos.lyap.PLANAR} on existence of sos
Lyapunov functions. Finally, another research direction would be
to obtain upper bounds on the degree of polynomial or sos
polynomial Lyapunov functions. Some degree bounds are known for
Lyapunov analysis of locally exponentially stable
systems~\cite{Peet.Antonis.converse.sos.journal}, but they depend
on uncomputable properties of the solution such as convergence
rate. Degree bounds on Positivstellensatz result of the type in
Theorems~\ref{thm:claus} and~\ref{thm:claus.3vars} are known, but
typically exponential in size and not very encouraging for
practical purposes.

\chapter{Joint Spectral Radius and Path-Complete Graph Lyapunov
Functions}\label{chap:jsr}
\newcommand{\rmj}[1]{{#1}}

%\newcommand{\comrj}[1]{\begin{tt}[RJ: #1]\end{tt}}
%%Black and white version
%\newcommand{\rmj}[1]{{#1}}

%%%%%%

%%%First journal revision by AAA:
%\long\def\aaa#1{{\color{blue}#1}}
%%Black and white version
\long\def\aaa#1{{#1}}

In this chapter, we introduce the framework of path-complete graph
Lyapunov functions for analysis of switched systems. The
methodology is presented in the context of approximation of the
joint spectral radius. The content of this chapter is based on an
extended version of the work in~\cite{HSCC_JSR_Path_complete}.

\section{Introduction}
Given a finite set of square matrices
$\mathcal{A}\mathrel{\mathop:}=\left\{ A_{1},...,A_{m}\right\} $,
their \emph{joint spectral radius} $\rho(\mathcal{A})$ is defined
as
\begin{equation} \rho\left(\mathcal{A}\right)
=\lim_{k\rightarrow\infty}\max_{\sigma
\in\left\{  1,...,m\right\}  ^{k}}\left\Vert A_{\sigma_{k}}...A_{\sigma_{2}%
}A_{\sigma_{1}}\right\Vert ^{1/k},\label{eq:def.jsr}%
\end{equation}
where the quantity $\rho(\mathcal{A})$ is independent of the norm
used in (\ref{eq:def.jsr}). The joint spectral radius (JSR) is a
natural generalization of the spectral radius of a single square
matrix and it characterizes the maximal growth rate that can be
obtained by taking products, of arbitrary length, of all possible
permutations of $A_{1},...,A_{m}$. This concept was introduced by
Rota and Strang~\cite{RoSt60} in the early 60s and has since been
the subject of extensive research within the engineering and the
mathematics communities alike. Aside from a wealth of fascinating
mathematical questions that arise from the JSR, the notion emerges
in many areas of application such as stability of switched linear
dynamical systems, computation of the capacity of codes,
continuity of wavelet functions, convergence of consensus
algorithms, trackability of graphs, and many others.
See~\cite{Raphael_Book} and references therein for a recent survey
of the theory and applications of the JSR.

Motivated by the abundance of applications, there has been much
work on efficient computation of the joint spectral radius; see
e.g.~\cite{BlNT04},~\cite{BlNes05},~\cite{Pablo_Jadbabaie_JSR_journal},
and references therein. Unfortunately, the negative results in the
literature certainly restrict the horizon of possibilities.
In~\cite{BlTi2}, Blondel and Tsitsiklis prove that even when the
set $\mathcal{A}$ consists of only two matrices, the question of
testing whether $\rho(\mathcal{A})\leq1$ is undecidable. They also
show that unless P=NP, one cannot compute an approximation
$\hat{\rho}$ of $\rho$ that satisfies
$|\hat{\rho}-\rho|\leq\epsilon\rho$, in a number of steps
polynomial in the bit size of $\mathcal{A}$ and the bit size of
$\epsilon$~\cite{BlTi3}. It is not difficult to show that the
spectral radius of any finite product of length $k$ raised to the
power of $1/k$ gives a lower bound on $\rho$~\cite{Raphael_Book}.
However, for reasons that we explain next, our focus will be on
computing upper bounds for $\rho$.

There is an attractive connection between the joint spectral
radius and the stability properties of an arbitrary switched
linear system; i.e., dynamical systems of the form
\begin{equation}\label{eq:switched.linear.sys}
x_{k+1}=A_{\sigma\left(  k\right)  }x_{k},
\end{equation}
where $\sigma:\mathbb{Z\rightarrow}\left\{  1,...,m\right\}$ is a
map from the set of integers to the set of indices. It is
well-known that $\rho<1$ if and only if system
(\ref{eq:switched.linear.sys}) is \emph{absolutely asymptotically
stable} (AAS), that is, (globally) asymptotically stable for all
switching sequences. Moreover, it is
known~\cite{switched_system_survey} that absolute asymptotic
stability of (\ref{eq:switched.linear.sys}) is equivalent to
absolute asymptotic stability of the linear difference inclusion
\begin{equation}\label{eq:linear.difference.inclusion}
x_{k+1}\in \mbox{co}{\mathcal{A}}\ x_{k},
\end{equation}
where $\mbox{co}{\mathcal{A}}$ here denotes the convex hull of the
set $\mathcal{A}$. Therefore, any method for obtaining upper
bounds on the joint spectral radius provides sufficient conditions
for stability of systems of type (\ref{eq:switched.linear.sys}) or
(\ref{eq:linear.difference.inclusion}). Conversely, if we can
prove absolute asymptotic stability of
(\ref{eq:switched.linear.sys}) or
(\ref{eq:linear.difference.inclusion}) for the set
$\mathcal{A}_\gamma\mathrel{\mathop:}=\{ \gamma
A_{1},\ldots,\gamma A_{m}\}$ for some positive scalar $\gamma$,
then we get an upper bound of $\frac{1}{\gamma}$ on
$\rho(\mathcal{A})$. (This follows from the scaling property of
the JSR: $\rho(\mathcal{A}_\gamma)=\gamma\rho(\mathcal{A})$.) One
advantage of working with the notion of the joint spectral radius
is that it gives a way of rigorously quantifying the performance
guarantee of different techniques for stability analysis of
systems (\ref{eq:switched.linear.sys}) or
(\ref{eq:linear.difference.inclusion}).

Perhaps the most well-established technique for proving stability
of switched systems is the use of a \emph{common (or simultaneous)
Lyapunov function}. The idea here is that if there is a
continuous, positive, and homogeneous (Lyapunov) function
$V(x):\mathbb{R}^n\rightarrow\mathbb{R}$ that for some $\gamma>1$
satisfies
\begin{equation}
V(\gamma A_ix)\leq V(x) \quad \forall i=1,\ldots,m,\ \ \forall
x\in\mathbb{R}^n,
\end{equation}
(i.e., $V(x)$ decreases no matter which matrix is applied), then
the system in (\ref{eq:switched.linear.sys}) (or in
(\ref{eq:linear.difference.inclusion})) is AAS. Conversely, it is
known that if the system is AAS, then there exists a \emph{convex}
common Lyapunov function (in fact a norm); see e.g.~\cite[p.
24]{Raphael_Book}. However, this function is not in general
finitely constructable. A popular approach has been to try to
approximate this function by a class of functions that we can
efficiently search for using convex optimization and in particular
semidefinite programming. As we mentioned in our introductory
chapters, semidefinite programs (SDPs) can be solved with
arbitrary accuracy in polynomial time and lead to efficient
computational methods for approximation of the JSR. As an example,
if we take the Lyapunov function to be quadratic (i.e.,
$V(x)=x^TPx$), then the search for such a Lyapunov function can be
formulated as the following SDP:
\begin{equation}\label{eq:Lyap.CQ.SDP}
\begin{array}{rll}
P&\succ&0 \\
\gamma^2 A_i^TPA_i&\preceq&P \quad \forall i=1,\ldots,m.
\end{array}
\end{equation}

The quality of approximation of common quadratic Lyapunov
functions is a well-studied topic. In particular, it is
known~\cite{BlNT04} that the estimate
$\hat{\rho}_{\mathcal{V}^{2}}$ obtained by this
method\footnote{The estimate $\hat{\rho}_{\mathcal{V}^{2}}$ is the
reciprocal of the largest $\gamma$ that satisfies
(\ref{eq:Lyap.CQ.SDP}) and can be found by bisection.} satisfies
\begin{equation}\label{eq:CQ.bound}
\frac{1}{\sqrt{n}}\hat{\rho}_{\mathcal{V}^{2}}(\mathcal{A})\leq\rho(\mathcal{A})\leq\hat{\rho}_{\mathcal{V}^{2}}(\mathcal{A}),
\end{equation}
where $n$ is the dimension of the matrices. This bound is a direct
consequence of John's ellipsoid theorem and is known to be
tight~\cite{Ando98}.

In~\cite{Pablo_Jadbabaie_JSR_journal}, the use of sum of squares
(sos) polynomial Lyapunov functions of degree $2d$ was proposed as
a common Lyapunov function for the switched system in
(\ref{eq:switched.linear.sys}). As we know, the search for such a
Lyapunov function can again be formulated as a semidefinite
program. This method does considerably better than a common
quadratic Lyapunov function in practice and its estimate
$\hat{\rho}_{\mathcal{V}^{SOS,2d}}$ satisfies the bound
\begin{equation}\label{eq:SOS.bound}
\frac{1}{\sqrt[2d]{\eta}}\hat{\rho}_{\mathcal{V}^{SOS,2d}}(\mathcal{A})\leq\rho(\mathcal{A})\leq\hat{\rho}_{\mathcal{V}^{SOS,2d}}(\mathcal{A}),
\end{equation}
where $\eta=\min\{m,{n+d-1\choose d}\}$. Furthermore, as the
degree $2d$ goes to infinity, the estimate
$\hat{\rho}_{\mathcal{V}^{SOS,2d}}$ converges to the true value of
$\rho$~\cite{Pablo_Jadbabaie_JSR_journal}. The semidefinite
programming based methods for approximation of the JSR have been
recently generalized and put in the framework of conic
programming~\cite{protasov-jungers-blondel09}.

\subsection{Contributions and organization of this chapter}
It is natural to ask whether one can develop better approximation
schemes for the joint spectral radius by using multiple Lyapunov
functions as opposed to requiring simultaneous contractibility of
a single Lyapunov function with respect to all the matrices. More
concretely, our goal is to understand how we can write
inequalities among, say, $k$ different Lyapunov functions
$V_1(x),\ldots,V_k(x)$ that imply absolute asymptotic stability of
(\ref{eq:switched.linear.sys}) and can be checked via semidefinite
programming.

The general idea of using several Lyapunov functions for analysis
of switched systems is a very natural one and has already appeared
in the literature (although to our knowledge not in the context of
the approximation of the JSR); see e.g.
\cite{JohRan_PWQ},~\cite{multiple_lyap_Branicky},
\cite{composite_Lyap}, \cite{composite_Lyap2},
\cite{convex_conjugate_Lyap}. Perhaps one of the earliest
references is the work on ``piecewise quadratic Lyapunov
functions'' in~\cite{JohRan_PWQ}. However, this work is in the
different framework of state dependent switching, where the
dynamics switches depending on which region of the space the
trajectory is traversing (as opposed to arbitrary switching). In
this setting, there is a natural way of using several Lyapunov
functions: assign one Lyapunov function per region and ``glue them
together''. Closer to our setting, there is a body of work in the
literature that gives sufficient conditions for existence of
piecewise Lyapunov functions of the type $\max\{x^TP_1x,\ldots,
x^TP_kx\}$, $\min\{x^TP_1x,\ldots,x^TP_kx\}$, and
$\mbox{conv}\{x^TP_1x,\ldots, x^TP_kx\}$, i.e, the pointwise
maximum, the pointwise minimum, and the convex envelope of a set
of quadratic functions \cite{composite_Lyap},
\cite{composite_Lyap2}, \cite{convex_conjugate_Lyap},
\cite{hu-ma-lin}. These works are mostly concerned with analysis
of linear differential inclusions in continuous time, but they
have obvious discrete time counterparts. The main drawback of
these methods is that in their greatest generality, they involve
solving bilinear matrix inequalities, which are non-convex and in
general NP-hard. One therefore has to turn to heuristics, which
have no performance guarantees and their computation time quickly
becomes prohibitive when the dimension of the system increases.
Moreover, all of these methods solely provide sufficient
conditions for stability with no performance guarantees.

There are several unanswered questions that in our view deserve a
more thorough study: (i) With a focus on conditions that are
amenable to convex optimization, what are the different ways to
write a set of inequalities among $k$ Lyapunov functions that
imply absolute asymptotic stability of
(\ref{eq:switched.linear.sys})? Can we give a unifying framework
that includes the previously proposed Lyapunov functions and
perhaps also introduces new ones? (ii) Among the different sets of
inequalities that imply stability, can we identify some that are
less conservative than some other? (iii) The available methods on
piecewise Lyapunov functions solely provide sufficient conditions
for stability with no guarantee on their performance. Can we give
converse theorems that guarantee the existence of a feasible
solution to our search for a given accuracy?

\aaa{The contributions of this chapter to these questions are as
follows.} We propose a unifying framework based on a
representation of Lyapunov inequalities with labeled graphs and by
making some connections with basic concepts in automata theory.
This is done in Section~\ref{sec:graphs.jsr}, where we define the
notion of a path-complete graph
(Definition~\ref{def:path-complete}) and prove that any such graph
provides an approximation scheme for the JSR
(Theorem~\ref{thm:path.complete.implies.stability}). In
Section~\ref{sec:duality.and.some.families.of.path.complete}, we
give examples of families of path-complete graphs and show that
many of the previously proposed techniques come from particular
classes of simple path-complete graphs (e.g.,
Corollary~\ref{cor:min.of.quadratics},
Corollary~\ref{cor:max.of.quadratics}, \aaa{and
Remark~\ref{rmk:Lee-Dellerud_Daafouz}}). In
Section~\ref{sec:who.beats.who}, we characterize all the
path-complete graphs with two nodes for the analysis of the JSR of
two matrices.  We determine how the approximations obtained from
all of these graphs compare
(Proposition~\ref{prop:who.beats.who}). In Section~\ref{sec:hscc},
we study in more depth the approximation properties of a
particular pair of ``dual'' path-complete graphs that seem to
perform very well in practice.
Subsection~\ref{subsec:duality.and.transposition} contains more
general results about duality within path-complete graphs and its
connection to transposition of matrices
(Theorem~\ref{thm:transpose.bound.dual.bound}).
Subsection~\ref{subsec:HSCC.bound} gives an approximation
guarantee for the graphs studied in Section~\ref{sec:hscc}
(Theorem~\ref{thm:HSCC.bound}), and
Subsection~\ref{subsec:numerical.examples} contains some numerical
examples. In Section~\ref{sec:converse.thms}, we prove a converse
theorem for the method of max-of-quadratics Lyapunov functions
(Theorem~\ref{thm:converse.max.of.quadratics}) and an
approximation guarantee for a new class of methods for proving
stability of switched systems (Theorem~\ref{thm-bound-codes}).
Finally, some concluding remarks and future directions are
presented in Section~\ref{sec:conclusions.future.directions}.

\section{Path-complete graphs and the joint spectral
radius}\label{sec:graphs.jsr}

In what follows, we will think of the set of matrices
$\mathcal{A}\mathrel{\mathop:}=\left\{ A_{1},...,A_{m}\right\} $
as a finite alphabet and we will often refer to a finite product
of matrices from this set as a \emph{word}. We denote the set of
all words ${A_i}_t\ldots{A_i}_1$ of length $t$ by $\mathcal{A}^t$.
Contrary to the standard convention in automata theory, our
convention is to read a word from right to left. This is in
accordance with the order of matrix multiplication. The set of all
finite words is denoted by $\mathcal{A}^*$; i.e.,
$\mathcal{A}^*=\bigcup\limits_{t\in\mathbb{Z}^+} \mathcal{A}^t$.

The basic idea behind our framework is to represent through a
graph all the possible occurrences of products that can appear in
a run of the dynamical system in (\ref{eq:switched.linear.sys}),
and assert via some Lyapunov inequalities that no matter what
occurrence appears, the product must remain stable. A convenient
way of representing these Lyapunov inequalities is via a directed
labeled graph $G(N, E)$. Each node of this graph is associated
with a (continuous, positive definite, and homogeneous) Lyapunov
function $V_i(x):\mathbb{R}^n\rightarrow\mathbb{R}$, and each edge
is labeled by a finite product of matrices, i.e., by a word from
the set $\mathcal{A}^*$. As illustrated in
Figure~\ref{fig:node.arc}, given two nodes with Lyapunov functions
$V_i(x)$ and $V_j(x)$ and an edge going from node $i$ to node $j$
labeled with the matrix $A_l$, we write the Lyapunov inequality:
\begin{equation}\label{eq:lyap.inequality.rule}
V_j(A_lx)\leq V_i(x) \quad \forall x\in\mathbb{R}^n.
\end{equation}

\begin{figure}[h]
\centering \scalebox{0.25} {\includegraphics{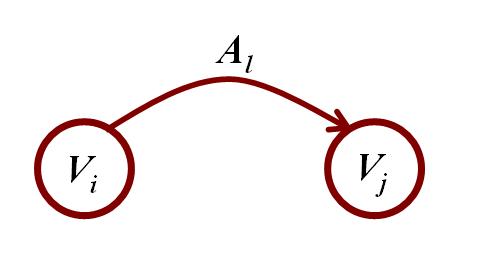}}  %AAA: I think cannot use JPG to creat a ps file...
\caption{Graphical representation of Lyapunov inequalities. The
edge in the graph above corresponds to the Lyapunov inequality
$V_j(A_lx)\leq V_i(x)$. Here, $A_l$ can be a single matrix from
$\mathcal{A}$ or a finite product of matrices from $\mathcal{A}$.}
\label{fig:node.arc}
\end{figure}

The problem that we are interested in is to understand which sets
of Lyapunov inequalities imply stability of the switched system in
(\ref{eq:switched.linear.sys}). We will answer this question based
on the corresponding graph.

For reasons that will become clear shortly, we would like to
reduce graphs whose edges have arbitrary labels from the set
$\mathcal{A}^*$ to graphs whose edges have labels from the set
$\mathcal{A}$, i.e, labels of length one. This is explained next.

\begin{definition}\label{def:expanded.graph}
Given a labeled directed graph $G(N,E)$, we define its
\emph{expanded graph} $G^e(N^e,E^e)$ as the outcome of the
following procedure. For every edge $(i,j)\in E$ with label
${A_i}_k\ldots{A_i}_1\in\mathcal{A}^k$, where $k>1$, we remove the
edge $(i,j)$ and replace it with $k$ new edges $(s_q,s_{q+1})\in
E^e\setminus E:\ q\in\{0,\ldots,k-1\}$, where $s_0=i$ and
$s_k=j$.\footnote{It is understood that the node index $s_q$
depends on the original nodes $i$ and $j$. To keep the notation
simple we write $s_q$ instead of $s_{q}^{ij}$.} (These new edges
go from node $i$ through $k-1$ newly added nodes
$s_1,\ldots,s_{k-1}$ and then to node $j$.) We then label the new
edges $(i,s_1),\ldots,(s_q,s_{q+1}),\ldots,(s_{k-1},j)$ with
${A_i}_1,\ldots,{A_i}_k$ respectively.
\end{definition}
\begin{figure}[h]
\centering \scalebox{0.3}
{\includegraphics{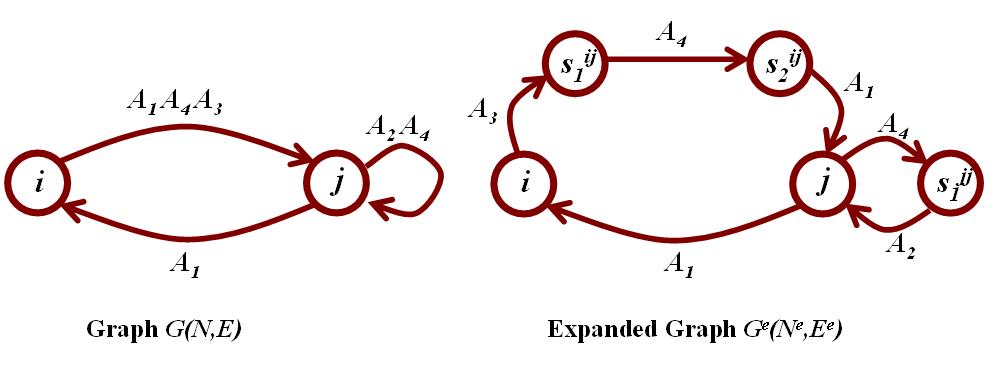}} \caption{Graph
expansion: edges with labels of length more than one are broken
into new edges with labels of length one.}
\label{fig:graph.expansion}
\end{figure}

An example of a graph and its expansion is given in
Figure~\ref{fig:graph.expansion}. Note that if a graph has only
labels of length one, then its expanded graph equals itself. The
next definition is central to our development.
\begin{definition}\label{def:path-complete}
Given a directed graph $G(N,E)$ whose edges are labeled with words
from the set $\mathcal{A}^*$, we say that the graph is
\emph{path-complete}, if for all finite words $A_{\sigma_k}\ldots
A_{\sigma_1}$ of any length $k$ (i.e., for all words in
$\mathcal{A}^*$), there is a directed path in its expanded graph
$G^e(N^e,E^e)$ such that the labels \aaa{on} the edges \aaa{of}
this path are \rmj{the} labels $A_{\sigma_1}$ up to
$A_{\sigma_k}$.
\end{definition}

In Figure~\ref{fig:jsr.graphs}, we present seven path-complete
graphs on the alphabet $\mathcal{A}=\{A_1,A_2\}$. The fact that
these graphs are path-complete is easy to see for graphs $H_1,
H_2, G_3,$ and $G_4$, but perhaps not so obvious for graphs $H_3,
G_1,$ and $G_2$. One way to check if a graph is path-complete is
to think of it as a finite automaton by introducing an auxiliary
start node (state) with free transitions to every node and by
making all the other nodes be accepting states. Then, there are
well-known algorithms (see e.g.~\cite[Chap.
4]{Hopcroft_Motwani_Ullman_automata_Book}) that check whether the
language accepted by an automaton is $\mathcal{A}^*$, which is
equivalent to the graph being path-complete. At least for the
cases where the automata are deterministic (i.e., when all
outgoing edges from any node have different labels), these
algorithms are very efficient and have running time of only
$O(|N|^2)$. Similar algorithms exist in the symbolic dynamics
literature; see e.g.~\cite[Chap. 3]{Lind_Marcus_symbolic_Book}.
Our interest in path-complete graphs stems from the
Theorem~\ref{thm:path.complete.implies.stability} below that
establishes that any such graph gives a method for approximation
of the JSR. We introduce one last definition before we state this
theorem.
%
%We are not so concerned with running times of algorithms for
%recognizing path-complete graphs since our interest in these
%graphs solely stems from their potential for stability analysis of
%arbitrary switched systems. As the following theorem illustrates,
%once we know a graph is path-complete, we can fix it and then
%build an approximation scheme for the JSR based on it.
%

\begin{figure}[h]
\centering \scalebox{0.4}
{\includegraphics{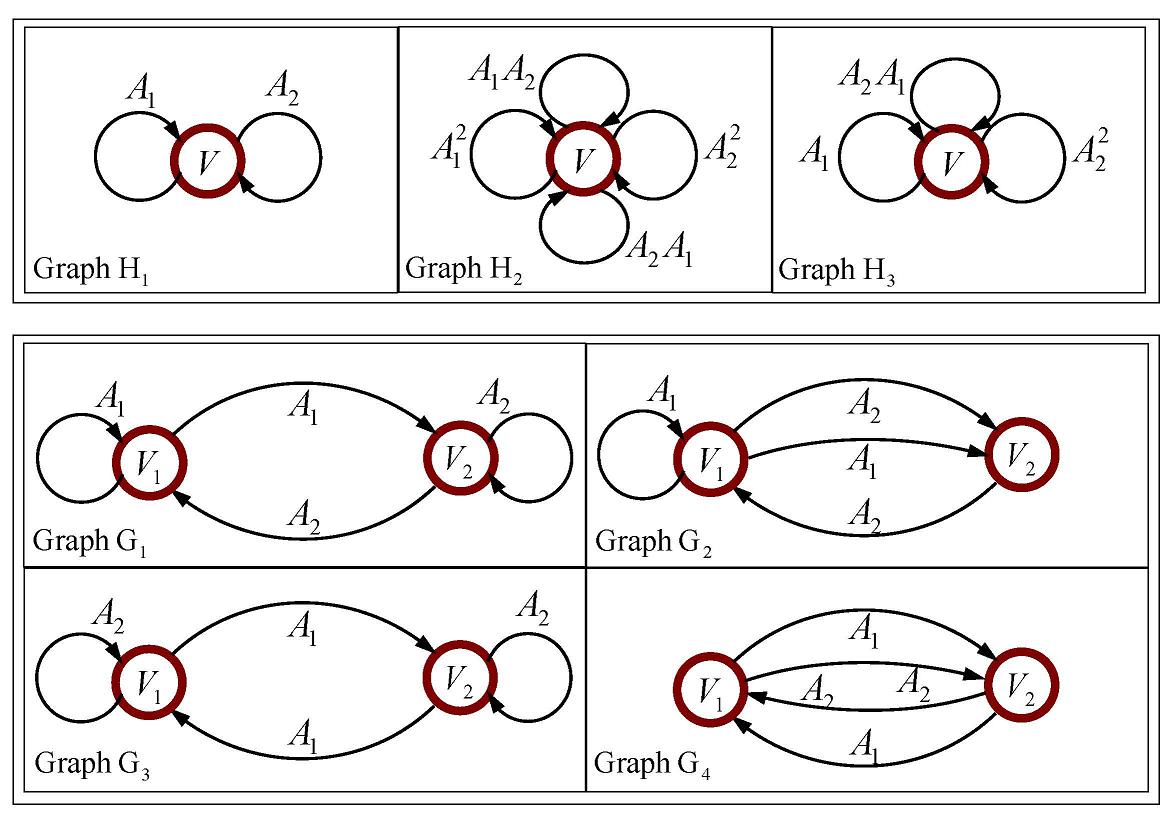}} \caption{Examples
of path-complete graphs for the alphabet $\{A_1,A_2\}$. If
Lyapunov functions satisfying the inequalities associated with any
of these graphs are found, then we get an upper bound of unity on
$\rho(A_1,A_2)$.} \label{fig:jsr.graphs}
\end{figure}

\begin{definition}
Let $\mathcal{A}=\left\{ A_{1},\ldots,A_{m}\right\}  $ be a set of
matrices. Given a path-complete graph $G\left( N,E\right) $ and
$|N|$ functions $V_i(x)$, we say that
$\{V_{i}(x)|i=1,\ldots,\left\vert N\right\vert \}$ is a
\emph{graph Lyapunov function (GLF) associated with $G\left(
N,E\right)  $} if
\[
V_{j}\left(  L\left( (i,j)  \right)  x\right)  \leq V_{i}\left(
x\right)  \text{\qquad}\forall x\in \mathbb{R}^n,\ \ \forall\
(i,j)\in E,
\]
where $L\left(  (i,j)  \right) \in\mathcal{A}^{\ast}$ is the label
associated with edge $(i,j)  \in E$ going from node $i$ to node
$j$.
\end{definition}

\begin{theorem}\label{thm:path.complete.implies.stability}
%Consider a finite set of matrices
%$\mathcal{A}=\{A_1,\\\ldots,A_m\}$. Let $G(N,E)$ be a
%path-complete graph whose edges are labeled with words from
%$\mathcal{A}^*$. If there exist positive, continuous, and
%homogeneous\footnote{The requirement of homogeneity can be
%replaced by the requirement of radially unboundedness which is
%implied by homogeneity and positivity. However, since the
%dynamical system in (\ref{eq:switched.linear.sys}) is homogeneous
%in space, there is no conservatism in asking $V_i(x)$ to be
%homogeneous~\cite{HomogHomog}.} Lyapunov functions $V_i(x)$, one
%per node of the graph, that for some scalar $\gamma>0$ satisfy the
%Lyapunov inequalities associated to the edges of $G$ (cf.
%Figure~\ref{fig:node.arc}) with scaled matrices $\gamma
%A_1,\ldots,\gamma A_m$, then
%$\rho(\mathcal{A})\leq\frac{1}{\gamma}$.
Consider a finite set of matrices
$\mathcal{A}=\{A_1,\ldots,A_m\}$. For a scalar $\gamma>0$, let
$\mathcal{A}_\gamma\mathrel{\mathop:}=\{ \gamma
A_{1},\ldots,\gamma A_{m}\}$. Let $G(N,E)$ be a path-complete
graph whose edges are labeled with words from
$\mathcal{A}_\gamma^*$. If there exist positive, continuous, and
homogeneous\footnote{The requirement of homogeneity can be
replaced by radial unboundedness which is implied by homogeneity
and positivity. However, since the dynamical system in
(\ref{eq:switched.linear.sys}) is homogeneous, there is no
conservatism in asking $V_i(x)$ to be homogeneous.} functions
$V_i(x)$, one per node of the graph, such that
$\{V_{i}(x)~|~i=1,\ldots,\left\vert N\right\vert \}$ is a graph
Lyapunov function associated with $G(N,E)$, then
$\rho(\mathcal{A})\leq\frac{1}{\gamma}$.
\end{theorem}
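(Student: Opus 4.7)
My plan is to reduce the statement to the case of single-letter edge labels, unfold any admissible word along a path in the graph to obtain a telescoped Lyapunov inequality, and then convert that inequality into a uniform operator-norm bound via standard homogeneity comparison. First, I would absorb the scalar $\gamma$ into the matrices: since $\rho(\mathcal{A}_\gamma)=\gamma\rho(\mathcal{A})$, it suffices to prove that if $G$ has edge labels in $\mathcal{A}^*$ and admits a GLF, then $\rho(\mathcal{A})\le 1$; the general claim follows by applying this to $\mathcal{A}_\gamma$.

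Next, I would argue that we may assume every edge of $G$ carries a label of length one. Given the original GLF $\{V_i\}$, I construct Lyapunov functions on the auxiliary nodes of the expanded graph $G^e$ introduced in Definition~\ref{def:expanded.graph}. Concretely, if an edge $(i,j)$ with label $A_{i_k}\cdots A_{i_1}$ is replaced by the path $i=s_0,s_1,\dots,s_k=j$ with single-letter labels $A_{i_1},\dots,A_{i_k}$, I define, for $q=1,\dots,k-1$,
\[
V_{s_q}(x)\;:=\;V_j\bigl(A_{i_k}\cdots A_{i_{q+1}}\,x\bigr).
\]
A direct substitution shows $V_{s_q}(A_{i_q}x)=V_{s_{q-1}}(x)$ for $1\le q\le k-1$ and $V_{s_1}(A_{i_1}x)=V_j(A_{i_k}\cdots A_{i_1}x)\le V_i(x)$ by the original GLF inequality, so $\{V_{s_q}\}$ together with the original $V_i$ constitute a GLF for $G^e$. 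Moreover, $G^e$ is path-complete as a graph labeled in $\mathcal{A}$ (this is exactly the content of Definition~\ref{def:path-complete}). So it suffices to prove the theorem for graphs all of whose edge labels lie in $\mathcal{A}$.

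Under this reduction, fix any word $A_{\sigma_k}\cdots A_{\sigma_1}\in\mathcal{A}^k$. By path-completeness applied to $G^e=G$, there exist nodes $i_0,i_1,\dots,i_k$ with $(i_{q-1},i_q)\in E$ carrying label $A_{\sigma_q}$. Chaining the GLF inequalities yields the telescope
\[
V_{i_k}(A_{\sigma_k}\cdots A_{\sigma_1}x)\;\le\;V_{i_{k-1}}(A_{\sigma_{k-1}}\cdots A_{\sigma_1}x)\;\le\;\cdots\;\le\;V_{i_0}(x).
\]
Here I use that positivity and homogeneity of the $V_i$ force a common degree $d$ on every connected component traversed by a path (comparing $V_j(\lambda A_l x)\le V_i(\lambda x)$ for all $\lambda>0$ forces $d_i=d_j$); without loss of generality we make all degrees equal by replacing each $V_i$ with a suitable positive power. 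Because the unit sphere is compact and the $V_i$ are continuous and strictly positive away from the origin, there are constants $0<c\le C<\infty$ with $c\|x\|^d\le V_i(x)\le C\|x\|^d$ for every node $i$.

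Applying the telescoped inequality to the scaled family $\mathcal{A}_\gamma$ gives
\[
\gamma^{kd}\,V_{i_k}(A_{\sigma_k}\cdots A_{\sigma_1}x)\;\le\;V_{i_0}(x),
\]
hence $\|A_{\sigma_k}\cdots A_{\sigma_1}x\|\le (C/c)^{1/d}\gamma^{-k}\|x\|$, uniformly in $x$ and in the choice of word. Passing to operator norms, taking the maximum over words of length $k$, extracting $k$-th roots, and letting $k\to\infty$ in the definition (\ref{eq:def.jsr}) yields $\rho(\mathcal{A})\le 1/\gamma$. The main obstacle I expect is the bookkeeping in the reduction step, specifically verifying that the auxiliary functions on the expanded graph are themselves continuous, positive, and homogeneous of a consistent degree; this needs the observation above that edges of a GLF force degree matching, together with the fact that the auxiliary $V_{s_q}$ inherit positivity and homogeneity from $V_j$ because the matrices $A_{i_k}\cdots A_{i_{q+1}}$ will turn out to be nonsingular on the relevant reachable set (or, alternatively, by a limiting argument after perturbing with $\epsilon\|x\|^d$ and sending $\epsilon\to 0$).
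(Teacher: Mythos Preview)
Your proposal is correct and follows essentially the same route as the paper: reduce to single-letter labels by defining auxiliary Lyapunov functions on the expanded graph (the paper's $V_s^e(x)=V_q(A_{sq}x)$ is exactly your construction iterated one step at a time), then telescope the GLF inequalities along a path guaranteed by path-completeness and convert to a norm bound via the homogeneity sandwich $c\|x\|^d\le V_i(x)\le C\|x\|^d$. The one point where you are slightly off is the fix for positivity of the auxiliary functions when a label matrix is singular: adding $\epsilon\|x\|^d$ to $V_{s_q}$ would break the intermediate equalities, and the paper instead perturbs the matrix $A_{sq}$ itself to $\hat A_{sq}=(A_{sq}+\delta I)/(1+\varepsilon)$, using continuity of the JSR (Remark~\ref{rmk:invertibility}).
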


\begin{proof}
We will first prove the claim for the special case where the edge
labels of $G(N,E)$ belong to $\mathcal{A}_\gamma$ and therefore
$G(N,E)=G^e(N^e,E^e)$. The general case will be reduced to this
case afterwards. Let $d$ be the degree of homogeneity of the
Lyapunov functions $V_i(x)$, i.e., $V_i(\lambda x)=\lambda^d
V_i(x)$ for all $\lambda\in\mathbb{R}$. (The actual value of $d$
is irrelevant.) By positivity, continuity, and homogeneity of
$V_i(x)$, there exist scalars $\alpha_i$ and $\beta_i$ with
$0<\alpha_i\leq\beta_i$ for $i=1,\ldots,|N|$, such that
\begin{equation}\label{eq:homog.bounds}
\alpha_i||x||^d\leq V_i(x)\leq\beta_i||x||^d,
\end{equation}
for all $x\in\mathbb{R}^n$ and for all $i=1,\ldots,|N|$, where
$||x||$ here denotes the Euclidean norm of $x$. Let
\begin{equation}\label{eq:xi}
\xi=\max_{i,j\in\{1,\ldots,|N|\}^2} \frac{\beta_i}{\alpha_j}.
\end{equation}
Now consider an arbitrary product $A_{\sigma_{k}}\ldots
A_{\sigma_{1}}$ of length $k$. Because the graph is path-complete,
there will be a directed path corresponding to this product that
consists of $k$ edges, and goes from some node $i$ to some node
$j$. If we write the chain of $k$ Lyapunov inequalities associated
with these edges (cf. Figure~\ref{fig:node.arc}), then we get
%for the scaled matrices $\gamma A_{\sigma_{1}},\ldots,\gamma
%A_{\sigma_{k}}$, then we get
\begin{equation}\nonumber
V_j(\gamma^k A_{\sigma_{k}}\ldots A_{\sigma_{1}}x)\leq V_i(x),
\end{equation}
which by homogeneity of the Lyapunov functions can be rearranged
to
\begin{equation}\label{eq:Vj/Vi.bound}
\left(\frac{V_j(A_{\sigma_{k}}\ldots
A_{\sigma_{1}}x)}{V_i(x)}\right)^{\frac{1}{d}}\leq\frac{1}{\gamma^k}.
\end{equation}
We can now bound the spectral norm of $A_{\sigma_{k}}\ldots
A_{\sigma_{1}}$ as follows:
%\begin{equation}\nonumber
%\begin{array}{lll}
\begin{eqnarray}\nonumber
||A_{\sigma_{k}}\ldots A_{\sigma_{1}}||&\leq&\max_{x}
\frac{||A_{\sigma_{k}}\ldots A_{\sigma_{1}}x||}{||x||} \\
\nonumber\
&\leq&\left(\frac{\beta_i}{\alpha_j}\right)^{\frac{1}{d}}\max_{x}\frac{V_j^{\frac{1}{d}}(A_{\sigma_{k}}\ldots
A_{\sigma_{1}}x)}{V_i^{\frac{1}{d}}(x)} \\\nonumber \
&\leq&\left(\frac{\beta_i}{\alpha_j}\right)^\frac{1}{d}\frac{1}{\gamma^k}
\\\nonumber
\ &\leq&\xi^{\frac{1}{d}}\frac{1}{\gamma^k},
\end{eqnarray}
%\end{array}
%\end{equation}
where the last three inequalities follow from
(\ref{eq:homog.bounds}), (\ref{eq:Vj/Vi.bound}), and (\ref{eq:xi})
respectively. From the definition of the JSR in
(\ref{eq:def.jsr}), after taking the $k$-th root and the limit
$k\rightarrow\infty$, we get that
$\rho(\mathcal{A})\leq\frac{1}{\gamma}$ and the claim is
established.

Now consider the case where at least one edge of $G(N,E)$ has a
label of length more than one and hence $G^e(N^e,E^e)\neq G(N,E).$
We will start with the Lyapunov functions $V_i(x)$ assigned to the
nodes of $G(N,E)$ and from them we will explicitly construct
$|N^e|$ Lyapunov functions for the nodes of $G^e(N^e,E^e)$ that
satisfy the Lyapunov inequalities associated to the edges in
$E^e$. Once this is done, in view of our preceding argument and
the fact that the edges of $G^e(N^e,E^e)$ have labels of length
one by definition, the proof will be completed.

%\texttt{The remainder of this proof and
%Remark~\ref{rmk:invertibility}\\ is from Mardavij's tex-file.}

For $j\in N^e$, let us denote the new Lyapunov functions by
$V_j^e(x)$. We give the construction for the case where
$\left\vert N^{e}\right\vert =\left\vert N\right\vert +1.$ The
result for the general case follows by iterating this simple
construction. Let $s\in N^{e}\backslash N$ be the added node in
the expanded graph, and $q,r\in N$ be such that $\left( s,q\right)
\in E^{e}$ and $\left( r,s\right)  \in E^{e}$ with $A_{sq}$ and
$A_{rs}$ as the corresponding labels respectively. Define
\begin{equation}
V_{j}^{e}\left(  x\right)  =\left\{
\begin{array}
[c]{lll}%
V_{j}\left(  x\right)  ,\text{ } & \text{if} & j\in N\medskip\\
V_{q}\left(  A_{sq}x\right)  ,\text{ } & \text{if} & j=s.
\end{array}
\right.  \label{Eqtwo}%
\end{equation}
By construction, $r$ and $q,$ and subsequently, $A_{sq}$ and
$A_{rs}$ are uniquely defined and hence, $\left\{  V_{j}^e\left(
x\right)  ~|~j\in N^{e}\right\}  $ is well defined. We only need
to show that
\begin{align}
V_{q}\left(  A_{sq}x\right)   &  \leq V_{s}^{e}\left(  x\right)
\label{thefirst}\\
V_{s}^{e}\left(  A_{rs}x\right)   &  \leq V_{r}\left(  x\right).
\label{thesecond}%
\end{align}
Inequality (\ref{thefirst}) follows trivially from (\ref{Eqtwo}).
Furthermore,
it follows from (\ref{Eqtwo}) that%
\begin{align*}
V_{s}^{e}\left(  A_{rs}x\right)   &  =V_{q}\left(  A_{sq}A_{rs}x\right)  \\
&  \leq V_{r}\left(  x\right),
\end{align*}
where the inequality follows from the fact that for $i\in N$, the
functions $V_i(x)$ satisfy the Lyapunov inequalities of the edges
of $G\left( N,E\right).$
\end{proof}

\begin{remark}\label{rmk:invertibility}
If the matrix $A_{sq}$ is not invertible, the extended function
$V_{j}^{e}(x)$ as defined in (\ref{Eqtwo}) will only be positive
semidefinite. However, since our goal is to approximate the JSR,
we will never be concerned with invertibility of the matrices in
$\mathcal{A}$. Indeed, since the JSR is continuous in the entries
of the matrices~\cite{Raphael_Book}, we can always perturb the
matrices slightly to make them invertible without changing the JSR
by much. In particular, for any $\alpha>0,$ there exist
$0<\varepsilon, \delta <\alpha$ such that
\[
\hat{A}_{sq}=\frac{A_{sq}+\delta I}{1+\varepsilon}%
\]
is invertible and (\ref{Eqtwo})$-$(\ref{thesecond}) are satisfied
with $A_{sq}=\hat{A}_{sq}.$
\end{remark}

To understand the generality of the framework of ``path-complete
graph Lyapunov funcitons'' more clearly, let us revisit the
path-complete graphs in Figure~\ref{fig:jsr.graphs} for the study
of the case where the set $\mathcal{A}=\{A_1,A_2\}$ consists of
only two matrices. For all of these graphs if our choice for the
Lyapunov functions $V(x)$ or $V_1(x)$ and $V_2(x)$ are quadratic
functions or sum of squares polynomial functions, then we can
formulate the well-established semidefinite programs that search
for these candidate Lyapunov functions.

Graph $H_1$, which is clearly the simplest possible one,
corresponds to the well-known common Lyapunov function approach.
Graph $H_2$ is a common Lyapunov function applied to all products
of length two. This graph also obviously implies
stability.\footnote{By slight abuse of terminology, we say that a
graph implies stability meaning that the associated Lyapunov
inequalities imply stability.} But graph $H_3$ tells us that if we
find a Lyapunov function that decreases whenever $A_1$, $A_2^2$,
and $A_2A_1$ are applied (but with no requirement when $A_1A_2$ is
applied), then we still get stability. This is a priori not
obvious and we believe this approach has not appeared in the
literature before. Graph $H_3$ is also an example that explains
why we needed the expansion process. Note that for the unexpanded
graph, there is no path for any word of the form $(A_1A_2)^k$ or
of the form $A_2^{2k-1}$, for any $k\in \mathbb{N}.$ However, one
can check that in the expanded graph of graph $H_3$, there is a
path for every finite word, and this in turn allows us to conclude
stability from the Lyapunov inequalities of graph $H_3$.

The remaining graphs in Figure~\ref{fig:jsr.graphs} which all have
two nodes and four edges with labels of length one have a
connection to the method of min-of-quadratics or max-of-quadratics
Lyapunov functions~\cite{composite_Lyap},~\cite{composite_Lyap2},
\cite{convex_conjugate_Lyap}, \cite{hu-ma-lin}. If Lyapunov
inequalities associated with any of these four graphs are
satisfied, then either $\min\{V_1(x),V_2(x)\}$ or
$\max\{V_1(x),V_2(x)\}$ or both serve as a common Lyapunov
function for the switched system. In the next section, we assert
these facts in a more general setting
(Corollaries~\ref{cor:min.of.quadratics}
and~\ref{cor:max.of.quadratics}) and show that these graphs in
some sense belong to ``simplest'' families of path-complete
graphs.

%Let us comment now on the graphs with two nodes and four edges,
%which each impose four Lyapunov inequalities. We can show that if
%$V_1(x)$ and $V_2(x)$ satisfy the inequalities of any of the
%graphs $G_2, G_3$, or $G_4$, then $\max\{V_1(x),V_2(x)\}$ is a
%common Lyapunov function for the switched system. If $V_1(x)$ and
%$V_2(x)$ satisfy the inequalities of the graphs $G_3$ and $G_4$,
%then $\min\{V_1(x),V_2(x)\}$ is a common Lyapunov function. These
%arguments serve as alternative proofs of stability and in the case
%where $V_1$ and $V_2$ are quadratic functions, they correspond to
%the works in~\cite{composite_Lyap},~\cite{composite_Lyap2},
%\cite{convex_conjugate_Lyap}, \cite{hu-ma-lin}.
%
%In the next section, we prove these statements in a more general
%setting (Corollaries~\ref{cor:min.of.quadratics}
%and~\ref{cor:max.of.quadratics}) and show that these graphs in
%some sense belong to ``simplest'' families of path-complete
%graphs.

\section{Duality and examples of families of path-complete
graphs}\label{sec:duality.and.some.families.of.path.complete}

Now that we have shown that \emph{any} path-complete graph
introduces a method for proving stability of switched systems, our
next focus is naturally on showing how one can produce graphs that
are path-complete. Before we proceed to some basic constructions
of such graphs, let us define a notion of \emph{duality} among
graphs which essentially doubles the number of path-complete
graphs that we can generate.

\begin{definition}\label{def:dual.graph}
Given a directed graph $G(N,E)$ whose edges are labeled from the
words in $\mathcal{A}^*$, we define its \emph{dual graph}
$G'(N,E')$ to be the graph obtained by reversing the direction of
the edges of $G$, and changing the labels $A_{\sigma_k}\ldots
A_{\sigma_1}$ of every edge of $G$ to its reversed version
$A_{\sigma_1}\ldots A_{\sigma_k}$.
\end{definition}

\begin{figure}[h]
\centering \scalebox{0.25} {\includegraphics{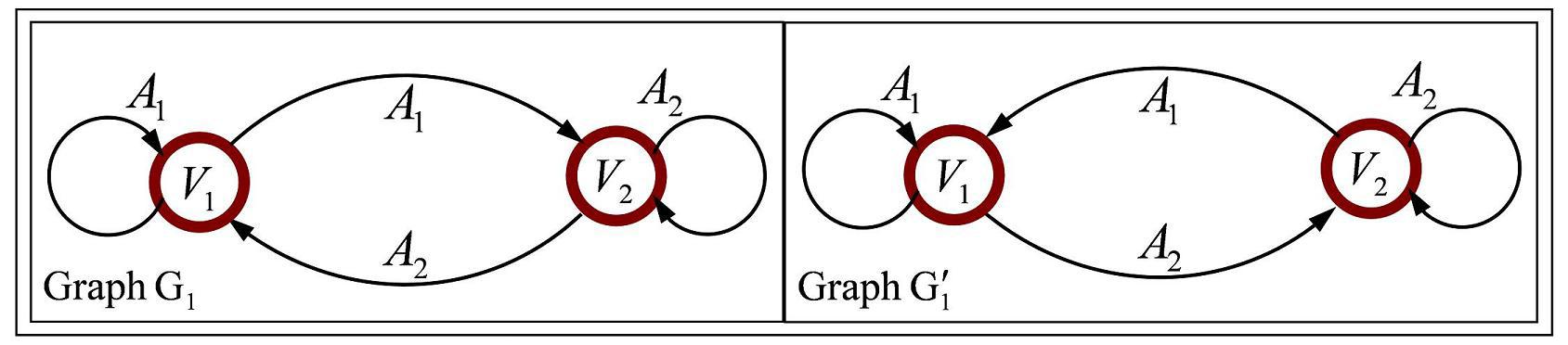}}  %AAA: I think cannot use JPG to creat a ps file...
\caption{An example of a pair of dual graphs.}
\label{fig:dual.graphs}
\end{figure}

An example of a pair of dual graphs with labels of length one is
given in Figure~\ref{fig:dual.graphs}. The following theorem
relates dual graphs and path-completeness.

\begin{theorem}\label{thm:path.complete.dual.path.complete}
If a graph $G(N,E)$ is path-complete, then its dual graph
$G'(N,E')$ is also path-complete.
\end{theorem}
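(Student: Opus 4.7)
The plan is to reduce the claim to a clean statement about how expansion and duality interact: namely, that $(G')^e$ coincides with the graph obtained from $G^e$ by reversing all edge directions (keeping the labels the same). Once this is in hand, path-completeness of $G'$ will follow from path-completeness of $G$ by a simple path-reversal argument.

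First I would verify the commutation property. An edge $(i,j)\in E$ with label $A_{i_k}\cdots A_{i_1}$ expands in $G^e$ into a chain $i=s_0\to s_1\to\cdots\to s_k=j$ whose successive edge labels are $A_{i_1},A_{i_2},\ldots,A_{i_k}$. On the other hand, in $G'$ the corresponding edge is $(j,i)$ with the reversed label $A_{i_1}\cdots A_{i_k}$, and Definition~\ref{def:expanded.graph} expands this into a chain $j\to t_1\to\cdots\to t_{k-1}\to i$ whose successive labels are $A_{i_k},A_{i_{k-1}},\ldots,A_{i_1}$. This is exactly the chain $s_k\to s_{k-1}\to\cdots\to s_0$ obtained by reversing the edges of the chain above (up to renaming the auxiliary nodes). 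Since $G^e$ already has single-matrix labels, each reversed edge carries the same label as the original. Therefore $(G')^e$ is obtained from $G^e$ by reversing every edge while preserving its label.

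Next I would use this to deduce path-completeness of $G'$. Let $w=A_{\tau_k}\cdots A_{\tau_1}$ be an arbitrary word in $\mathcal{A}^{\ast}$, and consider its ``reverse'' $\bar{w}=A_{\tau_1}\cdots A_{\tau_k}$, which is again a word in $\mathcal{A}^{\ast}$. By Definition~\ref{def:path-complete} applied to $G$, there is a directed path in $G^e$, say $v_0\to v_1\to\cdots\to v_k$, whose successive edge labels spell out $\bar{w}$; unwinding the convention of Definition~\ref{def:path-complete}, the edge $(v_{r-1},v_r)$ is labeled $A_{\tau_{k+1-r}}$, so that reading the edge labels along the path yields $A_{\tau_k},A_{\tau_{k-1}},\ldots,A_{\tau_1}$. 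Reversing this path edge by edge produces a walk $v_k\to v_{k-1}\to\cdots\to v_0$ in $(G')^e$, by the commutation property established above, and the labels encountered along this reversed walk are $A_{\tau_1},A_{\tau_2},\ldots,A_{\tau_k}$. This is precisely the edge-label sequence that Definition~\ref{def:path-complete} requires for the word $w=A_{\tau_k}\cdots A_{\tau_1}$. Hence $G'$ is path-complete.

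The main obstacle I anticipate is purely notational rather than conceptual: one must carefully track the two ``reversals'' at play (reversing the direction of traversal, and reversing the order of letters within a multi-letter edge label) and confirm that they combine consistently with the right-to-left reading convention for matrix products. The key structural insight that makes the proof go through is that, after expansion, every edge carries a single letter, so label-reversal becomes trivial and the whole argument collapses to ordinary path reversal in a directed graph. I would therefore emphasize the commutation lemma as the main step, and present the word-by-word verification above as a short corollary.
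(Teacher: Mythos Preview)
Your proof is correct and follows essentially the same approach as the paper: establish that expansion and dualization commute (i.e., $(G')^e = (G^e)'$), then reverse a path for the reversed word in $G^e$ to obtain the desired path in $(G')^e$. The paper states the commutation step more tersely (``it is easy to see''), whereas you spell it out, but the argument is the same.
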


\begin{proof}
Consider an arbitrary finite word $A_{i_k}\ldots A_{i_1}$. By
definition of what it means for a graph to be path-complete, our
task is to show that there exists a path corresponding to this
word in the expanded graph of the dual graph $G'$. It is easy to
see that the expanded graph of the dual graph of $G$ is the same
as the dual graph of the expanded graph of $G$; i.e,
$G'^e(N^e,E'^e)=G^{e^{'}}(N^e,E^{e^{'}})$. Therefore, we show a
path for $A_{i_k}\ldots A_{i_1}$ in $G^{e^{'}}$. Consider the
reversed word $A_{i_i}\ldots A_{i_k}$. Since $G$ is path-complete,
there is a path corresponding to this reversed word in $G^e$. Now
if we just trace this path backwards, we get exactly a path for
the original word $A_{i_k}\ldots A_{i_1}$ in $G^{e^{'}}$. This
completes the proof.
\end{proof}

The next proposition offers a very simple construction for
obtaining a large family of path-complete graphs with labels of
length one.

\begin{proposition}\label{prop:in.out.going.edges.path.complete}
A graph having any of the two properties below is path-complete.

Property (i): every node has outgoing edges with all the labels in
$\mathcal{A}$.

Property (ii): every node has incoming edges with all the labels
in $\mathcal{A}$.
\end{proposition}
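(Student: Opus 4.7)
The plan is to prove the two properties separately, handling Property (i) directly by a forward greedy construction of a path for an arbitrary word, and then obtaining Property (ii) either by an analogous backward construction or, more elegantly, by invoking the duality result (Theorem~\ref{thm:path.complete.dual.path.complete}) together with the observation that duality interchanges the two properties. Since in both properties the labels referenced belong to $\mathcal{A}$ (i.e.\ they are letters of length one), the expanded graph $G^e$ coincides with $G$, so ``path-complete'' here reduces to the literal statement that for every finite word $A_{i_k}\cdots A_{i_1} \in \mathcal{A}^*$ there is a directed path in $G$ whose consecutive edge labels read $A_{i_1},A_{i_2},\ldots,A_{i_k}$.

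\paragraph{Property (i).}
First I would fix an arbitrary word $w = A_{i_k}\cdots A_{i_1} \in \mathcal{A}^k$ and an arbitrary starting node $v_0 \in N$. Invoking Property (i) at $v_0$, there exists an outgoing edge from $v_0$ with label $A_{i_1}$; let $v_1$ be its head. Applying Property (i) at $v_1$ yields an outgoing edge labeled $A_{i_2}$ leading to some $v_2$, and iterating this construction $k$ times produces nodes $v_0,v_1,\ldots,v_k$ and edges $(v_{t-1},v_t)$ with label $A_{i_t}$ for $t=1,\ldots,k$. This is exactly a directed path in $G$ spelling out the word $w$, so $G$ is path-complete.

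\paragraph{Property (ii).}
The cleanest approach is to observe that if $G(N,E)$ has Property (ii), then its dual $G'(N,E')$ (obtained by reversing every edge and, since all labels in $\mathcal{A}$ have length one, leaving the labels unchanged) has Property (i): the incoming edges at a node in $G$ become outgoing edges at the same node in $G'$, with the same labels. By the argument above, $G'$ is path-complete, and then Theorem~\ref{thm:path.complete.dual.path.complete} (which states that the dual of a path-complete graph is path-complete, and whose proof trivially shows the converse as well since duality is an involution on graphs with length-one labels) yields path-completeness of $G$. Alternatively, one can mirror the proof of Property (i) backwards: pick any terminal node $v_k$ and use Property (ii) at $v_k$ to select an incoming edge labeled $A_{i_k}$ from some $v_{k-1}$, then at $v_{k-1}$ an incoming edge labeled $A_{i_{k-1}}$ from some $v_{k-2}$, and so on, producing the required path.

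\paragraph{Remarks on difficulty.}
There is essentially no obstacle here: the proposition is a direct consequence of the definitions, and the only subtle point is recognizing that Property (i) and Property (ii) are dual to one another under the operation defined in Definition~\ref{def:dual.graph}, so that one of the two cases is free once the other has been established. No clever construction or combinatorial insight is required beyond the greedy traversal argument.
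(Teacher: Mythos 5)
Your proof is correct and follows essentially the same route as the paper: Property (i) by the (obvious) greedy construction of a path spelling any word, and Property (ii) by observing that the dual graph then has Property (i) and invoking Theorem~\ref{thm:path.complete.dual.path.complete} (applied to the dual, since duality is an involution) to transfer path-completeness back to the original graph.
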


\begin{proof}
If a graph has Property (i), then it is obviously path-complete.
If a graph has Property (ii), then its dual has Property (i) and
therefore by Theorem~\ref{thm:path.complete.dual.path.complete} it
is path-complete.
\end{proof}

Examples of path-complete graphs that fall in the category of this
proposition include graphs $G_1,G_2,G_3,$ and $G_4$ in
Figure~\ref{fig:jsr.graphs} and all of their dual graphs. By
combining the previous proposition with
Theorem~\ref{thm:path.complete.implies.stability}, we obtain the
following two simple corollaries which unify several linear matrix
inequalities (LMIs) that have been previously proposed in the
literature. These corollaries also provide a link to
min/max-of-quadratics Lyapunov functions. Different special cases
of these LMIs have appeared
in~\cite{composite_Lyap},~\cite{composite_Lyap2},
\cite{convex_conjugate_Lyap}, \cite{hu-ma-lin}, \cite{LeeD06},
\cite{daafouzbernussou}. Note that the framework of path-complete
graph Lyapunov functions makes the proof of the fact that these
LMIs imply stability immediate.

\begin{corollary}\label{cor:min.of.quadratics}
Consider a set of $m$ matrices and the switched linear system in
(\ref{eq:switched.linear.sys}) or
(\ref{eq:linear.difference.inclusion}). If there exist $k$
positive definite matrices $P_j$ such that
\begin{eqnarray}\label{eq:min.quadratics.LMIs}
\forall (i,k)\in\{1,\ldots,m\}^2,\ \exists j\in\{1,\ldots,m\}\
\nonumber
\\
\mbox{such that}\quad \quad \gamma^2A_i^TP_jA_i\preceq P_k,
\end{eqnarray}
for some $\gamma>1$, then the system is absolutely asymptotically
stable. Moreover, the pointwise minimum
$$\min\{x^TP_1x,\ldots,x^TP_kx\}$$ of the quadratic functions serves
as a common Lyapunov function.
\end{corollary}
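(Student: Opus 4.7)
The plan is to recognize the LMIs in (\ref{eq:min.quadratics.LMIs}) as the Lyapunov inequalities attached to a particular path-complete graph, and then invoke Theorem~\ref{thm:path.complete.implies.stability}. Concretely, I would introduce a graph $G(N,E)$ with $k$ nodes indexed by $l\in\{1,\ldots,k\}$, assign to node $l$ the quadratic candidate $V_l(x)=x^T P_l x$, and for every pair $(i,l)\in\{1,\ldots,m\}\times\{1,\ldots,k\}$ draw an edge from node $l$ to some node $j=j(i,l)$ provided by the hypothesis, labeling it with $\gamma A_i$. The LMI $\gamma^2 A_i^T P_j A_i \preceq P_l$ is precisely the assertion $V_j(\gamma A_i x)\leq V_l(x)$, so $\{V_l\}_{l=1}^k$ is a valid GLF for $G$ associated with the scaled alphabet $\mathcal{A}_\gamma$.

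Next, I would observe that by construction every node of $G$ has outgoing edges carrying each label in $\mathcal{A}_\gamma$. This is exactly Property (i) of Proposition~\ref{prop:in.out.going.edges.path.complete}, so $G$ is path-complete. The $V_l$ are positive, continuous, and homogeneous, hence Theorem~\ref{thm:path.complete.implies.stability} applies directly and gives $\rho(\mathcal{A})\leq 1/\gamma<1$, which is absolute asymptotic stability of (\ref{eq:switched.linear.sys})--(\ref{eq:linear.difference.inclusion}).

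For the second claim, the plan is to verify by a one-line pointwise argument that $V(x):=\min_{l\in\{1,\ldots,k\}} x^T P_l x$ is a common (nonsmooth, positively homogeneous) Lyapunov function. Fix $x\in\mathbb{R}^n$ and any letter $A_i$, and let $l^\star\in\{1,\ldots,k\}$ attain the minimum defining $V(x)$, so that $V(x)=V_{l^\star}(x)$. By the hypothesis there exists $j^\star$ with $\gamma^2 A_i^T P_{j^\star} A_i \preceq P_{l^\star}$, hence
\begin{equation*}
V_{j^\star}(A_i x)\;\leq\;\frac{1}{\gamma^2}\,V_{l^\star}(x)\;=\;\frac{1}{\gamma^2}\,V(x).
\end{equation*}
Since $V(A_i x)=\min_{j} V_j(A_i x)\leq V_{j^\star}(A_i x)$, this yields $V(A_i x)\leq (1/\gamma^2)V(x)$, so $V$ strictly decreases (by a uniform factor $1/\gamma^2<1$) under every switch, which is exactly what is required of a common Lyapunov function.

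I expect the entire argument to be essentially mechanical once the correspondence with path-complete graphs is set up; the only subtlety worth being careful about is the direction of the edges. Because the hypothesis puts $P_l$ on the right-hand side of the LMI and $P_j$ sandwiched by $A_i$, the natural orientation is $l\to j$, making this the ``outgoing-edges'' (Property (i)) instance of Proposition~\ref{prop:in.out.going.edges.path.complete}; and it is precisely in this orientation that the \emph{pointwise minimum} of the quadratic pieces descends along trajectories, as the min/argmin bookkeeping above shows. The dual orientation (Property (ii)) would instead produce a \emph{pointwise maximum} Lyapunov function, corresponding to Corollary~\ref{cor:max.of.quadratics}.
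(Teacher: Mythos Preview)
Your proposal is correct and follows exactly the paper's approach: identify the graph as satisfying Property~(i) of Proposition~\ref{prop:in.out.going.edges.path.complete}, invoke Theorem~\ref{thm:path.complete.implies.stability} for stability, and verify the min-of-quadratics claim directly. The paper in fact leaves the last part ``to the reader,'' so your explicit argmin argument simply fills in what the paper omits.
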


\begin{proof}
The inequalities in (\ref{eq:min.quadratics.LMIs}) imply that
every node of the associated graph has outgoing edges labeled with
all the different $m$ matrices. Therefore, by
Proposition~\ref{prop:in.out.going.edges.path.complete} the graph
is path-complete, and by
Theorem~\ref{thm:path.complete.implies.stability} this implies
absolute asymptotic stability. The proof that the pointwise
minimum of the quadratics is a common Lyapunov function is easy
and left to the reader.
\end{proof}

\begin{corollary}\label{cor:max.of.quadratics}
Consider a set of $m$ matrices and the switched linear system in
(\ref{eq:switched.linear.sys}) or
(\ref{eq:linear.difference.inclusion}). If there exist $k$
positive definite matrices $P_j$ such that
\begin{eqnarray}\label{eq:max.quadratics.LMIs}
\forall (i,j)\in\{1,\ldots,m\}^2,\ \exists k\in\{1,\ldots,m\}\
\nonumber
\\
\mbox{such that}\quad \quad \gamma^2A_i^TP_jA_i\preceq P_k,
\end{eqnarray}
for some $\gamma>1$, then the system is absolutely asymptotically
stable. Moreover, the pointwise maximum
$$\max\{x^TP_1x,\ldots,x^TP_kx\}$$ of the quadratic functions serves
as a common Lyapunov function.
\end{corollary}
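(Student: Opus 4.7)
The plan is to mirror the proof of Corollary 4.5, exchanging the roles of source and target nodes so that Property (ii) of Proposition 4.4 (rather than Property (i)) is what gets invoked, and to verify that taking the pointwise maximum of the quadratics $V_j(x)=x^TP_jx$ plays the role that the pointwise minimum played in Corollary 4.5. The central observation is that the matrix inequality $\gamma^2 A_i^T P_j A_i \preceq P_\ell$ is exactly the Lyapunov inequality $V_j(\gamma A_i x) \le V_\ell(x)$ written out in the quadratic parameterization, and in the graph convention of Figure~4.1 it corresponds to a directed edge from node $\ell$ to node $j$ carrying the label $\gamma A_i$. So I would first reinterpret the hypothesis (4.8) in this edge language.

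Next I would construct the labeled graph $G(N,E)$ with $|N|$ nodes, one per $P_j$, and with edges dictated by (4.8): for each pair $(i,j)$, pick one witnessing index $\ell=\ell(i,j)$ and add the edge from $\ell$ to $j$ labeled $\gamma A_i$. By construction, for every destination node $j$ and every label $\gamma A_i$, an incoming edge with that label is present; that is precisely Property (ii) of Proposition 4.4, so $G$ is path-complete. With the functions $V_j(x)=x^TP_jx$ attached to the nodes, $\{V_j\}$ is a graph Lyapunov function associated with $G$ for the scaled alphabet $\mathcal{A}_\gamma$, and Theorem 4.2 then yields $\rho(\mathcal{A}) \le 1/\gamma < 1$, giving absolute asymptotic stability.

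For the second assertion I would set $V(x)=\max_{1\le j\le |N|} V_j(x)$ and show directly that $V(\gamma A_i x)\le V(x)$ for every $x$ and every $i\in\{1,\ldots,m\}$. Fix $x$ and $i$, and pick $j^\star \in \arg\max_j V_j(\gamma A_i x)$, so that $V(\gamma A_i x)=V_{j^\star}(\gamma A_i x)$. By hypothesis there exists $\ell$ with $\gamma^2 A_i^T P_{j^\star} A_i \preceq P_\ell$, which gives
\[
V(\gamma A_i x) \;=\; V_{j^\star}(\gamma A_i x) \;\le\; V_\ell(x) \;\le\; \max_j V_j(x) \;=\; V(x),
\]
and since $\gamma>1$ and $V_j$ are homogeneous of degree two this strictly contracts $V$. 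Thus $V$ is a common Lyapunov function for $\{A_1,\ldots,A_m\}$, which also reproves absolute asymptotic stability by the standard common Lyapunov argument.

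There is no real obstacle here: the only thing to be careful about is the direction of the inequality and which index is quantified universally versus existentially, since the path-complete framework of Theorem 4.2 and the max-of-quadratics construction use the two matrix indices in opposite roles from Corollary 4.5. Once the edge from $\ell$ to $j$ (rather than from $j$ to $\ell$) is correctly identified, both parts of the statement follow in a single step apiece.
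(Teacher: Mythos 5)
Your proposal is correct and follows essentially the same route as the paper: you read the LMIs (\ref{eq:max.quadratics.LMIs}) as edges giving every node an incoming edge with every label, invoke Property (ii) of Proposition~\ref{prop:in.out.going.edges.path.complete} to get path-completeness, and conclude absolute asymptotic stability from Theorem~\ref{thm:path.complete.implies.stability}. Your direct verification that $\max_j x^TP_jx$ contracts (by maximizing over $j^\star$ at $\gamma A_i x$ and applying the hypothesis to that index) correctly supplies the step the paper leaves to the reader.
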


\begin{proof}
The inequalities in (\ref{eq:max.quadratics.LMIs}) imply that
every node of the associated graph has incoming edges labeled with
all the different $m$ matrices. Therefore, by
Proposition~\ref{prop:in.out.going.edges.path.complete} the graph
is path-complete and the proof of absolute asymptotic stability
then follows. The proof that the pointwise maximum of the
quadratics is a common Lyapunov function is again left to the
reader.
\end{proof}

\begin{remark} The linear matrix inequalities in (\ref{eq:min.quadratics.LMIs}) and (\ref{eq:max.quadratics.LMIs}) are (convex) sufficient
conditions for existence of min-of-quadratics or max-of-quadratics
Lyapunov functions. The converse is not true. The works in
~\cite{composite_Lyap},~\cite{composite_Lyap2},
\cite{convex_conjugate_Lyap}, \cite{hu-ma-lin} have additional
multipliers in (\ref{eq:min.quadratics.LMIs}) and
(\ref{eq:max.quadratics.LMIs}) that make the inequalities
non-convex but when solved with a heuristic method contain a
larger family of min-of-quadratics and max-of-quadratics Lyapunov
functions. Even if the non-convex inequalities with multipliers
could be solved exactly, except for special cases where the
$\mathcal{S}$-procedure is exact (e.g., the case of two quadratic
functions), these methods still do not completely characterize
min-of-quadratics and max-of-quadratics functions.
%
%As we will see shortly, the framework of path-complete graphs
%gives us a wealth of techniques amenable to convex optimization
%formulations and with performance guarantees that in our view it
%is not really needed to resort to non-convex approaches and
%heuristic solvers.
\end{remark}
\aaa{\begin{remark} \label{rmk:Lee-Dellerud_Daafouz} The work in
\cite{LeeD06} on ``path-dependent quadratic Lyapunov functions''
and the work in \cite{daafouzbernussou} on ``parameter dependent
Lyapunov functions''--when specialized to the analysis of
arbitrary switched linear systems--are special cases of
Corollary~\ref{cor:min.of.quadratics}
and~\ref{cor:max.of.quadratics} respectively. This observation
makes a connection between these techniques and
min/max-of-quadratics Lyapunov functions which is not established
in~\cite{LeeD06},~\cite{daafouzbernussou}. It is also interesting
to note that the path-complete graph corresponding to the LMIs
proposed in \cite{LeeD06} (see Theorem 9 there) is the well-known
De Bruijn graph~\cite{GraphTheory_Handbook}.
%Two other well-established references in the literature that (when
%specialized to the analysis of arbitrary switched linear systems)
%turn out to be particular classes of path-complete graphs are the
%work in \cite{LeeD06} on ``path-dependent quadratic Lyapunov
%functions'', and the work in \cite{daafouzbernussou} on
%``parameter dependent Lyapunov functions''. In fact, the LMIs
%suggested in these works are special cases of
%Corollary~\ref{cor:min.of.quadratics}
%and~\ref{cor:max.of.quadratics} respectively, hence revealing a
%connection to the min/max-of-quadratics type Lyapunov functions. A
%curious observation is that the path-complete graph corresponding
%to the LMIs proposed in \cite{LeeD06} (see Theorem 9 there) is the
%De Bruijn graph~\cite{GraphTheory_Handbook}.
\end{remark}}

The set of path-complete graphs is much broader than the set of
simple family of graphs constructed in
Proposition~\ref{prop:in.out.going.edges.path.complete}. Indeed,
there are many graphs that are path-complete without having
outgoing (or incoming) edges with all the labels on every node;
see e.g. graph $H_4^e$ in
Figure~\ref{fig:non.trivial.path.complete}. This in turn means
that there are several more sophisticated Lyapunov inequalities
that we can explore for proving stability of switched systems.
Below, we give one particular example of such ``non-obvious''
inequalities for the case of switching between two matrices.

\begin{figure}[h]
\centering \scalebox{0.5}
{\includegraphics{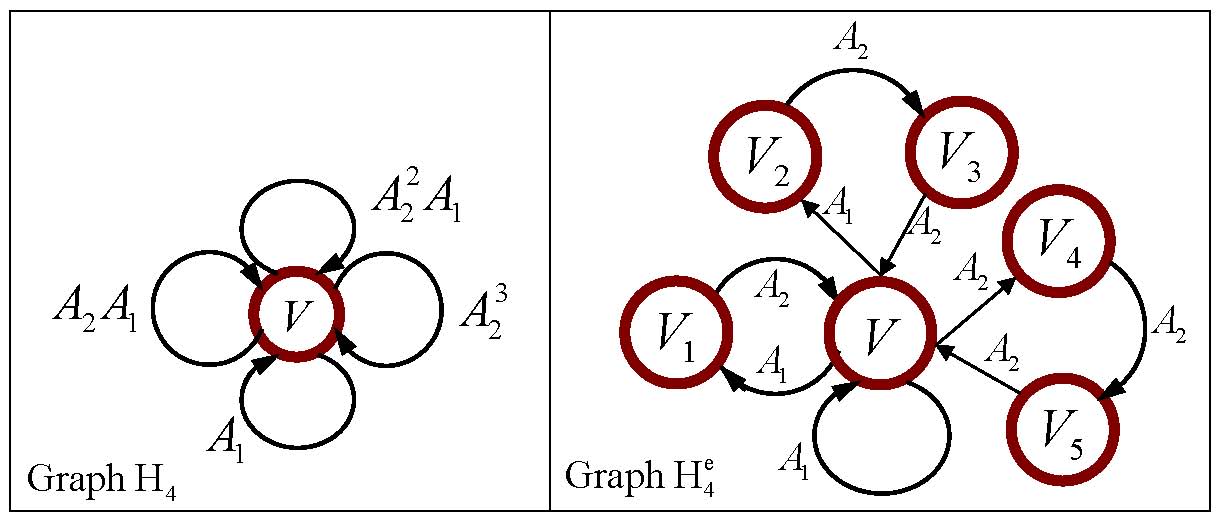}}
\caption{The path-complete graphs corresponding to
Proposition~\ref{prop:non.trivial.path.complete.graph}.}
\label{fig:non.trivial.path.complete}
\end{figure}

\begin{proposition}\label{prop:non.trivial.path.complete.graph}
Consider the set $\mathcal{A}=\{A_1,A_2\}$ and the switched linear
system in (\ref{eq:switched.linear.sys}) or
(\ref{eq:linear.difference.inclusion}). If there exist a positive
definite matrix $P$ such that
\begin{eqnarray}\label{eq:non.trivial.path.complete.graph}
\gamma^2A_1^TPA_1\preceq P, \nonumber \\
\gamma^4(A_2A_1)^TP(A_2A_1)\preceq P, \nonumber \\
\gamma^6(A_2^2A_1)^TP(A_2^2A_1)\preceq P, \nonumber \\
\gamma^6A_2^{3^T}PA_2^3\preceq P, \nonumber
\end{eqnarray}
for some $\gamma>1$, then the system is absolutely asymptotically
stable.
\end{proposition}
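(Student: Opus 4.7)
My plan is to recast the four inequalities in the language of Section~\ref{sec:graphs.jsr} and then invoke Theorem~\ref{thm:path.complete.implies.stability}. Specifically, I would view $P$ as defining a single quadratic Lyapunov function $V(x)=x^T P x$ sitting at one node, and read the four LMIs as the Lyapunov inequalities corresponding to four self-loops on that node with labels $\gamma A_1$, $\gamma^2 A_2 A_1$, $\gamma^3 A_2^2 A_1$, $\gamma^3 A_2^3 \in \mathcal{A}_\gamma^*$; this is exactly the graph $H_4$ of Figure~\ref{fig:non.trivial.path.complete}. Since $V(x)=x^T P x$ is continuous, positive definite, and homogeneous, it will suffice to verify that $H_4$ is path-complete, as Theorem~\ref{thm:path.complete.implies.stability} will then deliver $\rho(\mathcal{A})\leq 1/\gamma<1$, i.e.\ absolute asymptotic stability.

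The only substantive step is therefore to check path-completeness of $H_4$. I would do this constructively on the expanded graph $H_4^e$, whose nodes consist of the original node $v_0$ together with five auxiliary nodes created by breaking each self-loop of length $\geq 2$ into a chain. In $H_4^e$, only $v_0$ has outgoing $A_1$-edges, while every auxiliary node has a single outgoing edge, labeled $A_2$ and pointing one step further along its chain back toward $v_0$. Given any word $w\in\{A_1,A_2\}^*$, read in traversal order (i.e.\ right-to-left relative to matrix multiplication), I would first strip off its maximal leading run of $A_2$'s, of length $\ell$, and choose the starting node according to $\ell \bmod 3$: $v_0$ if $\ell\equiv 0$, the penultimate node of the $A_2^3$-chain if $\ell\equiv 1$, and the antepenultimate node if $\ell\equiv 2$. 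Traversing along the $A_2^3$-chain (and repeating the full $A_2^3$ self-loop as needed) consumes all of $\ell$ and deposits us at $v_0$. The remainder of $w$, if nonempty, starts with $A_1$ and hence decomposes into successive blocks $A_1 A_2^{j_i}$; each such block can be parsed from $v_0$ by first applying the token $A_1 A_2^{j_i \bmod 3}$ (one of $A_1$, $A_2 A_1$, $A_2^2 A_1$) and then $\lfloor j_i/3\rfloor$ copies of the $A_2^3$ loop, returning to $v_0$ after the block.

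The main obstacle is not algebraic but combinatorial: one must check that the four tiles $A_1, A_2 A_1, A_2^2 A_1, A_2^3$ really do cover every word. The slight subtlety is that, because $A_2$ by itself is not a self-loop, words whose leading $A_2$-run has length $\not\equiv 0 \pmod 3$ force the path to \emph{begin} at an intermediate node of the expansion rather than at $v_0$; this is exactly where the flexibility in Definition~\ref{def:path-complete}, which allows the path to start anywhere in $G^e$, is used. As a sanity check (and as an alternative to the explicit parsing above), one can feed $H_4^e$ to the standard automaton-emptiness algorithm mentioned after Definition~\ref{def:path-complete} and verify path-completeness mechanically. Either way, once path-completeness is established, the proposition follows from Theorem~\ref{thm:path.complete.implies.stability} applied to the one-node graph carrying the quadratic GLF $V(x)=x^T P x$.
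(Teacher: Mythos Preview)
Your proposal is correct and follows essentially the same approach as the paper: identify the one-node graph $H_4$ with its four self-loops, verify that its expansion $H_4^e$ is path-complete, and invoke Theorem~\ref{thm:path.complete.implies.stability}. The paper's own proof is a two-line sketch that leaves path-completeness of $H_4^e$ as an exercise (suggesting induction on word length), whereas you supply an explicit parsing argument; your handling of the leading $A_2$-run via the starting-node flexibility in Definition~\ref{def:path-complete} is exactly the right observation, and your block decomposition of the remainder into $A_1 A_2^{j_i}$ tokens routed through the appropriate self-loop is correct.
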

\begin{proof}
The graph $H_4$ associated with the LMIs above and its expanded
version $H_4^e$ are drawn in
Figure~\ref{fig:non.trivial.path.complete}. We leave it as an
exercise for the reader to show (e.g. by induction on the length
of the word) that there is path for every finite word in $H_4^e$.
Therefore, $H_4$ is path-complete and in view of
Theorem~\ref{thm:path.complete.implies.stability} the claim is
established.
\end{proof}

\begin{remark}
Proposition~\ref{prop:non.trivial.path.complete.graph} can be
generalized as follows: If a single Lyapunov function decreases
with respect to the matrix products
$$\{A_1,A_2A_1,A_2^2A_1,\ldots,A_2^{k-1}A_1,A_2^k\}$$ for some
integer $k\geq 1$, then the arbitrary switched system consisting
of the two matrices $A_1$ and $A_2$ is absolutely asymptotically
stable. We omit the proof of this generalization due to space
limitations. We will later prove (Theorem~\ref{thm-bound-codes}) a
bound for the quality of approximation of path-complete graphs of
this type, where a common Lyapunov function is required to
decrease with respect to products of different lengths.
\end{remark}

When we have so many different ways of imposing conditions for
stability, it is natural to ask which ones are better. The answer
clearly depends on the combinatorial structure of the graphs and
does not seem to be easy in general. Nevertheless, in the next
section, we compare the performance of all path-complete graphs
with two nodes for analysis of switched systems with two matrices.
The connections between the bounds obtained from these graphs are
not always obvious. For example, we will see that the graphs $H_1,
G_3,$ and $G_4$ always give the same bound on the joint spectral
radius; i.e, one graph will succeed in proving stability if and
only if the other will. So, there is no point in increasing the
number of decision variables and the number of constraints and
impose $G_3$ or $G_4$ in place of $H_1$. The same is true for the
graphs in $H_3$ and $G_2$, which makes graph $H_3$ preferable to
graph $G_2$. (See Proposition~\ref{prop:who.beats.who}.)

\section{Path-complete graphs with two nodes}\label{sec:who.beats.who}

In this section, we characterize the set of all path-complete
graphs consisting of two nodes, an alphabet set
$\mathcal{A}=\{A_{1},A_{2}\},$ and edge labels of unit length. We
will elaborate on the set of all admissible topologies arising in
this setup and compare the performance---in the sense of
conservatism of the ensuing analysis---of different path-complete
graph topologies.

\subsection{The set of path-complete graphs}

%Let us first establish a terminology that we will be using
%frequently.

%Given a labeled graph $G\left(  N,E\right)  ,$ we denote by
%$L\left(  e\right)  $ the label associated with edge $e\in E.$
%Also, $\mathcal{D}\left(  e\right)  $ denotes the destination node
%of $e\in E.$ Given a path-complete graph $G\left( N,E\right)  ,$
%we say that $\{V_{i}~|~i=1,...,\left\vert N\right\vert \}$ is a
%Graph Lyapunov Function (GLF) associated with $G\left(
%N,E\right)  $ for the switched system defined by
%$\mathcal{A}=\left\{  A_{1},\cdots,A_{m}\right\}  $ if
%\[
%V_{j}\left(  L\left(  \left(  i,j\right)  \right)  x\right)  \leq
%V_{i}\left( x\right)  ,\text{\qquad}\forall\left(  i,j\right)  \in
%E,
%\]
%where $L\left(  \left(  i,j\right)  \right) \in\mathcal{A}^{\ast}$
%is the label associated with edge $\left( i,j\right)  \in E.$ %%

%\textbf{Terminology:} Given a path-complete graph $G\left(
%N,E\right) ,$ we say that $\{V_{i}~|~i=1,...,\left\vert
%N\right\vert \}$ is a Graph Lyapunov Function (GLF) associated
%with $G\left( N,E\right)  $ for the switched system defined by
%$\mathcal{A}=\left\{ A_{1},\cdots,A_{m}\right\}  $ if
%\[
%V_{j}\left(  L\left( e  \right)  x\right)  \leq V_{i}\left(
%x\right)  ,\text{\qquad}\forall\ e  \in E,
%\]
%where $L\left(  e  \right) \in\mathcal{A}^{\ast}$ is the label
%associated with edge $e  \in E.$

The next lemma establishes that for thorough analysis of the case
of two matrices and two nodes, we only need to examine graphs with
four or fewer edges.

\begin{lemma}
\label{lessthan5}Let $G\left(  \left\{  1,2\right\}  ,E\right)  $
be a path-complete graph with labels of length one for
$\mathcal{A}=\{A_{1},A_{2}\}$. Let $\left\{ V_{1},V_{2}\right\}$
be a graph Lyapunov function for $G.$ If $\left\vert E\right\vert
>4,$ then, either
\newline\hspace*{0.15in}(i) there exists $\hat{e}\in E$ such that $G\left(  \left\{  1,2\right\}, E\backslash \hat{e}\right)  $ is a path-complete graph,
\newline\hspace*{0.15in} or
\newline\hspace*{0.15in}(ii) either $V_{1}$ or $V_{2}$ or both are common Lyapunov functions for {$\mathcal{A}.$}
\end{lemma}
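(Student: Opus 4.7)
\textit{Plan.} The proof is combinatorial and relies on the fact that with two nodes, two letters, and labels of unit length, every edge is uniquely specified by a triple (source, target, label), so there are only $2\cdot 2\cdot 2=8$ possible edges in total. I organize these eight candidates into four bins indexed by the pair (source, label). Since $|E|>4$, the pigeonhole principle forces at least one bin to contain both of its edges: there exist $i\in\{1,2\}$ and $k\in\{1,2\}$ such that both the self-loop $(i,i,A_k)$ and the cross-edge $(i,j,A_k)$ with $j\neq i$ belong to $E$. By relabeling if necessary, I take $i=k=1$.

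I next dispose of the ``common Lyapunov'' case. If node~$1$ additionally carries its second self-loop, i.e.\ $(1,1,A_2)\in E$, then the GLF inequalities attached to the two self-loops of node~$1$ read $V_1(A_1 x)\leq V_1(x)$ and $V_1(A_2 x)\leq V_1(x)$, which is precisely the statement that $V_1$ is a common Lyapunov function for $\mathcal A$; this is conclusion (ii). The symmetric argument disposes of the case $\{(2,2,A_1),(2,2,A_2)\}\subseteq E$. Thus I may assume that neither node carries both of its self-loops; in particular $(1,1,A_2)\notin E$.

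Under this standing assumption the plan is to produce a removable edge $\hat e\in E$, thereby establishing conclusion (i). The natural candidate $\hat e=(1,2,A_1)$ (the cross-edge of the duplicated bin) is not always removable, so the argument proceeds by a finite case analysis on the pair of outgoing-edge counts $(n_{2,1},n_{2,2})\in\{0,1,2\}^2$ at node~$2$, subject to the constraints $|E|>4$ and path-completeness of $G$. In each sub-case I identify a concrete removable edge by the following principle: an edge $e$ is removable exactly when every finite word still admits some trace in $G\setminus\{e\}$; since $G$ has only two nodes and labels of length one, this is verified directly from the nondeterministic transition structure. When node~$2$ is ``rich'' in outgoing edges, $\hat e=(1,2,A_1)$ works, since any trace in $G$ that used this edge can be rerouted through the self-loop $(1,1,A_1)$ and a subsequent cross-edge out of node~$1$ or~$2$. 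When node~$2$ is sparse, the removable edge is instead one of node~$1$'s $A_2$-edges, whose deletion forces all $A_2$-initial words to be traced from node~$2$.

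The main obstacle is the bookkeeping of the corner sub-cases: those in which node~$2$ has exactly two outgoing edges and hence little room to reroute a broken trace. Here the hypotheses $|E|>4$ and ``no node has both self-loops'' must both be invoked simultaneously. Concretely, if $(2,2,A_1)$ and $(2,2,A_2)$ were both absent, a short count shows that the total number of edges at node~$2$ compatible with path-completeness of $G$ is too small to reach $|E|>4$ unless a second bin at node~$1$ is also doubled; a second application of pigeonhole then locates that additional duplicated bin, whose cross-edge can be deleted in place of $(1,2,A_1)$. Carrying out this case-by-case verification---at most a dozen sub-cases in all---completes the proof.
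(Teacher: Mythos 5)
Your pigeonhole setup and the elimination of case (ii) when one node carries both self-loops are fine, but the core of the argument---actually exhibiting a removable edge in the remaining cases---rests on claims that are false, and the ``dozen sub-cases'' that would expose this are deferred rather than carried out. The rerouting justification for deleting $(1,2,A_1)$ when node $2$ is ``rich'' does not work: if you replace that edge by the self-loop $(1,1,A_1)$, your rerouted run sits at node $1$ while the original run continues from node $2$, and it can only re-synchronize if node $1$ has an outgoing edge for the next letter, which your standing assumptions do not guarantee (you have already excluded $(1,1,A_2)$, and $(1,2,A_2)$ need not be present). Concretely, take $E=\{(1,1,A_1),(1,2,A_1),(2,1,A_1),(2,1,A_2),(2,2,A_2)\}$ in (source, destination, label) notation. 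This satisfies all of your normalizations: the bin $(1,A_1)$ is doubled, $(1,1,A_2)\notin E$, node $2$ has only one self-loop, $|E|=5$, and the graph is path-complete (starting from the state set $\{1,2\}$, each letter maps $\{1,2\}$ back to $\{1,2\}$). Here node $2$ is as rich as your assumptions allow---three outgoing edges covering both labels---yet $(1,2,A_1)$ is \emph{not} removable: it is the only $A_1$-labeled edge ending at node $2$ and node $1$ has no outgoing $A_2$-edge, so after its deletion the word $A_2A_1$ (an $A_1$-edge followed by an $A_2$-edge) has no path. Your fallback for the ``sparse'' case---delete one of node $1$'s $A_2$-edges---is vacuous here because node $1$ has none, and the count in your final paragraph (``a second bin at node $1$ is also doubled'') is impossible under your own assumption $(1,1,A_2)\notin E$. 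The lemma does hold for this instance, but only via edges your scheme never proposes: deleting $(1,1,A_1)$ or $(2,1,A_1)$ leaves a path-complete graph.

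The paper sidesteps this by pigeonholing differently: since $|E|>4$, some node has at least three outgoing edges; after discarding duplicated edges and the two-self-loop case, that node can be normalized to carry exactly $1\to 1$ with $A_1$, $1\to 2$ with $A_1$, and $1\to 2$ with $A_2$, so it is guaranteed an outgoing $A_2$-edge and retains both labels after any single deletion. One then only has to look at the edges from node $2$ back to node $1$: if $2\to 1$ with $A_2$ exists, the four edges $\{1\to 1:A_1,\ 1\to 2:A_1,\ 1\to 2:A_2,\ 2\to 1:A_2\}$ already form a path-complete graph and any fifth edge may be deleted; otherwise the return edge carries $A_1$, path-completeness forces the self-loop $2\to 2:A_2$, and then $1\to 2:A_1$ is deletable. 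If you want to keep your bin-based normalization, you must replace the rich/sparse dichotomy (the example above shows the removable edge can be a self-loop) and genuinely enumerate the cases; otherwise, switch to the out-degree pigeonhole so that the distinguished node is forced to have an $A_2$-edge.
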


\begin{proof}
If $\left\vert E\right\vert >4,$ then at least one node has three
or more outgoing edges. Without loss of generality let node $1$ be
a node with exactly three outgoing edges $e_{1},e_{2},e_{3}$, and
let $L\left( e_{1}\right)  =L\left( e_{2}\right)  =A_{1}.$ Let
$\mathcal{D}\left(  e\right)  $ denote the destination node of an
edge $e\in E.$ If $\mathcal{D}\left(
e_{1}\right)  =\mathcal{D}\left(  e_{2}\right),$ then $e_{1}$ (or $e_{2}%
$)\ can be removed without changing the output set of words. If
$\mathcal{D}\left( e_{1}\right)  \neq\mathcal{D}\left(
e_{2}\right)  ,$ assume, without loss of generality, that
$\mathcal{D}\left(  e_{1}\right)  =1$ and $\mathcal{D}\left(
e_{2}\right)  =2.$ Now, if $L\left(  e_{3}\right)  =A_{1},$ then
regardless of its destination node, $e_{3}$ can be removed. If
$L\left(  e_{3}\right)  =A_{2}$ and $\mathcal{D}\left(
e_{3}\right) =1$, then $V_1$ is a common Lyapunov function for
$\mathcal{A}$. The only remaining possibility is that $L\left(
e_{3}\right)  =A_{2}$ and $\mathcal{D}\left(  e_{3}\right) =2. $
Note that there must be an edge $e_4 \in E$ from node $2$ to node
$1$, otherwise either node $2$ would have two self-edges with the
same label or $V_2$ would be a common Lyapunov function for
$\mathcal{A}$. If $L(e_4)=A_2$ then it can be verified that
$G(\{1,2\},\{e_1,e_2,e_3,e_4\})$ is path-complete and thus all
other edge can be removed. If there is no edge from node $2$ to
node $1$ with label $A_2$ then $L(e_4)=A_1$ and node $2$ must have
a self-edge $e_5 \in E$ with label $L(e_5)=A_2$, otherwise the
graph would not be path-complete. In this case, it can be verified
that $e_{2}$ can be removed without affecting the output set of
words.
\end{proof}

It can be verified that a path-complete graph with two nodes and
less than four edges must necessarily place two self-loops with
different labels on one node, which necessitates existence of a
common Lyapunov function for the underlying switched system. Since
we are interested in exploiting the favorable properties of
\aaa{graph Lyapunov functions} in approximation of the JSR, we
will focus on graphs with four edges.

Before we proceed, for convenience we introduce the following
notation: Given a labeled graph $G(N,E)$ associated with two
matrices $A_1$ and $A_2$, we denote by $\overline{G}(N,E)$, the
graph obtained by swapping of $A_1$ and $A_2$ in all the labels on
every edge.
%%%This notion can be generalized to more than two matrices, however, we need not elaborate this as in this paper we will use the notation for the case of two matrices only.
%TCIMACRO{\FRAME{ftbpF}{3.5276in}{3.0268in}{0pt}{}{\Qlb{Fig X}}{slide1_1.jpg}%
%{\special{ language "Scientific Word";  type "GRAPHIC";
%maintain-aspect-ratio TRUE;  display "USEDEF";  valid_file "F";
%width 3.5276in;  height 3.0268in;  depth 0pt;  original-width 6.9998in;
%original-height 6.0001in;  cropleft "0";  croptop "1";  cropright "1";
%cropbottom "0";  filename 'slide1_1.jpg';file-properties "XNPEU";}}}%
%BeginExpansion
%\begin{figure}
%[ptb]
%\begin{center}
%\includegraphics[
%natheight=6.000100in, natwidth=6.999800in, height=3.0268in,
%width=3.5276in
%]%
%{slide1_1.jpg}%
%\label{Fig X}%
%\end{center}
%\end{figure}
%EndExpansion

\subsection{Comparison of performance}\label{subsec:comparison.of.who.beats.who}

It can be verified that for path-complete graphs with two nodes,
four edges, and two matrices, and without multiple self-loops on a
single node, there are a total of nine distinct graph topologies
to consider. Of the nine graphs, six have the property that every
node has two incoming edges with different labels. These are
graphs $G_1,~G_2,~\overline{G}_2,~G_3,~\overline{G}_3,$ and $G_4$
(Figure \ref{fig:jsr.graphs}). Note that $\overline{G}_1=G_1$ and
$\overline{G}_4=G_4$. The duals of these six graphs, i.e.,
$G_1^\prime,~G_2^\prime,~\overline{G}_2^\prime,~G_3^\prime=G_3,~\overline{G}_3^\prime=\overline{G}_3,$
and $G_4^\prime=G_4$ have the property that every node has two
outgoing edges with different labels. Evidently,
$G_3,~\overline{G}_3,$ and $G_4$ are \emph{self-dual graphs},
i.e., they are isomorphic to their dual graphs. The self-dual
graphs are least interesting to us since, as we will show, they
necessitate existence of a common Lyapunov function for
$\mathcal{A}$ (cf. Proposition \ref{prop8}, equation
(\ref{theselfduals}))$.$

%%The (strictly) primal graphs are Graph $G_{1}\ $(Figure
%%\ref{fig:jsr.graphs} (g)), Graph $G_{2},$ (Figure
%%\ref{fig:jsr.graphs} (d)), and Graph $\overline{G}_2$ which is obtained by
%%swapping the roles of $A_{1}$ and $A_{2}$ in $G_{2}$ (not shown).
%%The self-dual graphs are Graph $G_{4}$ (Figure
%%\ref{fig:jsr.graphs} (f)), Graph $G_{5}$ (Figure
%%\ref{fig:jsr.graphs} (e)), and Graph $G_{6}$ which is obtained by
%%swapping the roles of $A_{1}$ and $A_{2}$ in $G_{5}$ (not shown).
%%The (strictly) dual graphs are obtained by reversing the direction
%%of the arrows in the primals and are denoted by $G_{1}^{\prime},$
%%$G_{2}^{\prime},$ $\overline{G}_2^{\prime}$ respectively. For instance,
%%$G_{1}^{\prime}$ is the graph shown in Figure \ref{fig:jsr.graphs}
%%(h). The rest of the dual graphs are not shown.

Note that all of these graphs perform at least as well as a common
Lyapunov function because we can always take $V_{1}\left( x\right)
=V_{2}\left( x\right)  $. Furthermore, we know from
Corollaries~\ref{cor:max.of.quadratics}
and~\ref{cor:min.of.quadratics} that if Lyapunov inequalities
associated with $G_1,~G_2,~\overline{G}_2,~G_3,~\overline{G}_3,$
and $G_4$ are satisfied, then $\max\left\{ V_{1}\left( x\right)
,V_{2}\left( x\right) \right\} $ is a common Lyapunov function,
whereas, in the case of graphs
$G_1^\prime,~G_2^\prime,~\overline{G}_2^\prime,~G_3^\prime,~\overline{G}_3^\prime$,
and $G_4^\prime$, the function  $\min\left\{ V_{1}\left( x\right)
,V_{2}\left( x\right) \right\} $ would serve as a common Lyapunov
function. Clearly, for the self-dual graphs $G_3,~\overline{G}_3,$
and $G_4$ both $\max\left\{  V_{1}\left( x\right) ,V_{2}\left(
x\right) \right\}$ and $\min\left\{ V_{1}\left(  x\right)
,V_{2}\left( x\right) \right\}$ are common Lyapunov functions.

\textbf{Notation:} Given a set of matrices $\mathcal{A}=\left\{  A_{1}%
,\cdots,A_{m}\right\}  ,$ a path-complete graph $G\left(
N,E\right)  ,$ and a
class of functions $\mathcal{V},$ we denote by $\hat{\rho}_{\mathcal{V}}%
,_{G}\left(  \mathcal{A}\right)  ,$ the upper bound on the JSR of
$\mathcal{A}$ that can be obtained by numerical optimization of
GLFs $V_{i}\in \mathcal{V},~i\in N  ,$ defined over $G.$ With a
slight abuse of notation, we denote by $\hat{\rho
}_{\mathcal{V}}\left( \mathcal{A}\right)  ,$ the upper bound that
is obtained by using a common Lyapunov function $V\in\mathcal{V}.$

\aaa{

\begin{proposition}
\label{prop:who.beats.who} \label{prop8} Consider the set
$\mathcal{A}=\left\{A_{1},A_{2}\right\}  ,$  and let
$G_1,~G_2,~G_3,~G_4$, and $H_3$ be the path-complete graphs shown
in Figure \ref{fig:jsr.graphs}. Then, the upper bounds on the JSR
of $\mathcal{A}$ obtained by analysis via the associated GLFs
satisfy the following relations:
\begin{equation}
\hat{\rho}_{\mathcal{V}},_{G_{1}}\left(  \mathcal{A}\right)
=\hat{\rho }_{\mathcal{V}},_{G_{1}^{\prime}}\left(
\mathcal{A}\right)
\label{hscceqdual}%
\end{equation}
and
\begin{equation}
\hat{\rho }_{\mathcal{V}}\left(
\mathcal{A}\right)=\hat{\rho}_{\mathcal{V}},_{G_{3}}\left(
\mathcal{A}\right)
=\hat{\rho}_{\mathcal{V}},_{\overline{G}_{3}}\left(
\mathcal{A}\right) =\hat{\rho}_{\mathcal{V}},_{G_{4}}\left(
\mathcal{A}\right)
\label{theselfduals}%
\end{equation}
and
\begin{equation}
\hat{\rho}_{\mathcal{V}},_{G_{2}}\left(  \mathcal{A}\right)
=\hat{\rho }_{\mathcal{V}},_{{{H}_3}}\left(\mathcal{A}\right)
,\text{\qquad}\hat{\rho}_{\mathcal{V}},_{\overline{G}_2}\left(\mathcal{A}\right)
=\hat{\rho}_{\mathcal{V}},_{{\overline{{H}}_3}}\left(\mathcal{A}\right)
\label{theprimals}%
\end{equation}
and
\begin{equation}
\hat{\rho}_{\mathcal{V}},_{G_{2}^{\prime}}\left(
\mathcal{A}\right)
=\hat{\rho}_{\mathcal{V}},_{{{H}_3^{\prime}}}\left(
\mathcal{A}\right)  ,\text{\qquad}\hat{\rho}_{\mathcal{V}},_{\overline{G}_2^{\prime}%
}\left(  \mathcal{A}\right)  =\hat{\rho}_{\mathcal{V}},_{{\overline{{H}}_3^{\prime}}}\left(  \mathcal{A}\right).  \label{theduals}%
\end{equation}
\end{proposition}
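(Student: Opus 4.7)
The plan is to establish each of the claimed equalities by proving mutual inequalities of the form $\hat\rho_{\mathcal V,G}(\mathcal A) \leq \hat\rho_{\mathcal V,\tilde G}(\mathcal A)$ and vice versa. In every case one of the two directions follows from an immediate ``lifting'' of a Lyapunov certificate by replication across nodes, while the other requires an explicit construction that collapses or splits a graph Lyapunov function into a different one.

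For the self-dual identities \eqref{theselfduals}, the inequality $\hat\rho_{\mathcal V,G_{i}}(\mathcal A) \leq \hat\rho_{\mathcal V}(\mathcal A)$ for $G_i \in \{G_3,\overline G_3,G_4\}$ is immediate: given a common Lyapunov function $V \in \mathcal V$ for $\gamma\mathcal A$, the pair $V_1 = V_2 = V$ trivially satisfies every edge inequality of $G_i$. The reverse direction is the substantive step. I would inspect the edge topology of each of $G_3, \overline G_3, G_4$ and show, by pairing up the four edges according to their labels, that summing the GLF inequalities in the right order makes the cross-terms cancel so that $V = V_1 + V_2$ satisfies $V(\gamma A_k x) \leq V(x)$ for each $k\in\{1,2\}$. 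If $\mathcal V$ is closed under sums (as for quadratic or sum-of-squares forms), this yields a common Lyapunov function of the same class with the same~$\gamma$.

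For the duality equality \eqref{hscceqdual}, I would use the congruence duality of quadratic Lyapunov inequalities combined with the fact that $\rho(\mathcal A) = \rho(\{A_1^T,\ldots,A_m^T\})$. Given a GLF $V_i(x) = x^T P_i x$ for $G_1$ with the $A_j$'s invertible (which we may assume by the perturbation argument of Remark~\ref{rmk:invertibility}), I would define $\tilde V_i(x) = x^T P_i^{-1} x$ and verify that these constitute a GLF on the dual graph $G_1'$: each reversed edge carries the transpose label $A_j^T$, and the identity $A_j^T P A_j \preceq P \Leftrightarrow A_j P^{-1} A_j^T \preceq P^{-1}$ converts each inequality of $G_1$ into the corresponding inequality of $G_1'$. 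Running the construction in both directions yields equality. This argument is a particular instance of the general duality principle to be developed in Section~\ref{subsec:duality.and.transposition}.

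The equalities \eqref{theprimals} and \eqref{theduals} relating the two-node graphs $G_2,\overline G_2$ (and their duals) with the one-node graphs $H_3,\overline H_3$ will be proven by exhibiting an explicit correspondence between a single function $V$ decreasing along the length-one and length-two products labeling $H_3$ and a pair $\{V_1,V_2\}$ of functions on $G_2$. In one direction I would take $V_1 = V$ and $V_2(x) = V(A_2 x)$ (or the analogous substitution read off the edge topology of $G_2$), and verify each of the four $G_2$ inequalities by chaining the three $H_3$ inequalities. Conversely, from a GLF on $G_2$ I would pick $V$ to be the Lyapunov function assigned to a specific ``anchor'' node, and recover each $H_3$ inequality by composing the two GLF inequalities along the unique path in $G_2^e$ that spells the corresponding product. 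The primed versions follow by applying this construction on top of the duality argument of the previous paragraph. The main obstacle will be identifying the correct anchor node and the correct substitution for each of the four variants and confirming that the three products attached to $H_3$ (respectively $\overline H_3$) are exactly those whose paths in $G_2$ (respectively $\overline G_2$) close up with no spurious extra constraints.
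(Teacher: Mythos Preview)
Your plan for \eqref{theselfduals}, \eqref{theprimals}, and \eqref{theduals} is essentially the paper's proof. The paper phrases the self-dual case via a node-swap symmetry (if $\{V_1,V_2\}$ is a GLF for $G_3$ then so is $\{V_2,V_1\}$, hence so is $\{V_1+V_2,\,V_1+V_2\}$), which is exactly your ``sum the inequalities'' idea; and for $G_2\leftrightarrow H_3$ the paper uses precisely your anchor-node construction, with $V_1=V$ and $V_2(x)=V(A_2x)$ in one direction and $V=V_1$ in the other.

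For \eqref{hscceqdual}, however, your argument has a gap rooted in a misreading of the dual graph. The dual $G_1'$ reverses edge \emph{directions} and reverses the \emph{order of letters} in each label; it does \emph{not} transpose the matrices. For length-one labels the edge label stays $A_j$, not $A_j^T$. Your Schur-complement identity $A_j^TPA_j\preceq Q \Leftrightarrow A_jQ^{-1}A_j^T\preceq P^{-1}$ therefore turns a $G_1$-inequality for $\mathcal A$ into a $G_1'$-inequality for $\mathcal A^T$, yielding only $\hat\rho_{\mathcal V^2,G_1}(\mathcal A)=\hat\rho_{\mathcal V^2,G_1'}(\mathcal A^T)$. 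To reach $\hat\rho_{\mathcal V^2,G_1'}(\mathcal A)$ from here you would need $\hat\rho_{\mathcal V^2,G_1'}(\mathcal A)=\hat\rho_{\mathcal V^2,G_1'}(\mathcal A^T)$, which is essentially the claim itself; invoking $\rho(\mathcal A)=\rho(\mathcal A^T)$ does not help, since that concerns the true JSR, not the bound. Moreover, the proposition is stated for a general class $\mathcal V$, whereas your inverse-matrix construction is intrinsically quadratic.

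The paper's fix is a different substitution: set $V_i'(x)=V_i(A_i^{-1}x)$ (equivalently $V_i'(A_ix)=V_i(x)$), which requires only invertibility of the $A_i$ and works for any homogeneous class $\mathcal V$. A direct check shows that the $G_1$ inequality $V_j(\gamma A_ix)\le V_i(x)$ becomes, after the change of variable $x\mapsto A_j^{-1}x$, exactly the $G_1'$ inequality $V_i'(\gamma A_ix)\le V_j'(x)$. This is the missing idea you need for \eqref{hscceqdual}.
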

}

\begin{proof}
A proof of \eqref{hscceqdual} in more generality is provided in
Section \ref{sec:hscc} (cf. Corollary
\ref{cor:HSCC.invariance.under.transpose}). The proof of
(\ref{theselfduals}) is based on symmetry arguments. Let
$\left\{V_{1},V_{2}\right\}$ be a GLF associated with $G_{3}$
($V_1$ is associated with node $1$ and $V_2$ is associated with
node $2$). Then, by symmetry, $\left\{V_{2},V_{1}\right\}$ is also
a GLF for $G_{3}$ (where $V_1$ is associated with node $2$ and
$V_2$ is associated with node $1$). Therefore, letting
$V=V_{1}+V_{2}$, we have that $\left\{V,V\right\}$ is a GLF for
$G_{3}$ and thus, $V=V_1+V_2$ is also a common Lyapunov function
for $\mathcal{A},$ which implies that
$\hat{\rho}_{\mathcal{V}},_{G_{3}}\left( \mathcal{A}\right)
\geq\hat{\rho }_{\mathcal{V}}\left( \mathcal{A}\right)  .$ The
other direction is trivial: If $V\in\mathcal{V}$ is a common
Lyapunov function for $\mathcal{A},$ then $\left\{
V_{1},V_{2}~|~V_{1}=V_{2}=V\right\}  $ is a GLF associated with
$G_{3},$ and hence, $\hat{\rho}_{\mathcal{V}},_{G_{3}}\left(  \mathcal{A}%
\right)  \leq\hat{\rho}_{\mathcal{V}}\left(  \mathcal{A}\right) .$
Identical arguments based on symmetry hold for
${\overline{G}_{3}}$ and ${G_{4}}$. We now prove the left equality
in (\ref{theprimals}), the proofs for the remaining equalities in
(\ref{theprimals}) and (\ref{theduals}) are analogous. The
equivalence between $G_2$ and $H_3$ is a special case of the
relation between a graph and its \emph{reduced} model, obtained by
removing a node without any self-loops, adding a new edge per each
pair of incoming and outgoing edges to that node, and then
labeling the new edges by taking the composition of the labels of
the corresponding incoming and outgoing edges in the original
graph; see~\cite{Roozbehani2008},~\cite[Chap.
5]{MardavijRoozbehani2008}. Note that $H_3$ is an offspring of
$G_2$ in this sense. This intuition helps construct a proof. Let
$\left\{ V_{1},V_{2}\right\}  $ be a GLF associated with $G_{2}.$
It can be verified that $V_{1}$ is a Lyapunov function associated
with ${{H}_3},$ and therefore,
$\hat{\rho}_{\mathcal{V}},_{{{H}_3}}\left(\mathcal{A}\right)
\leq\hat{\rho}_{\mathcal{V}},_{G_{2}}\left(\mathcal{A}\right) .$
Similarly, if $V\in\mathcal{V}$ is a Lyapunov function associated
with ${{H}_3},$ then one can check that $\left\{
V_{1},V_{2}~|~V_{1}\left(  x\right)  = V\left( x\right)
,V_{2}\left(  x\right)  =V\left(  A_{2}x\right) \right\} $ is a
GLF associated with $G_{2},$ and hence,
$\hat{\rho}_{\mathcal{V}},_{{{H}_3 }}\left(  \mathcal{A}\right)
\geq\hat{\rho}_{\mathcal{V}},_{G_{2}}\left( \mathcal{A}\right).$
%We refer the reader to~\cite{Roozbehani2008},
%and~\cite{MardavijRoozbehani2008}, Chapter 5 for a more general
%understanding of how the Lyapunov inequalities associated to
%certain pairs of graphs relate to each other.
%%%The proof of (\ref{hscceqdual}) is based on similar
%%%arguments;\ the GLFs associated with $G_{1}$ and $G_{1}^{\prime}$
%%%can be derived from one another via $V_{1}^{\prime}\left(
%%%A_{1}x\right)  =V_{1}\left(  x\right)  ,$ and
%%%$V_{2}^{\prime}\left(  A_{2}x\right)  =V_{2}\left(  x\right)  .$
\end{proof}

%Amirali: left out the following remark to save space. Put it back in for journal version.
%\begin{remark}
%The graphs $G_{1}$ and $G_{1}^{\prime}$ are the only ones for
%which we have not established the equivalence of GLF analysis to
%analysis via a common Lyapunov function for a finite set of
%matrices. What can be shown is that if $\left\{
%V_{1},V_{2}\right\}  $ is a GLF associated with $G_{1}$ then
%$V_{1}$
%is a Lyapunov function for $\mathcal{A}_{\infty}=\left\{  A_{2}^{k}%
%A_{1}~|~k=0,1,\cdots\right\}  .$ Similarly, if $\left\{  V_{1}^{\prime}%
%,V_{2}^{\prime}\right\}  $ is a GLF associated with
%$G_{1}^{\prime}$ then $V_{1}^{\prime}$ is a Lyapunov function for
%$\mathcal{A}_{\infty}^{\prime }=\left\{
%A_{1}A_{2}^{k}~|~k=0,1,\cdots\right\}  .$ Moreover, it can be
%verified that the finite sequences $\mathcal{A}_{k}=\left\{  A_{1},A_{2}%
%A_{1},\cdots,A_{2}^{k-1}A_{1},A_{2}^{k}\right\}  $ and $\mathcal{A}%
%_{k}^{\prime}=\left\{  A_{1},A_{1}A_{2},\cdots,A_{1}A_{2}^{k-1},A_{2}%
%^{k}\right\}  $ can be associated with a path-complete graph with
%a single node. Computation of the JSR using a common Lyapunov
%function for the finite sequence $\mathcal{A}_{k}$ (or
%$\mathcal{A}_{k}^{\prime}$) often leads to a good approximation of
%the JSR at a modest computational cost. The quality of
%approximation improves as $k$ increases.
%\end{remark}

\begin{figure}[h]
\centering \scalebox{0.28}
{\includegraphics{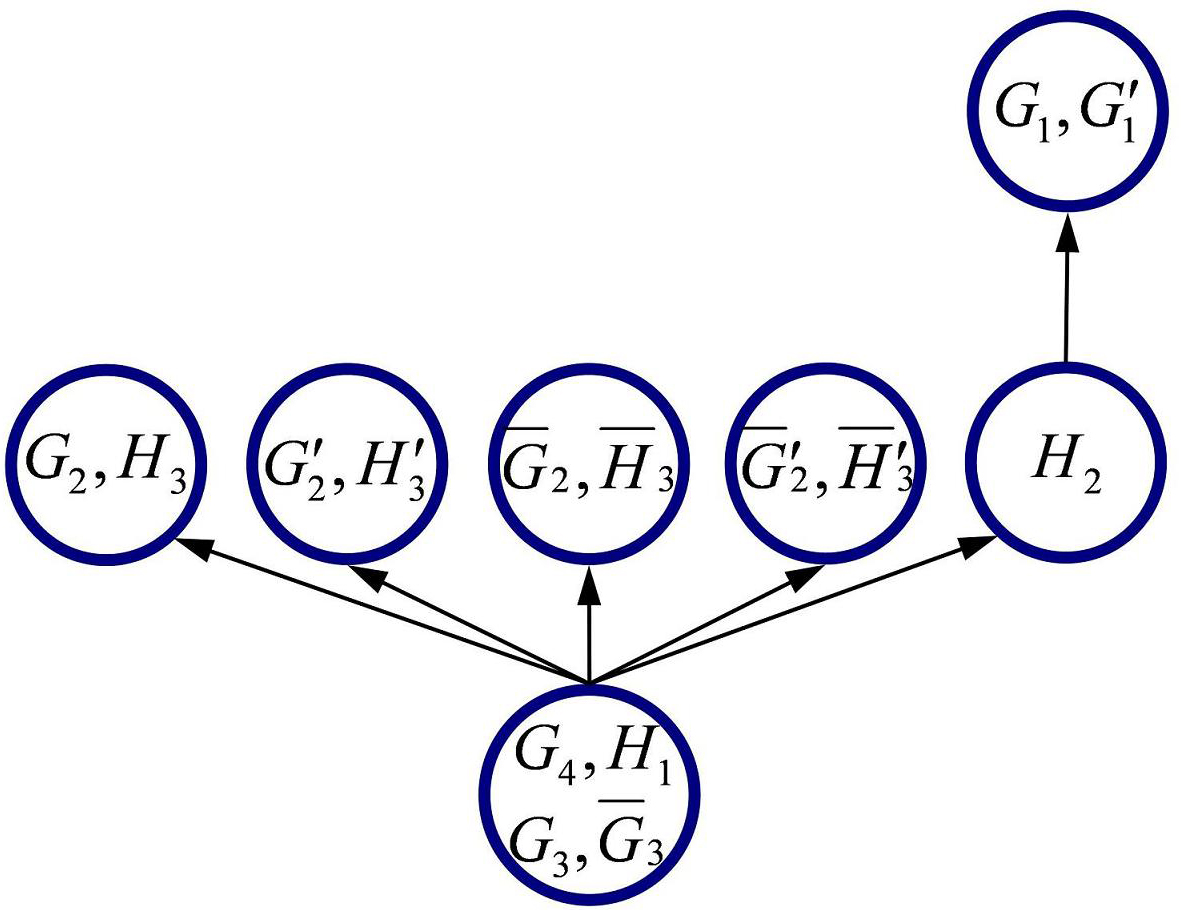}} \caption{\aaa{A
diagram describing the relative performance of the path-complete
graphs of Figure~\ref{fig:jsr.graphs} together with their duals
and label permutations. The graphs placed in the same circle
always give the same approximation of the JSR. A graph at the end
of an arrow results in an approximation of the JSR that is always
at least as good as that of the graph at the start of the arrow.
When there is no directed path between two graphs in this diagram,
either graph can outperform the other depending on the set of
matrices $\mathcal{A}$.}} \label{fig:Hasse.diag}
\end{figure}

\begin{remark}
\label{unverifiedremark}Proposition \ref{prop8} (equation
\ref{hscceqdual}) establishes the equivalence of the bounds
obtained from the pair of dual graphs $G_{1}$ and
$G_{1}^{\prime}$. This, however, is not true for graphs $G_{2}$
and $\overline{G}_2$ as there exist examples for which
\begin{align*}
\hat{\rho}_{\mathcal{V}},_{G_{2}}\left(  \mathcal{A}\right)    &
\neq\hat
{\rho}_{\mathcal{V}},_{G_{2}^{\prime}}\left(  \mathcal{A}\right)  ,\text{ }\\
\hat{\rho}_{\mathcal{V}},_{\overline{G}_2}\left(
\mathcal{A}\right)    & \neq\hat
{\rho}_{\mathcal{V}},_{\overline{G}_2^{\prime}}\left(
\mathcal{A}\right)  .
\end{align*}
\end{remark}

The diagram in Figure~\ref{fig:Hasse.diag} summarizes the results
of this section. We remark that no relations other than the ones
given in Figure~\ref{fig:Hasse.diag} can be made among these
path-complete graphs. Indeed, whenever there are no relations
between two graphs in Figure~\ref{fig:Hasse.diag}, we have
examples of matrices $A_1,A_2$ (not presented here) for which one
graph can outperform the other.

The graphs $G_1$ and $G^\prime_1$ seem to statistically perform
better than all other graphs in Figure~\ref{fig:Hasse.diag}. For
example, we ran experiments on a set of $100$ random $5\times5$
matrices $\{A_1,A_2\}$ with elements uniformly distributed in
$\left[ -1,1\right]$ to compare the performance of graphs $G_1,
G_2$ and $\overline{G}_2$. If in each case we also consider the
relabeled matrices (i.e., $\{A_2,A_1\}$) as our input, then, out
of the total $200$ instances, graph $G_1$ produced strictly better
bounds on the JSR $58$ times, whereas graphs $G_2$ and
$\overline{G}_2$ each produced the best bound of the three graphs
only $23$ times. (The numbers do not add up to $200$ due to ties.)
In addition to this superior performance, the bound
$\hat{\rho}_{\mathcal{V}},_{G_{1}}\left( \left\{
A_{1},A_{2}\right\}  \right)  $ obtained by analysis via the graph
$G_1$ is invariant under (i) permutation of the labels $A_{1}$ and
$A_{2}$ (obvious), and (ii) transposing of $A_{1}$ and $A_{2}$
(Corollary~\ref{cor:HSCC.invariance.under.transpose}). These are
desirable properties which fail to hold for $G_{2}$ and
$\overline{G}_2$ or their duals. Motivated by these observations,
we generalize $G_{1}$ and its dual $G_{1}^{\prime}$ in the next
section to the case of $m$ matrices and $m$ Lyapunov functions and
establish that they have certain appealing properties. We will
prove (cf. Theorem \ref{thm:HSCC.bound}) that these graphs always
perform better than a common Lyapunov function in 2 steps (i.e.,
the graph $H_2$ in Figure~\ref{fig:jsr.graphs}), whereas, this is
not the case for $G_{2}$ and $\overline{G}_2$ or their duals.

%\aaa{It is natural to define a partial order among path-complete
%graphs in terms of their relative performance in approximating the
%joint spectral radius. In Figure~\ref{fig:Hasse.diag}, we show the
%Hasse diagram of this partial order for the eight graphs given in
%Figure~\ref{fig:jsr.graphs}.}
%\aaa{The diagram in Figure~\ref{fig:Hasse.diag} summarizes the
%results of this section by demonstrating the relationship between
%the graphs shown in Figure~\ref{fig:jsr.graphs}, as well as their
%duals, and/or label permutations in terms of their quality of
%approximation of the JSR.}

\section{Further analysis of a particular family of path-complete graphs}\label{sec:hscc}
The framework of path-complete graphs provides a multitude of
semidefinite programming based techniques for the approximation of
the JSR whose performance vary with computational cost. For
instance, as we increase the number of nodes of the graph, or the
degree of the polynomial Lyapunov functions assigned to the nodes,
or the number of edges of the graph that instead of labels of
length one have labels of higher length, we obtain better results
but at a higher computational cost. Many of these approximation
techniques are asymptotically tight, so in theory they can be used
to achieve any desired accuracy of approximation. For example,
$$\hat{\rho}_{\mathcal{V}^{SOS,2d}}(\mathcal{A})\rightarrow\rho(\mathcal{A})
\ \mbox{as} \ 2d\rightarrow\infty,$$ where $\mathcal{V}^{SOS,2d}$
denotes the class of sum of squares homogeneous polynomial
Lyapunov functions of degree $2d$. (Recall our notation for bounds
from Section~\ref{subsec:comparison.of.who.beats.who}.) It is also
true that a common quadratic Lyapunov function for products of
higher length achieves the true JSR
asymptotically~\cite{Raphael_Book}; i.e.\footnote{By
$\mathcal{V}^2$ we denote the class of quadratic homogeneous
polynomials. We drop the superscript ``SOS'' because nonnegative
quadratic polynomials are always sums of squares.},
$$\sqrt[t]{\hat{\rho}_{\mathcal{V}^{2}}(\mathcal{A}^t)}\rightarrow\rho(\mathcal{A})\ \mbox{as} \ t\rightarrow\infty.$$

Nevertheless, it is desirable for practical purposes to identify a
class of path-complete graphs that provide a good tradeoff between
quality of approximation and computational cost. Towards this
objective, we propose the use of $m$ quadratic Lyapunov functions
assigned to the nodes of the De Bruijn graph of order $1$ on $m$
symbols for the approximation of the JSR of a set of $m$ matrices.
This graph and its dual are particular path-complete graphs with
$m$ nodes and $m^2$ edges and will be the subject of study in this
section. If we denote the quadratic Lyapunov functions by
$x^TP_ix$, then we are proposing the use of linear matrix
inequalities
\begin{equation}\label{eq:HSCC.lmis}
\begin{array}{rll}
P_i&\succ&0 \quad  \forall i=1,\ldots, m, \\
\gamma^2A_i^TP_jA_i&\preceq&P_i \quad \forall i,j=\{1,\ldots,
m\}^2
\end{array}
\end{equation}
or the set of LMIs
\begin{equation}\label{eq:dual.HSCC.lmis}
\begin{array}{rll}
P_i&\succ&0 \quad  \forall i=1,\ldots, m, \\
\gamma^2A_i^TP_iA_i&\preceq&P_j \quad \forall i,j=\{1,\ldots,
m\}^2
\end{array}
\end{equation}
for the approximation of the JSR of $m$ matrices. Throughout this
section, we denote the path-complete graphs associated with
(\ref{eq:HSCC.lmis}) and (\ref{eq:dual.HSCC.lmis}) with $G_1$ and
$G_1^\prime$ respectively. (The De Bruijn graph of order $1$, by
standard convention, is actually the graph $G_1^\prime$.) Observe
that $G_1$ and $G_1^\prime$ are indeed dual graphs as they can be
obtained from each other by reversing the direction of the edges.
For the case $m=2$, our notation is consistent with the previous
section and these graphs are illustrated in
Figure~\ref{fig:dual.graphs}. Also observe from
Corollary~\ref{cor:min.of.quadratics} and
Corollary~\ref{cor:max.of.quadratics} that the LMIs in
(\ref{eq:HSCC.lmis}) give rise to max-of-quadratics Lyapunov
functions, whereas the LMIs in (\ref{eq:dual.HSCC.lmis}) lead to
min-of-quadratics Lyapunov functions. We will prove in this
section that the approximation bound obtained by these LMIs (i.e.,
the reciprocal of the largest $\gamma$ for which the LMIs
(\ref{eq:HSCC.lmis}) or (\ref{eq:dual.HSCC.lmis}) hold) is always
the same and lies within a multiplicative factor of
$\frac{1}{\sqrt[4]{n}}$ of the true JSR, where $n$ is the
dimension of the matrices. The relation between the bound obtained
by a pair of dual path-complete graphs has a connection to
transposition of the matrices in the set $\mathcal{A}$. We explain
this next.

\subsection{Duality and invariance under transposition}\label{subsec:duality.and.transposition}
In~\cite{dual_LMI_diff_inclusions},~\cite{convex_conjugate_Lyap},
it is shown that absolute asymptotic stability of the linear
difference inclusion in (\ref{eq:linear.difference.inclusion})
defined by the matrices $\mathcal{A}=\{A_1,\ldots,A_m\}$ is
equivalent to absolute asymptotic stability of
(\ref{eq:linear.difference.inclusion}) for the transposed matrices
$\mathcal{A}^T\mathrel{\mathop:}=\{A_1^T,\ldots,A_m^T\}$. Note
that this fact is immediately seen from the definition of the JSR
in (\ref{eq:def.jsr}), since
$\rho(\mathcal{A})=\rho(\mathcal{A}^T)$. It is also well-known
that
$$\hat{\rho}_{\mathcal{V}^2}(\mathcal{A})=\hat{\rho}_{\mathcal{V}^2}(\mathcal{A}^T).$$
Indeed, if $x^TPx$ is a common quadratic Lyapunov function for the
set $\mathcal{A}$, then it is easy to show that $x^TP^{-1}x$ is a
common quadratic Lyapunov function for the set $\mathcal{A}^T$.
However, this nice property is not true for the bound obtained
from some other techniques. For instance, the next example shows
that
%\footnote{We have examples that show the statement in
%(\ref{eq:bound.sos.not.transp.invar}), which we do not present
%because of space limitations. See~\cite{dual_LMI_diff_inclusions}
%for such an example in the continuous time setting.}
\begin{equation}\label{eq:bound.sos.not.transp.invar}
\hat{\rho}_{\mathcal{V}^{SOS,4}}(\mathcal{A})\neq\hat{\rho}_{\mathcal{V}^{SOS,4}}(\mathcal{A}^T),
\end{equation}
i.e, the upper bound obtained by searching for a common quartic
sos polynomial is not invariant under transposition.

\begin{example}\label{ex:quartic.sos.not.transpose.invariant}
Consider the set of matrices $\mathcal{A}=\{A_1,A_2,A_3,A_4\},$
with \scalefont{.5}
\begin{align*}
A_{1} =\left[
\begin{array}
[c]{rrr}%
10 & -6 & -1 \\
8 & 1 & -16 \\
-8 & 0 & 17
\end{array}
\right] , A_{2}=\left[
\begin{array}
[c]{rrr}%
-5 & 9 & -14\\
1 & 5 & 10 \\
3 & 2 & 16
\end{array}
\right],  A_{3} =\left[
\begin{array}
[c]{rrr}%
-14 & 1 & 0\\
-15 & -8 & -12 \\
-1 & -6 & 7
\end{array}
\right],  A_{4} =\left[
\begin{array}
[c]{rrr}%
1 & -8 & -2\\
1 & 16 & 3 \\
16 & 11 & 14
\end{array}
\right].
\end{align*}
\normalsize We have
$\hat{\rho}_{\mathcal{V}^{SOS,4}}(\mathcal{A})=21.411,$ but
$\hat{\rho}_{\mathcal{V}^{SOS,4}}(\mathcal{A}^T)=21.214$ (up to
three significant digits). $\triangle$
\end{example}

Similarly, the bound obtained by non-convex inequalities proposed
in~\cite{dual_LMI_diff_inclusions} is not invariant under
transposing the matrices. For such methods, one would have to run
the numerical optimization twice---once for the set $\mathcal{A}$
and once for the set $\mathcal{A}^T$---and then pick the better
bound of the two. We will show that by contrast, the bound
obtained from the LMIs in (\ref{eq:HSCC.lmis}) and
(\ref{eq:dual.HSCC.lmis}) are invariant under transposing the
matrices. Before we do that, let us prove a general result which
states that for path-complete graphs with quadratic Lyapunov
functions as nodes, transposing the matrices has the same effect
as dualizing the graph.

\begin{theorem}\label{thm:transpose.bound.dual.bound}
Let $G(N,E)$ be a path-complete graph, and let
$G^\prime(N,E^\prime)$ be its dual graph. Then,
\begin{equation}\label{eq:rho.hat.A^T=rho.hat.G'}
\hat{\rho}_{\mathcal{V}^2,
G}(\mathcal{A}^T)=\hat{\rho}_{\mathcal{V}^2,
G^\prime}(\mathcal{A}).
\end{equation}
\end{theorem}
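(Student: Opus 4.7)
The plan is to exhibit an explicit bijection between graph Lyapunov functions (with quadratic pieces) associated with $G'$ on $\mathcal{A}$ and those associated with $G$ on $\mathcal{A}^T$, at the same value of the scaling $\gamma$. Given positive definite matrices $\tilde{P}_1,\ldots,\tilde{P}_{|N|}$ serving as a GLF for $(G',\mathcal{A})$, I would define $Q_i \mathrel{\mathop:}= \tilde{P}_i^{-1}$ and claim that $\{Q_i\}$ is a GLF for $(G,\mathcal{A}^T)$. Since the map $P \mapsto P^{-1}$ is an involution on the cone of positive definite matrices, this correspondence preserves feasibility in both directions, so the largest $\gamma$ for which either set of LMIs is feasible is the same, which yields \eqref{eq:rho.hat.A^T=rho.hat.G'}.

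The workhorse for this bijection is the standard Schur complement identity: for any $P,Q \succ 0$ and any (not necessarily invertible) matrix $B$,
\begin{equation*}
B^T P B \preceq Q \quad \Longleftrightarrow \quad B Q^{-1} B^T \preceq P^{-1},
\end{equation*}
both being equivalent to positivity of the block matrix $\bigl(\begin{smallmatrix} Q & B^T \\ B & P^{-1} \end{smallmatrix}\bigr)$. I will apply this identity edge by edge. An edge $(j,i) \in E'$ carries the reversed label $A_{\sigma_1}\cdots A_{\sigma_k}$ obtained from the label $A_{\sigma_k}\cdots A_{\sigma_1}$ on $(i,j)\in E$ (Definition~\ref{def:dual.graph}), so writing $B \mathrel{\mathop:}= A_{\sigma_1}\cdots A_{\sigma_k}$, the Lyapunov inequality on $G'$ with matrices $\mathcal{A}$ reads $\gamma^{2k} B^T \tilde{P}_i B \preceq \tilde{P}_j$. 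Meanwhile, the edge $(i,j) \in E$ evaluated with the matrices $\mathcal{A}^T$ produces the product $A_{\sigma_k}^T\cdots A_{\sigma_1}^T = B^T$, so its associated inequality becomes $\gamma^{2k} B Q_j B^T \preceq Q_i$. Setting $Q_i = \tilde{P}_i^{-1}$ and invoking the Schur complement identity with $P = \tilde{P}_i$, $Q = \tilde{P}_j$ shows these two inequalities are equivalent.

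The main obstacle, modest but worth handling carefully, is matching the labels between the two graphs---the dual flips both the orientation of each edge and the internal order of its label, while transposing the alphabet flips the order of matrix multiplication in the evaluation of each label. I will verify that these two order reversals cancel out exactly as in the display above, so that the matrix appearing in the dual-side inequality is precisely the transpose of the matrix appearing in the original-side inequality, which is exactly what the Schur-complement identity asks for. Note that we do not need the individual $A_l$'s to be invertible: the identity holds for arbitrary $B$ provided the $\tilde{P}_i$'s are strictly positive definite, which is part of the definition of $\hat{\rho}_{\mathcal{V}^2,G}$. Finally, because the bijection $\{\tilde{P}_i\} \leftrightarrow \{\tilde{P}_i^{-1}\}$ preserves $\gamma$, taking the infimum of $1/\gamma$ over feasible tuples on both sides yields the claimed equality of approximation bounds.
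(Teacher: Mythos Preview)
Your proof is correct and takes essentially the same approach as the paper: both use the Schur complement equivalence $B^T P B \preceq Q \Leftrightarrow B Q^{-1} B^T \preceq P^{-1}$ together with the bijection $P_i \mapsto P_i^{-1}$ to match the LMIs of $(G,\mathcal{A}^T)$ with those of $(G',\mathcal{A})$ edge by edge. The paper states the argument only for labels of length one and remarks that the general case is identical, whereas you carry out the general case explicitly and track the two order reversals; otherwise the arguments coincide.
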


\begin{proof}
For ease of notation, we prove the claim for the case where the
edge labels of $G(N,E)$ have length one. The proof of the general
case is identical. Pick an arbitrary edge $(i,j)\in E$ going from
node $i$ to node $j$ and labeled with some matrix
$A_l\in\mathcal{A}$. By the application of the Schur complement we
have
\begin{equation}\nonumber
A_lP_jA_l^T\preceq P_i \ \Leftrightarrow\
\begin{bmatrix}P_i & A_l\\ A_l^T & P_j^{-1} \end{bmatrix}\succeq0 \ \Leftrightarrow\
A_l^TP_i^{-1}A_l\preceq P_j^{-1}.
\end{equation}
But this already establishes the claim since we see that $P_i$ and
$P_j$ satisfy the LMI associated with edge $(i,j)\in E$ when the
matrix $A_l$ is transposed if and only if $P_j^{-1}$ and
$P_i^{-1}$ satisfy the LMI associated with edge $(j,i)\in
E^\prime$.
%
%OLD PROOF:
%associated constraint be given by
%\begin{equation}\nonumber
%A_l^TP_jA_l\preceq P_i,
%\end{equation}
%for some $A_l\in\mathcal{A}$. Suppose the constraint associated to
%this edge is satisfied for the transposed matrix $A_l^T$, i.e.,
%there exists positive definite matrices $\tilde{P}_i$ and
%$\tilde{P}_j$ such that
%\begin{equation}\nonumber
%A_l\tilde{P}_jA_l^T\preceq \tilde{P}_i.
%\end{equation}
%By applying the Schur complement twice, we see that the last
%inequality holds if and only if
%\begin{equation}\nonumber
%A_l^T\tilde{P}_i^{-1}A_l\preceq \tilde{P}_j^{-1}.
%\end{equation}
%But this inequality shows that $\tilde{P}_i^{-1}$ and
%$\tilde{P}_j^{-1}$ satisfy the constraint associated with edge
%$(j,i)\in E^\prime$. This establishes the claim.
\end{proof}

\begin{corollary}\label{cor:transpose.eq.iff.dual.eq}
$\hat{\rho}_{\mathcal{V}^2,
G}(\mathcal{A})=\hat{\rho}_{\mathcal{V}^2, G}(\mathcal{A}^T)$ if
and only if $\hat{\rho}_{\mathcal{V}^2,
G}(\mathcal{A})=\hat{\rho}_{\mathcal{V}^2,
G^\prime}(\mathcal{A})$.
\end{corollary}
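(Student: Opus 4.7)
The proof is essentially a one-line chain of equalities once Theorem~\ref{thm:transpose.bound.dual.bound} is in hand, so I would keep it short. The plan is to simply substitute the identity $\hat{\rho}_{\mathcal{V}^2, G}(\mathcal{A}^T)=\hat{\rho}_{\mathcal{V}^2, G^\prime}(\mathcal{A})$ from the theorem into whichever side of the ``iff'' is assumed, and read off the other side.

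More concretely, for the forward direction, I would assume $\hat{\rho}_{\mathcal{V}^2, G}(\mathcal{A})=\hat{\rho}_{\mathcal{V}^2, G}(\mathcal{A}^T)$ and then invoke Theorem~\ref{thm:transpose.bound.dual.bound} to rewrite $\hat{\rho}_{\mathcal{V}^2, G}(\mathcal{A}^T)$ as $\hat{\rho}_{\mathcal{V}^2, G^\prime}(\mathcal{A})$, immediately yielding $\hat{\rho}_{\mathcal{V}^2, G}(\mathcal{A})=\hat{\rho}_{\mathcal{V}^2, G^\prime}(\mathcal{A})$. For the reverse direction, I would start from $\hat{\rho}_{\mathcal{V}^2, G}(\mathcal{A})=\hat{\rho}_{\mathcal{V}^2, G^\prime}(\mathcal{A})$ and again apply the theorem (in the other direction) to replace $\hat{\rho}_{\mathcal{V}^2, G^\prime}(\mathcal{A})$ with $\hat{\rho}_{\mathcal{V}^2, G}(\mathcal{A}^T)$, obtaining the transposition-invariance statement.

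There is essentially no obstacle here; the only thing to be mindful of is to make sure the statement of Theorem~\ref{thm:transpose.bound.dual.bound} is applied to the correct graph (since $(G^\prime)^\prime = G$, both directions of the theorem are immediately available). In particular, no additional structural properties of $G$, of the set $\mathcal{A}$, or of the quadratic class $\mathcal{V}^2$ need to be invoked beyond the identity already proved. This makes the corollary a direct bookkeeping consequence of the theorem rather than a result requiring new ideas.
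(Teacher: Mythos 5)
Your proposal is correct and matches the paper's argument: the corollary is proved there in one line as an immediate consequence of the identity $\hat{\rho}_{\mathcal{V}^2, G}(\mathcal{A}^T)=\hat{\rho}_{\mathcal{V}^2, G^\prime}(\mathcal{A})$ from Theorem~\ref{thm:transpose.bound.dual.bound}, exactly as you do. Your substitution argument in both directions is just a slightly more explicit write-up of the same bookkeeping.
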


\begin{proof}
This is an immediate consequence of the equality in
(\ref{eq:rho.hat.A^T=rho.hat.G'}).
\end{proof}

It is an interesting question for future research to characterize
the topologies of path-complete graphs for which one has
$\hat{\rho}_{\mathcal{V}^2,
G}(\mathcal{A})=\hat{\rho}_{\mathcal{V}^2, G}(\mathcal{A}^T).$ For
example, the above corollary shows that this is obviously the case
for any path-complete graph that is self-dual. Let us show next
that this is also the case for graphs $G_1$ and $G_1^\prime$
despite the fact that they are not self-dual.

\begin{corollary}\label{cor:HSCC.invariance.under.transpose}
For the path-complete graphs $G_1$ and $G_1^\prime$ associated
with the inequalities in (\ref{eq:HSCC.lmis}) and
(\ref{eq:dual.HSCC.lmis}), and for any class of continuous,
homogeneous, and positive definite functions $\mathcal{V}$, we
have
\begin{equation}\label{eq:bound.G=bound.G'.general.Lyap}
\hat{\rho}_{\mathcal{V},G_1}(\mathcal{A})=\hat{\rho}_{\mathcal{V},G_1^\prime}(\mathcal{A}).
\end{equation}
Moreover, if quadratic Lyapunov functions are assigned to the
nodes of $G_1$ and $G_1^\prime$, then we have
\begin{equation}\label{eq:bound.G1=G1'=G1A^T=G1'A^T}
\hat{\rho}_{\mathcal{V}^2,
G_1}(\mathcal{A})=\hat{\rho}_{\mathcal{V}^2,
G_1}(\mathcal{A}^T)=\hat{\rho}_{\mathcal{V}^2,
G_1^\prime}(\mathcal{A})=\hat{\rho}_{\mathcal{V}^2,
G_1^\prime}(\mathcal{A}^T).
\end{equation}
\end{corollary}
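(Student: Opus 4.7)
My plan is to establish Part 2 (equation \ref{eq:bound.G1=G1'=G1A^T=G1'A^T}) by bootstrapping on Part 1 (equation \ref{eq:bound.G=bound.G'.general.Lyap}) together with Theorem \ref{thm:transpose.bound.dual.bound}. I will therefore attack Part 1 first, where the work actually lies.

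For Part 1, I would exploit the very special ``star-like'' label structure shared by $G_1$ and $G_1^\prime$: in $G_1$ every outgoing edge from node $i$ carries the single label $A_i$, whereas in $G_1^\prime$ every incoming edge into node $i$ carries the single label $A_i$. Consequently, the Lyapunov inequalities (\ref{eq:HSCC.lmis}) for $G_1$ collapse to $\max_{j} V_j(A_i x) \le V_i(x)/\gamma^2$ for each $i$, while the inequalities (\ref{eq:dual.HSCC.lmis}) for $G_1^\prime$ collapse to $V_i(A_i x) \le \min_{j} V_j(x)/\gamma^2$ for each $i$. In the first case $V := \max_k V_k$ satisfies $V(A_i x) \le V_i(x)/\gamma^2 \le V(x)/\gamma^2$, so $V$ is a common Lyapunov function for $\mathcal{A}$ at rate $\gamma$; symmetrically, in the second case $V := \min_k V_k$ satisfies $V(A_i x) \le V_i(A_i x) \le V(x)/\gamma^2$, again a common Lyapunov function at rate $\gamma$. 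I would then use this common Lyapunov function to produce a certificate for the \emph{dual} graph at the same rate $\gamma$, by assigning it uniformly to every node of the dual graph and directly checking, via the collapsed inequalities above and the sandwich $\min_k V_k \le V_j \le \max_k V_k$, that the dual system is satisfied. This yields $\hat{\rho}_{\mathcal{V},G_1}(\mathcal{A}) = \hat{\rho}_{\mathcal{V},G_1^\prime}(\mathcal{A})$.

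Once Part 1 is in hand, Part 2 follows by a short bookkeeping argument. Theorem \ref{thm:transpose.bound.dual.bound}, applied to the dual pair $(G_1, G_1^\prime)$, gives $\hat{\rho}_{\mathcal{V}^2,G_1}(\mathcal{A}^T) = \hat{\rho}_{\mathcal{V}^2,G_1^\prime}(\mathcal{A})$ and, with the roles of $G_1$ and $G_1^\prime$ swapped, $\hat{\rho}_{\mathcal{V}^2,G_1^\prime}(\mathcal{A}^T) = \hat{\rho}_{\mathcal{V}^2,G_1}(\mathcal{A})$. Part 1, applied once to $\mathcal{A}$ and once to $\mathcal{A}^T$, supplies the two remaining identifications $\hat{\rho}_{\mathcal{V}^2,G_1}(\mathcal{A}) = \hat{\rho}_{\mathcal{V}^2,G_1^\prime}(\mathcal{A})$ and $\hat{\rho}_{\mathcal{V}^2,G_1}(\mathcal{A}^T) = \hat{\rho}_{\mathcal{V}^2,G_1^\prime}(\mathcal{A}^T)$. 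Chaining the four equalities gives exactly (\ref{eq:bound.G1=G1'=G1A^T=G1'A^T}).

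The main obstacle is the ``lift'' step in Part 1: the pointwise extrema $\max_k V_k$ and $\min_k V_k$ lie in the closure of $\mathcal{V}$ under finite pointwise maxima and minima rather than in $\mathcal{V}$ itself, so some care is required when $\mathcal{V}$ is not closed under these operations (for example, the class $\mathcal{V}^2$ of quadratic forms is not). I expect the cleanest resolution to be either reading $\hat{\rho}_{\mathcal{V},G}(\mathcal{A})$ as permitting node-wise certificates in this natural closure (so that the uniform assignment of $\max_k V_k$ to every node of the dual graph is an admissible certificate), or a direct construction that stays inside $\mathcal{V}$ by leveraging the collapsed form of the inequalities more carefully. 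Verifying this consistency is the technical heart of the proof; after that, the rest of the corollary is essentially formal.
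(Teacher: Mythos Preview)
Your plan for Part 2 is correct and matches the paper's: once Part 1 is established, combining it with Theorem~\ref{thm:transpose.bound.dual.bound} (equivalently Corollary~\ref{cor:transpose.eq.iff.dual.eq}) immediately yields all four equalities in (\ref{eq:bound.G1=G1'=G1A^T=G1'A^T}).

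The gap is exactly where you flag it, and it is real rather than a formality. The statement is about $\hat{\rho}_{\mathcal{V},G_1}$ and $\hat{\rho}_{\mathcal{V},G_1'}$ for a \emph{fixed} class $\mathcal{V}$; you are not free to enlarge $\mathcal{V}$ to its closure under pointwise max/min. For the most important case $\mathcal{V}=\mathcal{V}^2$, your uniform assignment $W_j \equiv \max_k V_k$ is not a quadratic, so it is not an admissible certificate for $G_1'$, and your argument as written does not establish $\hat{\rho}_{\mathcal{V}^2,G_1}(\mathcal{A}) \ge \hat{\rho}_{\mathcal{V}^2,G_1'}(\mathcal{A})$. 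Your ``resolution 1'' changes the definition (and would destroy the SDP structure that motivates the bound in the first place); your ``resolution 2'' is the right instinct but you have not supplied the construction.

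The paper's construction is the missing idea: given $\{V_i\}$ feasible for $G_1$, set $V_i'(x) := V_i(A_i^{-1}x)$, i.e., $V_i'(A_i x) = V_i(x)$. Then the $G_1'$ inequality $V_i'(A_i x) \le V_j'(x)$ becomes $V_i(x) \le V_j(A_j^{-1}x)$, which after the substitution $y = A_j^{-1}x$ is exactly the $G_1$ inequality $V_i(A_j y) \le V_j(y)$. This transformation stays inside $\mathcal{V}$ for any class closed under invertible linear changes of variables (in particular $\mathcal{V}^2$ and $\mathcal{V}^{SOS,2d}$), and the invertibility of the $A_i$ is handled by Remark~\ref{rmk:invertibility}. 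This is the ``direct construction that stays inside $\mathcal{V}$'' you were looking for; it exploits precisely the feature of $G_1$ you identified---that all outgoing edges from node $i$ share the label $A_i$---but uses it to relabel the Lyapunov functions rather than to collapse them into a single extremum.
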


\begin{proof}
The proof of (\ref{eq:bound.G=bound.G'.general.Lyap}) is
established by observing that the GLFs associated with $G_1$ and
$G_1^\prime$ can be derived from one another via
$V_i^\prime(A_ix)=V_i(x).$ (Note that we are relying here on the
assumption that the matrices $A_i$ are invertible, which as we
noted in Remark~\ref{rmk:invertibility}, is not a limiting
assumption.) Since (\ref{eq:bound.G=bound.G'.general.Lyap}) in
particular implies that $\hat{\rho}_{\mathcal{V}^2,
G_1}(\mathcal{A})=\hat{\rho}_{\mathcal{V}^2,
G_1^\prime}(\mathcal{A})$, we get the rest of the equalities in
(\ref{eq:bound.G1=G1'=G1A^T=G1'A^T}) immediately from
Corollary~\ref{cor:transpose.eq.iff.dual.eq} and this finishes the
proof. For concreteness, let us also prove the leftmost equality
in (\ref{eq:bound.G1=G1'=G1A^T=G1'A^T}) directly. Let $P_i$,
$i=1,\ldots,m,$ satisfy the LMIs in (\ref{eq:HSCC.lmis}) for the
set of matrices $\mathcal{A}$. Then, the reader can check that
$$\tilde{P}_i=A_i P_i^{-1}A_i^T,\quad i=1,\ldots,m,$$
satisfy the LMIs in (\ref{eq:HSCC.lmis}) for the set of matrices
$\mathcal{A}^T$.
\end{proof}

%\begin{remark}\label{rmk:G1.bound.equal.G1'.bound}
%The equivalence of the bounds obtained by the path-complete graphs
%$G_1$ and $G_1^\prime$ is not special to the case where quadratic
%Lyapunov functions are assigned to the nodes. Indeed, for any
%class of continuous, homogeneous, and positive definite functions
%$\mathcal{V}$, we can directly show that $\hat{\rho}_{\mathcal{V},
%G_1}(\mathcal{A})=\hat{\rho}_{\mathcal{V},
%G_1^\prime}(\mathcal{A})$ by observing that the GLFs associated
%with $G_1$ and $G_1^\prime$ can be derived from one another via
%$$V_i^\prime(A_ix)=V_i(x).$$
%\end{remark}

\subsection{An approximation guarantee}\label{subsec:HSCC.bound}
The next theorem gives a bound on the quality of approximation of
the estimate resulting from the LMIs in (\ref{eq:HSCC.lmis}) and
(\ref{eq:dual.HSCC.lmis}). Since we have already shown that
$\hat{\rho}_{\mathcal{V}^2,
G_1}(\mathcal{A})=\hat{\rho}_{\mathcal{V}^2,
G_1^\prime}(\mathcal{A}),$ it is enough to prove this bound for
the LMIs in (\ref{eq:HSCC.lmis}).

\begin{theorem}\label{thm:HSCC.bound}
Let $\mathcal{A}$ be a set of $m$ matrices in $\mathbb{R}^{n\times
n}$ with JSR $\rho(\mathcal{A})$. Let $\hat{\rho}_{\mathcal{V}^2,
G_1}(\mathcal{A})$ be the bound on the JSR obtained from the LMIs
in (\ref{eq:HSCC.lmis}). Then,
\begin{equation}\label{eq:HSCC.bound.4throot.of.n}
\frac{1}{\sqrt[4]{n}}\hat{\rho}_{\mathcal{V}^2,
G_1}(\mathcal{A})\leq\rho(\mathcal{A})\leq\hat{\rho}_{\mathcal{V}^2,
G_1}(\mathcal{A}).
\end{equation}
%and
%\begin{equation}\label{eq:dual.HSCC.bound.4throot.of.n}
%\frac{1}{\sqrt[4]{n}}\hat{\rho}_{\mathcal{V}^2,
%G_1^\prime}(\mathcal{A})\leq\rho(\mathcal{A})\leq\hat{\rho}_{\mathcal{V}^2,
%G_1^\prime}(\mathcal{A}).
%\end{equation}
\end{theorem}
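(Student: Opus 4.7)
The upper inequality $\rho(\mathcal{A})\le \hat{\rho}_{\mathcal{V}^2, G_1}(\mathcal{A})$ is immediate from path-completeness of $G_1$ and Theorem~\ref{thm:path.complete.implies.stability}, so the entire task is the lower inequality $\hat{\rho}_{\mathcal{V}^2, G_1}(\mathcal{A})\le \sqrt[4]{n}\,\rho(\mathcal{A})$. My plan is to go through the set $\mathcal{A}^2\mathrel{\mathop:}=\{A_jA_i: i,j\in\{1,\ldots,m\}\}$ of all two-step products and prove the two-step estimate
\begin{equation}\label{eq:plan.key}
\hat{\rho}_{\mathcal{V}^2, G_1}(\mathcal{A})^2 \;\le\; \hat{\rho}_{\mathcal{V}^2}(\mathcal{A}^2),
\end{equation}
and then combine it with the classical John-ellipsoid bound
$\hat{\rho}_{\mathcal{V}^2}(\mathcal{A}^2)\le \sqrt{n}\,\rho(\mathcal{A}^2)=\sqrt{n}\,\rho(\mathcal{A})^2,$
which is exactly (\ref{eq:CQ.bound}) applied to the $m^2$-matrix set $\mathcal{A}^2$ (together with the elementary identity $\rho(\mathcal{A}^k)=\rho(\mathcal{A})^k$). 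Multiplying these two inequalities yields the desired $n^{1/4}$ bound.

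The heart of the argument is therefore (\ref{eq:plan.key}). Given $Q\succ 0$ and $\eta>0$ with $\eta^2 (A_jA_i)^T Q (A_jA_i)\preceq Q$ for every $i,j$ (so that $\hat{\rho}_{\mathcal{V}^2}(\mathcal{A}^2)\le 1/\eta$), I propose the explicit construction of $|N|=m$ Lyapunov matrices
\[
P_i \;\mathrel{\mathop:}=\; \eta\, A_i^T Q A_i + Q, \qquad i=1,\ldots,m.
\]
Each $P_i$ is strictly positive definite thanks to the additive $Q$ term (so invertibility of $A_i$ plays no role; cf.\ Remark~\ref{rmk:invertibility}). Plugging this into the LMIs (\ref{eq:HSCC.lmis}) with $\gamma^2=\eta$, I get
\[
\gamma^{2} A_i^T P_j A_i \;=\; \eta\bigl[\eta\,(A_jA_i)^T Q (A_jA_i)\bigr] + \eta\, A_i^T Q A_i \;\preceq\; \eta\cdot Q + \eta\, A_i^T Q A_i \;=\; P_i,
\]
where the middle inequality is exactly the two-step common quadratic hypothesis on $Q$. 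Hence $(P_i)_{i=1}^m$ satisfy (\ref{eq:HSCC.lmis}) with $\gamma=\sqrt{\eta}$, giving $\hat{\rho}_{\mathcal{V}^2, G_1}(\mathcal{A})\le 1/\sqrt{\eta}$. Taking $\eta\uparrow 1/\hat{\rho}_{\mathcal{V}^2}(\mathcal{A}^2)$ (using strict feasibility after an arbitrarily small perturbation, since the sos/SDP cones are closed and our inequalities are non-strict) yields (\ref{eq:plan.key}).

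The main obstacle, in my view, is not the verification above but rather seeing the right ansatz for $P_i$: one must combine the common quadratic $Q$ for two-step products with the ``one-step propagated'' term $A_i^TQA_i$ in just the right ratio so that after multiplying on both sides by $A_i$ the cross term $(A_jA_i)^TQ(A_jA_i)$ is absorbed by $Q$ while the leftover $\gamma^2 A_i^TQA_i$ is absorbed by the $\eta A_i^TQA_i$ term of $P_i$; any other choice of coefficients fails to cancel. Once this algebraic identity is in place, the remainder of the argument is bookkeeping. Finally, I would remark that in view of Corollary~\ref{cor:HSCC.invariance.under.transpose}, the same bound automatically transfers to $\hat{\rho}_{\mathcal{V}^2, G_1^\prime}(\mathcal{A})$, so no separate argument for the dual graph is needed; and I would note explicitly that the proof shows the more refined inequality $\hat{\rho}_{\mathcal{V}^2, G_1}(\mathcal{A})\le \sqrt{\hat{\rho}_{\mathcal{V}^2}(\mathcal{A}^2)}$, which is of independent interest and foreshadows the higher-order De Bruijn bounds to be proved later in the chapter.
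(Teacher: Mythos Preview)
Your proposal is correct and follows essentially the same route as the paper's proof: reduce to the common-quadratic bound on $\mathcal{A}^2$ via the John-ellipsoid estimate, and exhibit explicit matrices $P_i$ built from $Q$ plus a one-step propagated term to certify the $G_1$ LMIs. Your version is in fact slightly more careful than the paper's, which writes $P_i = Q + A_i^T Q A_i$ without the scaling factor; your inclusion of the $\eta$ coefficient in $P_i = \eta A_i^T Q A_i + Q$ is exactly what makes the verification go through cleanly for arbitrary $\gamma$ (the paper's version works after an implicit homogeneity normalization to $\gamma=1$).
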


\begin{proof}
The right inequality is just a consequence of $G_1$ being a
path-complete graph
(Theorem~\ref{thm:path.complete.implies.stability}). To prove the
left inequality, consider the set $\mathcal{A}^2$ consisting of
all $m^2$ products of length two. In view of (\ref{eq:CQ.bound}),
a common quadratic Lyapunov function for this set satisfies the
bound
\begin{equation}\nonumber
\frac{1}{\sqrt{n}}\hat{\rho}_{\mathcal{V}^2}(\mathcal{A}^2)\leq\rho(\mathcal{A}^2).
\end{equation}
It is easy to show that
$$\rho(\mathcal{A}^2)=\rho^2(\mathcal{A}).$$
See e.g.~\cite{Raphael_Book}. Therefore,
\begin{equation}\label{eq:bound.of.CQ.2.steps}
\frac{1}{\sqrt[4]{n}}\hat{\rho}_{\mathcal{V}^2}^{\frac{1}{2}}(\mathcal{A}^2)\leq\rho(\mathcal{A}).
\end{equation}
Now suppose for some $\gamma>0$, $x^TQx$ is a common quadratic
Lyapunov function for the matrices in $\mathcal{A}_\gamma^2$;
i.e., it satisfies
\begin{equation}\nonumber
\begin{array}{rll}
Q&\succ&0 \\
\gamma^4(A_iA_j)^TQA_iA_j&\preceq&Q \quad \forall i,j=\{1,\ldots,
m\}^2.
\end{array}
\end{equation}
Then, we leave it to the reader to check that
\begin{equation}\nonumber
P_i=Q+A_i^TQA_i,\quad i=1,\ldots,m
\end{equation}
satisfy (\ref{eq:HSCC.lmis}). Hence,
\begin{equation}\nonumber
\hat{\rho}_{\mathcal{V}^2,
G_1}(\mathcal{A})\leq\hat{\rho}_{\mathcal{V}^2}^{\frac{1}{2}}(\mathcal{A}^2),
\end{equation}
and in view of (\ref{eq:bound.of.CQ.2.steps}) the claim is
established.
\end{proof}

Note that the bound in (\ref{eq:HSCC.bound.4throot.of.n}) is
independent of the number of matrices. Moreover, we remark that
this bound is tighter, in terms of its dependence on $n$, than the
known bounds for $\hat{\rho}_{\mathcal{V}^{SOS,2d}}$ for any
finite degree $2d$ of the sum of squares polynomials. The reader
can check that the bound in (\ref{eq:SOS.bound}) goes
asymptotically as $\frac{1}{\sqrt{n}}$. Numerical evidence
suggests that the performance of both the bound obtained by sum of
squares polynomials and the bound obtained by the LMIs in
(\ref{eq:HSCC.lmis}) and (\ref{eq:dual.HSCC.lmis}) is much better
than the provable bounds in (\ref{eq:SOS.bound}) and in
Theorem~\ref{thm:HSCC.bound}. The problem of improving these
bounds or establishing their tightness is open. It goes without
saying that instead of quadratic functions, we can associate sum
of squares polynomials to the nodes of $G_1$ and obtain a more
powerful technique for which we can also prove better bounds with
the exact same arguments.

\subsection{Numerical examples}\label{subsec:numerical.examples}
In the proof of Theorem~\ref{thm:HSCC.bound}, we essentially
showed that the bound obtained from LMIs in (\ref{eq:HSCC.lmis})
is tighter than the bound obtained from a common quadratic applied
to products of length two. Our first example shows that the LMIs
in (\ref{eq:HSCC.lmis}) can in fact do better than a common
quadratic applied to products of \emph{any} finite length.

\begin{example}
Consider the set of matrices $\mathcal{A}=\{A_1,A_2\},$ with
\[
A_{1}=\left[
\begin{array}
[c]{cc}%
1 & 0\\
1 & 0
\end{array}
\right]  ,\text{ }A_{2}=\left[
\begin{array}
[c]{cr}%
0 & 1\\
0 & -1
\end{array}
\right].
\]
This is a benchmark set of matrices that has been studied
in~\cite{Ando98},~\cite{Pablo_Jadbabaie_JSR_journal},~\cite{AAA_PP_CDC08_non_monotonic}
because it gives the worst case approximation ratio of a common
quadratic Lyapunov function. Indeed, it is easy to show that
$\rho(\mathcal{A})=1$, but
$\hat{\rho}_{\mathcal{V}^2}(\mathcal{A})=\sqrt{2}$. Moreover, the
bound obtained by a common quadratic function applied to the set
$\mathcal{A}^t$ is
$$\hat{\rho}_{\mathcal{V}^2}^{\frac{1}{t}}(\mathcal{A}^t)=2^{\frac{1}{2t}},$$
which for no finite value of $t$ is exact. On the other hand, we
show that the LMIs in (\ref{eq:HSCC.lmis}) give the exact bound;
i.e., $\hat{\rho}_{\mathcal{V}^2, G_1}(\mathcal{A})=1$. Due to the
simple structure of $A_{1}$ and $A_{2}$, we can even give an
analytical expression for our Lyapunov functions. Given any
$\varepsilon>0$, the LMIs in (\ref{eq:HSCC.lmis}) with
$\gamma=1/\left( 1+\varepsilon \right)  $ are feasible with
\[
P_{1}=\left[
\begin{array}
[c]{cc}%
a & 0\\
0 & b
\end{array}
\right]  ,\text{\qquad}P_{2}=\left[
\begin{array}
[c]{cc}%
b & 0\\
0 & a
\end{array}
\right],
\]
for any $b>0$ and $a>b/2\varepsilon.$ $\triangle$
\end{example}

\begin{example}
%Consider the set of randomly generated matrices
%$\mathcal{A}=\{A_1,A_2,A_3\},$ with \scalefont{.7}
%\begin{align*}
%A_{1}  & =\left[
%\begin{array}
%[c]{rrrrr}%
%0 & -2 & 2 & 2 & 4\\
%0 & 0 & -4 & -1 & -6\\
%2 & 6 & 0 & -8 & 0\\
%-2 & -2 & -3 & 1 & -3\\
%-1 & -5 & 2 & 6 & -4
%\end{array}
%\right]  ,\text{ }A_{2}=\left[
%\begin{array}
%[c]{rrrrr}%
%-5 & -2 & -4 & \text{ \ \hspace*{0.01in}}6 & -1\\
%1 & 1 & 4 & 3 & -5\\
%-2 & 3 & -2 & 8 & -1\\
%0 & 8 & -6 & 2 & 5\\
%-1 & -5 & 1 & 7 & -4
%\end{array}
%\right] \\[0.1in]
%A_{3}  & =\left[
%\begin{array}
%[c]{rrrrr}%
%3 & -8 & -3 & 2 & -4\\
%-2 & -2 & -9 & 4 & -1\\
%2 & 2 & -5 & -8 & 6\\
%-4 & -1 & 4 & -3 & 0\\
%0 & 5 & 0 & -3 & 5
%\end{array}
%\right].
%\end{align*}
%\normalsize
%%Let's make the matrices smaller to save room:
\aaa{ Consider the set of randomly generated matrices
$\mathcal{A}=\{A_1,A_2,A_3\},$ with \scalefont{.5}
\begin{align*}
A_{1} =\left[
\begin{array}
[c]{rrrrr}%
0 & -2 & 2 & 2 & 4\\
0 & 0 & -4 & -1 & -6\\
2 & 6 & 0 & -8 & 0\\
-2 & -2 & -3 & 1 & -3\\
-1 & -5 & 2 & 6 & -4
\end{array}
\right] , A_{2}=\left[
\begin{array}
[c]{rrrrr}%
-5 & -2 & -4 & \text{ \ \hspace*{0.01in}}6 & -1\\
1 & 1 & 4 & 3 & -5\\
-2 & 3 & -2 & 8 & -1\\
0 & 8 & -6 & 2 & 5\\
-1 & -5 & 1 & 7 & -4
\end{array}
\right],  A_{3} =\left[
\begin{array}
[c]{rrrrr}%
3 & -8 & -3 & 2 & -4\\
-2 & -2 & -9 & 4 & -1\\
2 & 2 & -5 & -8 & 6\\
-4 & -1 & 4 & -3 & 0\\
0 & 5 & 0 & -3 & 5
\end{array}
\right].
\end{align*}
\normalsize

}

A lower bound on $\rho(\mathcal{A})$ is
$\rho(A_{1}A_{2}A_{2})^{1/3}=11.8015$. The upper approximations
for $\rho(\mathcal{A})$ that we computed for this example are
\aaa{as follows}:
\begin{equation}
\begin{array}{rll}
\hat{\rho}_{\mathcal{V}^2}(\mathcal{A})&=& 12.5683   \\
\hat{\rho}_{\mathcal{V}^2}^{\frac{1}{2}}(\mathcal{A}^2)&=&  11.9575    \\
\hat{\rho}_{\mathcal{V}^2,G_1}(\mathcal{A})&=&   11.8097 \\
\hat{\rho}_{\mathcal{V}^{SOS,4}}(\mathcal{A})&=&   11.8015.
\end{array}
\end{equation}
%let's save some room:
%$\hat{\rho}_{\mathcal{V}^2}(\mathcal{A})= 12.5786,\
%\hat{\rho}_{\mathcal{V}^2}^{\frac{1}{2}}(\mathcal{A}^2)= 12.0337,\
%\hat{\rho}_{\mathcal{V}^2,G_1}(\mathcal{A})=   11.8106,\
%\hat{\rho}_{\mathcal{V}^{SOS,4}}(\mathcal{A})=   11.8022,\
%\hat{\rho}_{\mathcal{V}^{SOS,4},G_1}(\mathcal{A})= 11.8015.$
The bound $\hat{\rho}_{\mathcal{V}^{SOS,4}}$ matches the lower
bound numerically and is most likely exact for this example. This
bound is slightly better than $\hat{\rho}_{\mathcal{V}^2,G_1}$.
However, a simple calculation shows that the semidefinite program
resulting in $\hat{\rho}_{\mathcal{V}^{SOS,4}}$ has 25 more
decision variables than the one for
$\hat{\rho}_{\mathcal{V}^2,G_1}$. Also, the running time of the
algorithm leading to $\hat{\rho}_{\mathcal{V}^{SOS,4}}$ is
noticeably larger than the one leading to
$\hat{\rho}_{\mathcal{V}^2,G_1}$. In general, when the dimension
of the matrices is large, it can often be cost-effective to
increase the number of the nodes of our path-complete graphs but
keep the degree of the polynomial Lyapunov functions assigned to
its nodes relatively low. $\triangle$
%\begin{table}[tbh]
%\caption{Comparison of the performance and complexity of our
%technique versus
%existing methods for numerical approximation of an upper-bound on the JSR.}%
%
%\label{Table II}
%\begin{center}
%$%
%\begin{tabular}
%[c]{|p{1.4in}|p{0.79in}|p{0.53in}|p{0.62in}|}\hline
%Approx. scheme & Upper bound \hspace*{0.06in}on the JSR & $\hspace*{0.03in}%
%$$\sharp$ of decision variables & $\sharp$ of constraints\\\hline
%$\hat{\rho}_{\mathcal{V}^2}(\mathcal{A})$ &
%$\hspace*{0.17in}12.5786$ & $\hspace*{0.2in}15$ &
%$\hspace*{0.27in}3$\\\hline
%$\hat{\rho}_{\mathcal{V}^2}^{\frac{1}{2}}(\mathcal{A}^2) $& $\hspace*{0.17in}%
%12.0337$ & $\hspace*{0.2in}15$ & $\hspace*{0.27in}9$\\\hline
%$\hat{\rho}_{\mathcal{V}^2,G_1}(\mathcal{A}) $&
%$\hspace*{0.17in}11.8106$ & $\hspace *{0.2in}45$ &
%$\hspace*{0.27in}9$\\\hline
%$\hat{\rho}_{\mathcal{V}^{SOS,4}}(\mathcal{A})$&
%$\hspace*{0.17in}11.8022$ & $\hspace*{0.2in}70$ &
%$\hspace*{0.27in}3$\\\hline
%$\hat{\rho}_{\mathcal{V}^{SOS,4},G_1}(\mathcal{A}) $&
%$\hspace*{0.17in}11.8015$ & $\hspace *{0.17in}210$ &
%$\hspace*{0.27in}9$\\\hline
%\end{tabular}
%$
%\end{center}
%\end{table}
\end{example}

\section{Converse Lyapunov theorems and approximation with arbitrary accuracy}\label{sec:converse.thms}
It is well-known that existence of a Lyapunov function which is
the pointwise maximum of quadratics is not only sufficient but
also necessary for absolute asymptotic stability of
(\ref{eq:switched.linear.sys}) or
(\ref{eq:linear.difference.inclusion}); see
e.g.~\cite{pyatnitskiy_max_lyap}.
%AAA: To save some space, I'm removing a few lines:
This is perhaps an intuitive fact if we recall that switched
systems of type (\ref{eq:switched.linear.sys}) and
(\ref{eq:linear.difference.inclusion}) always admit a convex
Lyapunov function. Indeed, if we take ``enough'' quadratics, the
convex and compact unit sublevel set of a convex Lyapunov function
can be approximated arbitrarily well with sublevel sets of
max-of-quadratics Lyapunov functions, which are intersections of
ellipsoids. This of course implies that the bound obtained from
max-of-quadratics Lyapunov functions is asymptotically tight for
the approximation of the JSR. However, this converse Lyapunov
theorem does not answer two natural questions of importance in
practice: (i) How many quadratic functions do we need to achieve a
desired quality of approximation? (ii) Can we search for these
quadratic functions via semidefinite programming or do we need to
resort to non-convex formulations? Our next theorem provides an
answer to these questions.

\begin{theorem}\label{thm:converse.max.of.quadratics}
Let $\mathcal{A}$ be a set of $m$ matrices in $\mathbb{R}^{n\times
n}$. Given any positive integer $l$, there exists an explicit
path-complete graph $G$ consisting of $m^{l-1}$ nodes assigned to
quadratic Lyapunov functions and $m^l$ edges with labels of length
one such that the linear matrix inequalities associated with $G$
imply existence of a max-of-quadratics Lyapunov function and the
resulting bound obtained from the LMIs satisfies
\begin{equation}\label{eq:bound.converse.thm.max}
\frac{1}{\sqrt[2l]{n}}\hat{\rho}_{\mathcal{V}^2,
G}(\mathcal{A})\leq\rho(\mathcal{A})\leq\hat{\rho}_{\mathcal{V}^2,
G}(\mathcal{A}).
\end{equation}
\end{theorem}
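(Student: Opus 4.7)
The graph $G$ I have in mind is the natural $l$-step extension of $G_1$ from Section~\ref{sec:hscc} (a labeled De Bruijn–type graph). Its nodes are indexed by tuples $w=(w_1,\ldots,w_{l-1})\in\{1,\ldots,m\}^{l-1}$, so $|N|=m^{l-1}$. For every $w=(w_1,\ldots,w_{l-1})$ and every $s\in\{1,\ldots,m\}$, I put an edge labeled $A_{w_1}$ from $w$ to $(w_2,\ldots,w_{l-1},s)$. This gives $|E|=m^l$ edges of length-one labels, and every node of $G$ has $m$ incoming edges carrying all possible labels $A_1,\ldots,A_m$, so $G$ is path-complete by Proposition~\ref{prop:in.out.going.edges.path.complete}(ii). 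The right inequality of \eqref{eq:bound.converse.thm.max} then follows directly from Theorem~\ref{thm:path.complete.implies.stability}, and by Corollary~\ref{cor:max.of.quadratics} any feasible solution $\{P_w\}$ yields the max-of-quadratics common Lyapunov function $\max_w x^T P_w x$.

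For the left inequality the strategy parrots the proof of Theorem~\ref{thm:HSCC.bound}, now using common quadratics on $\mathcal{A}^l$ instead of $\mathcal{A}^2$. Applying \eqref{eq:CQ.bound} to $\mathcal{A}^l$ and using $\rho(\mathcal{A}^l)=\rho(\mathcal{A})^l$, I obtain
\[
\tfrac{1}{\sqrt[2l]{n}}\,\hat{\rho}_{\mathcal{V}^{2}}^{1/l}(\mathcal{A}^l)\ \leq\ \rho(\mathcal{A}).
\]
The remaining (and central) task is therefore to prove that
\[
\hat{\rho}_{\mathcal{V}^2,G}(\mathcal{A})\ \leq\ \hat{\rho}_{\mathcal{V}^2}^{1/l}(\mathcal{A}^l),
\]
i.e.\ that any common quadratic $Q\succ 0$ for $\mathcal{A}_{\gamma}^{l}$ can be bootstrapped into a feasible solution $\{P_w\}$ of the LMI system attached to $G$ with the same scaling $\gamma$.

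The natural construction, generalizing $P_i=Q+\gamma^{2}A_i^{T}QA_i$ from Theorem~\ref{thm:HSCC.bound}, is
\[
P_{(w_1,\ldots,w_{l-1})}\ \coloneqq\ \sum_{k=0}^{l-1}\gamma^{2k}\,\bigl(A_{w_k}A_{w_{k-1}}\cdots A_{w_1}\bigr)^{T} Q\,\bigl(A_{w_k}\cdots A_{w_1}\bigr),
\]
where the $k=0$ summand is interpreted as $Q$. I then plan to verify the edge-LMI
\[
\gamma^{2}\,A_{w_1}^{T}\,P_{(w_2,\ldots,w_{l-1},s)}\,A_{w_1}\ \preceq\ P_{(w_1,\ldots,w_{l-1})}
\]
by a telescoping computation: expanding the left-hand side and reindexing the sum, the terms $k=0,\ldots,l-2$ match exactly the terms $j=1,\ldots,l-1$ of $P_{(w_1,\ldots,w_{l-1})}$, while the $k=l-1$ contribution becomes $\gamma^{2l}(A_{s}A_{w_{l-1}}\cdots A_{w_1})^{T} Q (A_{s}A_{w_{l-1}}\cdots A_{w_1})$, and the leftover baseline $Q$ from $P_{(w_1,\ldots,w_{l-1})}$ dominates this term precisely because $Q$ is a common quadratic for the length-$l$ product $A_{s}A_{w_{l-1}}\cdots A_{w_1}\in\mathcal{A}_{\gamma}^{l}$.

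The bulk of the work is really just this last algebraic telescoping; its only subtlety is getting the correspondence between the edge-labeling convention of $G$ and the indexing of the summands in $P_w$ right, so that exactly one ``new'' length-$l$ product appears when we cross one edge. Once that step is in hand, combining the two displayed inequalities yields \eqref{eq:bound.converse.thm.max}, and the max-of-quadratics statement is automatic from Corollary~\ref{cor:max.of.quadratics}. I do not expect any real obstacle beyond the bookkeeping; in particular, no Positivstellensatz or convex-geometric input is needed, unlike in the related converse result Theorem~\ref{thm:converse.sos.switched.sys}.
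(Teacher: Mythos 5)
Your proposal is correct and follows essentially the same route as the paper: the graph you describe is (up to reversing the order of the tuple indexing the nodes) exactly the dual De Bruijn graph of order $l-1$ used there, path-completeness and the max-of-quadratics statement are obtained from the same proposition/corollary, and the left inequality is proven by the same bootstrapping $P_w=\sum_{k}\gamma^{2k}(A_{w_k}\cdots A_{w_1})^T Q (A_{w_k}\cdots A_{w_1})$ from a common quadratic $Q$ for $\mathcal{A}_\gamma^l$, verified by the same telescoping argument. The paper's proof carries out precisely the algebra you sketch, so no further ideas are needed.
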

\begin{proof}
Let us denote the $m^{l-1}$ quadratic Lyapunov functions by
$x^TP_{i_1\ldots i_{l-1}}x$, where $i_1\ldots
i_{l-1}\in\{1,\ldots,m\}^{l-1}$ is a multi-index used for ease of
reference to our Lyapunov functions. We claim that we can let $G$
be the graph dual to the De~Bruijn graph of order $l-1$ on $m$
symbols. The LMIs associated to this graph are given by
\begin{equation}\label{eq:converse.thm.LMIs}
\begin{array}{rll}
P_{i_1i_2\ldots i_{l-2}i_{l-1}}&\succ& 0\ \ \ \  \forall i_1\ldots i_{l-1}\in\{1,\ldots,m\}^{l-1}\\
A_j^TP_{i_1i_2\ldots i_{l-2}i_{l-1}}A_j&\preceq&P_{i_2i_3\ldots
i_{l-1}j}\\
\ &\ & \forall i_1\ldots i_{l-1}\in\{1,\ldots,m\}^{l-1},\\
\ &\ & \forall j\in\{1,\ldots,m\}.
\end{array}
\end{equation}
The fact that $G$ is path-complete and that the LMIs imply
existence of a max-of-quadratics Lyapunov function follows from
Corollary~\ref{cor:max.of.quadratics}. The proof that these LMIs
satisfy the bound in (\ref{eq:bound.converse.thm.max}) is a
straightforward generalization of the proof of
Theorem~\ref{thm:HSCC.bound}. By the same arguments we have
\begin{equation}\label{eq:bound.of.CQ.l.steps}
\frac{1}{\sqrt[2l]{n}}\hat{\rho}_{\mathcal{V}^2}^{\frac{1}{l}}(\mathcal{A}^l)\leq\rho(\mathcal{A}).
\end{equation}
Suppose $x^TQx$ is a common quadratic Lyapunov function for the
matrices in $\mathcal{A}^l$; i.e., it satisfies
\begin{equation}\nonumber
\begin{array}{rll}
Q&\succ&0 \\
(A_{i_1}\ldots A_{i_l})^TQA_{i_1}\ldots A_{i_l}&\preceq&Q \quad
\forall i_1\ldots i_{l}\in\{1,\ldots,m\}^{l}.
\end{array}
\end{equation}
Then, it is easy to check that\footnote{The construction of the
Lyapunov function here is a special case of a general scheme for
constructing Lyapunov functions that are monotonically decreasing
from those that decrease only every few steps; see~\cite[p.
58]{AAA_MS_Thesis}.}
\begin{equation}\nonumber
\begin{array}{lll}
P_{i_1i_2\ldots i_{l-2}i_{l-1}}=Q+A_{i_{l-1}}^TQA_{i_{l-1}}&\ &\ \\
+(A_{i_{l-2}}A_{i_{l-1}})^TQ(A_{i_{l-2}}A_{i_{l-1}})+\cdots&\ &\ \\
+ (A_{i_1}A_{i_2}\ldots
A_{i_{l-2}}A_{i_{l-1}})^TQ(A_{i_1}A_{i_2}\ldots
A_{i_{l-2}}A_{i_{l-1}}), &\ &\ \\
i_1\ldots i_{l-1}\in\{1,\ldots,m\}^{l-1},&\ &\
\end{array}
\end{equation}
satisfy (\ref{eq:converse.thm.LMIs}). Hence,
\begin{equation}\nonumber
\hat{\rho}_{\mathcal{V}^2,
G}(\mathcal{A})\leq\hat{\rho}_{\mathcal{V}^2}^{\frac{1}{l}}(\mathcal{A}^l),
\end{equation}
and in view of (\ref{eq:bound.of.CQ.l.steps}) the claim is
established.
\end{proof}

%Don't think the next remark is accurate. So, I'm removing it for now.
%
%\begin{remark}Theorem~\ref{thm:converse.max.of.quadratics} establishes that
%for any fixed $l$, there is a polynomial time algorithm based on
%max-of-quadratics Lyapunov functions for the approximation of the
%JSR that achieves the approximation guarantee of
%(\ref{eq:bound.converse.thm.max}). In view of NP-hardness of
%approximation of the JSR~\cite{BlTi3}, the fact that the number of
%quadratic functions and the number of LMIs grow exponentially in
%$l$ is to be expected.
%\end{remark}

\begin{remark}
A converse Lyapunov theorem identical to
Theorem~\ref{thm:converse.max.of.quadratics} can be proven for the
min-of-quadratics Lyapunov functions. The only difference is that
the LMIs in (\ref{eq:converse.thm.LMIs}) would get replaced by the
ones corresponding to the dual graph of $G$.
\end{remark}

Our last theorem establishes approximation bounds for a family of
path-complete graphs with one single node but several edges
labeled with words of different lengths. Examples of such
path-complete graphs include graph $H_3$ in
Figure~\ref{fig:jsr.graphs} and graph $H_4$ in
Figure~\ref{fig:non.trivial.path.complete}.

\begin{theorem}
\label{thm-bound-codes} Let $\mathcal{A}$ be a set of matrices in
$\mathbb{R}^{n\times n}.$ Let $\tilde{G}\left(  \left\{ 1\right\}
,E\right)  $ be a path-complete graph, and $l$ be the length of
the shortest word in $\tilde{\mathcal{A}}=\left\{ L\left( e\right)
:e\in E\right\}.$ Then
$\hat{\rho}_{\mathcal{V}^{2}},_{\tilde{G}}(\mathcal{A})$ provides
an estimate of $\rho\left(  \mathcal{A}\right)  $ that satisfies
\[
\frac{1}{\sqrt[2l]{n}}\hat{\rho}_{\mathcal{V}^{2}},_{\tilde{G}}(\mathcal{A}%
)\leq\rho(\mathcal{A})\leq\hat{\rho}_{\mathcal{V}^{2}},_{\tilde{G}%
}(\mathcal{A}).
\]

\end{theorem}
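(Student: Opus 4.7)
The upper bound $\rho(\mathcal{A}) \leq \hat{\rho}_{\mathcal{V}^2,\tilde{G}}(\mathcal{A})$ is immediate from Theorem~\ref{thm:path.complete.implies.stability}: since $\tilde{G}$ is path-complete by hypothesis, any feasible quadratic certificate for its LMIs automatically proves the JSR bound $1/\gamma$. The substance of the theorem lies in the lower bound.

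For the lower bound I plan to follow the two-step reduction used in Theorems~\ref{thm:HSCC.bound} and~\ref{thm:converse.max.of.quadratics}. Invoking $\rho(\mathcal{A}^l) = \rho(\mathcal{A})^l$ together with the John's-ellipsoid bound (\ref{eq:CQ.bound}) applied to the set of length-$l$ products gives
$$\frac{1}{\sqrt[2l]{n}}\,\hat{\rho}_{\mathcal{V}^2}(\mathcal{A}^l)^{1/l} \;\leq\; \rho(\mathcal{A}),$$
so it suffices to show $\hat{\rho}_{\mathcal{V}^2,\tilde{G}}(\mathcal{A}) \leq \hat{\rho}_{\mathcal{V}^2}(\mathcal{A}^l)^{1/l}$. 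To do this I will start from a $Q \succ 0$ that is a common quadratic for the scaled set $\mathcal{A}_\gamma^l$, i.e.\ $\gamma^{2l}\,w^{T}Qw \preceq Q$ for every $w \in \mathcal{A}^l$, and manufacture from $Q$ a single quadratic certificate $P$ for $\tilde{G}$. Inspired by the multi-node construction of Theorem~\ref{thm:converse.max.of.quadratics} (where a Lyapunov function is assembled from $Q$ by summing its translates along prefixes of length less than $l$), the natural single-node analogue, obtained by summing out the multi-index, is
$$P \;=\; \sum_{j=0}^{l-1} \gamma^{2j}\,\sum_{u \in \mathcal{A}^j} u^{T}Q\,u.$$
Verifying $\gamma^{2k_e}\,w_e^{T}P\,w_e \preceq P$ for each edge $e$ with word-length $k_e \geq l$ then reduces to bounding each summand $\gamma^{2(k_e+j)}(uw_e)^{T}Q(uw_e)$: writing $k_e + j = q_j l + r_j$ with $0 \leq r_j < l$, one peels off $q_j \geq 1$ consecutive length-$l$ chunks of $uw_e$ from the left and applies the common-quadratic inequality for $Q$ to each, collapsing the term to $\gamma^{2r_j}\,t_{r_j}^{T}Q\,t_{r_j}$, where $t_{r_j} \in \mathcal{A}^{r_j}$ is the first $r_j$ characters (applied first) of $w_e$. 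Since $j \mapsto r_j$ is a bijection of $\{0,\ldots,l-1\}$, these tails are in one-to-one correspondence with the summation index of $P$.

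The main obstacle I anticipate is the combinatorial factor $m^j$ that arises when summing the tail bound over $u \in \mathcal{A}^j$: a term-by-term comparison with $P$ leaves this factor uncompensated, which would artificially inflate the estimate by a power of $m$. To absorb it I plan to rephrase the construction in terms of the completely positive averaging map $\mathcal{L}(X) = m^{-1}\sum_i A_i^{T}XA_i$, using the identity $\sum_{u \in \mathcal{A}^j} u^{T}Qu = m^j \mathcal{L}^j(Q)$ together with the averaged inequality $\mathcal{L}^l(Q) \preceq \gamma^{-2l}Q$ (which follows by summing the common-quadratic property of $Q$ over $\mathcal{A}^l$). The monotonicity of $\mathcal{L}$ then propagates this decrease through the iterates $\mathcal{L}^{j+l}(Q) \preceq \gamma^{-2l}\mathcal{L}^j(Q)$, and it is this averaged, dimension-free estimate---rather than the pointwise one---that should close the edge LMIs without any residual dependence on $m$. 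Once this technical step is carried out, the claimed bound $\hat{\rho}_{\mathcal{V}^2,\tilde{G}}(\mathcal{A}) \leq \sqrt[2l]{n}\,\rho(\mathcal{A})$ follows immediately from the John's-ellipsoid estimate above.
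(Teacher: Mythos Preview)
Your plan has a genuine gap at exactly the point you flag, and the averaging fix does not close it. The edge inequalities of $\tilde G$ are \emph{pointwise} constraints: for each fixed label $w_e$ you must certify $\gamma^{2k_e}w_e^{T}Pw_e\preceq P$. The averaged estimate $\mathcal{L}^l(Q)\preceq\gamma^{-2l}Q$ (and its iterates) controls only the mean of $u^{T}Qu$ over $u\in\mathcal{A}^l$, not any individual product, so it cannot be fed back into a per-edge bound. Concretely, after you peel chunks and sum, the inequality you need reduces to $t_r^{T}Qt_r\preceq\mathcal{L}^r(Q)$ for the specific length-$r$ suffix $t_r$ of $w_e$---one term dominated by an average---which is false in general. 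The underlying difficulty is that when labels in $\tilde{\mathcal{A}}$ have lengths strictly greater than $l$, a common quadratic for $\mathcal{A}^l$ gives no control over the length-$r$ remainders with $r<l$, so the intermediate inequality $\hat\rho_{\mathcal{V}^2,\tilde G}(\mathcal{A})\leq\hat\rho_{\mathcal{V}^2}(\mathcal{A}^l)^{1/l}$ you are aiming for is not established by this route.

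The paper avoids the construction entirely by exploiting the single-node structure of $\tilde G$: a common quadratic for the label set $\tilde{\mathcal{A}}$ \emph{is} already a graph Lyapunov function for $\tilde G$, so one should apply the John's-ellipsoid bound (\ref{eq:CQ.bound}) to $\tilde{\mathcal{A}}$ itself rather than to $\mathcal{A}^l$. The missing ingredient is then the JSR comparison $\rho(\tilde{\mathcal{A}})\leq\rho(\mathcal{A})^l$, which the paper proves from the $\limsup$ characterization of the JSR using only that every word in $\tilde{\mathcal{A}}$ has length at least $l$ (and the normalization $\rho(\mathcal{A})\leq 1$). With this in hand, a contradiction argument finishes: normalize so $\hat\rho_{\mathcal{V}^2,\tilde G}(\mathcal{A})=1$, suppose $\rho(\mathcal{A})<n^{-1/(2l)}$, deduce $\rho(\tilde{\mathcal{A}})<n^{-1/2}$, and conclude from (\ref{eq:CQ.bound}) that a common quadratic for $(1+\epsilon)\tilde{\mathcal{A}}$ exists, forcing $\hat\rho_{\mathcal{V}^2,\tilde G}(\mathcal{A})<1$.
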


\begin{proof}
The right inequality is obvious, we prove the left one. Since both
$\hat{\rho }_{\mathcal{V}^{2}},_{\tilde{G}}(\mathcal{A})$ and
$\rho$ are homogeneous in $\mathcal{A},$ we may assume, without
loss of generality, that $\hat{\rho
}_{\mathcal{V}^{2}},_{\tilde{G}}(\mathcal{A})=1$. Suppose for the
sake of contradiction that
\begin{equation}\label{eq:supposition-thm-words}\rho(\mathcal{A})<1/\sqrt[2l]{n}.\end{equation}
We will show that this implies that
$\hat{\rho}_{\mathcal{V}^{2}},_{\tilde{G}}(\mathcal{A})<1$.
Towards this goal, let us first prove that
$\rho(\tilde{\mathcal{A}})\leq\rho^l(\mathcal{A}).$
%new proof:
\aaa{ Indeed, if we had
$\rho(\tilde{\mathcal{A}})>\rho^{l}(\mathcal{A})$, then there
would exist\footnote{Here, we are appealing to the well-known fact
about the JSR of a general set of matrices $\mathcal{B}$:
$\rho(\mathcal{B})~=~\limsup_{k\rightarrow\infty}
\max_{B\in\mathcal{B}^k} \rho^\frac{1}{k}(B).$ See
e.g.~\cite[Chap. 1]{Raphael_Book}.} an integer $i$ and a product
$A_\sigma\in\tilde{\mathcal{A}}^i$ such that
\begin{equation}\label{eq:rho^1/i>rho^l}
\rho^{\frac{1}{i}}(A_\sigma)>\rho^{l}(\mathcal{A}).
\end{equation}
Since we also have $A_\sigma\in\mathcal{A}^j$ (for some $j\geq
il$), it follows that
\begin{equation}\label{eq:rho^1/j<rho}
\rho^{\frac{1}{j}}(A_\sigma)\leq\rho(\mathcal{A}).
\end{equation}
The inequality in (\ref{eq:rho^1/i>rho^l}) together with
$\rho(\mathcal{A})\leq 1$ gives
$$\rho^{\frac{1}{j}}(A_\sigma)>\rho^{\frac{il}{j}}(\mathcal{A})\geq\rho(\mathcal{A}).$$
But this contradicts (\ref{eq:rho^1/j<rho}). Hence we have shown
$$\rho(\tilde{\mathcal{A}})\leq \rho^{l}(\mathcal{A}).$$
}
%%%OLD PROOF
%It is a well known result (see \aaa{\cite[Sec.
%3.6]{Raphael_Book}}) that for any set of matrices $\mathcal{A}$
%with JSR $\rho,$ there exists an increasing polynomial \aaa{$p$},
%positive on $\mathbb{R}^{+},$ such that for any \rmj{$Y\in \mathbb{N},$ and for any} product $A_{\sigma}%
%\in\mathcal{A}^{Y},$
%\[
%\left\Vert A_{\sigma}\right\Vert \leq p(Y)\rho^{Y}.
%\]
%Let $L$ be the length of the largest word in
%$\tilde{\mathcal{A}},$ and consider a product of matrices
%$A_{\sigma}\in\tilde{\mathcal{A}}^{y}.$ Then $A_{\sigma}$
%corresponds to a product of matrices in $\mathcal{A}^{Y},$ for
%some $Y,$%
%\begin{equation}
%yl\leq Y\leq yL.\label{ThmBoundsEq1}%
%\end{equation}
%Thus, we have the following bound:
%\begin{align}
%\left\Vert A_{\sigma}\right\Vert  &  \leq p(Y)(\rho(\mathcal{A}))^{Y}%
%\nonumber\\
%&  \leq p(yL)(\rho(\mathcal{A})^{l})^{y},\label{ThmBoundsEq2}%
%\end{align}
%where (\ref{ThmBoundsEq2}) follows from $\rho(\mathcal{A})\leq1$
%and (\ref{ThmBoundsEq1}). Since $$\lim_{y\rightarrow\infty}\left(
%p(yL)\right) ^{1/y}=1,$$ the bound (\ref{ThmBoundsEq2}) on the
%norm of the products in $\tilde{\mathcal{A}}^{y},$ \rmj{together
%with the definition (\ref{eq:def.jsr}),} implies that
%$$\rho(\tilde{\mathcal{A}})\leq \rho(\mathcal{A})^{l}.$$
%%%END OF OLD PROOF
\rmj{Now, by our hypothesis (\ref{eq:supposition-thm-words})
above, we have that $ \rho(\tilde{\mathcal{A}})<1/\sqrt{n}.$}
Therefore, there
exists $\epsilon>0$ such that $\rho((1+\epsilon)\tilde{\mathcal{A}}%
)<1/\sqrt{n}.$ It then follows from (\ref{eq:CQ.bound}) that there
exists a common quadratic Lyapunov function for
$(1+\epsilon)\tilde{\mathcal{A}}.$ \aaa{Hence}, $\hat
{\rho}_{\mathcal{V}^{2}}((1+\epsilon)\tilde{\mathcal{A}})\leq1,$
which
immediately implies that $\hat{\rho}_{\mathcal{V}^{2}},_{\tilde{G}%
}(\mathcal{A})<1,$ a contradiction.
\end{proof}

\aaa{ A noteworthy immediate corollary of
Theorem~\ref{thm-bound-codes} (obtained by setting
$\tilde{\mathcal{A}}=\bigcup_{t=r}^k \mathcal{A}^t)$ is the
following: If $\rho(\mathcal{A})<\frac{1}{\sqrt[2r]{n}}$, then
there exists a quadratic Lyapunov function that decreases
simultaneously for all products of lengths $r,r+1,\ldots,r+k$, for
any desired value of $k$. Note that this fact is obvious for
$r=1$, but nonobvious for $r\geq 2$.}

\section{Conclusions and future directions}\label{sec:conclusions.future.directions}

We introduced the framework of path-complete graph Lyapunov
functions for the formulation of semidefinite programming based
algorithms for approximating the joint spectral radius (or
equivalently establishing absolute asymptotic stability of an
arbitrary switched linear system). We defined the notion of a
path-complete graph, which was inspired by concepts in automata
theory. We showed that every path-complete graph gives rise to a
technique for the approximation of the JSR. This provided a
unifying framework that includes many of the previously proposed
techniques and also introduces new ones. \rmj{\aaa{(In fact}, all
families of LMIs that we are aware of \aaa{are} particular cases
of our method.\aaa{)}} \aaa{We shall also emphasize that although
we focused on switched \emph{linear} systems because of our
interest in the JSR, the analysis technique of multiple Lyapunov
functions on path-complete graphs is clearly valid for switched
\emph{nonlinear} systems as well.}

We compared the quality of the bound obtained from certain classes
of path-complete graphs, including all path-complete graphs with
two nodes on an alphabet of two matrices, and also a certain
family of dual path-complete graphs. We proposed a specific class
of such graphs that appear to work particularly well in practice
and proved that the bound obtained from these graphs is invariant
under transposition of the matrices and is always within a
multiplicative factor of $1/\sqrt[4]{n}$ from the true JSR.
Finally, we presented two converse Lyapunov theorems, one for the
well-known methods of minimum and maximum-of-quadratics Lyapunov
functions, and the other for a new class of methods that propose
the use of a common quadratic Lyapunov function for a set of words
of possibly different lengths.

%AAA: removed the following lines to save room:
%These theorems yield explicit and systematic constructions of
%semidefinite programs that achieve any desired accuracy of
%approximation.

%Amirali: I deferred the following comment of Raph to the journal version.
%\rmj{In their greatest generality, our methods can be applied by using conic programming with any cone (see \cite{protasov-jungers-blondel09}). They improve the quality of the bounds when the function is obtained thanks to optimization on the cone of SDP matrices, on the cone of Sum-Of-Squares polynomials, and on the positive orthant.  We conjecture that for any cone, our methods provide an improvement over classical methods. }

We believe the methodology proposed in this chapter should
straightforwardly extend to the case of \emph{constrained
switching} by requiring the graphs to have a path not for all the
words, but only the words allowed by the constraints on the
switching. A rigorous treatment of this idea is left for future
work.

Vincent Blondel showed that when the underlying automaton is not
deterministic, checking path-completeness of a labeled directed
graph is an NP-hard problem (personal communication). In general,
the problem of deciding whether a non-deterministic finite
automaton accepts all finite words is known to be
PSPACE-complete~\cite[p. 265]{GareyJohnson_Book}. However, we are
yet to investigate whether the same is true for automata arising
from path-complete graphs which have a little more structure. At
the moment, the NP-hardness proof of Blondel remains as the
strongest negative result we have on this problem. Of course, the
step of checking path-completeness of a graph is done offline and
prior to the run of our algorithms for approximating the JSR.
Therefore, while checking path-completeness is in general
difficult, the approximation algorithms that we presented indeed
run in polynomial time since they work with a fixed (a priori
chosen) path-complete graph. Nevertheless, the question on
complexity of checking path-completeness is interesting in many
other settings, e.g., when deciding whether a given set of
Lyapunov inequalities imply stability of an arbitrary switched
system.

Some other interesting questions that can be explored in the
future are the following. What are some other classes of
path-complete graphs that lead to new techniques for proving
stability of switched systems? How can we compare the performance
of different path-complete graphs in a systematic way? Given a set
of matrices, a class of Lyapunov functions, and a fixed size for
the graph, can we efficiently come up with the least conservative
topology of a path-complete graph? Within the framework that we
proposed, do all the Lyapunov inequalities that prove stability
come from path-complete graphs? \aaa{What are the analogues of the
results of this chapter for continuous time switched systems?} To
what extent do the results carry over to the synthesis (controller
design) problem for switched systems? These questions and several
others show potential for much follow-up work on path-complete
graph Lyapunov functions.

%%%%%%%%%%%%%%
%% Back Matter
%%%%%%%%%%%%%%

%% appendices
%\appendix
%\include{appendix_directSampler}
%\include{appendix_blockedSampler}
%\include{appendix_hypers}
%\include{appendix_messagePassing}
%\include{appendix_HDPHMMKF}
%\include{appendix_dynamicParamPost}

% bibliography
\cleardoublepage
\phantomsection
\addcontentsline{toc}{chapter}{References}
\bibliographystyle{abbrv}
\bibliography{pablo_amirali}
\cleardoublepage

\phantomsection
%\addcontentsline{toc}{chapter}{Notation}
%\include{defn}

%\bibliography{\bibdir/StickyHDPHMM_Bib,\bibdir/Bibliography,\bibdir/Asilomar_Bib,\bibdir/Fusion_Bib,\bibdir/NIPS_Bib,\bibdir/Proposal_Bib}
%\bibliography{Bibliography}
% index
%\newpage
%\mbox{}
%\newpage
%\addcontentsline{toc}{chapter}{Index}
%\printindex

\end{document}